\newtheorem{theorem}{Theorem}
\newtheorem{definition}[theorem]{Definition}
\newtheorem{proposition}[theorem]{Proposition}
\newtheorem{lemma}[theorem]{Lemma}
\newtheorem{corollary}[theorem]{Corollary}
\newtheorem{remark}[theorem]{Remark}
\def\my_c{c_\infty}
\newcommand{\mynewtheorem}[2]{
        \newaliascnt{#1}{dummy}
        \newtheorem{#1}[#1]{#2}
        \aliascntresetthe{#1}
        \expandafter\def\csname #1autorefname\endcsname{#2}
}
\newcommand{\be}{\begin{equation}}
        \newcommand{\ee}{\end{equation}}
\newcommand{\bde}{\begin{displaymath}}
        \newcommand{\ede}{\end{displaymath}}
\newcommand{\beq}{\begin{eqnarray*}}
        \newcommand{\eeq}{\end{eqnarray*}}
\newcommand{\beqa}{\begin{eqnarray}}
        \newcommand{\eeqa}{\end{eqnarray}}
\newcommand{\bel }{\left\{\begin{array}{ll}}
        \newcommand{\eel}{\cr \end{array} \right.}
\newcommand{\dcb}{\begin{array}{lll}}
        \newcommand{\dce}{\end{array}}
\newcommand{\ebe}{\begin{enumerate}\setlength{\baselineskip}{13pt}\setlength{\parskip}{0pt}}
        \newcommand{\dbe}{\end{enumerate}}
\newcommand{\leftrightharpoonup}{%
        \mathrel{\mathpalette\lrhup\relax}%
}
\newcommand{\lrhup}[2]{%
        \ooalign{$#1\leftharpoonup$\cr$#1\rightharpoonup$\cr}%
}
\newcommand\rharp[1]{\mathstrut\mkern2.5mu#1\mkern-11mu\raise1.2ex%
        \hbox{$\scriptscriptstyle\rightharpoonup$}}
\newcommand\lharp[1]{\mathstrut\mkern2.5mu#1\mkern-11mu\raise1.2ex%
        \hbox{$\scriptscriptstyle\leftharpoonup$}}
\newcommand\lrharp[1]{\mathstrut\mkern2.5mu#1\mkern-11mu\raise1.2ex%
        \hbox{$\scriptscriptstyle\leftrightharpoonup$}}
\def\0{{\mathbf{0}}}
\begin{document}
        
        \title[The derivative of a one dimensional
        killed diffusion semigroup]{A probabilistic representation of the derivative of a one dimensional
                killed diffusion semigroup}
        \author{Dan Crisan and Arturo Kohatsu-Higa }
        \address{\noindent Dan Crisan:
                Department of Mathematics, Imperial College London, 180 Queen's Gate, London
                SW7 2AZ, UK\newline Arturo Kohatsu-Higa:
                Department of Mathematical Sciences, Ritsumeikan University 1-1-1
                Nojihigashi, Kusatsu, Shiga, 525-8577, Japan.}
        \thanks{The research of the first author was partially supported by a Royal Society grant.  The research of the 
        	author was supported by KAKENHI grant 20K03666.
        }
        \begin{abstract}
                We provide a probabilistic representation for the derivative of the
                semigroup corresponding to a diffusion process killed at the boundary of a
                half interval. In particular, we show that the derivative of the semi-group
                can be expressed as the expected value of a functional of a reflected
                diffusion process. Furthermore, as an application, we obtain a Bismut-Elworthy-Li formula which is also valid at the boundary.\\[2mm]
                \textbf{Keywords}: diffusion semigroup, killed diffusions, reflected
                diffusions, semigroup derivatives, probabilistic representations, integration by parts.
        \end{abstract}
        \maketitle

        	\section{Introduction and motivation}
        	
        	\label{sec:intro} Let $(\Omega ,\mathcal{F},\mathbb{P})$ be a probability
        	space on which we have defined a $1-$dimensional Brownian motion $W$. We
        	denote by $X$ the solution of a $1$-dimensional stochastic differential
        	equation 
        	\begin{equation*}
        		dX^x_{t}=b\left( X^x_{t}\right) dt+\sigma (X^x_{t})dW_{t},  \label{x}
        	\end{equation*}%
        	with initial condition $X_{0}=x>L$ and   {bounded} coefficients  $b$ and $\sigma
        	\geq c>0$ for some constant $c$. We assume that the coefficients are twice differentiable with bounded derivatives. A basic result about the derivatives of stochastic flows (see e.g. \cite{Kunita}) states that 
        	\begin{align*}
        		\partial_xX_t^x=\exp\left(\int_0^t\sigma'(X_s^x)dW_s+\int_0^t(b'-\frac{(\sigma')^2}{2})(X_s^x)ds\right).
        	\end{align*}
        	In particular, this result implies that the derivative of the diffusion semigroup can be rewritten using $ \partial_xX_t^x $ as follows:
        	\begin{align}
        		\partial_x\mathbb{E}\left[f(X_t^x)\right]=\mathbb{E}\left[f'(X_t^x)\partial_xX_t^x\right],
        		\label{eq:clas}
        	\end{align}
        	where $ f\in C^1_b(\mathbb{R}) $.  In this article, we are interested in obtaining a probabilistic representation for the derivative of the killed diffusion semigroup. More precisely, let $\tau $ be the first exit time from the
        	interval $D:=(L,\infty )$, i.e., 
        	\begin{equation*}
        		\tau :=\inf \left\{ t\geq 0,~X^x_{t}=L\right\} .
        	\end{equation*}%
        	Let also $f$ be a smooth function\footnote{%
        		The precise degree of smoothness of the function $f$ is explained in the
        		statement of the Theorem \ref{th:main}. 
        		In the one-dimensional case, an alternative to $\left( L,\infty \right) $ is 
        		$(-\infty,R) $. This case is treated with the same techniques as those
        		developed for the case $\left( L,\infty \right) $. } $f:[L,\infty )\rightarrow \mathbb{R}
        	$ which satisfies that $f(L)=0$. In this article, we introduce a
        	probabilistic representation of the derivative $\partial _{x}P_{t}f(x)$, where $%
        	P_{t}$ is the semigroup associated to the process $X$ killed when it reaches
        	the boundary $\partial D,$ i.e.,%
        	\begin{equation}
        		P_{t}f\left( x\right) :=\mathbb{E}\left[ f\left( X^x_{t}\right) 1_{(
        			t<\tau )}\right] .  \label{ptf}
        	\end{equation}
        	To explain the results in this paper, we consider first the case when $b=0$.
        	We will show below that 
        	\begin{equation}
        		\partial _{x}P_{T}f(x)=\mathbb{E}\left[ f^{\prime }(Y_{T})\xi _{T}\right] .
        		\label{eq:-1}
        	\end{equation}
        	In \eqref{eq:-1}, $Y$ is the process reflected at the boundary $L$ of the
        	domain $[L,\infty )$ and $\xi$ is a solution of a stochastic linear
        	equation. More precisely, 
        	\begin{align}
        		Y_{t}=& x+\int_{0}^{t}\sigma (Y_{s})dW_{s}+B_{t}\geq L,  \label{y1} \\
        		B_t=&\int_0^t1_{(Y_s=L)}d|B|_s \notag \\\xi _{t}=& \exp\left(\int_{0}^{t}\sigma ^{\prime }(Y_{s})dW_{s}-\frac 12\int_{0}^{t}\sigma ^{\prime }(Y_{s})^2d{s}\right).  \label{xi1}
        	\end{align}%
        	%
        	%
        	%
        	%
        	%
        	%
        	%
        	%
        	%
        	The pair $(Y,B)$ is uniquely identified by the equations \eqref{y1}+%
        	\eqref{xi1}, once we impose the constraint that $Y_{t}\geq L$ for any $t\geq
        	0$ and that $ B $ is a non-decreasing process with $ B_0=0 $ satisfying  $\int_{0}^{t}1_{(L,\infty )}\left( Y_{s}\right) dB_{s}=0$ for
        	any $t\geq 0$, see Theorem 1.2.1 in \cite{ap}. Note that in comparison with the classical case \eqref{eq:clas}, we have instead of $ (X_t^x,\partial_xX_t^x) $, the pair $ (Y_t,\xi_t) $.
        	
        	The above representation may be somewhat expected if one recalls the classical 
        	result connecting the densities of
        	killed and reflected Brownian motion. That is, if we denote the density of
        	killed Brownian motion started at $x $ by $q^-_t(x,y) $ and similarly by $%
        	q^+_t(x,y) $ the density of reflected Brownian motion, one has (see also Section \ref{sec:sc})
        	\begin{align}
        		\label{eq:dens}
        		\partial_xq^-_t(x,y)=\partial_yq^+_t(x,y).
        	\end{align}
        	
        	The above equality uses the symmetry properties of the Gaussian transition
        	kernel which arise due to the reflection principle. These are lost once one introduces a drift or a non-constant coefficients. As a result, in the case with
        	non-zero drift, the corresponding probabilistic representation is more involved 
        	due to the interaction between the drift and the boundary of the domain. In this case, we have for $ a(x):=\sigma^2(x) $
        	\begin{align}
        		\partial_xP_Tf(x)=&\mathbb{E}[f^{\prime }(Y_{T})\Psi_{T}]
        		\label{eq:main} \\
        		Y_{t}=& x+\int_{0}^{t}b(Y_{s})ds+\int_{0}^{t}\sigma (Y_{s})dW_{s}+B _{t}\geq L,
        		\label{eq:rep} \\
        		B_t=&\int_0^t1_{(Y_s=L)}d|B|_s   \\
        		\Psi_{t}=&\exp\left(
        		\int_{0}^{t}\left( \left(b^{\prime
        		}-\frac {(\sigma ^{\prime })^2}2\right)(Y_{s})ds+\sigma ^{\prime }(Y_{s})dW_{s}\right)+{\frac{2b(L)}{a(L)}}B _{t}\right ) .  \label{def:psi}
        	\end{align}
        	The interpretation of the last term  in \eqref{def:psi} is important. In fact, if $ b(L)>0 $ increases in value then $ \Psi_t $ will have a larger value  which implies that the killing will happen less frequently.
        	
        	We will prove the general representation formula (\ref{eq:main}) in two steps. 
        	\begin{itemize}
        		\item  First, we show that (see Theorem \ref{th:main})  for $ \mathcal{E}_t:=e^{-\frac{b}{a}(L)B_t}\Psi _t$, we have for \\ $\tau =\inf \left\{ t\geq 0,~Y_{t}=L\right\}$
        		and $\rho_T:=\sup\{s<T: Y_s=L\} $:
        		
        		\begin{align}  \label{eq:0}
        			\partial_xP_Tf(x)=&\mathbb{E}[f^{\prime }(Y_{T}){\mathcal{E}}_{T}]+{\frac{b}{
        					a}}\left( L\right) \mathbb{E}\left[ f(Y_{T}){\mathcal{E}}_{\rho
        				_{T}}1_{(\tau \leq T)}\right] .
        		\end{align}
        		The additional term that appears in the representation formula \eqref{eq:0}
        		when compared with \eqref{eq:-1} is the effect that the drift has when the
        		diffusion hits the boundary. Heuristically, the effect of the drift only
        		lasts until the last time the process touches the boundary $\rho_T $ \emph{%
        			before} time $T$ (one can assume without loss of generality that $Y_T>L $).

        		\item    Second, we show that equation (\ref{eq:0}) is equivalent to (\ref%
        		{eq:main}) (see Theorem \ref{th:36}). To do so we prove that %
        		\eqref{eq:0} leads to a linear equation in $\partial_xP_tf(x) $ which can be
        		solved explicitly in order to obtain \eqref{eq:main}. The proof of the non-zero drift case is
        		based on a reduction to the case $b=0$ via an application of Girsanov's
        		theorem. The extra difficulty arises because the Girsanov's change of measure depends on the
        		whole path of the process $Y$ and therefore additional arguments are
        		required.
        		
        	\end{itemize}

        	The challenge in obtaining a probabilistic representation of the killed
        	diffusion semigroup is perhaps not surprising. It is known that the
        	stochastic flow corresponding to a killed diffusion can not be
        	differentiated with respect to its initial condition. The presence of the
        	stopping time $\tau$ in the definition \eqref{ptf} of the semigroup $P_t$ is
        	an important hurdle.         
        	For example, \cite{MaAREN} proves that diffusion stopping times are not
        	differentiable in the Malliavin sense. On the other hand, it is well known
        	in the theory of partial differential equations, that $P_tf$ is
        	differentiable. This is because the function $u$ defined as $u(t,x):=P_tf(x)$
        	is the solution of an initial value partial differential equation with
        	Dirichlet boundary condition and therefore differentiable on the domain. 
        	
        	%
        	
        	The proof of the main result hinges on an iterative differentiation of
        	expected values of functionals for the Euler approximation under a discrete time change of probability measures which correspond to an approximation of the killed diffusion, see Proposition \ref{lem:pf}. The resulting probabilistic
        	representation of the relevant functionals for the Euler approximation of
        	the killed diffusion is expressed in terms of a set of processes based on a reflected process which are
        	shown to converge in distribution. To do this we make use of weak
        	convergence results for stochastic equations (see, e.g. \cite{KurtzProtter}%
        	). Here we note that there will be terms converging to the regulator process $ B $. This is known to be a delicate problem in general (see \cite{Jacod}). Once this is done, we identify the probabilistic representation for the
        	derivative of the semigroup corresponding to the killed diffusion as the
        	limit of probabilistic representation of the derivative of semigroup-like
        	expressions corresponding to the killed Euler approximations. 
        	
        	In this article, the first of a series of four papers,
        	we provide the basic theoretical argument that can be used in a variety of situations including the case for killed diffusion processes in a multidimensional smooth domain. We will introduce the derivative process $\xi $ with the intention to prove in our subsequent research that the meaning of the derivative process $ \xi $ is close to the usual flow derivative for smooth diffusions. In fact, a potential application of our results can be seen in Theorem 2.2 in \cite{CGK} which will require the formula for $ \xi $ when the initial point is at the boundary.
        	

        	A secondary result of the paper is a new version of the Bismut-Elworthy-Li (BEL) formula for killed diffusions in one dimension which is valid up to and including the boundary (see Theorem \ref{th:19}). There are other classical variations of these formulas which are valid only when the initial point is at the interior of the domain.
        	For example, this is the case of the formula appearing in Theorem 2.18 in \cite{Mall}.  Without going into details, Theorem 2.18 in \cite{Mall} gives the following representation in our setting: 
        	\begin{align}
        		\partial_xP_Tf(x)=\mathbb{E}\left[\! f(X^x_T)1_{(\tau>T)}\!\!\int_0^T \!\!\!\theta_t\exp\left(
        		\int_{0}^{t}\!\!\left( \left(b^{\prime
        		}-\frac {(\sigma ^{\prime })^2}2\right)(X^x_{s})ds+\sigma ^{\prime }(X^x_{s})dW_{s}\right)\right )dW_t\right ].
        	\label{eq:MT}
        	\end{align}
        	Here, $\theta $ is a predictable process such that $ \theta(s)=0 $ for $ s>\tau  $ and $ \int_0^T\theta(s)ds=1 $ as detailed in Definition 2.17 of  \cite{Mall}. Under the conditions of Theorem 2.18 of \cite{Mall},  one can not take limits as $ x $ converges to a point in the boundary. On the other hand, as the formula in Theorem \ref{th:19} uses the last excursion of the path of $ X^x $ away from the boundary. Therefore the above  formula is not useful from a simulation point of view when the support of $ f $ is close to the boundary. This is the case in some applications (e.g. see \cite{Cose}). In fact, in the course of this research, we were able to obtain other integration by parts formulas which may be useful in situations when the initial and arrival point are close to the boundary but they seem to be cumbersome to be detailed here.  Furthermore, the method of proof for \eqref{eq:MT} can not be applied here because $ P_{T-t}(Y_t) $ is not a martingale in the present case.

        In the first sequel to the present article, we extend the one dimensional result introduced here to a
        	multi-dimensional case. Following this, we analyze the behaviour of the distribution of a diffusion as a function of the corresponding boundary  of the domain. In particular we study the behaviour of the diffusion as the boundary moves and, separately as the boundary becomes less smooth. with a limiting case when the boundary is H\"older continuous (a rough path). As an application of these theoretical results, the authors will analyse in a fourth paper the process of sedimentation at boundaries of multi-dimensional domains when the diffusion represents movement of sediment/debris carried by the fluid and deposited on the boundary. This is one of the reasons why it is essential that our probabilistic representation covers the case where the initial point can also be chosen on the boundary. 
        	
        	There exists a different version of the BEL formula valid also at the boundary which appeared in \cite{FKL}. Such a formula is of a different nature: it uses interpolation techniques so as to obtain a formula that can be simulated. Its geometrical interpretation it is not clear and it does not offer a natural proxy for the derivative process. 
        	The formula presented in this work contains an integral over the last excursion of the reflected process. For details, see Theorem \ref{th:19}. We exploit this result in the subsequent work to provide a natural physical interpretation of the boundary behavior of the derivative of the semigroup that characterizes the  particle sedimentation rate at the boundary.    
        	
        	Other applications of the representation  \eqref{eq:-1}  are as follows:
        	\begin{itemize}
        		\item Feynman-Kac formulas and derivatives of parabolic equations. (e.g. \cite{Taira}). In this area we will explore the relations of the present approach in the multi-dimensional case with the so-called Steklov problem in partial differential equations.
        		\item Duality between optimal stopping problems and singular control (e.g. \cite{CGK} and \cite{SC}). 
        		\item Greeks in finance (e.g.  \cite{Four}). In this application one may extension of formulas such as \eqref{eq:-1} for the cases when the test function is differentiable and then as a second application deal with some particular cases when $ f $ is not differentiable through Theorem \ref{th:19} and compare with other available methods. 
        		\item Accumulation of sediments at the boundary of a domain (work in progress by the authors).
        	\end{itemize} These developments require additional work in order to incorporate the particular set-up in each application and will be discussed in future research. For further information on applications, heuristic explanations on the arguments used in this research we refer the reader to our YouTube video presentation \cite{YT}.

        	In the multi-dimensional case further geometric considerations and the covariance structure at the boundary force the
        	introduction of jumps in the equation for $\Psi $ which has some dual connections with previous results about the differentiation of reflected processes (see \cite{Andres}, \cite{Burdzy}, \cite{DZ} and \cite{LK}). Therefore, for
        	clarity of presentation, we will treat in this paper only the one dimensional
        	case with an argument which is similar to the one to be used in the multi-dimensional case. The case where $D=(L,R) $
        	is a special case of the multi-dimensional case. 
        	Finally, one may foresee that the uniqueness of the process $ \Psi $ can be somewhat gauged from the fact that it is adapted to the filtration generated by  $Y $ and that a similar formula as \eqref{eq:main} can be obtained in the case that one considers $ \partial_x\mathbb{E}\left[f(X_{t_1},...,X_{t_n})1_{(\tau>T)}\right] $ for $ 0<t_1<....<t_n=T $ for an appropriate class of test functions $ f $. This is a future research direction for the authors.
        	
        	\section{Simple cases and some heuristic arguments}
        	\label{sec:sc}
        	In this section, we will provide some simple arguments for the Brownian case ($ b=0 $, $ \sigma $ constant). This clarifies the reason as to $Y$ is the reflected process but also gives the main line of our arguments in the general case of non-constant coefficients. We remark that the goal in this case is to prove that for $ f:[L,\infty)\to\mathbb{R} $ satisfying $ f(L)=0 $ and  for $ Y $ the reflected Wiener process started at $ x\geq L $, the following probabilistic representation is satisfied  $ x\geq L $
        	\begin{align*}
        	       		\partial_x\mathbb{E}\left[f(X^x_t)1_{(\tau>t)}\right] =&\mathbb{E}\left[f'({ Y_t  })\xi_t\right],\\
        	       			X_t\equiv&  X_t^x=x+W_t,\  \xi_t=1.
        	\end{align*}
        This equality can be rewritten using densities as $$  \partial_x\int_L^\infty f(y)q_t^-(x,y)dy=\int_L^\infty f'(y)q_t^+(x,y)dy.$$
        	Here, $ q_t^- $ and $ q_t^+ $ denote the explicit densities of the killed $ q_t^- $ and reflected Wiener processes, which can be obtained using the reflection principle written below.
        	\begin{align*}
        		q_t^{\mp}(x,y):=&g_t(y-x) \mp g_t(y+x-2L),\quad
        		 g_t(y):=\frac{e^{-\frac{y^2}{2t}}}
        		{\sqrt{2\pi t}}.
        	\end{align*}
        	We will now dwelve into a variation of the above proof as this idea can not be applied directly to the general non-constant coefficient case because densities in the general case do not have the symmetry property stated in \eqref{eq:dens}. 
        	
        	Instead, we will give an alternative argument which may have a deeper probabilistic flavor which will be the one used in the proofs. 
        	In fact, from the above  expression one can see that given the initial and final value of the Wiener process in the interval $ [0,t] $, the probability of crossing the boundary $ L $ is given as
        	\begin{align*}
        		p_t(x,y)=\mathbb{P}(\tau<t|X^x_t=y)=e^{-2\frac{(y-L)(x-L)}{t}}.
        	\end{align*}
        	We define the non-negative random variables: $ M_t=1-p_t(x,X^x_t) $ and $ \bar{M}_t=1+p_t(x,X^x_t) $.
        	With these definitions, the main line of the argument to be used in the general case uses the above equality is as follows:
        	\begin{align*}
        		\partial_x\mathbb{E}\left[f(X^x_t)1_{(\tau>t)}\right] =&\partial_x\mathbb{E}\left[f(X^x_t)\mathbb{E}\left[1_{(\tau>t)}|X^x_t=y\right]|_{y=X^x_t}\right]=\partial_x\mathbb{E}\left[f(X^x_t)(1-p_t(x,X^x_t))\right]=\partial_x\mathbb{E}\left[f(X^x_t)M_t\right]	.
        	\end{align*}
        	The above calculation shows how to handle the indicator $ 1_{(\tau>t)} $ through the use of the conditional probability $  1-p_t(x,X^x_t)$.
        	This conditional probability can now be differentiated. 
        	In contrast, this property can not be obtained in the case that coefficients of the diffusion $ X $ are not constant because one does not have an explicit expression for the crossing probabilities.
        	
        	 Next, we can compute the expressions for the derivative using an integration by parts formula:
        	\begin{align}
        		\partial_x\mathbb{E}\left[f(X^x_t)M_t\right]=&\mathbb{E}\left[f'(X^x_t)(1-p_t(x,X^x_t))\right]-\mathbb{E}\left[f(X^x_t)\partial_xp_t(x,X^x_t)\right]\notag\\
        		=&\mathbb{E}\left[f'(X^x_t)(1+p_t(x,X^x_t))\right]
        		=\mathbb{E}\left[f'(X^x_t)\bar{M}_t\right]\notag\\
        		=&\mathbb{E}\left[f'({ Y_t  })\mathcal{E}_t\right].
        		\label{eq:refl}
        	\end{align}
        	We have used the integration by parts formula in the second equality which is the reason why we require the ellipticity condition on $ \sigma $:
        	\begin{align*}
        		\mathbb{E}\left[f(X^x_t)\partial_xp_t(x,X^x_t)\right]=-2\mathbb{E}\left[f'(X^x_t)p_t(x,X^x_t)\right].
        	\end{align*}
        	The non-negative random variables $ M_t $ and $ \bar{M}_t $ can be interpreted heuristically as a change of measure from Bownian motion to killed and reflected Brownian motion respectively. These measures are not related in the same way as in the above expression for the general non-constant coefficient case (for more on this, see the case of the Wiener process with drift). In fact, a change of measure which will change a diffusion into a reflected diffusion does not exist. This is the main reason why we have to resort to an approximation argument.
        	%

        	Now, consider the case of Brownian motion with constant drift. That is, $ X^x_t=x+bt+\sigma W_t $. In this case the above argument can not be used because the reflection principle is not satisfied for the Wiener process with drift. Although the densities for the killed and reflected Wiener processes with drift are explicitly known, the above probabilistic type calculation does not work in a straightforward fashion. In fact, we use Girsanov's change of measure to obtain a representation which we can differentiate:
        	\begin{align*}
        		\mathbb{E}\left[ f\left( X^x_{t}\right) 1_{(
        			t<\tau ) }\right] =\mathbb{E}^{\mathbb{Q}}\left[ f(X^x_t) \frac{d\mathbb{P}}{d\mathbb{Q}} 1_{(
        			t<\tau ) }\right] =\mathbb{E}^{\mathbb{Q}}\left[ f(X^x_t)\exp \left( \frac{b}{a}({X}^x_{t}-x)-\frac{
        			b^{2}}{2a}t\right)1_{(
        			t<\tau )}\right].
        	\end{align*}
        	Therefore applying the formula, we have obtained for the Wiener process with no drift, we have
        	\begin{align*}
        		\partial_x\mathbb{E}\left[ f\left( X^x_{t}\right) 1_{\left\{
        			t<\tau \right\} }\right]=&\mathbb{E}^{\mathbb{Q}}\left[ f'\left( Y_{t}\right)\exp \left( \frac{b}{a}({Y}_{t}-x)-\frac{
        			b^{2}}{2a}t\right)\right]\\
        		&+\frac{b}{a}\mathbb{E}^{\mathbb{Q}}\left[ f\left( Y_{t}\right)\exp \left( \frac{b}{a}({Y}_{t}-x)-\frac{
        			b^{2}}{2a}t\right)\right]-\frac{b}{a}\mathbb{E}^{\mathbb{Q}}\left[ f\left( Y_{t}\right)\exp \left( \frac{b}{a}({Y}_{t}-x)-\frac{
        			b^{2}}{2a}t\right)1_{(
        			t<\tau  )}\right]\\
        		=&\mathbb{E}\left[\left(f'\left( Y_{t}\right)+f\left( Y_{t}\right)\frac ba1_{(\tau\leq t)}\right) \exp \left( \frac{b}{a}B_t\right)\right].
        	\end{align*}
        	Here $ B_t $ is the regulator process associated to the reflected Wiener process with drift $$ Y_t=x+bt+\sigma W_t+B_t\geq L .$$ The above formula is obtained in Theorem \ref{th:main} (see also \eqref{eq:0}) for the general (non-constant) coefficient case.
        	It is not clear to us if there is an easier way to deduce the formula \eqref{eq:-1}. 
        	In fact, our method consists of  showing that the above equation gives a special type  of linear equation for the derivative $\partial_x\mathbb{E}\left[ f\left( X^x_{t}\right) 1_{(
        		t<\tau )}\right]  $. Once that equation is solved we obtain the claimed formula for the case of the Wiener process with drift. For more details in the general case, see Theorem \ref{th:36} and its proof.
        	
        	
        	Finally, we want to remark that, one may seek other types of arguments in the one dimensional case but as we plan to show in subsequent work, the argument presented here is the one that allows the extension of the present results to the multi-dimensional case.
        Similarly, one may also seek arguments using partial differential equations. These arguments are not available as they require a careful analysis of the corresponding functions at the boundary. In fact, not even in the multidimensional reflected situation (Neumann case), these analytical arguments are available, because the corresponding partial derivative equation for the gradient should contain the jump behavior of the process as in e.g. \cite{Andres}. The corresponding results for the associated Feynman-Kac formulas for the derivative of the killed diffusion will also be discussed in future research.
        	
        	In the next section, we introduce the set-up for discrete approximations.

        	\section{Notation, Assumptions and the push-forward formula}
        	\label{sec:2} 
        	We will work on the time interval $[0,T]$ with the following uniform partition $%
        	t_{i}\equiv t_{i}^{n}:=i\Delta$, $i=0,...,n$ with $\Delta=\frac{T}{n}$. We assume without loss of generality, that $%
        	\Delta\leq 1 $ and $ T $ is fixed. 
        	
        	All constants $c,C$, $M $ or 
        	$\mathsf{M} $ appearing in the article are independent of $n $. They may
        	depend on the general bounds for the coefficients $b $ and $\sigma $ and
        	their derivatives as well as on the uniform elliptic condition constant $%
        	\sigma(x)\geq c>0 $ for all $x\in\mathbb{R} $. We also denote $ a(x)=\sigma^2(x) $. 
        	
        	The supremum norm for a
        	function $f :\mathbb{R}\to\mathbb{R}$ is denoted as $\|f\|_\infty
        	:=\sup\{|f(x)|;x\in\mathbb{R}\} $. Some of the functions may only be defined
        	in $[L,\infty) $. In this case we assume that they have extensions defined
        	on $\mathbb{R} $ that satisfy the required continuity/smoothness properties. 
        	
        	In the following, we will use the Euler approximation process of $X$ with
        	initial point $X_0 $, defined as follows: For $i=0,...,n-1$, 
        	\begin{equation*}  
        		{X}_{t_{i+1}}^{n}=X_{t_i}^{n}+b(X_{t_i}^{n})(t_{i+1}-t_{i})+\sigma
        		(X_{t_i}^{n})(W_{t_{i+1}}-W_{t_{i}}).
        	\end{equation*}%
        	We will use the following notation to simplify equations: $
        	X_i^{n,x}=X_{t_i}^{n,x} $  where $X_{0}^{n,x}=x$ and in general $ X_i=X^n_{t_i} $. For coefficients, we use  $b_i=b_{i}^{n}=b(X_{i}^{n,x}),$ $\sigma_i=\sigma
        	_{i}^{n}=\sigma (X_{i}^{n,x})$ and\footnote{%
        		In general, we use this difference operator notation for other sequences of random
        		variables too.} $\Delta_{i+1}W=W_{t_{i+1}}-W_{t_{i}}$. Therefore, we have 
        	\begin{equation}
        		X_{i+1}^{n,x}\equiv X_i^{n,x}+b(X_i^{n,x})(t_{i+1}-t_{i})+\sigma
        		(X_i^{n,x})\Delta_{i+1}W\equiv X_i^{n,x}+b_{i}^{n}\Delta +\sigma
        		_{i}^{n}\Delta_{i+1}W. \label{eq:defbX}
        	\end{equation}%

        	Subsequently, whenever possible, we will omit the dependence of $%
        	X_{i+1}^{n,x}$ on $n$ and/or $x$ in their notation as well as that of $%
        	b_{i}^{n}$ and $\sigma _{i}^{n}$ on $n$. In particular, one has 
        	\begin{equation*}
        		\partial _{x}X_{p}=\partial _{x}X_{p-1}(1+b_{p-1}^{\prime
        		}(t_{p}-t_{p-1})+\sigma _{p-1}^{\prime
        		}(W_{t_{p}}-W_{t_{p-1}}))=\prod_{i=1}^{p}(1+b_{i-1}^{\prime }\Delta +\sigma
        		_{i-1}^{\prime }\Delta_iW). 
        	\end{equation*}
        	We will use the standard imbedding  of the discrete time process $X_i$ into the piece-wise constant continuous time process $X=X^n$ defined as $X_t=X^n_{t_{i}}$ for 
        	$t\in        [t_i,t_{i+1})$. 
        	
        	\textbf{\noindent Notation.} \textit{In what follows, in order to simplify
        		the notation, we will use $\partial_i\equiv \partial_{X_i} $ (and therefore $%
        		\partial_0=\partial_x$). To be more precise, $\partial_{X_i} $means that if
        		we have a random variable of the form $f (X_i)$, where $f $ is a real valued
        		function, then $\partial_{i}f(X_i) = f^{\prime }(X_i)$, where $f^{\prime }$%
        		is the standard derivative of $f$. In particular, we also have for $f(x,y) $%
        		, $x,y\in\mathbb{R} $, $\partial_if(X_i,\Delta_{i+1}W)=\partial_xf
        		(X_i,\Delta_{i+1}W)$.\newline
        	}

        	As explained in the introduction, the probabilistic representation of the derivative of the one dimensional
        	killed diffusion semigroup
        	uses the semigroup of the corresponding reflected diffusion. To obtain the result, we also make use of the piecewise interpolated version $X^{c,n}$ of the Euler approximation defined as        
        	\begin{equation}  \label{xcn}
        		{X}_{t}^{c,n}=X_{t_i}^{c,n}+b(X_{t_i}^{c,n})(t-t_{i})+\sigma
        		(X_{t_i}^{c,n})(W_{t}-W_{t_{i}}).
        	\end{equation}%
        	for $i=0,...,n-1$ and $t\in \left[ t_{i-1},t_{i}\right] $.
        	Note that $X_{t_i}^{c,n}=X_{t_i}^{n}=X_i$ for $i=0,...,n$. 

        	Let $U_{i}$, $ i\in\mathbb{N} $, be i.i.d. uniformly distributed random variables on $[0,1]$
        	independent of $W$. It is known that we can identify the event that the path
        	of the Euler approximation $X^{c,n}$, as defined in %
        	\eqref{xcn}, did not touch the boundary in the interval $\left[ t_{i-1},t_{i}%
        	\right] $ with the event $\left\{ U_{i}>p_{i}\right\} $ where 
        	\begin{equation*}
        		p_{i}\equiv p_{i}(X_{i-1}^{c,n},X_{i}^{c,n})=p_{i}(X_{i-1},X_{i}):=e^{-2\frac{\left( X_{i-1}-L\right) \left(
        				X_{i}-L\right) }{a_{i-1}\Delta }},
        	\end{equation*}%
        	where $a_{i}=\left( \sigma _{i}\right) ^{2}$, 
        	see \cite{gobetesaim} for details.  Define the ``change of measure" process
        	\footnote{%
        		In other words, we define the measure $\mathbb{P}^{M}$ such that $\left. \frac{d\mathbb{P}^{M}%
        		}{d\mathbb{P}}\right\vert _{\mathcal{F}_{{n}}}=M^n_{n}$. Note that $M^n_{n}$ is not a
        		martingale, but a \emph{supermartingale}, see \eqref{lem:3} for details.
        		Hence the measure $\mathbb{P}^{M}\,\ $is, in fact, a sub-probability measure as it
        		does not integrate to 1. Under it, the continuously interpolated Euler approximation $X^{c,n}_{n}$ defined by (\ref{xcn}) has the same law as the process $X^{c,n}_{n}$ killed when it exits
        		the domain, see \cite{gobetesaim} for details.
        		
        	}
        	
        	\begin{equation*}
        		M^n_{n}:=\prod_{i=1}^{n}m_{i}=\prod_{i=1}^{n}1_{\left( {X}_{i}>L\right)
        		}1_{\left( U_{i}>p_{i}\right) }.
        	\end{equation*}%
        	The conditional expectation with respect to the $\sigma$-algebra $\mathcal{F}%
        	_{i}=\sigma (U_{j},X_{j};j=1,...,i)$ will be denoted by $\mathbb{E}_{i}$ and
        	we also use the notation $\mathbb{E}_{i,x}[\cdot]\equiv \mathbb{E}_{i}[\cdot|X_i=x] $. In particular, we may use sometimes $\mathbb{E}_{0,x}$ if we want to stress that the expectation is a function of $ x $. In this sense $\mathbb{E} $ can be interpreted as $\mathbb{E}_0$ or $\mathbb{E}_{0,x} $ without any confusion.
        	
        	In order to reduce the length of various equations we let $%
        	f_{i}:=f_{i}(X_{i})$, $i=0,...,n$ denote a random variable obtained from the
        	evaluation of the function $f_{i}$ on the random variable $X_{i}$ where $%
        	f_{i}(x):=\mathbb{E}_{i,x}\left[ f\left( {X}_{n}\right) M^n_{i:n}%
        	\right] $ where $M^n_{i:n}:=\prod_{j=i+1}^{n}m_{j}  $. Therefore, using this notation and the Markov property we have 
        	for $ \tau^n:=\inf\{s\geq 0; X^{c,n}_s\in\partial D\} $.
        	\begin{equation*}
        		\mathbb{E}_0\left[f(X_n)1_{(\tau^n>T)}\right]=
        		\mathbb{E}_0\left[f(X_n)\mathbb{E}\left[1_{(\tau^n>T)}\Big/\mathcal{F}_n\right]\right]=
        		\mathbb{E}\left[ f\left( {X}_{n}\right) M^n_{n}\right] =\mathbb{E}\left[
        		f_{1}M^n_{1}\right] .
        	\end{equation*}%
        	In general, 
        	\begin{align*}
        		f_{i}(X_{i}) :=&\mathbb{E}\left[ f\left( {X}_{n}\right) M^n_{n}(M^n_{i})^{-1}|%
        		\mathcal{F}_{i}\right] =\mathbb{E}\left[ f\left( {X}_{n}\right)
        		M^n_{n}(M^n_{i})^{-1}|X_{i}\right]  \\
        		=&\mathbb{E}_{i}\left[ f\left( {X}_{n}\right)
        		M^n_{n}(M^n_{i})^{-1}\right] =\mathbb{E}_{i}\left[ f_{i+1}\left( {X}_{i+1}\right) m_{i+1}\right] .
        	\end{align*}
        	
        	Note that the function $f_{i}$ satisfies the boundary condition $f_{i}(L)=0 $%
        	, $i=0,...,n$. In fact, if $i<n $ and $X_i=L $ then $p_{i+1}=1 $ and
        	therefore $m_{i+1}=0 $. Note that the case $i=n$ requires that $f(L)=0$.
        	
        	As noted before, under the measure $M^n_n $ the continuously interpolated Euler approximation $X^{c,n}_{n}$ defined by \eqref{xcn} has the same law as the process $X^{c,n}_{n}$ killed when it reaches the boundary $ L $. 
        	In fact, one
        	has, for $\tau^i:=\inf\{s>t_i;X^{c,n}_s=L\} $ 
        	\begin{align*}
        		\mathbb{E}_i\left[ f\left( {X}_{n}\right) M^n_{i:n}\right]= \mathbb{E}_i\left[ f\left( {X}_{n}^{c,n}\right) M^n_{i:n}\right] =
        		\mathbb{E}        [f(X^{c,n}_{T\wedge \tau^i})|X_{t_i}^{c,n}],
        	\end{align*}
        	see \cite{gobetesaim} for details.
        	
        	In the following, we will use the notation: 
        	\begin{align}
        		g_{i+1}(z)=& \frac{1}{\sigma _{i}\sqrt{2\pi \Delta }}\exp \left( -\frac{z^{2}%
        		}{2a_{i}\Delta }\right), \label{def:gi} \\
        		X_{i}^{L}=& X_{i-1}-L,~~~~X_{i}^{L,\sigma }=\frac{X_{i}-L}{\sigma _{i}\sqrt{%
        				\Delta }}.  \label{eq:not}
        	\end{align}%
        	
        	\textbf{\noindent Notation.} 
        	\textit{For the following proposition and later on we will use the following
        		random variables 
        		\begin{align*}
        			\bar{m}_{i}:=&1_{\left( {X}_{i}>L\right) }\left( 1+1_{\left( U_{i}\leq
        				p_{i}\right) }\right), \\
        			e_{i}:=&1+b_{i-1}^{\prime }\Delta 1_{\left( U_{i}>p_{i}\right) }+\sigma
        			_{i-1}^{\prime }(1_{\left( U_{i}>p_{i}\right) }\Delta _{i}W-1_{\left(
        				U_{i}\leq p_{i}\right) }\sqrt{\Delta }X_{i-1}^{L,\sigma }), \\
        			h_{i}:=&\frac{b_{i-1}}{a_{i-1}}1_{(U_{i}\leq p_{i})}+X_{i-1}^{L}\partial
        			_{i-1}\left( \frac{b_{i-1}}{a_{i-1}}\right) 1_{(U_{i}\leq p_{i})}.
        		\end{align*}
        	}
        	
        	\noindent Several processes are defined by products of a
        	given sequence of random variables $(a_k)_k$. More precisely, we denote 
        	\begin{align*}
        		A^n_{i:j}:=\prod_{k=i+1}^ja_k.
        	\end{align*}
        	In the above, we may have $(a,A)=(e,E),
        	(\bar{m},\bar{M}),(m,M) $. We will also use the notation $%
        	A^n_{0:i}\equiv A^n_i $ and, by convention, we define $A^n_{i:j}\equiv 1 $
        	if $j=i-1 ,i$. Therefore, we have 
        	\begin{align*} 
        		(\bar{M}^{n}_i,E^{n}_i):=\left (\prod_{j=1}^{i}\bar{m}_{j},%
        		\prod_{j=1}^{i}e_{j} \right ).
        	\end{align*}
        	
        	%
        	
        	Now, we give a first important result to obtain the derivative of the semigroup
        	of the killed diffusion. 
        	The main idea is to approximate it with the derivative of the
        	conditional expectation of a function of the approximating Markov chain $%
        	X^{n}$ in each time interval. This leads to an iterative forward formula,
        	which we call the push-forward derivative formula as it will be iterated from $ i=1 $ to $ i=n $:
        	
        	\begin{proposition}
        		(push-forward derivative formula)\label{lem:pf} Let $F_{i}:[L,\infty)%
        		\rightarrow \mathbb{R}$ be a continuous function that satisfies the boundary
        		condition $F_{i}(L)=0$. We also assume that $F_{i}$ is differentiable on $%
        		(L,\infty)$ and right differentiable at the boundary $L$. Then for any $%
        		i=1,...,n$ 
        		\begin{equation}
        			\partial _{i-1}\mathbb{E}_{i-1}\left[ F_{i}m_{i}\right] =\mathbb{E}_{i-1}%
        			\left[ \left( \partial _{i}{F}_{i}e_{i}+F_{i}h_{i}\right) \bar{m}_{i}\right]
        			.  \label{recurrenceformula}
        		\end{equation}%
        		In particular, for ${\tau }_{n}:=\inf \{s>0;X_{s}^{n,x}=L\}$, (\ref%
        		{recurrenceformula}) implies that for $ f\in C^1_b $
        		\begin{equation}
        			\label{fprime}
        			\begin{aligned}
        				\partial _{x}\mathbb{E}\left[ f\left( X_{T\wedge \tau _{n}}^{n,x}\right) %
        				\right] =&\partial _{x}\mathbb{E}\left[ f\left( X_{n}^{n,x}\right) M^n_{n}%
        				\right] \\=&\mathbb{E}\left[ {f}^{\prime }\left( X_{n}^{n,x}\right) E_{n}^{n}%
        				\bar{M}_{n}^{n}\right] +\sum_{i=1}^{n}\mathbb{E}\left[ f_{i}\left(
        				X_{i}^{n,x}\right) E_{i-1}^{n}h_{i}\bar{M}_{i}^{n}\right] .  
        			\end{aligned}
        		\end{equation}
        	\end{proposition}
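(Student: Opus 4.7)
The plan is to establish the one-step recurrence \eqref{recurrenceformula} by an explicit computation of Gaussian integrals combined with an integration by parts, and then to obtain the iterated identity \eqref{fprime} by applying the one-step formula $n$ times while exploiting the $\mathcal{F}_i$-measurability of $e_i$ and $\bar m_i$.

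For the one-step formula, I would first condition on $X_{i-1}=x$ and use the independence of $U_i$ from $(X_{i-1},X_i)$ to write
\[
\mathbb{E}_{i-1,x}[F_i\,m_i]\;=\;\int_L^\infty F_i(y)\,\bigl(1-p_i(x,y)\bigr)\,g_i\!\left(y-x-b(x)\Delta\right)dy,
\]
since $\mathbb{P}(U_i>p_i\mid X_{i-1}=x,X_i=y)=1-p_i(x,y)$ and $g_i$ is the one-step Euler transition density. Next I would differentiate in $x$ under the integral sign, splitting the result into the piece coming from $\partial_x p_i$ (which is proportional to $(y-L)\,p_i\,g_i$ together with a term involving $\partial_x(b/a)$) and the piece coming from $\partial_x g_i$, which contains contributions from $b'(x)$, $\sigma'(x)$ and $a'(x)$. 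Using the elementary identities $\partial_y g_i=-\tfrac{y-x-b\Delta}{a\Delta}\,g_i$ and the analogous formula for $\partial_y^2 g_i$, I would rearrange the integrand into the form $-\partial_y\!\left[\mathcal{G}(x,y)(1+p_i)g_i\right]+\mathcal{H}(x,y)(1+p_i)g_i$. The sign flip transforming the killed kernel $(1-p_i)g_i$ into the reflected kernel $(1+p_i)g_i$ is the algebraic reflection of the density identity \eqref{eq:dens} and is what produces $\bar m_i$ in the final formula.

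An integration by parts in $y$ then pushes $\partial_y$ onto $F_i$: the boundary term at $y=L$ vanishes because $F_i(L)=0$ (this is precisely where the boundary condition is essential), and the term at $+\infty$ vanishes by Gaussian decay. Finally I would match $\mathcal{G}$ with $e_i$ and $\mathcal{H}$ with $h_i$ after re-expressing the resulting integral against $(1+p_i)g_i$ as a conditional expectation of the form $\mathbb{E}_{i-1}[\,\cdot\,\bar m_i]$, writing the $1_{(U_i>p_i)}$ and $1_{(U_i\le p_i)}$ factors of $e_i$ and $h_i$ as the two contributions coming from the decomposition $\bar m_i=1+1_{(U_i\le p_i)}$ and converting $\Delta_iW$ and $X_{i-1}^{L,\sigma}$ factors via the identities $\mathbb{E}_{i-1,x}[\Delta_iW\,\varphi(X_i)]=\mathbb{E}_{i-1,x}[(X_i-x-b\Delta)\varphi(X_i)/\sigma_{i-1}]$ and similar moment identities associated with $p_i$.

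For the iteration, observe that $f_n=f$, $\partial_n f_n=f'$, and that $e_i,h_i,\bar m_i$ are $\mathcal{F}_i$-measurable, while $f_i=\mathbb{E}_i[f_{i+1}m_{i+1}]$ satisfies $f_i(L)=0$. Starting from $\partial_x\mathbb{E}[f_1 m_1]$ and applying \eqref{recurrenceformula} at step $i=1$, then \eqref{recurrenceformula} to $\partial_1 f_1$ at step $i=2$, and so on, each iteration either propagates the derivative contribution (multiplying by $e_i\bar m_i$) or produces a residual $\mathbb{E}[f_i\,E_{i-1}^n\,h_i\,\bar M_i^n]$ term that no longer needs to be differentiated. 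After $n$ such steps the surviving derivative term is $\mathbb{E}[f'(X_n)E_n^n\bar M_n^n]$ and the accumulated residual contributions form the sum in \eqref{fprime}. I expect the main obstacle to be Step 2: carrying out the algebraic manipulation so that the terms obtained from $\partial_x p_i$ and $\partial_x g_i$ collapse into the precise combination $-\partial_y[e_i\cdot]+h_i\cdot$ against the reflected kernel, in particular ensuring that the $a'(x)$ contribution (which would produce a second-order $\partial_y^2$ term) cancels correctly against the $\sigma'(x)/\sigma$ contribution so as to leave only the first-order structure visible in the definition of $e_i$.
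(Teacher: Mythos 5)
Your strategy is correct and would yield the proposition, but your execution differs from the paper's in a way worth noting. The paper never differentiates the transition density: it works pathwise, writing $\partial_{i-1}p_i=-2p_iA_i$ and $\partial_{i-1}f(X_i)=f'(X_i)\partial_{i-1}X_i$ via the chain rule, and then uses a Gaussian integration by parts \emph{in the Brownian increment} $\Delta_iW$ (the identity $\mathbb{E}_{i-1}[g(\Delta_iW)\Delta_iW/\Delta]=\mathbb{E}_{i-1}[g'(\Delta_iW)]$, Lemma \ref{lem:24}) only to dispose of the single term of $A_i$ containing $\Delta_iW/\Delta$; the factor $2$ that turns $1_{(U_i\le p_i)}$ into a contribution to $\bar m_i=1_{(X_i>L)}(1+1_{(U_i\le p_i)})$ comes out automatically, and the boundary condition $F_i(L)=0$ enters through the observation that $\partial_x(F(x)1_{(x>L)})=F'(x)1_{(x>L)}$ has no Dirac mass at $L$. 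Your kernel formulation — differentiate $\int_L^\infty F_i(y)(1-p_i(x,y))g_i(y-x-b\Delta)\,dy$ in $x$ and integrate by parts in $y$ — is the same computation after the change of variables $y=x+b\Delta+\sigma w$, and your identification of the sign flip $(1-p)\mapsto(1+p)$ and of the vanishing boundary term at $y=L$ as the two essential mechanisms is exactly right. What the paper's route buys is precisely the step you flag as your main obstacle: because the derivative acts on $f(X_i)$ through the flow rather than on the Gaussian kernel, no $(y-x-b\Delta)^2g_i/(a\Delta)$ (i.e.\ $\partial_y^2$-type) terms ever appear, so the $a'/a^2$ versus $\sigma'/\sigma$ recombination you would have to carry out by hand is never needed. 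Your iteration step (propagating $\partial_if_i$ forward through $e_i\bar m_i$ while peeling off the residuals $\mathbb{E}[f_iE^n_{i-1}h_i\bar M^n_i]$, using the $\mathcal F_i$-measurability of $e_i,\bar m_i$ and the tower property) coincides with the paper's backward induction. The only loose end in your write-up is that you state the plan for Step 2 without verifying the cancellation; since that is the whole algebraic content of the one-step formula, a complete proof along your lines must actually carry it out, whereas the pathwise formulation sidesteps it.
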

        	
        	\noindent The proof of this result is included in Section \ref{sec:dr}.%
        	
        	\begin{remark}
        		$\left.\right.$
        		\begin{enumerate}
        			\item The above formula holds true also for $x=L$ if we interpret the derivatives as their limits from the right of $ x=L $. 
        			
        			
        			
        			\item The study of the second term in \eqref{fprime} is the most delicate
        			part of the analysis as explained in Section \ref{sec:sc}. This term will lead to the appearance of the random
        			time $\rho_T $ in \eqref{eq:0}.
        		\end{enumerate}
        	\end{remark}
        	
        	{\color{red} }
        	
        	In Lemma \ref{boundedderivative}, we show that the terms on the right hand
        	side of \eqref{fprime} are bounded uniformly in $n$ and, therefore so is $%
        	\partial _{x}\mathbb{E}\left[ f\left( X_{T\wedge \tau _{n}}^{n,x}\right) %
        	\right]$. A result similar to Proposition \ref{lem:pf} is valid for second
        	derivative of $\mathbb{E}_{i-1}\left[ f_{i}m_{i}\right]$ which we use in
        	Section \ref{app:6.2a} to show that the sequence of first derivatives is
        	equi-continuous, and therefore it converges uniformly. 
        	
        	\section{The Girsanov change of measure and preparatory lemmas}
        	
        	\label{sec:4} The subsequent analysis of the identity (\ref{fprime}) starts with an application of Girsanov's transformation of measure to the original space $(\Omega ,%
        	\mathcal{F},\mathbb{P})$ so that it removes the drift in the Euler approximation\ (%
        	\ref{eq:defbX}) as explained in Section \ref{sec:sc}. This is necessary in order to be able to use $\bar{M}^n $
        	as characterizing a change of measure for reflecting processes.\footnote{In fact, \eqref{eq:refl} is only valid for Brownian motion and it is not valid for Brownian motion with drift.} We start by introducing  the (discrete) exponential martingale corresponding to the
        	increments 
        	\begin{equation*}
        		Z_{j}:=\Delta _{j}W+\sigma _{j-1}^{-1}b_{j-1}\Delta ,
        	\end{equation*}%
        	defined as 
        	\begin{equation}
        		\mathcal{\mathcal{K}}_{i}^{n}=\exp \left( \sum_{j=1}^{i}{\kappa }%
        		_{j}^{n}\right) ,\ \ \ \ \ \ \ \ {\kappa }_{j}^{n}=\frac{b_{j-1}}{\sigma
        			_{j-1}}Z_{j}-\frac{1}{2}\left( \frac{b_{j-1}}{\sigma _{j-1}}\right)
        		^{2}\Delta.  \label{Kn}
        	\end{equation}%
        	With these definitions, we let $\widetilde{\mathbb{P}}\equiv \widetilde{\mathbb{P}}^n$ to be given by 
        	\begin{equation*}
        		\left. \frac{d\widetilde{\mathbb{P}}^n}{d\mathbb{P}}\right\vert _{\mathcal{F}_{{i}}}=(\mathcal{K}%
        		_{i}^{n})^{-1}.
        	\end{equation*}%
        	Then, under $\widetilde{\mathbb{P}}^n$, the Euler approximation $X^{n,x}$ satisfies 
        	\begin{equation}
        		X_{i}^{n,x}=X_{i-1}^{n,x}+\sigma _{i-1}{Z}_{i},  \label{eq:defbXunderPtilde}
        	\end{equation}%
        	and, under $\widetilde{\mathbb{P}}^n$, the random variables ${Z}_{i}$, $i=1,...,n$ are
        	i.i.d. random variables with common distribution $N\left( 0,\Delta \right) $. The terms appearing in the identity \eqref{fprime} now become 
        	\begin{align}
        		\mathbb{E}\left[ {f}^{\prime }\left( X_{n}^{n,x}\right) E_{n}^{n}\bar{M}%
        		_{n}^{n}\right] =&\mathbb{\widetilde{E}}\left[ {f}^{\prime }\left(
        		X_{n}^{n,x}\right) {\mathcal{\mathcal{K}}}_{n}^{n}E_{n}^{n}\bar{M}_{n}^{n}%
        		\right]  \label{fprimetilde1} \\
        		\mathbb{E}\left[ f_{i}\left( X_{i}^{n,x}\right) E_{i-1}^{n}h_{i}\bar{M}^n_{i}%
        		\right] =&\mathbb{\widetilde{E}}\left[ f_{i}\left( X_{i}^{n,x}\right) {\mathcal{%
        				\mathcal{K}}}_{i}^{n}E_{i-1}^{n}h_{i}\bar{M}_{i}^{n}\right] .
        		\label{fprmetilde2}
        	\end{align}
        	Here $ \mathbb{\widetilde{E}} $ denotes the expectation under $\widetilde{\mathbb{P}}^n$ and from now on we work under this measure and therefore the
        	Euler approximation $X^{n,x}$ satisfies (\ref{eq:defbXunderPtilde}).
        	
        	\begin{remark}
        		\label{rem:3}
        		Of course, under $\widetilde{\mathbb{P}}^n$, the laws of the random variables $m_{i},\bar{m}%
        		_{i},e_{i},$ will change as all of them depend on $\Delta _{i}W$. In
        		particular, for $\bar{b}=b^{\prime }-\sigma^{\prime }\sigma^{-1}b $, we can rewrite $ e_i $ using $ Z_i $ as 
        		\begin{align}
        			e_{i} =&1+\bar{b}_{i-1}\Delta 1_{\left( U_{i}>p_{i}\right) }+\sigma
        			_{i-1}^{\prime }Z_{i}-\sigma _{i-1}^{\prime }1_{\left( U_{i}\leq
        				p_{i}\right) }\left( Z_{i}+\sqrt{\Delta }X_{i-1}^{L,\sigma }\right) 
        			\notag \\
        			=&1+\bar{b}_{i-1}\Delta 1_{\left( U_{i}>p_{i}\right) }+\sigma
        			_{i-1}^{\prime }\left( Z_{i}-\widetilde{\mathbb{E}}_{i-1}[Z_{i}\bar{m}%
        			_{i}]\right) +\gamma _{i}  \label{eq:einZZ} \\
        			\gamma _{i} :=&\sigma _{i-1}^{\prime }\left( \widetilde{\mathbb{E}}_{i-1}[Z_{i}%
        			\bar{m}_{i}]-1_{\left( U_{i}\leq p_{i}\right) }\left( Z_{i}+\sqrt{\Delta }%
        			X_{i-1}^{L,\sigma }\right) \right). \label{eq:gammainZ}
        		\end{align}
        	\end{remark}
        	
        	Next we define the measure $ {\mathbb{Q}}^n$ such that $\left. \frac{d{\mathbb{Q}}^n}{d\widetilde{\mathbb{P}}^n}\right\vert _{\mathcal{F}_{{n}}}=\bar{M}^n_{n}$. Note that
        	the process $\bar{M}^n_{n}$ is a discrete time positive martingale with mean $ 1 $ (see Lemma \ref{lem:333}).  
        	Under the measure ${\mathbb{Q}}^n$, the (discrete) Euler
        	approximation $X^{n}_i$ has the same $ \mathcal{F}_{i-1} $-conditional law as the reflected process in that interval. In fact, for any bounded measurable function $ f$, one has
        	\begin{align*}
        		\mathbb{E}_{i-1}[f(X^n_i)\bar{m}_i]=\int_L^\infty f(y)\left(g_i(y-X_{i-1}^n)+g_i(y+X_{i-1}^n+2L)\right)dy.
        	\end{align*}
        	
        	Therefore
        	for any measurable function $F:[L,\infty )\rightarrow \mathbb{R}$, for which
        	the two sides of the identity below are well defined, we have
        	\begin{equation}
        		\widetilde{\mathbb{E}}\left[ F(X_{i})\bar{M}^n_{i}\right] =\mathbb{E}\left[
        		F\left( \mathcal{X}_{t_{i}}\right) \right] ,\ \ \ i=0,...,n.
        		\label{eq:reft}
        	\end{equation}%
        	where the process $(\mathcal{X},\Lambda )$ is the solution of the reflected
        	Euler scheme as defined in Section \ref{app:res}. 

        	In order to understand the structure of the limit procedure, we define the
        	sequence of approximation process $ E^n   $ as well as its driving processes $R^{n}$ and $\Gamma ^{n}$ as follows
        	\begin{align}
        		(R_{t}^{n},\Gamma _{t}^{n},E_t^n) &:=\sum_{i=0}^{n-1}1_{[0,t_{i+1})}(t)
        		\left(\left( Z_{i}-\widetilde{\mathbb{E}%
        		}_{i-1}[Z_{i}\bar{m}_{i}]\right), \gamma _{i},E^n_{i}
        		\right). \label{rn} 
        	\end{align}
        	Finally, let us introduce the process which will eventually converge to the local
        	time process $B$ appearing in \eqref{y1}, respectively in \eqref{eq:rep} 
        	\begin{equation}
        		B_{t}^{n}:=\sum_{i=0}^{n-1}1_{[0,t_{i+1})}(t)\sigma _{i-1}\widetilde{\mathbb{E}}%
        		_{i-1}[Z_{i}\bar{m}_{i}].  \label{Bn}
        	\end{equation}%
        	In the particular case $ t=T $, we let $ (R_{T}^{n},\Gamma _{T}^{n},B_{T}^{n}) :=(R_{T-}^{n},\Gamma _{T-}^{n},B_{T-}^{n}) $. From (\ref{eq:defbXunderPtilde}), (\ref{eq:einZZ}), (\ref{eq:gammainZ}) and
        	(\ref{Kn}) we deduce that 
        	\begin{align}
        		X_{i}^{n} =&X_{i-1}^{n}+\sigma
        		_{i-1}(R_{t_{i}}^{n}-R_{t_{i-1}}^{n})+(B_{t_{i}}^{n}-B_{t_{i-1}}^{n})
        		\notag \\
        		E_{i}^{n} =&E_{i-1}^{n}\left( 1+\bar{b}_{i-1}\Delta
        		1_{(U_{i}>p_{i})}+\sigma _{i-1}^{\prime
        		}(R_{t_{i}}^{n}-R_{t_{i-1}}^{n})+(\Gamma _{t_{i}}^{n}-\Gamma
        		_{t_{i-1}}^{n})\right)  \label{eq:defbEunderPtilde'} \\
        		\mathcal{\mathcal{K}}_{i}^{n} =&\exp \left( \sum_{j=1}^{i}\left( \frac{%
        			b_{j-1}}{\sigma _{j-1}}(R_{t_{j}}^{n}-R_{t_{j-1}}^{n})+\frac{b_{j-1}}{\sigma
        			_{j-1}^{2}}(B_{t_{i}}^{n}-B_{t_{i-1}}^{n})-\frac{1}{2}\left( \frac{b_{j-1}}{%
        			\sigma _{j-1}}\right) ^{2}\Delta \right) \right) .
        		\label{eq:defbKunderPtilde'}
        	\end{align}

        	We introduce now five lemmas to help us towards obtaining the limit of the
        	sequence $(R^{n},B^{n},\Gamma ^{n})$ and then, implicitly, the limit of the
        	sequence $ (X^n,E^n) $. Their proofs are provided in Section %
        	\ref{sec:gs}. Recall the notation in \eqref{eq:not} which is used in the
        	following result. In the statements, we use the notation:
        	\begin{align*}
        		\vartheta _{i-1}:=& \sigma _{i-1}\sqrt{\Delta }%
        		g_{i}(X_{i-1}^{L})-X_{i-1}^{L,\sigma }\bar{\Phi}\left( X_{i-1}^{L,\sigma
        		}\right) .
        	\end{align*}
        	
        	
        	\begin{lemma}
        		\label{lem:333} Under $ \mathbb{Q}^n $ and for 
        		any $k\geq 0$, we have that 
        		\begin{equation}
        			\widetilde{\mathbb{E}}_{i-1}\left[ {Z}_{i}^{k}\bar{m}_{i}\right] =\frac{\Delta ^{%
        					\frac{k}{2}}}{\sqrt{2\pi }}\left( \int_{-X_{i-1}^{L,\sigma }}^{\infty
        			}x_{i}^{k}e^{-\frac{x_{i}^{2}}{2}}dx_{i}+\int_{X_{i-1}^{L,\sigma }}^{\infty
        			}\left( x_{i}-2X_{i-1}^{L,\sigma }\right) ^{k}e^{-\frac{x_{i}^{2}}{2}%
        			}dx_{i}\right). \label{ziid0}
        		\end{equation}
        		
        		Furthermore, the following moments estimates hold
        		\begin{align}
        			\widetilde{\mathbb{E}}_{i-1}\left[ {Z}_{i}\bar{m}_{i}\right] =&2\sqrt{\Delta}\vartheta _{i-1} ,\quad \widetilde{\mathbb{E}}_{i-1}\left[ {Z}_{i}^{2}\bar{m}_{i}\right] =\Delta
        			-4\Delta X_{i-1}^{L,\sigma }\vartheta _{i-1},\label{ziid} \\
        			0\leq \vartheta _{i-1}& \leq \sigma _{i-1}\sqrt{\Delta }g_{i}(X_{i-1}^{L}).\notag
        		\end{align}
        		
        		Finally, for any $m,p,k\in \mathbb{N}$, there exists a constant $%
        		c_{k}>0$ independent of $n$ such that 
        		\begin{align}
        			\widetilde{\mathbb{E}}_{i-1}\left[ {Z}_{i}^{2k}\bar{m}_{i}\right] \leq &
        			c_{k}\Delta ^{k},  \label{ziid3} \\
        			\widetilde{\mathbb{E}}_{i-1}\left[ |{Z}_{i}|^{k}(X_{i-1}-L)^{m}(X_{i}-L)^{p}{\
        				1_{(X_i>L)} }1_{(U_{i}\leq p_{i})}\right] \leq & c_{k,m,p}\Delta ^{\frac{%
        					k+m+p}{2}}\exp \left( -\frac{(X_{i-1}-L)^{2}}{2a_{i-1}\Delta }\right) .\notag
        		\end{align}
        		
        		In particular, \footnote{%
        			Here and everywhere else $\bar{\Phi}$ is defined to be $\bar{\Phi}=1-\Phi $,
        			where $\Phi $ is the cumulative distribution function of the standard normal
        			distribution.} 
        		\begin{equation}
        			\begin{array}{lll}
        				\widetilde{\mathbb{E}}_{i-1}[m_{i}] & =\bar{\Phi}(-X_{i-1}^{L,\sigma })-\bar{\Phi%
        				}(X_{i-1}^{L,\sigma }) & <1 \\ 
        				\widetilde{\mathbb{E}}_{i-1}[\bar{m}_{i}] & =\bar{\Phi}(-X_{i-1}^{L,\sigma })+%
        				\bar{\Phi}(X_{i-1}^{L,\sigma }) & =1.%
        			\end{array}
        			\label{lem:3}
        		\end{equation}
        	\end{lemma}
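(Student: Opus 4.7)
The plan is to exploit the fact that, under $\widetilde{\mathbb{P}}^n$, the increment $Z_i$ is $N(0,\Delta)$ independent of $\mathcal{F}_{i-1}$, and $U_i\sim U([0,1])$ is independent of everything else, so every quantity in the statement reduces to an explicit one-dimensional Gaussian integral. Conditioning on $(\mathcal{F}_{i-1},Z_i)$ gives $\widetilde{\mathbb{E}}_{i-1}[1_{(U_i\le p_i)}\mid Z_i]=p_i$ and hence
\[
\widetilde{\mathbb{E}}_{i-1}[Z_i^k \bar m_i]=\widetilde{\mathbb{E}}_{i-1}\!\left[Z_i^k 1_{(X_i>L)}\right]+\widetilde{\mathbb{E}}_{i-1}\!\left[Z_i^k 1_{(X_i>L)}\,p_i\right].
\]
Rescaling $z=\sqrt{\Delta}\,x$ in the first summand and using $\{X_i>L\}=\{x>-X_{i-1}^{L,\sigma}\}$ produces the first integral in \eqref{ziid0}. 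For the second, I complete the square,
\[
-\frac{z^2}{2\Delta}-\frac{2(X_{i-1}-L)(X_i-L)}{a_{i-1}\Delta}=-\frac{1}{2\Delta}\!\left(z+\frac{2(X_{i-1}-L)}{\sigma_{i-1}}\right)^{\!2},
\]
then substitute $u=z+2(X_{i-1}-L)/\sigma_{i-1}$ followed by $u=\sqrt{\Delta}\,x$; the integrand becomes $(x-2X_{i-1}^{L,\sigma})^k e^{-x^2/2}/\sqrt{2\pi}$ over $\{x>X_{i-1}^{L,\sigma}\}$, giving the second integral in \eqref{ziid0}.

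The explicit values \eqref{ziid} follow by setting $k=1,2$ in \eqref{ziid0} and using $\int_a^\infty xe^{-x^2/2}dx=e^{-a^2/2}$, $\int_a^\infty e^{-x^2/2}dx=\sqrt{2\pi}\,\bar\Phi(a)$ and (one integration by parts) $\int_a^\infty x^2 e^{-x^2/2}dx=ae^{-a^2/2}+\sqrt{2\pi}\,\bar\Phi(a)$, together with the identification $\sigma_{i-1}\sqrt{\Delta}\,g_i(X_{i-1}^L)=(2\pi)^{-1/2}\exp(-(X_{i-1}^{L,\sigma})^2/2)$ to recognise $\vartheta_{i-1}$. The bounds on $\vartheta_{i-1}$ then follow immediately: the upper bound from $X_{i-1}^{L,\sigma}\bar\Phi(X_{i-1}^{L,\sigma})\ge 0$, and the lower bound $\vartheta_{i-1}\ge 0$ from Mills' inequality $(2\pi)^{-1/2}e^{-a^2/2}\ge a\bar\Phi(a)$, valid for $a\ge 0$. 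Setting $k=0$ in \eqref{ziid0} gives \eqref{lem:3}: for $\bar m_i$ the two terms sum to $\bar\Phi(-X_{i-1}^{L,\sigma})+\bar\Phi(X_{i-1}^{L,\sigma})=1$, while $\widetilde{\mathbb{E}}_{i-1}[m_i]=\widetilde{\mathbb{P}}_{i-1}(X_i>L)-\widetilde{\mathbb{E}}_{i-1}[1_{(X_i>L)}p_i]=\bar\Phi(-X_{i-1}^{L,\sigma})-\bar\Phi(X_{i-1}^{L,\sigma})<1$. Finally, estimate \eqref{ziid3} is immediate from $\bar m_i\le 2$ and the standard Gaussian moments $\widetilde{\mathbb{E}}[Z_i^{2k}]=(2k-1)!!\,\Delta^k$.

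The delicate part is the last moment bound. Performing the same completing-the-square computation on the $1_{(U_i\le p_i)}$ piece alone yields
\[
\widetilde{\mathbb{E}}_{i-1}\!\left[|Z_i|^k(X_{i-1}-L)^m(X_i-L)^p 1_{(X_i>L)}1_{(U_i\le p_i)}\right]=\sigma_{i-1}^{m+p}\,\Delta^{(k+m+p)/2}\,a^m J(a),
\]
with $a:=X_{i-1}^{L,\sigma}$ and $J(a):=(2\pi)^{-1/2}\int_a^\infty |v-2a|^k(v-a)^p e^{-v^2/2}\,dv$. It remains to show $a^m J(a)\le c_{k,m,p}\,e^{-a^2/2}$: for bounded $a$ this is direct from continuity; for large $a$, the bound $|v-2a|^k\le 2^{k-1}(v^k+(2a)^k)$ reduces matters to $\int_a^\infty v^j e^{-v^2/2}dv$, which iterated integration by parts bounds by $C\,a^{j-1}e^{-a^2/2}$, leaving a polynomial prefactor in $a$ that is absorbed into the Gaussian via $a^N e^{-a^2/4}\le C_N$ (so the constant in the exponent can be harmlessly relaxed if necessary). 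This last step is the main obstacle, because it must be uniform in both the spatial variable $a$ and the mesh $\Delta$ and must correctly absorb the polynomial factors $(X_{i-1}-L)^m(X_i-L)^p$ against the boundary-crossing weight $p_i$; all other assertions are routine once the density identity \eqref{ziid0} is in hand.
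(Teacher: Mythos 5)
Your proposal follows the paper's own route: under $\widetilde{\mathbb{P}}^n$ the increment $Z_i$ is $N(0,\Delta)$ and independent of $\mathcal{F}_{i-1}$, one integrates out $U_i$ to replace $1_{(U_i\le p_i)}$ by $p_i$, and a rescaling plus completing the square in the exponent of $p_i$ yields \eqref{ziid0}; the identities \eqref{ziid} and \eqref{lem:3} and the sign of $\vartheta_{i-1}$ then follow from the same elementary Gaussian integrals and the inequality $a\bar{\Phi}(a)\le(2\pi)^{-1/2}e^{-a^2/2}$ that the paper uses. Two points of divergence are worth recording. First, for \eqref{ziid3} you use $\bar{m}_i\le 2$ together with the raw Gaussian moments, which is cleaner than (and equivalent to) the paper's binomial-expansion bound. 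Second, for the final unnumbered estimate the paper only writes that the ``other estimates follow in a similar manner,'' whereas you carry out the computation and correctly observe that one is left with $a^m J(a)$, $a=X_{i-1}^{L,\sigma}$, and that the residual polynomial factor in $a$ must be absorbed by weakening the constant in the Gaussian exponent. This caveat is not cosmetic: substituting $v=a+w$ gives
\[
J(a)=\frac{e^{-a^2/2}}{\sqrt{2\pi}}\int_0^\infty |w-a|^k w^p e^{-aw-\frac{w^2}{2}}\,dw\ \ge\ c_{k,p}\,a^{k-p-1}e^{-a^2/2}\quad\text{for }a\ge 2,
\]
so $a^mJ(a)/e^{-a^2/2}$ grows like $a^{m+k-p-1}$ and the displayed inequality fails as literally written whenever $m+k>p+1$; it only holds with $2a_{i-1}\Delta$ replaced by $Ca_{i-1}\Delta$ for some larger $C$ in the exponent. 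Since every downstream use (in particular Lemma \ref{lem:essb}) requires only Gaussian decay with \emph{some} constant, the relaxed form you prove is exactly what is needed, and on this point your argument is more careful than the paper's.
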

        Note that the above result implies that the process $\bar{M}^n$ is a martingale,
        whilst the process $M^n$ is a supermartingale.
        	Furthermore, the above result is the base in order to obtain the following estimates which will be used towards the justification of the semimartingale character of the driving processes $R^{n}$ and $\Gamma ^{n}$.
        	\begin{lemma}
        		\label{cumulativecontrol}There exists a
        		constant $c=c\left( T\right) $ independent of $n$ and $ k $ such that 
        		\begin{equation}
        			\widetilde{\mathbb{E}}\left[ \left( \sum_{i=1}^{n}\widetilde{\mathbb{E}}_{i-1}\left[ 
        			{Z}_{i}\bar{m}_{i}\right] \right) \bar{M}^n_{n}\right] \leq c,~~~~\widetilde{%
        				\mathbb{E}}\left[ \max_{j\leq n-k-1}\left( \sum_{i=j}^{j+k}\widetilde{\mathbb{E}}%
        			_{i-1}\left[ {Z}_{i}\bar{m}_{i}\right] \right) \bar{M}^n_{n}\right] \leq c%
        			\sqrt{\left( k+1\right) \Delta }  \label{control1}
        		\end{equation}
        		In addition, the following bounds hold $\mathbb{Q}^n$-almost surely\footnote{%
        			Due to the absolute continuity of ${\mathbb{Q}}^n$ with respect to $\mathbb{P}$%
        			, bounds (\ref{control2}) and (\ref{control3}) also hold ${\mathbb{Q}}^n$%
        			-almost surely.} 
        		\begin{align}
        			\sum_{i=1}^{n}\widetilde{\mathbb{E}}_{i-1}\left[ {Z}_{i}^{2}\bar{m}_{i}\right]
        			\leq &c,\quad\quad \max_{j\leq n-k-1}\sum_{i=j}^{j+k}\widetilde{\mathbb{E}}_{i-1}\left[
        			{Z}_{i}^{2}\bar{m}_{i}\right] \leq c\left( k+1\right) \Delta .
        			\label{control2} \\
        			\sum_{i=1}^{n}\widetilde{\mathbb{E}}_{i-1}\left[ {Z}_{i}^{4}\bar{m}_{i}\right]
        			\leq &c\Delta, \quad\quad\max_{j\leq n-k-1}\sum_{i=j}^{j+k}\widetilde{\mathbb{E}}%
        			_{i-1}\left[ {Z}_{i}^{4}\bar{m}_{i}\right] \leq c\left( k+1\right) \Delta
        			^{2}.  \label{control3}
        		\end{align}
        	\end{lemma}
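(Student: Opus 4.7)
The estimates split into two groups. The a.s.\ bounds \eqref{control2} and \eqref{control3} are immediate from the conditional moment estimate \eqref{ziid3} in Lemma~\ref{lem:333}: applied with $k=1$ and $k=2$ it gives $\widetilde{\mathbb{E}}_{i-1}[Z_i^{2}\bar m_i]\le c\Delta$ and $\widetilde{\mathbb{E}}_{i-1}[Z_i^{4}\bar m_i]\le c\Delta^{2}$ pathwise in $\omega$. Summing over the $n$ time steps gives the global bounds (with constants $cT$ and $cT\Delta$), while restricting to any $k+1$ consecutive indices gives the max-bounds $c(k+1)\Delta$ and $c(k+1)\Delta^{2}$. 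These bounds hold pathwise and hence also $\widetilde{\mathbb{P}}$-a.s., $\mathbb{Q}^n$-a.s.\ and $\mathbb{P}$-a.s.

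For the first bound in \eqref{control1} the plan is to use the exact identity $\widetilde{\mathbb{E}}_{i-1}[Z_i\bar m_i]=2\sqrt{\Delta}\,\vartheta_{i-1}$ from \eqref{ziid}, the martingale property $\widetilde{\mathbb{E}}_{i-1}[\bar M^n_n]=\bar M^n_{i-1}$ (consequence of $\widetilde{\mathbb{E}}_{i-1}[\bar m_i]=1$ in \eqref{lem:3}), and identity \eqref{eq:reft}. By the tower property,
\[
\widetilde{\mathbb{E}}\bigl[\widetilde{\mathbb{E}}_{i-1}[Z_i\bar m_i]\,\bar M^n_n\bigr]
 \;=\; 2\sqrt{\Delta}\,\widetilde{\mathbb{E}}[\vartheta_{i-1}\bar M^n_{i-1}]
 \;=\; 2\sqrt{\Delta}\,\mathbb{E}[\vartheta(\mathcal{X}_{t_{i-1}})].
\]
The function $\vartheta$ satisfies $0\le\vartheta(x)\le\frac{1}{\sqrt{2\pi}}\exp\bigl(-(x-L)^{2}/(2\sigma(x)^{2}\Delta)\bigr)$, i.e.\ it is a Gaussian bump of width $O(\sqrt{\Delta})$ concentrated at the boundary. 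Combined with the standard heat-kernel upper bound for the density of the reflected Euler scheme $\mathcal{X}$ (uniform in $n$, coming from the ellipticity of $\sigma$ and the boundedness of $b$), this yields $\mathbb{E}[\vartheta(\mathcal{X}_{t_{i-1}})]\le C\sqrt{\Delta}$, so that the full sum is bounded by $\sum_{i=1}^{n}2C\Delta=2CT$.

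The maximum estimate is the main technical point. Conceptually, $B^n_t=\sum_{i\le i(t)}\sigma_{i-1}\widetilde{\mathbb{E}}_{i-1}[Z_i\bar m_i]$ is a discrete proxy for the local time at $L$ of the reflected Euler scheme, and the quantity to be bounded is (essentially) the expected maximum of its increments over windows of length $(k+1)\Delta$. The plan is to derive a Kolmogorov-type moment estimate
\[
\widetilde{\mathbb{E}}\Bigl[\,\bigl|B^n_{t_{j+k+1}}-B^n_{t_j}\bigr|^{p}\,\bar M^n_n\Bigr]\;\le\; c_p\bigl((k+1)\Delta\bigr)^{p/2}
\]
for some $p>2$, by iterating the fixed-window argument of the previous paragraph using the elementary inequality $\vartheta^{2}\le\frac{1}{\sqrt{2\pi}}\vartheta$ together with the heat-kernel upper bound for $\mathcal{X}$, and then applying a Garsia--Rodemich--Rumsey or maximal-type inequality to pass from the fixed-window estimate to control of the maximum.

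\textbf{Main obstacle.} The trivial combinations of the natural estimates --- the global $O(1)$ bound and the fixed-$j$ bound $O((k+1)\Delta)$, or the pathwise bound $\vartheta\le 1/\sqrt{2\pi}$ giving $O((k+1)\sqrt{\Delta})$ --- do not interpolate to the intermediate rate $O(\sqrt{(k+1)\Delta})$. The hard part will therefore be to extract the improved higher-moment estimate above: one must leverage the Gaussian concentration of $\vartheta$ near the boundary, together with uniform-in-$n$ heat-kernel bounds for the reflected Euler scheme, to produce the chaining input that yields the correct $\sqrt{h}$ modulus for the discrete local-time proxy $B^n$.
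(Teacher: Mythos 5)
Your treatment of \eqref{control2} and \eqref{control3} is exactly the paper's: they are immediate consequences of the conditional bound \eqref{ziid3} with $k=1,2$, summed over a window, and nothing more is needed. The first inequality in \eqref{control1} is also fine in substance, although the paper does not invoke a heat-kernel upper bound for the density of the reflected Euler scheme (no such bound is established anywhere in the paper, and for the \emph{reflected} scheme it is not as ``standard'' as you suggest); instead the bound $\mathbb{E}[\vartheta(\mathcal{X}_{t_{i-1}})]\le C\sqrt{\Delta}$ comes out of Lemma \ref{lem:essb}, whose proof uses only the occupation-time formula and elementary local-time estimates.

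The genuine gap is in the maximum estimate, which you correctly single out as the hard point but then leave as a plan whose central step is neither carried out nor likely to deliver the claimed rate. A Kolmogorov/GRR strategy based on moment bounds $\widetilde{\mathbb{E}}\bigl[|B^n_{t_{j+k+1}}-B^n_{t_j}|^{p}\,\bar M^n_n\bigr]\le c_p((k+1)\Delta)^{p/2}$ controls the maximum over the $O(n)$ windows only at the price of a factor $n^{1/p}$ (union bound) or a logarithmic/H\"older-exponent loss (chaining, Kolmogorov continuity); one does not recover the clean bound $c\sqrt{(k+1)\Delta}$ this way without substantial extra work, and the moment estimate itself again presupposes conditional Gaussian bounds for the reflected scheme. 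The paper's mechanism (Lemma \ref{lem:essb}, estimate \eqref{eq:condb}) sidesteps the interpolation problem entirely: by \eqref{ziid} one has $0\le\widetilde{\mathbb{E}}_{i-1}[Z_i\bar m_i]\le 2\sigma_{i-1}\Delta g_i(X_{i-1}^L)=C\Delta^{1/2}\cdot\Delta^{1/2}F(X_{i-1}^{L,\sigma})$ with $F(x)=e^{-x^2/2}$; switching to the reflected Euler scheme via \eqref{eq:reft} and applying the occupation-time formula, the window sum $\Delta^{1/2}\sum_{i=j}^{j+k}F(X_i^{L,\sigma})$ is dominated by $\Delta^{-1/2}\int_L^\infty e^{-c(y-L)^2/\Delta}\,\bar\Lambda^y_{t_j\mapsto t_{j+k+1}}\,dy$, i.e.\ by the local time accumulated by the reflected scheme over the window. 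The Skorokhod-equation/Tanaka estimates of Lemma \ref{lem:37} then give the bound $\widetilde{\mathbb{E}}\bigl[\bar\Lambda^y_{t_j\mapsto t_{j+k+1}}\,\big|\,\widetilde{\mathcal{F}}_{t_j}\bigr]\le\widehat c\sqrt{(k+1)\Delta}$ of \eqref{moduluso} \emph{uniformly over the conditioning}, so the quantity being maximized is bounded by a deterministic constant times $\sqrt{(k+1)\Delta}$ and the maximum over $j$ costs nothing. This pathwise bound on the conditional local-time increment is the idea your proposal is missing.
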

        	
        	When dealing with remainders it will be useful to introduce the concept often
        	asymptotically negligible in expectation.
        	\begin{definition}
        		\label{def:2u}    
        		Let $ \Upsilon_i\equiv \Upsilon^n_i(x)\in \mathcal{F}_{t_i} $ be a sequence of $ L^{2q} (\Omega)$ -integrable r.v.'s,
        		we say that
        		the family of random variables $\Upsilon_i\in \mathcal{F}_i $, $i=1,...,n 
        		$ is asymptotically negligible in expectation (under the measure $ \bar{M}^n $) of order $ \Delta^p $ if for any $ q\in\mathbb{N} $ and any compact set $ K\subseteq D $, we have
        		\begin{align}
        			\label{eq:neg}
        			\sup_{x\in K}\tilde{\mathbb{E}}_{0,x}\left[\left|\sum_{i=1}^n\Upsilon_i\right |^q\bar{M}^n_n\right]\leq C\Delta^{pq}.
        		\end{align}
        		Here, as in the case for ${\mathbb{E}}_{0,x}  $, we define $ \tilde{\mathbb{E}}_{0,x}\left[\cdot \right]= \mathbb{E}_{i}[\cdot|X_i=x] $
        		As notation, we will use $ \Upsilon_i=O^E_i(\Delta^p) $ or $ O^{E,\bar{M}}_i(\Delta^p) $ in the case the Radon-Nikodym r.v. $ \bar{M}^n $ is stressed. 
        		The usual big O notation will also be used as in $\Upsilon_i=O_i(\Delta^p) $. The latter means that $ \Upsilon_i\in\mathcal{F}_i $ and $ |\Upsilon_i|\leq C\Delta^p $. We may also abuse slightly this notation using $ \Upsilon_i=O_i(Z_i) $  to mean that $ |\Upsilon_i|\leq C|Z_i| $.           
        	\end{definition}
        	
        	\begin{remark}
        		\label{rem:11} Using the same method of proof as in Lemma \ref{lem:essb}
        		together with Lemmas \ref{lem:333} and \ref{cumulativecontrol} we
        		obtain $\mathbb{Q}^n$-almost surely that for $k,m\in \mathbb{N}$, 
        		\begin{eqnarray*}
        			\tilde{\mathbb{E}}_{i-1}[  (X_{i-1}-L)^{k}1_{(U_{i}\leq p_{i})}] &=&O_{i-1}^{E}(\Delta ^{(k-1)/2}), \\
        			|Z_{i}|^{k+1} &=&O_{i}^{E}(\Delta ^{(k-1)/2}), \\
        			\tilde{\mathbb{E}}_{i-1}[   |Z_{i}|^{k+1}(X_{i-1}-L)^{m}1_{(U_{i}\leq p_{i})}] &=&O_{i-1}^{E}(\Delta
        			^{(k+m-1)/2}) \\
        			\tilde{\mathbb{E}}_{i-1}[|Z_{i}|^{k+1}(X_{i}-L)^{m}1_{(U_{i}\leq p_{i})}] &=&O_{i-1}^{E}(\Delta
        			^{(k+m-1)/2}).
        		\end{eqnarray*}%
        		Also note that $Z_{i}1_{(U_{i}\leq p_{i})}=\frac{X_{i}-L}{\sigma _{i-1}}%
        		1_{(U_{i}\leq p_{i})}-\frac{X_{i-1}-L}{\sigma _{i-1}}1_{(U_{i}\leq p_{i})}-%
        		\frac{b_{i-1}}{\sigma _{i-1}}\Delta 1_{(U_{i}\leq p_{i})}$. Therefore its order can be deduced from the above list.
        	\end{remark}
        	The next lemma gives the limit of the quadratic variation of the driving process $ R^n $.

        	\begin{lemma}
        		\label{lemmaBm} There exists a stochastic process $\theta ^{R}$ that
        		satisfies the control 
        		\begin{equation}
        			\sup_{t\in \left[ 0,T\right] }\widetilde{\mathbb{E}}\left[ \left\vert \theta
        			_{t}^{R}\right\vert \bar{M}^{n}_n\right] \leq c\sqrt{\Delta} \   \label{Bmcontrol}
        		\end{equation}%
        		where the constant $c=c\left( T\right) $ is independent of $n$ such that 
        		\begin{equation*}
        			\sum_{\left\{ i>0,t_{i}\leq t\right\} }(R_{t_{i}}^{n}-R_{t_{i-1}}^{n})^{2}= 
        			\left[ \frac{t}{\Delta }\right] \Delta +\theta _{t}^{R}.
        		\end{equation*}%
        		Similarly, we have that 
        		\begin{equation}
        			\sup_{t\in \left[ 0,T\right] }\widetilde{\mathbb{E}}\left[ \theta _{t}^{\Gamma }%
        			\bar{M}^{n}_n\right] \leq c\sqrt{\Delta },  \label{Rmcontrol}
        		\end{equation}%
        		where 
        		\begin{equation*}
        			\theta _{t}^{\Gamma }:=\sum_{\left\{ i>0,t_{i}\leq t\right\} }(\Gamma
        			_{t_{i}}^{n}-\Gamma _{t_{i-1}}^{n})^{2}.
        		\end{equation*}
        	\end{lemma}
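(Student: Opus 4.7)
The plan is to treat $\theta_t^R$ and $\theta_t^\Gamma$ in parallel via a Doob decomposition under $\mathbb{Q}^n$ together with the moment formulas of Lemma~\ref{lem:333}. Writing each increment of $R^n$ as $Z_i-\mu_i$ with $\mu_i:=\widetilde{\mathbb{E}}_{i-1}[Z_i\bar m_i]=2\sqrt{\Delta}\,\vartheta_{i-1}$ by the first identity in (\ref{ziid}), I would split $\theta_t^R=M_t^R+D_t^R$ where $M_t^R$ is a $\mathbb{Q}^n$-martingale and $D_t^R$ is the $\mathbb{Q}^n$-predictable compensator net of the reference $[t/\Delta]\Delta$. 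Lemma~\ref{lem:333} then delivers the explicit form
\[
\mathbb{E}^{\mathbb{Q}^n}_{i-1}\!\bigl[(Z_i-\mu_i)^2\bigr]=\widetilde{\mathbb{E}}_{i-1}[Z_i^2\bar m_i]-\mu_i^2=\Delta-4\Delta\bigl(X_{i-1}^{L,\sigma}\vartheta_{i-1}+\vartheta_{i-1}^2\bigr),
\]
so that $D_t^R=-4\Delta\sum_{i:t_i\le t}\bigl(X_{i-1}^{L,\sigma}\vartheta_{i-1}+\vartheta_{i-1}^2\bigr)$.

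For the martingale piece I would invoke orthogonality: $\mathbb{E}^{\mathbb{Q}^n}[(M_t^R)^2]=\sum_{i:t_i\le t}\mathbb{E}^{\mathbb{Q}^n}\!\bigl[\mathrm{Var}^{\mathbb{Q}^n}_{i-1}((Z_i-\mu_i)^2)\bigr]$, with each conditional variance bounded by $\mathbb{E}^{\mathbb{Q}^n}_{i-1}[(Z_i-\mu_i)^4]\le C\bigl(\widetilde{\mathbb{E}}_{i-1}[Z_i^4\bar m_i]+\mu_i^4\bigr)=O(\Delta^2)$ thanks to (\ref{ziid3}) and the trivial bound $\vartheta_{i-1}\le 1/\sqrt{2\pi}$. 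Summing (or invoking (\ref{control3}) in Lemma~\ref{cumulativecontrol}) gives $\mathbb{E}^{\mathbb{Q}^n}[(M_t^R)^2]=O(\Delta)$, and Jensen yields $\mathbb{E}^{\mathbb{Q}^n}[|M_t^R|]=O(\sqrt{\Delta})$. For the drift, I would first establish the elementary pointwise inequality $y\vartheta(y)\le\phi(y)$ for $y\ge 0$ (trivial on $[0,1]$ and on $(1,\infty)$ a consequence of the classical Mills bound $\bar\Phi(y)\ge\phi(y)(1/y-1/y^3)$), together with $\vartheta_{i-1}^2\le\vartheta_{i-1}/\sqrt{2\pi}\le\phi(X_{i-1}^{L,\sigma})/\sqrt{2\pi}$; these give the pointwise control $X_{i-1}^{L,\sigma}\vartheta_{i-1}+\vartheta_{i-1}^2\le C\phi(X_{i-1}^{L,\sigma})=C\sigma_{i-1}\sqrt{\Delta}\,g_i(X_{i-1}^L)$. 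The analysis of $\theta_t^\Gamma=\sum_i\gamma_i^2$ is then parallel: expanding $\gamma_i$ via (\ref{eq:gammainZ}) and using $Z_i+\sqrt{\Delta}\,X_{i-1}^{L,\sigma}=(X_i-L)/\sigma_{i-1}$, the Gaussian-tail estimates in Lemma~\ref{lem:333} applied with $(k,m,p)\in\{(0,0,1),(0,0,2)\}$ combine to give $\widetilde{\mathbb{E}}_{i-1}[\gamma_i^2\bar m_i]\le C\Delta\,\phi(X_{i-1}^{L,\sigma})$ as well.

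Both bounds therefore reduce to the density-type estimate $\widetilde{\mathbb{E}}\bigl[\sum_{i=1}^{n}\sqrt{\Delta}\,\phi(X_{i-1}^{L,\sigma})\,\bar{M}^n_n\bigr]\le C$, which when multiplied by $\sqrt{\Delta}$ delivers exactly the required $O(\sqrt{\Delta})$ bound for both $\widetilde{\mathbb{E}}[|\theta_t^R|\bar{M}^n_n]$ and $\widetilde{\mathbb{E}}[\theta_t^\Gamma\bar{M}^n_n]$. This is the principal obstacle and is the natural analogue of the first bound in (\ref{control1}) of Lemma~\ref{cumulativecontrol} with $\phi$ in place of $\vartheta$: by (\ref{eq:reft}), $X_{i-1}$ under $\mathbb{Q}^n$ has the law of the reflected Euler scheme, and the required bound amounts to a discrete local-time estimate showing that the Gaussian kernel of bandwidth $\sigma\sqrt{\Delta}$ integrated against this law produces an expectation of order $\sqrt{\Delta}$ per time step. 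It should follow from standard uniform Gaussian upper bounds for the Euler scheme combined with the reflection identity, while the initial step $i=1$ is trivial since $(x-L)/(\sigma_0\sqrt{\Delta})\to\infty$ forces $\phi(X_0^{L,\sigma})$ to decay exponentially in $\Delta$.
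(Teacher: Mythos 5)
Your proposal is correct and follows essentially the same route as the paper: the same martingale-plus-predictable decomposition with $D_t^R=-4\Delta\sum_i\bigl(X_{i-1}^{L,\sigma}\vartheta_{i-1}+\vartheta_{i-1}^2\bigr)$, orthogonality and the fourth-moment bound (\ref{ziid3}) for the martingale part, pointwise Gaussian domination of the drift terms, and the parallel treatment of $\gamma_i^2$. The "principal obstacle" you defer to — the estimate $\widetilde{\mathbb{E}}\bigl[\sqrt{\Delta}\sum_i\phi(X_{i-1}^{L,\sigma})\bar{M}^n_n\bigr]\leq C$ — is exactly the content of Lemma \ref{lem:essb}, which the paper proves separately via the reflected Euler scheme's occupation-time formula and invokes here just as you anticipate.
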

        	
        	
        	We need to be able to control uniformly the moments of the various processes appearing
        	in (\ref{fprimetilde1}), respectively (\ref{fprmetilde2}). We do this in the
        	following lemma: 
        	
        	\begin{lemma}
        		\label{moments}For arbitrary $p\geq 1$,  $\sup_{x\geq L}\widetilde{\mathbb{E}}_{i,x}\left[
        		\max_{j\geq i}\left( \bar{M}_{i:j}^{n}\right) ^{p}\right]^{1/p}\leq 2^n $ 
        		and there exists a constant $\mathsf{C}_p >0$ independent of $i$ and $n$ such that 
        		the following moment property is satisfied for $\mathsf{E}=\mathcal{K}^n,
        		E^n, $ 
        		\begin{equation}
        			\sup_{x\geq L}\widetilde{\mathbb{E}}_{i,x}\left[ \max_{j\geq
        				i}\left(e^{-\mathsf{C}_p(t_j-t_i)} \mathsf{E}_{i:j}\right) ^{p}\bar{M}^n_n\right]^{1/p} \leq { \mathsf{C}_p  }.
        			\label{Lnbounds}
        		\end{equation}
        		The same bound is satisfied for $\mathsf{E}=X^{n} $ but the supremum has to
        		be restricted to a compact set.
        	\end{lemma}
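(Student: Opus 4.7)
The first bound is pathwise: since $\bar{m}_k=1_{(X_k>L)}(1+1_{(U_k\leq p_k)})\in\{0,1,2\}$ almost surely, we get $\bar{M}^n_{i:j}\leq 2^{j-i}\leq 2^n$ and the first claim follows. For the remaining bounds with $\mathsf{E}\in\{\mathcal{K}^n,E^n\}$ the strategic idea is that $\bar{M}^n$ is a $\widetilde{\mathbb{P}}^n$-martingale by \eqref{lem:3}, so $\widetilde{\mathbb{E}}_{i,x}[\,\cdot\,\bar{M}^n_n]=\mathbb{E}^{\mathbb{Q}^n}_{i,x}[\,\cdot\,]$, and we use the Markov property to reduce to $i=0$. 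The key point is that although neither $(\mathsf{E}_{0:j})^p$ nor $\bar{M}^n_n$ is individually uniformly bounded in $L^q$ (a naive Cauchy--Schwarz would produce $2^n$-type estimates), their product is controllable, thanks to a compensation between the exponential-martingale structure of $\mathsf{E}$ and the boundary damping built into $\bar{M}^n$.

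For $\mathsf{E}=\mathcal{K}^n$, complete the square:
\[
(\mathcal{K}^n_{0:j})^p = \mathcal{M}^{(p)}_{0:j}\cdot\exp\Big(\tfrac{p^2-p}{2}\textstyle\sum_{k=1}^{j}(b_{k-1}/\sigma_{k-1})^2\Delta\Big),
\]
where $\mathcal{M}^{(p)}_{0:j}:=\exp\bigl(p\sum_{k=1}^{j}(b/\sigma)_{k-1}Z_k-(p^2/2)\sum(b/\sigma)_{k-1}^2\Delta\bigr)$ is an exponential $\widetilde{\mathbb{P}}^n$-martingale. Choosing $\mathsf{C}_p:=\tfrac{p^2-p}{2}\|b/\sigma\|_\infty^2$ absorbs the deterministic factor into $e^{\mathsf{C}_p t_j}$, giving $e^{-\mathsf{C}_p t_j}(\mathcal{K}^n_{0:j})^p\leq \mathcal{M}^{(p)}_{0:j}$ pathwise. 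Using $\widetilde{\mathbb{E}}_{j}[\bar{M}^n_n]=\bar{M}^n_j$ together with a Doob maximal inequality applied after a further Girsanov change of measure induced by $\mathcal{M}^{(p)}$ (under which the joint law of the increments is explicit Gaussian plus bounded drift), one obtains $\widetilde{\mathbb{E}}_{0,x}[\max_j \mathcal{M}^{(p)}_{0:j}\bar{M}^n_n]\leq \mathsf{C}'_p$ with a constant depending only on $p$, $T$ and $\|b/\sigma\|_\infty$.

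For $\mathsf{E}=E^n$, starting from \eqref{eq:defbEunderPtilde'} and Taylor expanding $\log(1+x)$,
\[
\log E^n_{0:j}=\sum_{k=1}^{j}\Big(\bar{b}_{k-1}\Delta 1_{(U_k>p_k)}+\sigma'_{k-1}(R^n_{t_k}-R^n_{t_{k-1}})+(\Gamma^n_{t_k}-\Gamma^n_{t_{k-1}})\Big)+\mathcal{R}^n_{0:j},
\]
where $\mathcal{R}^n_{0:j}$ collects the quadratic and higher-order corrections. The drift sum is bounded by $\|\bar{b}\|_\infty t_j$ and absorbed into $e^{-\mathsf{C}_p t_j}$. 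The $\sigma'(R^n_{t_k}-R^n_{t_{k-1}})$ sum is a $\mathbb{Q}^n$-martingale with quadratic variation uniformly controlled via \eqref{control2}, so $\exp(p\sum \sigma'_{k-1}\Delta R^n)$ admits a Doob-type bound. The $\Gamma^n$ contribution together with the $Z_k^2$ terms in $\mathcal{R}^n_{0:j}$ involve $\vartheta_{k-1}$-type local-time accumulations and higher Gaussian moments, each controlled in $\mathbb{Q}^n$-expectation by \eqref{control1}--\eqref{control3}. Combining these estimates and exponentiating yields the required uniform bound.

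The main obstacle is precisely the joint moment control just described: isolating $\mathcal{K}^n$ or $E^n$ from $\bar{M}^n_n$ by Cauchy--Schwarz loses the boundary damping and yields $n$-dependent blow-up, so one must exploit the $\mathbb{Q}^n$-martingale structure of $R^n$ and the exponential-martingale structure of $\mathcal{K}^n$ before invoking Doob. Finally, for $\mathsf{E}=X^n$ (interpreted as the pointwise process $X^n_j$) restricted to a compact set $K$ of starting points, standard Burkholder--Davis--Gundy and Gronwall estimates applied to \eqref{eq:defbXunderPtilde} yield $\sup_{x\in K}\widetilde{\mathbb{E}}_{i,x}[\max_j |X^n_j|^{2p}]\leq C_p(K)$, and a Cauchy--Schwarz pairing with $\widetilde{\mathbb{E}}_{i,x}[(\bar{M}^n_n)^2]\leq c$ (controlled via the $\mathbb{Q}^n$-reflected representation of $X^n$) gives the remaining bound, with the $e^{-\mathsf{C}_p(t_j-t_i)}$ factor absorbing any exponential-in-$j$ effects.
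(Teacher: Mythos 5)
Your pathwise bound $\bar{M}^n_{i:j}\leq 2^{j-i}$ for the first claim is fine, and your diagnosis that a Cauchy--Schwarz separation of $\mathsf{E}$ from $\bar{M}^n_n$ loses the boundary damping is exactly right. The gap is in the step where you actually exploit the joint structure. After completing the square you need $\widetilde{\mathbb{E}}_{0,x}\bigl[\max_j\mathcal{M}^{(p)}_{0:j}\bar{M}^n_n\bigr]\leq C_p$, and you justify this by a further Girsanov change of measure ``under which the joint law of the increments is explicit Gaussian plus bounded drift.'' That characterization is false: weighting by $\bar{M}^n_n$ places you under $\mathbb{Q}^n$, where $Z_j$ has the folded (reflected) Gaussian law with conditional mean $\widetilde{\mathbb{E}}_{j-1}[Z_j\bar{m}_j]=2\sqrt{\Delta}\,\vartheta_{j-1}$, which is of order $\sqrt{\Delta}$ (not $\Delta$) when $X_{j-1}$ is near $L$ (see \eqref{ziid}). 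Consequently $\mathcal{M}^{(p)}_{0:j}\bar{M}^n_j$ is not a martingale; its per-step multiplicative drift is $1+O\bigl(\sqrt{\Delta}\,e^{-(X_{j-1}^{L,\sigma})^2/2}\bigr)$, and the accumulated drift is the exponential of $C\sqrt{\Delta}\sum_i e^{-(X_{i-1}^{L,\sigma})^2/2}$, i.e.\ of a discrete local time. Controlling that requires the exponential moment bounds for the occupation functional and the regulator of the reflected scheme (Lemma \ref{lem:essb}, Lemma \ref{lem:37} and the Corollary following it); this is precisely why the paper's workhorse, Lemma \ref{lem:32}, carries the extra damping $e^{-qC\Delta_jJ}$ in the hypothesis \eqref{eq:79}, and why the paper proceeds by a discrete Gronwall iteration on $(1+\alpha_i)^q$ with a martingale-plus-$O(\Delta)$-drift decomposition rather than by a direct Doob/Girsanov argument. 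The same missing ingredient affects your $E^n$ case (where, in addition, $e_i$ can be nonpositive with positive probability, so $\log E^n_{0:j}$ is not defined pathwise; the paper avoids this by working with $(1+\alpha_i)^q$ for even $q$).

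A second concrete error: for $\mathsf{E}=X^n$ you invoke $\widetilde{\mathbb{E}}_{i,x}[(\bar{M}^n_n)^2]\leq c$. This is false. Since $\bar{m}_j^2=1_{(X_j>L)}(1+3\cdot 1_{(U_j\leq p_j)})$, one gets $\widetilde{\mathbb{E}}_{j-1}[\bar{m}_j^2]=1+2\bar{\Phi}(X_{j-1}^{L,\sigma})$, so the second moment of $\bar{M}^n_n$ picks up a factor close to $2$ on every step spent within $O(\sqrt{\Delta})$ of the boundary and is not bounded uniformly in $n$ (the paper itself only claims $\widetilde{\mathbb{E}}[(\bar{M}^n_n)^p]\leq 2^{pn}$). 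The representation \eqref{eq:reft} handles exactly one factor of $\bar{M}^n_n$, not its square: the correct route for $X^n$ is $\widetilde{\mathbb{E}}\bigl[\max_j|X^n_j|^p\bar{M}^n_n\bigr]=\mathbb{E}\bigl[\max_j|\mathcal{X}_{t_j}|^p\bigr]$ followed by the moment estimates for the reflected Euler scheme, with no Cauchy--Schwarz at all.
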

        	The constant $ 2 $ appearing in the above estimates is probably non-optimal but it makes proofs easier.

        	Now, we proceed with a uniform control of the derivative of the functions $%
        	f_{i}$.
        	
        	\begin{lemma}
        		\label{boundedderivative}Assuming that $f\in C_{b}^{1}([L,\infty ))$ such
        		that $f(L)=0$ and 
        		there exists a constant $\mathsf{C}$ which  depends on $\|f\|_\infty+\|f'\|_\infty $, but is independent of $i$ and $n $ such that 
        		\begin{equation*}
        			\|\partial _{x}f_{i}\| _{\infty }\leq \mathsf{C}e^{\mathsf{C}(T-t_i)}.
        		\end{equation*}
        	\end{lemma}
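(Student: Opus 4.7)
The plan is to represent $\partial_x f_i(x)$ as an iteration of the push-forward formula from Proposition \ref{lem:pf}, now starting at time index $i$ rather than $0$. By applying \eqref{recurrenceformula} repeatedly (via the Markov property) from $i$ up to $n$ and using $f_n = f$, one obtains the analogue of \eqref{fprime}
\begin{align*}
\partial_x f_i(x) = \mathbb{E}_{i,x}\!\left[f'(X_n)\,E^n_{i:n}\bar{M}^n_{i:n}\right] + \sum_{j=i+1}^n \mathbb{E}_{i,x}\!\left[f_j(X_j)\,E^n_{i:j-1}\,h_j\,\bar{M}^n_{i:j}\right] =: T_1(x) + T_2(x).
\end{align*}
The goal is to show $|T_1(x)|\leq \mathsf{C}\|f'\|_\infty e^{\mathsf{C}(T-t_i)}$ and $|T_2(x)|\leq \mathsf{C}\|f\|_\infty e^{\mathsf{C}(T-t_i)}$, uniformly in $x \geq L$ and $n$, which together give the claimed bound.

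For $T_1$, I would first convert to $\widetilde{\mathbb{P}}^n$ via the factor $\mathcal{K}^n_{i:n}$ as in \eqref{fprimetilde1} and then apply Cauchy--Schwarz twice to decouple the factors $E^n_{i:n}$ and $\mathcal{K}^n_{i:n}$. Each resulting $\widetilde{\mathbb{E}}_{i,x}[(\,\cdot\,)^{p}\bar{M}^n_n]^{1/p}$-type factor is controlled by Lemma \ref{moments} applied with $\mathsf{E}=E^n$ and $\mathsf{E}=\mathcal{K}^n$, yielding the advertised exponential growth.

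For $T_2$, use $|f_j|\leq \|f\|_\infty$ (immediate from $0\leq M^n_{j:n}\leq 1$). After switching to $\widetilde{\mathbb{P}}^n$ as in \eqref{fprmetilde2} and using Cauchy--Schwarz to peel off $\max_{i<j\leq n}|E^n_{i:j-1}\mathcal{K}^n_{i:j}|$ (bounded via Lemma \ref{moments}), the task reduces to controlling
\begin{align*}
\widetilde{\mathbb{E}}_{i,x}\!\left[\Bigl(\sum_{j=i+1}^n |h_j|\bar{m}_j\Bigr)^{2}\bar{M}^n_{i:n}\right]^{1/2}.
\end{align*}
Using $|h_j|\leq C(1+X_{j-1}^L)1_{(U_j\leq p_j)}$, conditioning on $\mathcal{F}_{j-1}$ and applying Lemma \ref{lem:333} identify the one-step conditional expectations in terms of $\vartheta_{j-1}$ and $\bar{\Phi}(X^{L,\sigma}_{j-1})$, after which the aggregate is dominated by cumulative bounds in the spirit of Lemma \ref{cumulativecontrol}.

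The main obstacle is this final cumulative estimate. Although $\widetilde{\mathbb{E}}_{j-1}[1_{(U_j\leq p_j)}\bar{m}_j]=2\bar{\Phi}(X^{L,\sigma}_{j-1})$ can be of order one whenever $X_{j-1}$ lies within $O(\sqrt{\Delta})$ of $L$, such boundary-visiting indices concentrate on a vanishing fraction of the time steps, and their aggregate weight under $\bar{M}^n$ remains uniformly bounded thanks to the local-time scaling inherent in Lemma \ref{cumulativecontrol}. The cross terms produced by squaring the sum are handled by combining the maximal inequality \eqref{control1} for consecutive blocks with the conditional moment identities in Lemma \ref{lem:333}.
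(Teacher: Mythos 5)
Your treatment of the first term (the one carrying $f'$) coincides with the paper's: bound $f'$ by $\|f'\|_\infty$ and control $\widetilde{\mathbb{E}}_{i,x}[|E^n_{i:n}|\mathcal{K}^n_{i:n}\bar{M}^n_{i:n}]$ via Lemma \ref{moments} and Cauchy--Schwarz. The gap is in your second term. Once you replace $f_j$ by $\|f\|_\infty$ and reduce to controlling $\sum_{j>i}|h_j|\bar{m}_j$, the leading part of $h_j$, namely $\tfrac{b_{j-1}}{a_{j-1}}1_{(U_j\le p_j)}$, carries no factor of $\sqrt{\Delta}$: by Lemma \ref{lem:333} one has $\widetilde{\mathbb{E}}_{j-1}[1_{(U_j\le p_j)}\bar{m}_j]=2\bar{\Phi}(X^{L,\sigma}_{j-1})$, and by the local-time scaling of Lemma \ref{lem:essb} the aggregate $\sum_j\bar{\Phi}(X^{L,\sigma}_{j-1})$ is of order $\Delta^{-1/2}$ under $\bar{M}^n$, not $O(1)$ --- the number of steps within $O(\sqrt{\Delta})$ of the boundary behaves like $B_T/\sqrt{\Delta}$ and each contributes order one. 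Lemma \ref{cumulativecontrol} does not rescue this: it bounds $\sum_j\widetilde{\mathbb{E}}_{j-1}[Z_j\bar{m}_j]$, which carries exactly the extra $\sqrt{\Delta}$ that your sum lacks. (Your scheme does work for the secondary piece $\widehat{h}_j$ of $h_j$, which contains the factor $X^L_{j-1}=\sqrt{\Delta}\,\sigma_{j-1}X^{L,\sigma}_{j-1}$; that is precisely Lemma \ref{additionalterm}.)

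The paper's way around this is structural and is destroyed at the moment you take $|f_j|\le\|f\|_\infty$ termwise. Writing $f_j(X_j)=\mathbb{E}_j[f(X_n)M^n_{j:n}]$ and applying the tower property, the product $\bar{h}_jM^n_{j:n}$ becomes the indicator $\bar{\mathsf{h}}_{j}$ of the event that the scheme crosses the boundary during $[t_{j-1},t_j]$ and never again before $T$. These events are pairwise disjoint in $j$, so $\bigl|\sum_j E^n_{i:j-1}\mathcal{B}_{j-1}\bar{\mathsf{h}}_j\bigr|\le\max_j|E^n_{i:j-1}\mathcal{B}_{j-1}|$ --- a single term survives on each path rather than $\Delta^{-1/2}$ many --- and Lemma \ref{moments} then gives the exponential bound. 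Some such disjointness or cancellation is essential; a crude $\ell^1$ bound on $\sum_j|h_j|$ cannot close the argument.
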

        	The above two technical lemmas are proven in the Appendix.
        	\section{Convergence of the processes $({X}^{n},R^{n},B^n, \Gamma^n, E^{n},\mathcal{K} ^n)$}
        	
        	Let us recall the definition of the various processes involved in the
        	approximate representation of the derivative of the killed semigroup. In the list below, we review the discrete time versions of the processes, i.e, we specify the processes only at the times $t_i$, $i=0,1,...,n$. However, in the subsequent analysis and whenever needed, we will work with their standard imbedding  of the discrete time versions into the piece-wise constant continuous time processes. In particular, we will show that the piece-wise constant version of the processes $({X}^{n},R^{n},B^n, \Gamma^n, E^{n},\mathcal{K} ^n)$ converge in distribution.
        	
        	\begin{itemize}
        		\item $X^n$ satisfies the recurrence formula 
        		\begin{align}
        			X_{i}^{n,x} =&X_{i-1}^{n,x}+\sigma _{i-1}{Z}_{i}  \notag \\
        			=& X_{i-1}^{n,x}+\sigma _{i-1}({Z}_{i}-\widetilde{\mathbb{E}}_{i-1}\left[ {Z}_{i}\bar{%
        				m}_{i}\right]) +\sigma _{i-1}\widetilde{\mathbb{E}}_{i-1}\left[ {Z}_{i}\bar{m}%
        			_{i}\right] \notag \\
        			=&X_{i-1}^{n,x}+\sigma
        			_{i-1}(R_{t_i}^{n}-R_{t_{i-1}}^{n})+(B _{t_i}^{n}-B _{t_{i-1}}^{n}),  \label{eq:defbXunderPtilde''}
        		\end{align}
        		We will show that $X^n$ converges in distribution to a reflected diffusion $%
        		Y $ as defined in (\ref{y1}), see also (\ref{reflected}) below.
        		
        		\item $R^n$ as defined in (\ref{rn}) is the martingale driving the
        		stochastic flow of $X^n$. We will show that $R^n$ converges in distribution
        		to a Brownian motion.
        		
        		\item $\Gamma^{n}$ as defined in (\ref{rn}) is an additional
        		martingale driving the process $E^n$. We will show that $\gamma^n$ vanishes
        		in the limit.
        		
        		\item $B^n$ as defined in (\ref{Bn}) is the predictable part in the
        		(discrete) Doob-Meyer decomposition of $X^n$ under the measure ${\mathbb{Q}}^n$. We will prove that $B^n$ converges in distribution to a local time-type (a.k.a. regulator)
        		process.
        		
        		\item $E^n$ as defined in (\ref{eq:defbEunderPtilde'}) is the derivative of
        		the ``stochastic flow" of the killing of $X^n$. We will prove that $E^n$ converges in
        		distribution to the process $\xi$ defined in (\ref{xi1}) (see also (\ref{exp}%
        		) below).
        		
        		\item $\mathcal{K}^n$ as defined in (\ref{eq:defbKunderPtilde'}) is the
        		Radon-Nikodym derivative of the measure $\widetilde{\mathbb{P}} $ (this is the measure
        		under which $X^n$ has no drift) with respect to the original measure $\mathbb{P}. $
        		We will prove that $\mathcal{K}^n$ converges in distribution to the process $%
        		\mathcal{K}$ defined in (\ref{G}).
        	\end{itemize}
        	
        	The processes $({X}^{n},E^{n},\mathcal{K} ^n)$ are driven/generated by $%
        	(R^{n},B^n, \Gamma^n)$. Note that $(R^{n},B^n, \Gamma^n)$ as~well as $\bar{M}^n_{n}$ have moments of all orders. Therefore so have $({X}^{n},E^{n},\mathcal{%
        		K} ^n)$. As a result $({X}^{n},R^{n},B^n, \Gamma^n, E^{n},\mathcal{K} ^n)$
        	are integrable under ${\mathbb{Q}}^n$.
        	
        	As stated above, the sequence $(X^n,B^{n})$ will converge to the reflected
        	process $(Y,B)$ defined in \eqref{y1} if $b=0$ or that defined in %
        	\eqref{eq:rep} if $b\ne 0$. If $b=0$, the sequence $E^n$ converges in
        	distribution to the exponential martingale $\xi$ appearing in formula %
        	\eqref{eq:-1} as defined in \eqref{xi1}. Moreover, we have that $\mathcal{K}%
        	^n\equiv 0$ (the measures $\mathbb{P}$ and $\widetilde{\mathbb{P}}^n$ coincide). If $b\ne 0$, the
        	sequence $(E^n,\mathcal{K}^n)$ converges in distribution and the
        	product $E^n\mathcal{K}^n$ converges in distribution to the semi-martingale $%
        	\mathcal{E}$ appearing in formula \eqref{eq:0} as defined in the introduction. The
        	sequence $R^{n}$ will converge to the driving Brownian motion $W$ appearing
        	in both \eqref{eq:-1} and \eqref{eq:0}. The sequence $\Gamma^{n}$ will
        	vanish in the limit. We justify all of these limits in what follows. We
        	start with the processes $(R^{n},\Gamma^{n})$:
        	
        	\begin{theorem}
        		\label{WandF} The sequence $(R^{n},\Gamma^{n})$ is a sequence of square integrable
        		martingales under ${\mathbb{Q}}^n$ which converges in distribution to $(W,0)$%
        		, where $W$ is a Brownian motion.
        	\end{theorem}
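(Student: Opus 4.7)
The plan is to invoke a functional martingale central limit theorem for triangular arrays of discrete martingales (e.g.\ Theorem~VIII.3.11 in Jacod--Shiryaev), whose hypotheses here are: (i) $(R^{n},\Gamma^{n})$ is a $\mathbb{Q}^n$-martingale on the grid $\{t_i\}$, (ii) the quadratic variations $[R^n]_t,[\Gamma^n]_t$ and the cross-variation $[R^n,\Gamma^n]_t$ converge in $\mathbb{Q}^n$-probability to $t,0,0$ respectively, and (iii) a Lindeberg-type asymptotic negligibility of the jumps. First I would establish (i). By Bayes' rule and $\bar{M}^n_i=\bar{M}^n_{i-1}\bar{m}_i$, this reduces to checking $\widetilde{\mathbb{E}}_{i-1}[(\Delta R^n_i)\bar{m}_i]=\widetilde{\mathbb{E}}_{i-1}[(\Delta \Gamma^n_i)\bar{m}_i]=0$. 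For $R^n$, the centering $\widetilde{\mathbb{E}}_{i-1}[Z_i\bar{m}_i]$ in the definition \eqref{rn} combined with $\widetilde{\mathbb{E}}_{i-1}[\bar{m}_i]=1$ from Lemma~\ref{lem:333} produces the cancellation. For $\Gamma^n$, expanding \eqref{eq:gammainZ} reduces the problem to the identity $\widetilde{\mathbb{E}}_{i-1}\bigl[1_{(U_i\le p_i)}(Z_i+\sqrt{\Delta}X^{L,\sigma}_{i-1})\bar{m}_i\bigr]=\widetilde{\mathbb{E}}_{i-1}[Z_i\bar{m}_i]$, which I would verify directly from the Gaussian representation \eqref{ziid0}.

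For (ii), Lemma~\ref{lemmaBm} gives $[R^n]_t=[t/\Delta]\Delta+\theta^R_t$ with $\widetilde{\mathbb{E}}[|\theta^R_t|\bar{M}^n_n]\le c\sqrt{\Delta}$ and $[\Gamma^n]_t=\theta^\Gamma_t$ with $\widetilde{\mathbb{E}}[\theta^\Gamma_t\bar{M}^n_n]\le c\sqrt{\Delta}$, so that $[R^n]_t\to t$ and $[\Gamma^n]_t\to 0$ in $L^1(\mathbb{Q}^n)$; the cross-variation then vanishes by Kunita--Watanabe. If the formulation of the CLT one invokes requires the predictable brackets $\langle R^n\rangle,\langle\Gamma^n\rangle$ instead of $[R^n],[\Gamma^n]$, I would pass from $[M]$ to $\langle M\rangle$ using the fourth-moment estimate $\sum_i\widetilde{\mathbb{E}}_{i-1}[Z_i^4\bar{m}_i]\le c\Delta$ from Lemma~\ref{cumulativecontrol}, which controls the compensated jump part $[M]-\langle M\rangle$ in $L^2(\mathbb{Q}^n)$.

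For (iii), the same fourth-moment bound, together with the order estimates of Remark~\ref{rem:11} applied to the jumps of $\Gamma^n$, yields $\mathbb{E}^{\mathbb{Q}^n}\bigl[\max_{i\le n}(\Delta R^n_i)^2+\max_{i\le n}(\Delta\Gamma^n_i)^2\bigr]\to 0$, which is more than enough for the conditional Lindeberg condition and for tightness in the Skorokhod topology. Combining (i)--(iii) with the martingale CLT, $(R^n,\Gamma^n)$ converges in law under $\mathbb{Q}^n$ to a continuous Gaussian martingale with deterministic covariation $(t,0,0)$, that is, to $(W,0)$ with $W$ a standard Brownian motion.

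The main obstacle is the martingale property of $\Gamma^n$ in Step~1: the required cancellation under $\widetilde{\mathbb{E}}_{i-1}[\,\cdot\,\bar{m}_i]$ is a discrete reflection-principle identity linking the crossing probability $p_i$ with the conditional law of the Brownian increment $Z_i$ on the event $\{U_i\le p_i\}$, and must be extracted carefully from the symmetric representation \eqref{ziid0}. Once this algebraic identity is in place, Steps~2 and 3 reduce to routine combinations of the moment estimates already collected in Lemmas~\ref{lem:333}--\ref{lemmaBm}, so that the overall proof amounts to assembling these ingredients inside the martingale CLT.
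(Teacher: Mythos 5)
Your proposal is correct and follows essentially the same route as the paper: establish the discrete martingale property of $(R^n,\Gamma^n)$ under $\mathbb{Q}^n$ via Lemma \ref{lem:333} (the cancellation for $\gamma_i$ being exactly the Gaussian identity you isolate from \eqref{ziid0}, which indeed reduces to $\widetilde{\mathbb{E}}_{i-1}[Z_i\bar m_i]=2\sqrt{\Delta}\vartheta_{i-1}$), then combine the quadratic-variation controls of Lemmas \ref{lemmaBm} and \ref{cumulativecontrol} with a functional martingale CLT (the paper cites Theorem 2.2.13 of Jacod--Protter rather than Jacod--Shiryaev VIII.3.11, but these play the same role) to identify the limit as a Gaussian martingale with covariation $\mathrm{diag}(t,0)$.
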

        	
        	\begin{proof}
        		We deduce the martingale property from \eqref{rn}, \eqref{eq:gammainZ} and Lemma  \ref{lem:333}. In fact,
        		\begin{align*}
        			\widetilde{\mathbb{E}}_{i-1}[(R^n_{t_i}-R^n_{t_{i-1}})\bar{m}_{i}]=&\widetilde{%
        				\mathbb{E}}_{i-1}[(Z_{i}-\widetilde{\mathbb{E}}_{i-1}[Z_{i}\bar{m}_{i}])\bar{m}%
        			_{i}]=0 \\
        			\widetilde{\mathbb{E}}_{i-1}[(\Gamma^n_{t_i}-\Gamma^n_{t_{i-1}})\bar{m}_{i}]=& 
        			\widetilde{\mathbb{E}}_{i-1}[\gamma _{i}\bar{m}_{i}] =0.
        		\end{align*}
        		%
        		In the last equality we have also used that $ \widetilde{\mathbb{E}}[Z_i1_{(X_i>L)}]=\sigma_{i-1}\Delta g_{i-1}(X_{i-1}^L) $. The square integrability of the processes $(R^{n},\Gamma^{n})$ follows from
        		the controls in Lemma \ref{cumulativecontrol}. The convergence in
        		distribution follows by applying Theorem 2.2.13 page 56 from \cite%
        		{JacodProtter} by using the controls in Lemma \ref{lemmaBm} and (\ref%
        		{control3}). In other words we deduce that $\left( W^{n},\Gamma ^{n}\right) $
        		converges in distribution to a two dimensional Gaussian process with
        		independent increments and with covariation process 
        		\begin{equation*}
        			\left( 
        			\begin{array}{cc}
        				t & 0 \\ 
        				0 & 0%
        			\end{array}
        			\right) .
        		\end{equation*}
        		which gives our claim.
        	\end{proof}
        	
        	Next, we move on to the remaining four sequences. 
        	\begin{theorem}
        		\label{relcomptheorem}The sequence of processes $({X}^{n},B^n, E^{n},%
        		\mathcal{K} ^n)$ is relatively compact under ${\mathbb{Q}}^n$
        	\end{theorem}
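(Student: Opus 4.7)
The plan is to establish tightness in the Skorokhod space $D([0,T])$ under $\mathbb{Q}^n$ of each of the four components of $(X^n, B^n, E^n, \mathcal{K}^n)$ individually; tightness of the vector process, hence relative compactness, then follows componentwise. Since $d\mathbb{Q}^n/d\widetilde{\mathbb{P}}^n|_{\mathcal{F}_n} = \bar{M}^n_n$ and $\bar{M}^n_n$ is a mean-one martingale by Lemma \ref{lem:333}, any bound of the form $\widetilde{\mathbb{E}}[F\,\bar{M}^n_n] \leq C$ translates directly into a bound under $\mathbb{Q}^n$, and the two essential inputs will be Lemmas \ref{cumulativecontrol} and \ref{moments}.

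I would begin with the predictable process $B^n$. Its increments $\sigma_{i-1}\widetilde{\mathbb{E}}_{i-1}[Z_i \bar{m}_i] = 2\sigma_{i-1}\sqrt{\Delta}\,\vartheta_{i-1}$ are non-negative by \eqref{ziid}, so $B^n$ is non-decreasing. The first estimate in Lemma \ref{cumulativecontrol} gives $\sup_n \widetilde{\mathbb{E}}[B^n_T \bar{M}^n_n] \leq c$, while the second estimate, applied with $k+1 = \lceil \delta/\Delta \rceil$ and using the boundedness of $\sigma$, yields $\sup_n \widetilde{\mathbb{E}}[\sup_{|t-s|\leq \delta}(B^n_t - B^n_s)\,\bar{M}^n_n] \leq c\sqrt{\delta + \Delta}$. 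For a sequence of non-decreasing processes, this uniform modulus bound immediately gives tightness in $D([0,T])$ under $\mathbb{Q}^n$ and forces every weak limit to have continuous paths.

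For $X^n$, I would use the decomposition \eqref{eq:defbXunderPtilde''} to write $X^n_{t_i} - X^n_{t_j} = \sum_{k=j+1}^{i}\sigma_{k-1}(R^n_{t_k} - R^n_{t_{k-1}}) + (B^n_{t_i} - B^n_{t_j})$. Since $R^n$ is a square-integrable martingale under $\mathbb{Q}^n$ with predictable quadratic variation controlled by \eqref{control2}, Burkholder-Davis-Gundy combined with the boundedness of $\sigma$ yields $\widetilde{\mathbb{E}}[(X^n_t - X^n_s)^2\,\bar{M}^n_n] \leq C(|t-s| + \Delta)$, which, combined with the tightness of $B^n$ already obtained, gives tightness of $X^n$ by a Kolmogorov-type criterion. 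For the multiplicative processes $E^n$ and $\mathcal{K}^n$, the uniform $L^p$ bounds of Lemma \ref{moments} combined with expansion of the increments via \eqref{eq:defbEunderPtilde'} and \eqref{eq:defbKunderPtilde'} respectively, followed by Cauchy-Schwarz against increments of the already-controlled driving processes $(R^n, \Gamma^n, B^n)$, produce analogous second-moment modulus of continuity bounds and hence tightness.

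The main obstacle I anticipate is the treatment of $B^n$, whose increments concentrate near the boundary $L$ and can a priori accumulate rapidly within short intervals; the sharp $\sqrt{(k+1)\Delta}$ modulus supplied by Lemma \ref{cumulativecontrol} is precisely what rescues tightness. A secondary technical point is that the processes are piecewise constant interpolations with potential jumps, so one must verify that individual jumps vanish uniformly in $n$ in order to conclude that the weak limits live in $C([0,T])$; this is immediate from Remark \ref{rem:11} since each single increment of $R^n$, $\Gamma^n$, and $B^n$ is of order $O(\sqrt{\Delta})$, and the uniform bounds in Lemma \ref{moments} propagate this to $E^n$ and $\mathcal{K}^n$.
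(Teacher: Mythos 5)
Your overall strategy is the one the paper uses: argue componentwise, control $B^n$ through the window estimates of Lemma \ref{cumulativecontrol}, control $X^n$ through the martingale $R^n$ and its quadratic variation (Lemma \ref{lemmaBm}, \eqref{control2}), and control $E^n$ and $\mathcal{K}^n$ by Cauchy--Schwarz against the uniform moment bounds of Lemma \ref{moments}. The treatment of $B^n$ is essentially identical to the paper's (monotonicity from \eqref{ziid} plus the $\sqrt{(k+1)\Delta}$ modulus), and your remark that $B^n$ is the delicate component is exactly right.

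There is, however, one step that does not close as written. For $X^n$ you derive $\widetilde{\mathbb{E}}[(X^n_t-X^n_s)^2\,\bar M^n_n]\leq C(|t-s|+\Delta)$ and invoke ``a Kolmogorov-type criterion.'' A second-moment bound that is \emph{linear} in $|t-s|$ is not sufficient for the Kolmogorov--Chentsov tightness criterion, which requires $\mathbb{E}[|q_t-q_s|^a]\leq C|t-s|^{1+b}$ with $b>0$; the bound you obtain is the generic Brownian-scaling bound satisfied by any square-integrable martingale and carries no tightness information by itself. The same issue recurs for the martingale parts of $E^n$ and $\mathcal{K}^n$. What the paper does instead is apply the Ethier--Kurtz criterion (Theorem 8.6 and Remark 8.7, Chapter 3), which asks for a \emph{conditional} bound $\widetilde{\mathbb{E}}[|q^n_{t+u}-q^n_t|\,\bar M^n_n\mid\mathcal F^n_t]\leq\widetilde{\mathbb{E}}[\gamma^n_\delta\,\bar M^n_n\mid\mathcal F^n_t]$ with $\lim_{\delta\to 0}\limsup_n\widetilde{\mathbb{E}}[\gamma^n_\delta\,\bar M^n_n]=0$, and constructs explicit dominating variables $\gamma^n_\delta$ from $\sup_t\theta^R_t$ and the maxima in Lemma \ref{cumulativecontrol}; the $L^2$ control \eqref{Bmcontrol'} on $\sup_t\theta^R_t$ is what makes these $\gamma^n_\delta$ admissible. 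Your argument can be repaired either by switching to this (or to Aldous') conditional-increment criterion --- all the needed estimates are already in your hands and uniform in $t$ --- or by upgrading to fourth moments of the martingale increments, for which \eqref{control2} and \eqref{control3} give $\widetilde{\mathbb{E}}[(M_t-M_s)^4\,\bar M^n_n]\leq C(|t-s|+\Delta)^2$ and hence a genuine Kolmogorov exponent (together with the standard discrete-time modification to absorb the $+\Delta$ term). As stated, though, the second-moment Kolmogorov step is a gap.
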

        	
        	\begin{proof}
        		We apply here{ Theorem 8.6  and Remark 8.7 in Chapter 3  }from \cite{EthierKurtz}. In particular, we use the
        		estimates in Lemma \ref{lemmaBm} and (\ref{control3}). So we need to prove
        		that 
        		\begin{equation*}
        			\sup_{n}\widetilde{\mathbb{E}}\left[ \left\vert q_{t}^{n}\right\vert \bar{M}^n_{n}%
        			\right] <\infty ,
        		\end{equation*}%
        		where $q^{n}$ is replaced by each of the processes in the quadruplet $({X}%
        		^{n},B^{n},E^{n},\mathcal{K}^{n})$ in turn. The control for the processes in
        		the triplet $({X}^{n},E^{n},\mathcal{K}^{n})$ follows from Lemma \ref{moments}. The bounds on the moments of $(B^{n})$ follows from \eqref{Bn} and Lemma \ref%
        		{cumulativecontrol}.
        		
        		Next, we need to find a family $(\gamma _{\delta }^{n})_{0<\delta <1, n\in 
        			\mathbb{N}}$ of nonnegative random variables such that  
        		\begin{equation}\label{filt}
        			\widetilde{\mathbb{E}}\left[ \left\vert q_{t+u}^{n}-q_{t}^{n}\right\vert \bar{M}%
        			^n_{n}|\mathcal{F}_{t}^{n}\right] \leq \widetilde{\mathbb{E}}\left[ \gamma
        			_{\delta }^{n}\bar{M}^n_{n}|\mathcal{F}_{t}^{n}\right]
        		\end{equation}%
        		and $\lim_{\delta \rightarrow 0}\limsup_{n\rightarrow \infty }\widetilde{\mathbb{%
        				E}}\left[ \gamma _{\delta }^{n}\bar{M}^n_{n}\right] =0$ for $t\in \lbrack
        		0,T],u\in \lbrack 0,\delta ]$, where $q^{n}$ is replaced by $X^{n},B^{n}$, $%
        		E^{n}$ and $\mathcal{K}^{n}.$ {As usual, the continuous filtration in \eqref{filt} is identified as $\mathcal{F}_{t}^{n}:=\mathcal{F}_{[nt]\over n}$.}
        		
        		We take then in turn:\\[1mm]
        		$\bullet $ $q^{n}=B^{n}$. For $t\in \lbrack 0,T],u\in \lbrack 0,\delta ]$ we
        		have that 
        		\begin{align*}
        			\left\vert B_{t+u}^{n}-B_{t}^{n}\right\vert \leq \Vert \sigma \Vert_\infty
        			\sum_{\left\{ i,t_{i}\in \left[ t,t+\delta \right] \right\} }\widetilde{\mathbb{E%
        			}}_{i-1}[Z_{i}\bar{m}_{i}].
        		\end{align*}
        		The obvious choice for $\gamma _{\delta }^{n}$ is%
        		\begin{equation*}
        			\gamma _{\delta }^{n}=\max_{j\leq n-\left[ \frac{\delta }{\Delta }\right]
        				-2}\left( \sum_{i=j}^{j+\lceil \frac{\delta }{\Delta }\rceil}\widetilde{%
        				\mathbb{E}}_{i-1}[Z_{i}\bar{m}_{i}]\right)
        		\end{equation*}%
        		with the control following from (\ref{control1})%
        		\begin{equation*}
        			\widetilde{\mathbb{E}}\left[ \gamma _{\delta }^{n}\bar{M}^n_{n}\right] \leq c\sqrt{%
        				\left( \left[ \frac{\delta }{\Delta }\right] +2\right) \Delta }\leq c\sqrt{%
        				\delta +2\Delta }=c\sqrt{\delta +2\frac{T}{n}}
        		\end{equation*}%
        		and obviously $\lim_{\delta \rightarrow 0}\limsup_{n\rightarrow \infty }%
        		\widetilde{\mathbb{E}}\left[ \gamma _{\delta }^{n}\bar{M}^n_{n}\right] =0$.\\[2mm]
        		$\bullet $ $q^{n}=X^{n}$. For $t\in \lbrack 0,T],u\in \lbrack 0,\delta ]$ we
        		have that 
        		\begin{equation*}
        			\left\vert X_{t+u}^{n}-X_{t}^{n}\right\vert \leq \left\vert \sum_{\left\{
        				i,t_{i}\in \left[ t,t+u\right] \right\}
        			}(B_{t_{i}}^{n}-B_{t_{i-1}}^{n})\right\vert
        			+\left\vert \sum_{\left\{ i,t_{i}\in \left[
        				t,t+u\right] \right\} }\sigma
        			_{i-1}(R_{t_{i}}^{n}-R_{t_{i-1}}^{n})\right\vert
        		\end{equation*}%
        		where the first term is controlled from the computation for $q^{n}=B^{n}$.

        		The second term is controlled as follows (we use the martingale property of $%
        		R^{n}$) 
        		\begin{eqnarray*}
        			&&	\tilde{\mathbb{E}}\left[ \left. \left\vert \sum_{\left\{ i,t_{i}\in \left[
        				t,t+u\right] \right\} }\sigma
        			_{i-1}(R_{t_{i}}^{n}-R_{t_{i-1}}^{n})\right\vert \bar{M}_{n}^{n}\right\vert 
        			\mathcal{F}_{t}^{n}\right] ^{2}\\
        			&\leq &\tilde{\mathbb{E}}\left[ \left.
        			\left\vert \sum_{\left\{ i,t_{i}\in \left[ t,t+u\right] \right\} }\sigma
        			_{i-1}(R_{t_{i}}^{n}-R_{t_{i-1}}^{n})\right\vert ^{2}\bar{M}%
        			_{n}^{n}\right\vert \mathcal{F}_{t}^{n}\right]  \\
        			&\leq &\tilde{\mathbb{E}}\left[ \left. \sum_{\left\{ i,t_{i}\in \left[ t,t+u%
        				\right] \right\} }\sigma _{i-1}^{2}(R_{t_{i}}^{n}-R_{t_{i-1}}^{n})^{2}\bar{M}%
        			_{n}^{n}\right\vert \mathcal{F}_{t}^{n}\right]  \\
        			&\leq &\left\vert \left\vert \sigma \right\vert \right\vert_\infty ^{2}\tilde{%
        				\mathbb{E}}\left[ \left. \sum_{\left\{ i,t_{i}\in \left[ t,t+\delta \right]
        				\right\} }(R_{t_{i}}^{n}-R_{t_{i-1}}^{n})^{2}\bar{M}_{n}^{n}\right\vert 
        			\mathcal{F}_{t}^{n}\right].
        		\end{eqnarray*}%
        		From Lemma \ref{lemmaBm} we deduce that 
        		\begin{equation*}
        			\sum_{\left\{ i,t_{i}\in \left[ t,t+\delta \right] \right\}
        			}(R_{t_{i}}^{n}-R_{t_{i-1}}^{n})^{2}\leq \left( \left[ \frac{t+\delta }{%
        				\Delta }\right] -\left[ \frac{t}{\Delta }\right] \right) \Delta +\theta
        			_{t+\delta }^{R}-\theta _{t}^{R}\leq \left( \delta +\frac{T}{n}\right)
        			+2\sup_{t\in \left[ 0,T+\delta\right] }\theta _{t}^{R}.
        		\end{equation*}
        		
        		In this case, we choose 
        		\begin{equation*}
        			\gamma _{\delta }^{n}:=\|\sigma\|_\infty\left( \sup_{t\in \left[ 0,T\right] }\tilde{\mathbb{E}%
        			}\left[ \left. \left( \left( \delta +\frac{T}{n}\right) +2\sup_{t\in \left[
        				0,T+\delta \right] }\theta _{t}^{R}\right) \bar{M}_{n}^{n}\right\vert 
        			\mathcal{F}_{t}^{n}\right] \right) ^{\frac{1}{2}}(\bar{M}_{n}^{n})^{-1},
        		\end{equation*}%
        		which gives the correct control in \eqref{filt} by using (\ref{Bmcontrol'}).

        		$\bullet $ $q^{n}=E^{n}$. We have that for $ S(R,\Gamma)_{i}^{n}:=\sigma _{i-1}^{\prime
        		}(R_{t_{i}}^{n}-R_{t_{i-1}}^{n})+(\Gamma _{t_{i}}^{n}-\Gamma
        		_{t_{i-1}}^{n})$, $i=1,...,n$.
        		\begin{eqnarray*}
        			\left\vert E_{t+u}^{n}-E_{t}^{n}\right\vert &\leq &
        			\Delta \|\bar{b}\|_\infty u\max_{\left\{ i,t_{i}\in \left[ t,t+u\right] \right\}}|E_{i-1}^{n}| +\left\vert \sum_{\left\{ i,t_{i}\in \left[ t,t+u\right] \right\}
        			}E_{i-1}^{n}S(R,\Gamma)_{i}^{n}
        			\right\vert.
        		\end{eqnarray*}%
        		The first term is controlled using Lemma \ref{moments}. For the second one, consider
        		\begin{equation*}
        			\widetilde{\mathbb{E}}\left[ \left. \left\vert \sum_{\left\{ i,t_{i}\in \left[
        				t,t+u\right] \right\} }E_{i-1}^{n}S(R,\Gamma) _{i}^{n}\right\vert \bar{M}^n%
        			_{n}\right\vert \mathcal{F}_{t}^{n}\right].
        		\end{equation*}
        		Using the fact that $\widetilde{\mathbb{E}}_{i-1}\left[ S(R,\Gamma)_{i}^{n}\bar{m%
        		}_{i}\right] =0$ we deduce that 
        		\begin{eqnarray*}
        			&&\tilde{\mathbb{E}}\left[ \left. \left\vert \sum_{\left\{ i,t_{i}\in \left[
        				t,t+u\right] \right\} }E_{i-1}^{n}S(R,\Gamma )_{i}^{n}\right\vert \bar{M}%
        			_{n}^{n}\right\vert \mathcal{F}_{t}^{n}\right] ^{4} \\
        			&&\hspace{2cm}\leq \tilde{\mathbb{E}}\left[ \left. \left( \sum_{\left\{
        				i,t_{i}\in \left[ t,t+u\right] \right\} }E_{i-1}^{n}S(R,\Gamma
        			)_{i}^{n}\right) ^{2}\bar{M}_{n}^{n}\right\vert \mathcal{F}_{t}^{n}\right]
        			^{2} \\
        			&&\hspace{2cm}=\tilde{\mathbb{E}}\left[ \left. \left( \sum_{\left\{
        				i,t_{i}\in \left[ t,t+u\right] \right\} }\left( E_{i-1}^{n}\right)
        			^{2}\left( S(R,\Gamma )_{i}^{n}\right) ^{2}\right) \bar{M}%
        			_{n}^{n}\right\vert \mathcal{F}_{t}^{n}\right] ^{2} \\
        			&&\hspace{2cm}\leq \tilde{\mathbb{E}}\left[ \max_{\left\{ i,t_{i}\in \left[
        				t,t+\delta \right] \right\} }\left( E_{i}^{n}\right) ^{2}\left. \left(
        			\sum_{\left\{ i,t_{i}\in \left[ t,t+\delta \right] \right\} }\left(
        			S(R,\Gamma )_{i}^{n}\right) ^{2}\right) \bar{M}_{n}^{n}\right\vert \mathcal{F%
        			}_{t}^{n}\right] ^{2} \\
        			&&\hspace{2cm}\leq \tilde{\mathbb{E}}\left[ \max_{\left\{ i,t_{i}\in \left[
        				t,t+\delta \right] \right\} }\left( E_{i}^{n}\right) ^{4}\bar{M}_{n}^{n}|%
        			\mathcal{F}_{t}^{n}\right] \times \tilde{\mathbb{E}}\left[ \left. \left(
        			\sum_{\left\{ i,t_{i}\in \left[ t,t+\delta \right] \right\} }\!\!\!\!\!\!\left(
        			S(R,\Gamma )_{i}^{n}\right) ^{2}\right) ^{2}\!\!\!\!\bar{M}_{n}^{n}\right\vert 
        			\mathcal{F}_{t}^{n}\right].
        		\end{eqnarray*}%
        		
        		We choose then%
        		\begin{align*}
        			&\gamma _{\delta }^{n} :=2\Delta \Vert \bar{b}\Vert _{\infty }\sup_{i\geq
        				0}\left\vert E_{0:i}^{n}\right\vert  \\
        			&+\!\!\!\!\sup_{t\in \left[ 0,T\right] }\!\!\!\!\tilde{\mathbb{E}}\left[ \max_{\left\{
        				i=0,...,n\right\} }\left( E_{i}^{n}\right) ^{4}\bar{M}_{n}^{n}|\mathcal{F}%
        			_{t}^{n}\right] ^{\frac{1}{4}}\!\!\!\!\sup_{t\in \left[ 0,T\right] }\tilde{\mathbb{E}%
        			}\left[ \left. \left( \sum_{\left\{ i,t_{i}\in \left[ t,t+\delta \right]
        				\right\} }\!\!\!\!\!\!\left( S(R,\Gamma )_{i}^{n}\right) ^{2}\right) ^{2}\!\!\!\!\bar{M}%
        			_{n}^{n}\right\vert \mathcal{F}_{t}^{n}\right] ^{\frac{1}{4}}\!\!\!\!\left( \bar{M}%
        			_{n}^{n}\right) ^{-1}\!\!\!\!,
        		\end{align*}
        		and the result follows from the bounds in Lemma \ref{moments}, by using \eqref%
        		{Bmcontrol'} and \eqref{bmcontrol''} similar to the case $q^{n}=X^{n}.$\\[2mm]

        		$\bullet $ $q^{n}=\mathcal{\mathcal{K}}^{n}$. We have that 
        		\begin{equation*}
        			\left\vert \mathcal{\mathcal{K}}_{t+u}^{n}-\mathcal{\mathcal{K}}%
        			_{t}^{n}\right\vert =\left\vert \sum_{\left\{ i,t_{i}\in \left[ t,t+u\right]
        				\right\} }^{n}\mathcal{\mathcal{K}}_{i-1}^{n}\left( \exp (\bar{S}(R,B)
        			_{i}^{n})-1\right) \right\vert ,
        		\end{equation*}%
        		where%
        		\begin{equation*}
        			\bar{S}(R,B) _{i}^{n}:=\frac{b_{i-1}}{\sigma _{i-1}}%
        			(R_{t_{i}}^{n}-R_{t_{i-1}}^{n})+\frac{b_{i-1}}{a_{i-1}}%
        			(B_{t_{i}}^{n}-B_{t_{i-1}}^{n})-\frac{1}{2}\left( \frac{b_{i-1}}{\sigma
        				_{i-1}}\right) ^{2}\Delta .
        		\end{equation*}%
        		Observing that $\left\vert \widetilde{\mathbb{E}}_{i-1}\left[ \exp (
        		S(R,B)_{i}^{n})\bar{m}_{i}\right] -1\right\vert \leq C\Delta $, we deduce
        		the result with arguments similar as in the case $q^{n}=E^{n}$. This concludes the proof of
        		the theorem.
        	\end{proof}
        	
        	Now we can put all the above results together to obtain the first convergence characterization result for the probabilistic representation of the first derivative of the killed diffusion process.
        	\begin{theorem}
        		\label{bigT}The law of $({X}^{n},B^{n},E^{n},\mathcal{K}^{n},R^{n},\Gamma
        		^{n})$ under ${\mathbb{Q}}^n$ converges to the law of the process $(Y,B,\xi ,%
        		\mathcal{K},W,0)$. Here $W$ is a standard Brownian motion, $\xi $ and $%
        		\mathcal{K}$ satisfy 
        		\begin{align}
        			\xi _{t}=& 1+\int_0^t\bar{b}(Y_s)\xi_sds+\int_{0}^{t}\sigma ^{\prime }(Y_{s})\xi _{s}dW_{s}  \label{exp}
        			\\
        			\mathcal{K}_{t}=& \exp \left( \int_{0}^{t}ba^{-1}(Y_{s})dY_{s}-\frac{1}{2}%
        			\int_{0}^{t}b^{2}a^{-1}(Y_{s})ds\right)  \label{G}
        		\end{align}%
        		and $(Y,B)$ is the solution of the reflected equation in the domain $%
        		[L,\infty )$. 
        		\begin{align}
        			Y_{t}=&x+\int_{0}^{t}\sigma (Y_{s})dW_{s}+B_{t}, \label{reflected}\\
        			B_t=&\int_0^t1_{(Y_s=L)}d|B|_s.  \notag
        		\end{align}
        	\end{theorem}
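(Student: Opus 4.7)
The plan is to combine Theorems \ref{WandF} and \ref{relcomptheorem} to obtain tightness of the full six-tuple under $\mathbb{Q}^n$, extract a subsequential limit via Prokhorov's theorem (and Skorokhod's representation, working on a common space), and then identify this limit as the claimed process by passing to the limit in the recurrences (\ref{eq:defbXunderPtilde''}), (\ref{eq:defbEunderPtilde'}) and (\ref{eq:defbKunderPtilde'}) using weak-convergence-of-stochastic-integrals arguments in the style of Kurtz--Protter. Uniqueness of solutions to the limiting reflected SDE (\ref{reflected}) and to the linear SDEs (\ref{exp}) and (\ref{G}) driven by $(W,Y,B)$ then forces the full sequence to converge. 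The component $W$ is a standard Brownian motion and $\Gamma \equiv 0$ directly by Theorem \ref{WandF}.

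For the identification of $(Y,B)$, we pass to the limit in (\ref{eq:defbXunderPtilde''}). The martingale part $\sum_{i}\sigma_{i-1}(R^n_{t_i}-R^n_{t_{i-1}})$ converges in distribution to $\int_0^t \sigma(Y_s)\,dW_s$ by Theorem \ref{WandF} together with the joint tightness and the uniform moment bound on $X^n$ from Lemma \ref{moments}; the quadratic-variation control in Lemma \ref{lemmaBm} supplies the UCV (uniformly controlled variation) hypothesis needed. Each increment of $B^n$ equals $\sigma_{i-1}\widetilde{\mathbb{E}}_{i-1}[Z_i\bar m_i]=2\sqrt{\Delta}\,\sigma_{i-1}\vartheta_{i-1}\geq 0$ by Lemma \ref{lem:333}, so the limit $B$ is a.s. non-decreasing with $B_0=0$. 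From (\ref{eq:reft}) the law of $X^n$ under $\mathbb{Q}^n$ coincides with that of the reflected Euler scheme, which lives in $[L,\infty)$; hence $Y_t\geq L$ a.s.

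The delicate step, and the main obstacle, is to identify $B$ as the genuine reflection regulator, i.e. to show $\int_0^t 1_{(Y_s>L)}\,dB_s=0$. The key observation is the bound $\vartheta_{i-1}\leq \sigma_{i-1}\sqrt{\Delta}\,g_i(X_{i-1}^L)$ from Lemma \ref{lem:333}: the increment $B^n_{t_i}-B^n_{t_{i-1}}$ carries a Gaussian factor $\exp(-(X^n_{i-1}-L)^2/(2a_{i-1}\Delta))$, so contributions are negligible whenever $X^n_{i-1}$ is bounded away from $L$. Combining this with the joint convergence $X^n\to Y$, a straightforward argument using continuous bounded test functions $\varphi$ vanishing near $L$ shows $\int_0^t \varphi(Y_s)\,dB_s=0$, and the support property follows. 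Alternatively (and this is the route I would follow), uniqueness of the Skorokhod decomposition of the semimartingale $Y-\int_0^\cdot\sigma(Y_s)\,dW_s$ into a continuous non-decreasing process supported on $\{Y=L\}$ can be exploited via Theorem 1.2.1 of \cite{ap}: one verifies the Skorokhod problem conditions in the limit by testing against continuous functionals.

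For $\xi$ and $\mathcal{K}$, I would write (\ref{eq:defbEunderPtilde'}) as a product of factors $1+\varepsilon_i^n$ with uniformly bounded $\varepsilon_i^n$ and, using the $L^p$ moment controls of Lemma \ref{moments} together with the quadratic-variation estimate (\ref{Rmcontrol}) of Lemma \ref{lemmaBm}, pass to the limit $\prod(1+\varepsilon_i^n)\to \exp(\int\cdots)$ in the standard way. The $\bar b_{i-1}\Delta 1_{(U_i>p_i)}$ sum converges to $\int_0^t \bar b(Y_s)\,ds$ because $\mathbb{P}(U_i>p_i\mid\mathcal{F}_{i-1})=1-p_i\to 1$ away from the boundary (the boundary set has zero Lebesgue measure in time); $\sum\sigma'_{i-1}(R^n_{t_i}-R^n_{t_{i-1}})\to\int_0^t\sigma'(Y_s)\,dW_s$ by Kurtz--Protter; the $\Gamma^n$ contribution vanishes by (\ref{Rmcontrol}); and the Itô correction $-\tfrac12(\sigma'(Y_s))^2\,ds$ appears from the quadratic-variation limit of Lemma \ref{lemmaBm}. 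Rewriting the resulting exponential back in SDE form gives (\ref{exp}). The same mechanism applied to (\ref{eq:defbKunderPtilde'}), using (\ref{eq:defbXunderPtilde''}) to re-express $\sigma_{j-1}^{-1}(R^n_{t_j}-R^n_{t_{j-1}})+\sigma_{j-1}^{-2}(B^n_{t_j}-B^n_{t_{j-1}})=\sigma_{j-1}^{-2}(X^n_j-X^n_{j-1})$, yields (\ref{G}). Uniqueness of solutions to the reflected SDE and to these linear exponential SDEs identifies the subsequential limit uniquely, and the theorem follows.
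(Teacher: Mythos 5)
Your proposal is correct and follows essentially the same route as the paper: tightness from Theorems \ref{WandF} and \ref{relcomptheorem}, identification of the limiting equations via Kurtz--Protter weak convergence of stochastic integrals, the Gaussian factor $\exp(-(X_{i-1}-L)^2/(2a_{i-1}\Delta))$ in the increments of $B^n$ to verify $\int_0^t 1_{(Y_s>L)}\,dB_s=0$ against test functions supported away from $L$, Skorokhod-problem uniqueness via Theorem 1.2.1 of \cite{ap}, and pathwise uniqueness of the limiting system to upgrade subsequential to full-sequence convergence. The only cosmetic difference is that you deduce $Y_t\geq L$ from the reflected-Euler representation \eqref{eq:reft}, whereas the paper checks it directly with test functions supported in $(-\infty,L)$; both are fine.
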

        	
        	\begin{proof}
        		From Theorems \ref{WandF} and \ref{relcomptheorem}, the law of the vector  process $({%
        			X}^{n},B^{n},E^{n},\mathcal{K}^{n},R^{n},\Gamma ^{n})$ is relatively
        		compact. We extract a convergent (in law) subsequence from $({X}%
        		^{n},B^{n},E^{n},\mathcal{K}^{n},R^{n},\Gamma ^{n})$ which we re-index
        		and denote its limit as $(Y,B,\xi ,\mathcal{K},W,0).$ Following from (\ref
        		{eq:defbXunderPtilde''}) and (\ref{eq:defbEunderPtilde'}) we deduce that, for  $t\in [t_{i},t_{i+1})$ 
        		\begin{align*}
        			X_{t}^{n} =&X_{t_i}^{n}\notag\\
        			=&x_{0}+\sum_{j=0}^{i-1}\left(\sigma (X_{t_{j}}^{n}) (R_{t_{j+1}}-R_{t_{j}}^{n})+(B_{t_{j+1}}^{n}-B_{t_{j}}^{n})\right),\notag\\
        			=&x_{0}+\int_{0}^{t}\sigma ^{n}(X^{n},s)dR_{s}^{n}+B_{t}^{n},\notag\\
        			E_{t}^{n} =&E_{t_i}^n\\
        			=&1+\sum_{j=0}^{i-1}E_{j}^{n}\left( 
        			\bar{b}_{j}\Delta
        			1_{(U_{j+1}>p_{j+1})}+\sigma _{j}^{\prime
        			}(R_{t_{j+1}}^{n}-R_{t_{j}}^{n})+(\Gamma _{t_{j+1}}^{n}-\Gamma
        			_{t_{j}}^{n})\right),\notag\\
        			=&1+U^n_t+\int_0^t \widetilde{b}^n(E^{n},X^{n},s)ds +\int_{0}^{t}\widetilde{\sigma}^{n}(E^{n},X^{n},s)dR_{s}^{n}+%
        			\int_{0}^{t}E_{s}^{n}d\Gamma _{s}^{n},  
        		\end{align*}
        		where 
        		\begin{eqnarray*}
        			\sigma ^{n}(X^{n},s) &=&\sigma (X_{t_{j}}^{n}),~~~~~s\in \lbrack
        			t_{j}^{n},t_{j+1}^{n}) \\
        			\widetilde{\sigma}^{n}(E^{n},X^{n},s) &=&\sigma_j' (X_{t_{j}}^{n})E_{t_{j}}^{n},~~~~~s\in \lbrack
        			t_{j}^{n},t_{j+1}^{n}) \\
        			\widetilde{b}^{n}(E^{n},X^{n},s) &=&\bar{b}_j ^{
        			}(X_{t_{j}}^{n})E_{t_{j}}^{n}, ~~~~~s\in \lbrack t_{j}^{n},t_{j+1}^{n})\\
        			U^n_t &=& \sum_{j=0}^{i-1}E_{j}^{n} 
        			\bar{b}_{j}\Delta
        			1_{(U_{j+1}\le p_{j+1})}-\int_{t_i}^t \widetilde{b}^n(E^{n},X^{n},s)ds,\ \ \  ~~~~~t\in \lbrack t_{i}^{n},t_{i+1}^{n}).
        		\end{eqnarray*}
        		Again in the above identities we used the piecewise constant versions of the processes involved.

        		By applying Corollary 5.6 in \cite{KurtzProtter} and the fact that $({X}%
        		^{n},B^{n},E^{n},R^{n},\Gamma ^{n})$ converges to $(Y,B,\xi ,W,0)$ and that the process $U^n$ vanishes in the limit, we
        		deduce that $Y$ satisfies equation \eqref{reflected}, where $B$ is an
        		increasing process. A similar argument is applied to the convergence of $%
        		\mathcal{K}_{n}$ to $\mathcal{K}$. It remains to prove next that $(Y,B)$ is
        		the reflected diffusion in the domain $[L,\infty )$. For this it suffices to
        		show that $Y_{t}\geq L$ and that $\int_{0}^{t}1_{(Y_s>L)} dB_{s}=0,t\geq 0$. Then the result will follow, for
        		example, from Theorem 1.2.1 in \cite{ap}. Let $\varphi _{m}$ be the positive
        		continuous function with support in the interval $(-\infty ,L)$ given by 
        		$\varphi _{m}\left( x\right) =\min \left( 1,-m\left( x-L\right) \right)1_{(x<L)}$, $x\in \mathbb{R}$.
        		We deduce that 
        		\begin{equation*}
        			\widetilde{ \mathbb{P}}\left( Y_{t}\leq L-\frac{1}{m}\right) \leq \widetilde{\mathbb{E}}\left[
        			\varphi _{m}\left( Y_{t}\right) \right] =\lim_{n\mapsto \infty }\widetilde{%
        				\mathbb{E}}\left[ \varphi _{m}\left( X_{t}^{n}\right) \bar{M}_{n}^{n}\right]
        			=0.
        		\end{equation*}%
        		Hence the support of $Y$ is a subset of $[L,\infty ).$ Also observe that $B$ is an
        		increasing process and that due to Lemma  \ref{lem:333} and \eqref{Bn}  
        		\begin{align*}
        			&\mathbb{E}\left[ \int_{0}^{T}\psi _{m}\left( Y_{s}\right) dB_{s}\right]\\
        			=&\lim_{n\to \infty }\widetilde{\mathbb{E}}\left[ \left(
        			\sum_{i=0}^{n-1}\psi _{m}\left( X_{i-1}^{n}\right)
        			(B_{t_{i}}^{n}-B_{t_{i-1}}^{n})\right) \bar{M}_{n}^{n}\right] \\
        			=&\lim_{n\to \infty }\widetilde{\mathbb{E}}\left[ \left(
        			\sum_{i=0}^{n-1}\psi _{m}\left( X_{i-1}^{n}\right) \sigma _{i-1}\widetilde{%
        				\mathbb{E}}_{i-1}[Z_{i}\bar{m}_{i}]\right) \bar{M}_{n}^{n}\right] \\
        			\leq &2\|a\|_\infty\Delta\lim_{n\to \infty } \widetilde{\mathbb{E}}\left[ \left(
        			\sum_{i=0}^{n-1}\psi _{m}\left( X_{i-1}^{n}\right) g_{i}(X_{i-1}^{L})\right) 
        			\bar{M}_{n}^{n}\right] \\
        			\leq &C\exp \left( -\frac{1}{16\| a\|_\infty \Delta m^{2}}\right) \sqrt{\Delta }\lim_{n\to \infty }\widetilde{\mathbb{E}}\left[
        			\left( \sum_{i=0}^{n-1}\exp \left( -\frac{\left( X_{i-1}-L\right) ^{2}}{%
        				4a_{i}\Delta }\right) \right) \bar{M}_{n}^{n}\right] \\
        			\leq &C\lim_{n\to \infty }\exp \left( -\frac{n}{16\| a\|_\infty  Tm^{2}}\right) =0,
        		\end{align*}      
        		where $\psi _{m}~$is the positive continuous function with support in the
        		interval $(L+\frac{1}{2m},\infty )$ given by 
        		\begin{equation*}
        			\psi _{m}\left( x\right) =\min \left( 1,m\left( x-L-\frac{1}{2m}\right)
        			\right)1_{(x\geq L+\frac{1}{2m})}
        		\end{equation*}%
        		and the last inequality follows
        		from Lemma \ref{lem:essb}. We deduce that, almost surely, 
        		\begin{equation*}
        			\int_{0}^{T}\psi _{m}\left( Y_{s}\right) dB_{s}=0
        		\end{equation*}%
        		and so 
        		\begin{equation*}
        			0\leq \int_{0}^{t}1_{(Y_s>L)}dB_{s}\leq
        			\lim_{m\rightarrow \infty }\int_{0}^{t}\psi _{m}\left( Y_{s}\right) dB_{s}=0.
        		\end{equation*}
        		
        		Finally, let us note that the law of the process $(Y,B,\xi ,\mathcal{K},W,0)$
        		is uniquely identified by the identities (\ref{exp}), \eqref{G}, the pathwise uniqueness of 
        		equation (\ref{reflected}) and the fact that $W$ is a Brownian motion. This together with the
        		convergence along subsequences, gives us the convergence in law of the whole
        		sequence $({X}^{n},B^{n},E^{n},\mathcal{K}^{n},R^{n},\Gamma ^{n})$ under $%
        		{\mathbb{Q}}^n$ to the process $(Y,B,\xi ,\mathcal{K},W,0)$.
        	\end{proof}
        	
        	
        	\section{Proof of the probabilistic representation formula and applications}
        	\label{sec:5} In this section, we use Theorem \ref{bigT} in order to obtain the limits of the approximative representation for the first derivative obtained in %
        	\eqref{fprime}. We express the limits first as expectations over a
        	probability space under which $Y$ is driftless, in other words $Y$ satisfies %
        	\eqref{reflected}. Within this probability space, the process $\mathcal{E} $
        	satisfies \eqref{exp}. To emphasize this, we use $\widetilde {\mathbb{E}}$ to denote the
        	corresponding expectation. Finally, via a standard Girsanov transformation,
        	we express the derivatives as expectations over a probability space under
        	which $Y$ has drift $b$, in other words $Y$ satisfies \eqref{eq:rep}. Within
        	this probability space, the process $\mathcal{E}_t =\xi_t\exp\left(\frac{b}a(L)B_t\right)$ satisfies 
        	\begin{align}
        		\mathcal{E}_{t}=& 1+\int_{0}^{t}\mathcal{E}_{s}\left( b^{\prime
        		}(Y_{s})ds+\sigma ^{\prime }(Y_{s})dW_{s}+{\frac{b}{\sigma ^{2}}}\left(
        		L\right) dB _{s}\right) .  \label{e}
        	\end{align} 
        	
        	Note
        	that the Brownian motion $W$ appearing in  equations such as \eqref{eq:rep}, %
        	\eqref{e}, \eqref{exp}, \eqref{reflected} is a `generic' Brownian
        	motion, not necessarily the original Brownian motion appearing in the equation %
        	\eqref{x} satisfied by $X$.
        	
        	\begin{theorem}
        		\label{th:ch} Recall the set up of \eqref{eq:rep}. Then 
        		\begin{equation}  \label{limder}
        			\begin{aligned} \widetilde{\mathbb{E}}_{0,x}\left[ {f}^{\prime }\left(
        				X_{n}^{n}\right) E_{n}^{n}\mathcal{K} ^n_n\bar{M} _{n}^{n}\right]&\to
        				\widetilde{\mathbb{E}}_{0,x}\left[ {f}^{\prime }\left( Y_T\right)
        				\xi_T\mathcal{K}_T\right]=\mathbb{E}_{0,x}\left[ {f}^{\prime }\left(
        				Y_T\right) \mathcal{E}_T\right] \\
        				\sum_{i=1}^{n}\widetilde{\mathbb{E}}_{0,x}\left[ f_{i}\left( X_{i}^{n,x}\right)
        				E_{i-1}^{n}h_{i}\mathcal{K}_i\bar{M}_{i}^{n}\right] &\to
        				{\frac{b}{a}} \left( L\right)\widetilde{\mathbb{E}}_{0,x}\left[f(Y_T)
        				{\xi}_{\rho_T}\mathcal{K}_{T} 1_{(\tau\leq T)} \right]\\
        				&\hspace{1cm}={\frac{b}{a}} \left(
        				L\right)\mathbb{E}_{0,x}\left[f(Y_T) {\mathcal{E}}_{\rho_T} 1_{(\tau\leq T)}
        				\right]. \end{aligned}
        		\end{equation}
        	\end{theorem}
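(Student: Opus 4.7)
The strategy is to combine the joint weak convergence under $\mathbb{Q}^n$ of Theorem \ref{bigT} with the uniform moment estimates of Lemma \ref{moments} in order to pass to the limit in expectation of each summand in \eqref{fprime}, and then to conclude via a Girsanov change of measure that reintroduces the drift $b$ into the reflected equation. Since $d\mathbb{Q}^n/d\widetilde{\mathbb{P}}^n=\bar M^n_n$ with $\bar M^n$ a martingale (Lemma \ref{lem:333}), for any $\mathcal{F}_i$-measurable $\Phi_i$ one has $\widetilde{\mathbb{E}}[\Phi_i\bar M^n_i]=\widetilde{\mathbb{E}}[\Phi_i\bar M^n_n]=\mathbb{E}^{\mathbb{Q}^n}[\Phi_i]$, so both quantities in \eqref{limder} can be rewritten as expectations under $\mathbb{Q}^n$ and the problem reduces to passing to the limit in a sequence of expectations of path functionals of the jointly converging processes.

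For the first limit, the continuous mapping theorem applied to the evaluation-at-$T$ map, together with Theorem \ref{bigT}, gives $f'(X^n_n)E^n_n\mathcal{K}^n_n\Rightarrow f'(Y_T)\xi_T\mathcal{K}_T$ in law under $\mathbb{Q}^n$; Lemma \ref{moments} furnishes $L^p$-bounds on $E^n_n$ and $\mathcal{K}^n_n$ under $\mathbb{Q}^n$ uniform in $n$ for every $p\geq 1$, which together with $\|f'\|_\infty<\infty$ yields uniform integrability and hence convergence of expectations. To obtain the second equality, introduce the standard Girsanov density $d\mathbb{P}/d\widetilde{\mathbb{P}}=D_T:=\exp(\int_0^T b\sigma^{-1}(Y_s)\,dW_s-\tfrac12\int_0^T b^2a^{-1}(Y_s)\,ds)$ under which $Y$ becomes the drifted reflected diffusion \eqref{eq:rep}. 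Splitting the stochastic integral $\int_0^T ba^{-1}(Y_s)\,dY_s=\int_0^T b\sigma^{-1}(Y_s)\,dW_s+\frac{b(L)}{a(L)}B_T$ (valid because $dB_s$ is supported on $\{Y_s=L\}$) yields the pathwise identity $\mathcal{K}_T=D_T\exp(\frac{b(L)}{a(L)}B_T)$, hence $\xi_T\mathcal{K}_T=\mathcal{E}_T D_T$ since $\mathcal{E}_T=\xi_T\exp(\frac{b(L)}{a(L)}B_T)$, and therefore $\widetilde{\mathbb{E}}[f'(Y_T)\xi_T\mathcal{K}_T]=\mathbb{E}[f'(Y_T)\mathcal{E}_T]$.

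For the second limit, set $S_n:=\sum_{i=1}^n\mathbb{E}^{\mathbb{Q}^n}_{0,x}[f_i(X^{n,x}_i)E^n_{i-1}h_i\mathcal{K}^n_i]$. The plan has four steps. (i) The second summand in $h_i$ carries an extra factor $X^L_{i-1}1_{(U_i\leq p_i)}=O^E_{i-1}(\sqrt\Delta)$ by Remark \ref{rem:11}; combined with the uniform bounds from Lemmas \ref{boundedderivative} and \ref{moments}, this renders its contribution to $S_n$ asymptotically negligible in the sense of Definition \ref{def:2u}. (ii) For the principal term $\tfrac{b_{i-1}}{a_{i-1}}1_{(U_i\leq p_i)}$, the predictable projection $\widetilde{\mathbb{E}}_{i-1}[1_{(U_i\leq p_i)}\bar m_i]$ is, via Lemma \ref{lem:333}, asymptotically proportional to the regulator increment $\sigma_{i-1}\widetilde{\mathbb{E}}_{i-1}[Z_i\bar m_i]$ defining $B^n$ in \eqref{Bn}, so its accumulated effect converges (jointly with the other processes, by Theorem \ref{bigT}) to an integral against the local-time process $B$. (iii) Since $f(L)=0$ forces $f_i(L)=0$ and $f_i$ is Lipschitz uniformly in $i$ by Lemma \ref{boundedderivative}, any contribution from a summand $i$ whose boundary neighbourhood is followed by a further boundary visit before $T$ is annihilated by the tower property $f_i(X^n_i)=\mathbb{E}^{\mathbb{Q}^n}_i[f_j(X^n_j)\cdots]$ evaluated at the later visit where $f_j(L)=0$; consequently only summands with $t_i$ lying past $\rho_T$ survive in the limit, and the multiplicative structure of $E^n$ between $\rho_T$ and $T$ produces the factor $\xi_{\rho_T}$. (iv) Assembling (i)--(iii) and invoking Girsanov exactly as in the first limit, using the pathwise identity $\xi_{\rho_T}\mathcal{K}_T 1_{(\tau\leq T)}=\mathcal{E}_{\rho_T}D_T 1_{(\tau\leq T)}$ (valid because $B_T=B_{\rho_T}$ on $\{\tau\leq T\}$), yields $S_n\to\tfrac{b(L)}{a(L)}\widetilde{\mathbb{E}}[f(Y_T)\xi_{\rho_T}\mathcal{K}_T 1_{(\tau\leq T)}]=\tfrac{b(L)}{a(L)}\mathbb{E}[f(Y_T)\mathcal{E}_{\rho_T}1_{(\tau\leq T)}]$.

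The main obstacle is step (iii): localising the limiting integrator onto the last-exit set $\{\rho_T\}$ requires combining the path tightness from Theorem \ref{relcomptheorem}, the uniform boundary vanishing of the family $\{f_i\}_i$, and a careful control of the multiplicative increments of $E^n$ across successive excursions of $X^n$. This is the only place where genuine path-level information, beyond finite-dimensional marginals, plays a decisive role; all other steps reduce to standard weak-convergence machinery combined with the moment and Girsanov estimates already established.
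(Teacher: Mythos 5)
Your treatment of the first limit and of the Girsanov identities ($\mathcal{K}_T=\widetilde K_T e^{\frac ba(L)B_T}$, hence $\xi_T\mathcal{K}_T=\mathcal{E}_T\widetilde K_T$ and $\xi_{\rho_T}\mathcal{K}_T=\mathcal{E}_{\rho_T}\widetilde K_T$ on $\{\tau\le T\}$) matches the paper, as does step (i), which is the paper's Lemma \ref{additionalterm}. The problems are in the core of the second limit. Your steps (ii) and (iii) are two mutually incompatible routes, not consecutive steps: if you replace $\frac{b_{i-1}}{a_{i-1}}1_{(U_i\le p_i)}$ by its predictable projection and pass to an integral against $B$, you land on the expression $\frac ba(L)\mathbb{E}\left[\int_0^T\partial_xP_{T-s}f(L)\mathcal{E}_s\,dB_s\right]$ — the linear-equation form exploited in Theorem \ref{th:36} — and not on the $\rho_T$ formula; moreover $dB$ is carried by $\{s:Y_s=L\}\subseteq[0,\rho_T]$, so the claim that "only summands with $t_i$ past $\rho_T$ survive" is inconsistent with that route (those summands have $1_{(U_i\le p_i)}=0$ by definition of the last crossing). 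What the paper actually does involves no projection onto $B^n$ at all: unwinding $f_i(X_i)=\mathbb{E}_i[f(X_n)M^n_{i:n}]$ by the tower property, the product $1_{(U_i\le p_i)}M^n_{i:n}$ is \emph{exactly} the indicator that $t_i$ is the last crossing step, i.e. $1_{(\bar\rho^n_T=t_i)}$, and since these events are disjoint the whole sum collapses identically, for every fixed $n$, to $\frac ba(L)\,\widetilde{\mathbb{E}}_{0,x}\left[f(X^n_n)\mathcal{K}^n_n\,1_{(\bar\tau^n_T<T)}E^n_{\bar\rho^n_T-\Delta}\bar M^n_n\right]$. The mechanism is the indicators $1_{(U_j>p_j)}$ inside $M^n_{i:n}$, not an evaluation of $f_j$ at $L$.

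Second, even granting the collapse, the step you yourself flag as "the main obstacle" is genuinely missing rather than merely technical: to pass to the limit in $E^n_{\bar\rho^n_T-\Delta}$ you need the almost sure convergence (along a Skorokhod representation) of the discrete last-exit times $\bar\rho^n_T$ to $\rho_T$, jointly with $\bar\tau^n_T\to\tau$ and the fact that $P(\tau=T)=0$. This is Proposition \ref{rdthe}, and it is not standard weak-convergence machinery: $\rho_T$ is not a stopping time, and the paper must reverse time — after a Lamperti transform and a further Girsanov change of measure that turns the reversed martingale part into a Brownian motion — in order to reduce last-exit times to first-hitting times and use the regularity of the boundary point. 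Without supplying this, the identification of the limit as $\xi_{\rho_T}$ remains unproved.
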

        	
        	\begin{remark}
        		\label{rem:16}To keep the notation consistent, in (\ref{limder}) we kept the
        		tilde in the expression for the two limits, $\widetilde{\mathbb{E}}_{0,x}\left[ {%
        			f}^{\prime }\left( Y_T\right) \xi_T\mathcal{K}_T\right]$ and $\widetilde{\mathbb{%
        				E}}_{0,x}\left[f(Y_T) {\xi}_{\rho_T}\mathcal{K}_{T} 1_{\tau\leq T} %
        		\right]$. The expectation in these two quantities are taken with respect to a
        		probability measure, which we can denote by $\widetilde {\mathbb{P}}$ (via a slight abuse
        		of notation as this is not necessarily the measure $\widetilde {\mathbb{P}}$ defined in
        		Section \ref{sec:4}), under which $(Y,B)$ is the solution of (\ref{reflected}%
        		), the reflected equation in the domain $[L,\infty )$ with no drift term. One
        		can use an equivalent representation of the two limits as $\mathbb{E}_{0,x}%
        		\left[ {f}^{\prime }\left( Y_T\right) \xi_T\mathcal{K}_T\right]$ and $%
        		\mathbb{E}_{0,x}\left[f(Y_T) {\xi}_{T}\mathcal{K}_{T} 1_{\tau\leq
        			T} \right]$, where the expectation $\mathbb{E}_{0,x}$ in the two quantities
        		are taken with respect to a probability measure, which we can denote by $\mathbb{P}$
        		(again, via a slight abuse of notation as this is not necessarily the
        		original measure $\mathbb{P}$), under which $(Y,B)$ is the solution of (\ref{eq:rep}%
        		), the reflected equation in the domain $[L,\infty )$ that incorporates the
        		drift term $b$. The transfer from $\widetilde {\mathbb{P}}$ to $\mathbb{P}$ is done via Girsanov's
        		theorem. More precisely we have that $\displaystyle \left.\frac{d\widetilde {\mathbb{P}}}{d
        			\mathbb{P}}\right|_{{\mathcal{F}}_T}=\frac{1}{\widetilde{K}_T}$, where 
        		\begin{equation*}
        			\widetilde{K}_t:=\exp \left( \int_{0}^{t}b\sigma^{-1}(Y_{s})dW_{s}-\frac{1}{2}%
        			\int_{0}^{t}b^{2}a^{-1}(Y_{s})ds\right),\ \ \ t\ge 0 . 
        		\end{equation*}
        		The equivalent representation in (\ref{limder}) is obtained once we observe
        		that the process $\mathcal{E}$, as defined in the introduction, satisfies the
        		identity $\mathcal{E}_{T}=\frac{\xi_T\mathcal{K}_T}{\widetilde{K}_T}$.
        	\end{remark}
        	
        	In the next proof we will use the following representation for the random variables
        	$h_{i}$ appearing in (\ref{fprime}). 
        	\begin{equation*}  
        		h_{i}:={\frac{b}{a}}\left( L\right) \bar{h}_{i}+\widehat{h}_{i},
        	\end{equation*}%
        	where $\bar{h}_{i}:=1_{(U_{i}\leq p_{i})}$ and 
        	\begin{equation}
        		\widehat{h}_{i}:=X^L_{i-1}\left( {\varpi }_{i-1}+\partial _{i-1}\left( \frac{
        			b_{i-1}}{a_{i-1}}\right) \right) 1_{(U_{i}\leq p_{i})}.  \label{habar}
        	\end{equation}%
        	and ${\varpi }_{i-1}=\varpi (\frac{b}{a},X_{i-1})$ is defined as the bounded
        	continuous function (due to the regularity hypotheses on the coefficients $a$
        	and $b$)
        	\begin{equation*}
        		{\varpi }\left( g,x\right) =\left\{ 
        		\begin{array}{cc}
        			\int_0^1g'(\alpha x+(1-\alpha)L)d\alpha; & x>L ,\\ 
        			g^{\prime }\left( L+\right); & x=L.
        		\end{array}
        		\right.
        	\end{equation*}%
        	
        	Note that we can reduce our considertion to  just the term multiplying $\bar{h
        	}_i $ instead of the full formula for $h_i $ as $ \widehat{h}_i $ is negligible by virtue of Lemma \ref{additionalterm}.
        	
        	\begin{proof}[Proof of Theorem \protect\ref{th:ch}] 
        		The first result is immediate from Theorem \ref{bigT} and the moment
        		estimates in Lemma \ref{moments}. The second one, requires a rewriting of
        		the expectation using a path decomposition. For this, 
        		define $\bar{\rho}_T^{n}=\sup\{t_i;U_i<p_i\} $ and therefore $1_{(\bar{\rho}%
        			_T^{n}=t_i)}=1_{(U_i\leq p_i,U_{i+1}>p_{i+1},...,U_n>p_n)} $. In particular,
        		we define $\bar{\rho}_T^n=0 $ and $E^n_{-\Delta}=0 $ if $\{t_i;U_i<p_i\}
        		=\emptyset $. Furthermore, if we let  $\bar{\tau}_T^n=\inf\{t_i;U_i<p_i \} $ and using the
        		tower property for conditional expectations we obtain that 
        		\begin{align*}
        			\sum_{i=1}^{n}\widetilde{\mathbb{E}}_{0,x}\left[ f_{i}\left( X_{i}^{n}\right)
        			E_{i-1}^{n}\bar{h}_{i}\mathcal{K}^n_i\bar{M}_{i}^{n}\right] = &{\frac{b}{a}}%
        			\left( L\right)\widetilde{\mathbb{E}}_{0,x}\left[f(X^{n}_n){\mathcal{K}}%
        			_n\sum_{i=1}^{n}{E}^n_{i-1}1_{(\bar{\rho}^n_T=t_i)}\bar{M}^n_{i-1}\right] \\
        			=&{\frac{b}{a}}\left( L\right)\widetilde{\mathbb{E}}_{0,x}\left[f(X^{n}_n){%
        				\mathcal{K}}_n1_{(\bar{\tau}_T^{n}<T)} {E}^n_{\bar{\rho}_T^{n}-\Delta}\bar{M}^n%
        			_n\right].
        		\end{align*}
        		Then,
        		using Proposition \ref{rdthe}, $( \bar{\rho}_T^{n},\bar{\tau}_T^{n}) $
        		converges to $( \rho_T,\tau)$, where $\rho_T:=\sup\{s<T: Y_s=L\}$. This fact follows from the
        		convergence in distribution of the sequence $({X}^{n},R^{n},B^n, \Gamma^n,
        		E^{n},\mathcal{K} ^n)$ under ${\mathbb{Q}}^n$ to $(Y,W,0,\mathcal{E},\mathcal{K%
        		},\ell )$. The conclusion follows because $P(\tau=T)=0$.
        	\end{proof}
        	
        	We are now ready to prove that the limit deduced in Theorem \ref{th:ch}
        	corresponds to the derivative of $\mathbb{E}[f(X_{T})1_{(\tau >T)}]$ in \eqref{eq:0}. To do so, we re-introduce the explicit dependence on the
        	starting value $x$ of the various processes needed to obtain the
        	representation of the derivative and define the functions $\varphi
        	_{n},\varphi ,\psi :[T,\infty )\mapsto \mathbb{R}$ as follows 
        	\begin{align*}
        		\varphi \left( x\right) =&\mathbb{E}_{0,x}[f(X^x_{T\wedge \tau })]. \\
        		\psi \left( x\right) =&\mathbb{E}_{0,x}[f^{\prime }(Y_{T}){\mathcal{E}}%
        		_{T}]+{\frac{b}{\sigma ^{2}}}\left( L\right) \mathbb{E}_{0,x}\left[ f(Y_{T}){%
        			\mathcal{E}}_{\rho _{T}\vee 0}1_{(\tau \leq T)}\right] . \\
        		\varphi _{n}\left( x\right) =&\mathbb{E}_{0,x}[f(X_{T\wedge \tau^{n}
        		}^{n})].
        	\end{align*}

        	\begin{theorem}
        		\label{th:main} We assume that $b, \sigma\in C^2_b$  and that $\sigma$ is uniformly elliptic. Let $f\in C_{b}^{1}([L,\infty ))$ such that $f(L)=0$. Then $P_Tf$ is differentiable on $[L,\infty)$ and its derivative has the representation  \eqref{eq:0}. That is,
        		\begin{equation}\label{eq:0'}
        			\partial _{x}P_Tf(x)=\partial _{x}\mathbb{E}_{0,x}[f(X^x_{T\wedge \tau })]= \mathbb{E}%
        			_{0,x}[f^{\prime }(Y_{T}){\mathcal{E}}_{T}]+{\frac{b}{\sigma ^{2}}}\left(
        			L\right) \mathbb{E}_{0,x}\left[ f(Y_{T}){\mathcal{E}}_{\rho _{T}\vee 0}1_{(\tau
        				\leq T)}\right] .
        		\end{equation}
        	\end{theorem}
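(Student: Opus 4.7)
The plan is to obtain \eqref{eq:0'} by passing to the limit in the push-forward formula \eqref{fprime} of Proposition \ref{lem:pf} applied to the Euler approximation $X^{n,x}$. Since $\varphi_n(x)=\mathbb{E}[f(X^{n,x}_{T\wedge \tau_n})]=\mathbb{E}[f(X_n^{n,x})M_n^n]$, Proposition \ref{lem:pf} expresses $\partial_x\varphi_n(x)$ as the sum of two terms, and after the Girsanov change of measure introduced in Section \ref{sec:4} these become \eqref{fprimetilde1}--\eqref{fprmetilde2}, i.e.\ expectations under $\widetilde{\mathbb{P}}^n$ weighted by $\bar{M}^n_n$ and $\mathcal{K}^n_n$. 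The first step is therefore to record the identity
\begin{equation*}
\partial_x\varphi_n(x)=\widetilde{\mathbb{E}}_{0,x}\left[f'(X^{n}_n)E^n_n\mathcal{K}^n_n\bar{M}^n_n\right]+\sum_{i=1}^n\widetilde{\mathbb{E}}_{0,x}\left[f_i(X^{n}_i)E^n_{i-1}h_i\mathcal{K}^n_i\bar{M}^n_n\right].
\end{equation*}

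Next I would invoke Theorem \ref{th:ch} which identifies the limits of both terms on the right hand side: the first converges to $\mathbb{E}_{0,x}[f'(Y_T)\mathcal{E}_T]$ and, using the decomposition $h_i=\frac{b}{a}(L)\bar{h}_i+\widehat{h}_i$ together with the negligibility of the $\widehat{h}_i$ contribution (as recalled before the proof of Theorem \ref{th:ch}), the second converges to $\frac{b}{a}(L)\mathbb{E}_{0,x}[f(Y_T)\mathcal{E}_{\rho_T\vee 0}1_{(\tau\leq T)}]$. Consequently $\partial_x\varphi_n(x)\to\psi(x)$ for every $x\in[L,\infty)$.

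Independently, I would verify that $\varphi_n(x)\to\varphi(x)$ for every $x\geq L$. This is a classical weak-convergence statement for killed Euler schemes: the pair $(X^{c,n},M^n_n)$ has the law of the continuously interpolated Euler scheme killed at $L$, and under the standing smoothness and ellipticity assumptions on $b,\sigma$ the distribution of $X^{n,x}_{T\wedge \tau_n}$ converges to that of $X^x_{T\wedge \tau}$. In particular $\mathbb{E}[f(X^{n,x}_{T\wedge \tau_n})]\to\mathbb{E}[f(X^x_{T\wedge\tau})]$ because $f\in C_b^1$ vanishes at the boundary, so there is no mass issue at $L$.

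The main obstacle is the interchange of the limit and the derivative: pointwise convergence of $\varphi_n$ to $\varphi$ and pointwise convergence of $\partial_x\varphi_n$ to $\psi$ is not by itself enough to conclude that $\varphi$ is differentiable with $\varphi'=\psi$. To close this gap I would use Lemma \ref{boundedderivative}, which gives a uniform-in-$n$ bound $\|\partial_x f_i\|_\infty\leq \mathsf{C}e^{\mathsf{C}(T-t_i)}$ and hence a uniform bound on $\partial_x\varphi_n$ on any compact subset of $[L,\infty)$, together with the corresponding second-derivative estimate announced for Section \ref{app:6.2a} (a second-order analogue of Proposition \ref{lem:pf} applied to $\partial_x\varphi_n$). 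The second-derivative bound yields equicontinuity of the family $\{\partial_x\varphi_n\}_n$ on compact subsets of $[L,\infty)$. Combined with the pointwise convergence to $\psi$, Arzel\`a--Ascoli upgrades this to locally uniform convergence, and then the classical result on interchanging limits and derivatives gives that $\varphi$ is continuously differentiable on $[L,\infty)$ (including at $x=L$, where the derivative is understood as the right derivative) with $\varphi'=\psi$. This is precisely \eqref{eq:0'}. The delicate point here is the equicontinuity argument at $x=L$, where one must use the boundary condition $f_i(L)=0$ together with the estimates from Section \ref{sec:4} to control the second-derivative approximation uniformly up to the boundary.
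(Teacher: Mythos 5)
Your plan follows essentially the same route as the paper: pointwise convergence $\varphi_n\to\varphi$ (from \cite{gobetesaim}), pointwise convergence $\partial_x\varphi_n\to\psi$ (Theorem \ref{th:ch}), and then an upgrade to locally uniform convergence of the derivatives via the uniform second-derivative bound of Section \ref{app:6.2a}, so that the closedness of differentiation under uniform convergence yields $\varphi'=\psi$. All of these ingredients match the paper's proof.

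The one genuine gap is the regularity of $f$. The second-derivative bound of Proposition \ref{prop:30}, which is what delivers the equicontinuity of $\{\partial_x\varphi_n\}$, is an iteration whose terminal term is $\widetilde{\mathbb{E}}_{i,x}[f''(X_n^{n,x})\widehat{P}^n_{i:n}\bar{M}^n_{i:n}]$ (see \eqref{eq:85}); it therefore requires $f\in C^2_b$, not merely $f\in C^1_b$ as in the statement of the theorem. Your argument as written invokes this estimate for general $f\in C^1_b$, where it is not available. The paper resolves this by first proving \eqref{eq:0'} for $f\in C^2_b$ and then passing to the limit in the identity along a sequence $f_m\in C^2_b$ with $f_m'\to f'$ uniformly on $[L,\infty)$ and $f_m\to f$ pointwise (hence uniformly on compacts), using that both sides of \eqref{eq:0'} are continuous in $(f,f')$ for the uniform topology. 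You should add this approximation step; otherwise the skeleton of your argument is the correct one.
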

        	
        	\begin{proof}The argument we use is standard given all the previous results. Following  \cite{gobetesaim}, we
        		have that $\lim_{n\rightarrow \infty}\varphi^n=\varphi$ uniformly on any
        		compact set $K\subseteq [L,\infty) $.\footnote{%
        			In fact, pointwise convergence of $\varphi_n$ suffices for the argument.} 
        		Let us restrict first to the case that $f\in C_{b}^{2}([L,\infty ))$. We proved in Theorem \ref{th:ch} that $\partial_x\varphi _{n}$ converges
        		pointwise to $\psi$. Moreover the second derivative $\partial^2_x\varphi _{n}$  is uniformly bounded. This is proved separately in Section \ref%
        		{app:6.2a}. As a result, the convergence of  $\partial_x\varphi _{n}$ to $\psi$ is uniform on any compact set $K\subseteq [L,\infty) $. Since the differentiation operator is a closed operator in the uniform topology, we deduce immediately that the limit $\varphi$ is differentiable and its limit is indeed $\psi$.
        		The generalization to $f\in C_{b}^{1}([L,\infty ))$ is done by taking the limit of the identity \eqref{eq:0'} along a sequence $f_n\in C_{b}^{2}([L,\infty ))$ such that $\lim_{n\rightarrow \infty}f'_n =f'$ uniformly on $[L,\infty) $ and $\lim_{n\rightarrow \infty}f_n =f$ pointwise (and therefore $\lim_{n\rightarrow \infty}f_n =f$ uniformly on any compact $K\in [L,\infty) $).  
        	\end{proof}
        	
        	We deduce from formula \eqref{eq:0} the following alternative probabilistic representation of the derivative of the diffusion semigroup: 
        	
        	\begin{theorem}
        		\label{th:36}  Let $f\in C_{b}^{1}([L,\infty ))$, then $\partial_xP_Tf(x)=\mathbb{E}[f^{\prime
        		}(Y_{T})\Psi_{T}] $, $ x\geq L $, where $ \Psi_{T} $ is defined in \eqref{def:psi}.
        	\end{theorem}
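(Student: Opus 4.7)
The plan is to derive \eqref{eq:main} by applying It\^o's formula to the process $v'(t, Y_t)\Psi_t$, where $v(t,x) := P_{T-t} f(x)$ and $(Y,B)$ is the reflected diffusion from \eqref{eq:rep}. The strategy rests on a pair of cancellations that make $v'(t,Y_t)\Psi_t$ a local martingale; once this is established, taking expectations and using $v(T,\cdot)=f$, $v(0,x) = P_T f(x)$ gives $\mathbb{E}[f'(Y_T)\Psi_T] = \partial_x P_T f(x)$.

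Assume first $f\in C_b^3$ so that parabolic regularity for the uniformly elliptic Dirichlet problem on $[L,\infty)$ yields $v\in C^{1,3}([0,T]\times [L,\infty))$ with $\partial_t v + \tfrac{a}{2}v'' + bv' = 0$ and $v(t,L) = 0$. Evaluating this PDE at $x=L$ and using $\partial_t v(t,L) = 0$ produces the crucial boundary identity
\[
v''(t,L) = -\frac{2b(L)}{a(L)}\,v'(t,L), \qquad t\in [0,T],
\]
which is precisely the relation that makes the coefficient $\tfrac{2b(L)}{a(L)}$ in \eqref{def:psi} the \emph{correct} one. Writing $d\Psi_t = \Psi_t\bigl(b'(Y_t)\,dt + \sigma'(Y_t)\,dW_t + \tfrac{2b(L)}{a(L)}\,dB_t\bigr)$ and computing $d(v'(t, Y_t)\Psi_t)$ via It\^o's product rule, the $dt$-coefficient simplifies to $\Psi_t v''(t,Y_t)\bigl[-\tfrac{a'(Y_t)}{2}+\sigma(Y_t)\sigma'(Y_t)\bigr]$, which vanishes because $a=\sigma^2$ gives $a'=2\sigma\sigma'$; the $dB$-coefficient reduces (since $dB_t$ is supported on $\{Y_t=L\}$) to $\Psi_t\bigl[\tfrac{2b(L)}{a(L)}\,v'(t,L)+v''(t,L)\bigr]$, which vanishes by the boundary identity. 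Hence
\[
d\bigl(v'(t, Y_t)\Psi_t\bigr) = \Psi_t\bigl(v'(t,Y_t)\sigma'(Y_t)+v''(t,Y_t)\sigma(Y_t)\bigr)\,dW_t,
\]
a local martingale. The moment estimates of Lemma \ref{moments} for $\Psi$ combined with sup-norm control of $v'$ from Lemma \ref{boundedderivative} (and an analogous bound on $v''$) make this a true martingale, yielding the claim for $f\in C_b^3$. To extend to general $f\in C_b^1$ with $f(L)=0$, approximate by a sequence $f_n\in C_b^3$ with $f_n(L)=0$, $f_n\to f$ uniformly on compacts and $f_n'\to f'$ uniformly; both sides of \eqref{eq:main} are continuous under this convergence thanks to the $L^p$-bounds on $\Psi_T$.

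The main obstacle will be the regularity of $v''$ up to the boundary $L$ under minimal smoothness on $f$: control on the open half-line follows from standard parabolic estimates, but its continuous extension to $L$ is delicate and is what forces the $C_b^3$-then-approximation detour. An alternative route, closer to the paper's description of solving \emph{a linear equation in $\partial_x P_t f(x)$}, uses the strong Markov property at the first hitting time $\tau$ of $L$ in \eqref{eq:0'} to derive the integral identity
\[
\partial_x P_T f(x) = \mathbb{E}_x\bigl[f'(Y_T)\mathcal{E}_T\,1_{\tau>T}\bigr] + \mathbb{E}_x\bigl[\mathcal{E}_\tau\, \partial_x P_{T-\tau}f(L)\,1_{\tau\leq T}\bigr],
\]
and shows that $\mathbb{E}_x[f'(Y_T)\Psi_T]$ satisfies the same recursion upon noting that $\mathcal{E}_\tau=\Psi_\tau$ on $\{\tau\leq T\}$ (since $B_\tau=0$) and using the factorisation $\Psi_T = \mathcal{E}_\tau\,\widetilde\Psi_{T-\tau}$. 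This reduces matters to the boundary case $x=L$, which is then handled by a Girsanov reduction to the driftless case treated in Section \ref{sec:sc}; this avoids direct PDE bounds on $v''$ but introduces the path-dependence of the change of measure flagged in the introduction.
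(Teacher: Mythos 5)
Your primary (It\^o/PDE) route is genuinely different from the paper's proof and its algebra is correct: writing $v(t,x)=P_{T-t}f(x)$, the backward equation $\partial_tv+\tfrac{a}{2}v''+bv'=0$ together with the Dirichlet condition $v(t,L)=0$ does give $v''(t,L)=-\tfrac{2b(L)}{a(L)}v'(t,L)$, and I have checked that with this identity the $dt$- and $dB$-parts of $d\bigl(v'(t,Y_t)\Psi_t\bigr)$ cancel exactly as you claim. This is an attractive explanation of where the coefficient $\tfrac{2b(L)}{a(L)}$ in \eqref{def:psi} comes from. The paper does something entirely different: it never touches the PDE, but passes to the limit in the discrete push-forward formula to obtain the renewal-type equation \eqref{eqsur}, $\varsigma(t,x)=\mathbb{E}[f'(Y_t)\mathcal{E}_t]+\tfrac{b}{a}(L)\mathbb{E}\bigl[\int_0^t\varsigma(t-s,L)\mathcal{E}_s\,dB_s\bigr]$, checks that $\mathbb{E}[f'(Y_t)\Psi_t]$ solves it, and proves uniqueness by contraction on a small time interval plus iteration.

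The gap in your main route is the regularity claim. For $f\in C_b^3$ with only $f(L)=0$, it is \emph{not} true that $v\in C^{1,3}([0,T]\times[L,\infty))$: unless the first-order compatibility condition $\tfrac{a(L)}{2}f''(L)+b(L)f'(L)=0$ holds, $v''$ does not extend continuously to the corner $(T,L)$, so It\^o's formula cannot be applied on all of $[0,T]$. The standard repair is to stop at $T-\epsilon$ (for $t\le T-\epsilon$ the zero lateral data and smooth coefficients do give smoothness of $v$ up to $x=L$), which yields $\partial_xP_Tf(x)=\mathbb{E}\bigl[\partial_xP_\epsilon f(Y_{T-\epsilon})\Psi_{T-\epsilon}\bigr]$; you then still have to prove that $\partial_xP_\epsilon f\to f'$ boundedly and pointwise up to the boundary as $\epsilon\to0$, a statement of the same flavour as the theorem itself (provable from Gaussian heat-kernel bounds, but it must be supplied, not assumed). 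This is precisely the ``careful analysis at the boundary'' the paper invokes to explain why it avoids PDE arguments.

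Your alternative route has a more serious defect at the decisive step. The first-hitting-time recursion is degenerate at the boundary: under $\mathbb{P}_L$ one has $\tau=0$ a.s.\ and $\mathcal{E}_0=1$, so your identity collapses to $\partial_xP_Tf(L)=\partial_xP_Tf(L)$ and determines nothing about the boundary derivative, which is exactly the unknown you need. The paper's equation \eqref{eqsur} avoids this because it is indexed by the local time rather than the hitting time: at $x=L$ the unknown enters through $\mathbb{E}\bigl[\int_0^t\varsigma(t-s,L)\mathcal{E}_s\,dB_s\bigr]$, whose operator norm is controlled by $\tfrac{b}{a}(L)\,\mathbb{E}\bigl[B_T\max_{s\le T}\mathcal{E}_s\bigr]$ and is therefore a contraction for small $T$. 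Finally, resolving the boundary case ``by a Girsanov reduction to the driftless case of Section \ref{sec:sc}'' does not work as stated: that section treats only constant $\sigma$, and for variable coefficients the Girsanov density is path-dependent, which is the very obstruction the paper's approximation machinery is built to circumvent.
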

        	\begin{proof}
        		Let us restrict first to the case that $f\in C_{b}^{2}([L,\infty ))$, $ x>L $. Recall the last term in the iterated formula \eqref{fprime} and rewrite
        		it using a Taylor expansion with integral remainder and recalling that $%
        		f_{i}(L)=0$,
        		\begin{align*}
        			&	\sum_{i=1}^{n}\mathbb{E}\left[ f_{i}\left( X_{i}^{n,x}\right)
        			E_{i-1}^{n}h_{i}\bar{M}_{i}^{n}\right] = \sum_{i=1}^{n}\mathbb{E}\left[
        			f_{i}^{\prime }(L)\left( X_{i}^{n,x}-L\right) E_{i-1}^{n}h_{i}\bar{M}_{i}^{n}%
        			\right]  \\
        			& +\int_{0}^{1}d\alpha \mathbb{E}\left[ \sum_{i=1}^{n}f_{i}^{\prime \prime
        			}(\alpha X_{i}^{n,x}+(1-\alpha )L)(1-\alpha )\left( X_{i}^{n,x}-L\right)
        			^{2}E_{i-1}^{n}h_{i}\bar{M}_{i}^{n}\right] .
        		\end{align*}%
        		
        		Using the same arguments as in Lemma \ref{additionalterm} and the fact that $%
        		\Vert f_{i}^{\prime \prime }\Vert _{\infty }\leq \mathsf{M}e^{\mathsf{M}%
        			(T-t_{i})}$ (see Proposition \ref{prop:30}) we obtain that the last term in the above expression converges to zero. For the first
        		term, one uses arguments as in the proof of Theorem \ref{bigT}, including
        		the use of Girsanov's change of measure as in Section \ref{sec:4} to obtain
        		that 
        		\begin{align*}
        			\sum_{i=1}^{n}\mathbb{E}\left[ f_{i}^{\prime }(L)\left( X_{i}^{n,x}-L\right)
        			E_{i-1}^{n}h_{i}\bar{M}_{i}^{n}\right] \rightarrow & \frac{b}{a}(L)\widetilde{%
        				\mathbb{E}}\left[ \int_{0}^{T}\partial _{x}P_{T-s}f(L)\mathcal{K}_{s}%
        			\mathcal{E}_{s}dB_{s}\right] \\
        			=& \frac{b}{a}(L)\mathbb{E}\left[ \int_{0}^{T}\partial _{x}P_{T-s}f(L)%
        			\mathcal{E}_{s}dB_{s}\right] .
        		\end{align*}
        		
        		Note that we have used here that $B$ is the limit process for the process
        		whose increments are 
        		\begin{equation}
        			\label{eq:R}
        			\widetilde{\mathbb{E}}_{i-1}\left[ \left( X_{i}^{n,x}-L\right) h_{i}\bar{m}_{i}%
        			\right] =2\widetilde{\mathbb{E}}_{i-1}\left[ \left( \sigma
        			_{i-1}Z_{i}+X_{i-1}^{L}\right) 1_{(U_{i}\leq p_{i})}\right] =\Delta _{i}B^{n}.
        		\end{equation}%
        		Putting all the
        		limits together in \eqref{fprime}, one arrives at the linear equation 
        		\begin{equation*}
        			\partial _{x}P_{T}f(x)=\mathbb{E}[f^{\prime }(Y_{T}){\mathcal{E}}_{T}]+\frac{%
        				b}{a}(L)\mathbb{E}\left[ \int_{0}^{T}\partial _{x}P_{T-s}f(L)\mathcal{E}%
        			_{s}dB_{s}\right] .
        		\end{equation*}
        		
        		In other words, the function $\varsigma :[0,T]\times \lbrack L,\infty
        		)\rightarrow \mathbb{R}$ defined as $\varsigma \left( t,x\right) =\partial
        		_{x}P_{t}f(x)$ satisfies the equation%
        		\begin{equation}
        			\varsigma \left( t,x\right) =\mathbb{E}[f^{\prime }(Y_{t}){\mathcal{E}}_{t}]+%
        			\frac{b}{a}(L)\mathbb{E}\left[ \int_{0}^{t}\varsigma \left( t-s,L\right) 
        			\mathcal{E}_{s}dB_{s}\right] .  \label{eqsur}
        		\end{equation}
        		We note that the function $\widetilde{\varsigma}:[0,T]\times \lbrack L,\infty
        		)\rightarrow \mathbb{R}$ defined as $\widetilde{\varsigma}\left( t,x\right) =%
        		\mathbb{E}[f^{\prime }(Y_{t})e^{\frac{b}{a}(L)B_{t}}{\mathcal{E}}_{t}]$ also
        		satisfies equation (\ref{eqsur}). To see this, observe that  by direct integration
        		\begin{eqnarray*}
        			\widetilde{\varsigma}\left( t,x\right)  &=&\mathbb{E}[f^{\prime }(Y_{t}){%
        				\mathcal{E}}_{t}]+\frac{b}{a}(L)\mathbb{E}\left[ \int_{0}^{t}\mathbb{E}
        			_{s,Y_{s}}[f^{\prime }(Y_{t})e^{\frac{b}{a}(L)(B_{t}-B_{s})}{\mathcal{E}}
        			_{s,t}]\mathcal{E}_{s}dB_{s}\right]  \\
        			&=&\mathbb{E}[f^{\prime }(Y_{t}){\mathcal{E}}_{t}]+\frac{b}{a}(L)\mathbb{E}%
        			\left[ \int_{0}^{t}\widetilde{\varsigma}\left( t-s,L\right) \mathcal{E}_{s}dB_{s}%
        			\right] .
        		\end{eqnarray*}%
        		We show next that (\ref{eqsur}) has a unique solution and therefore that $$%
        		\partial _{x}P_{T}f(x)=\mathbb{E}[f^{\prime }(Y_{T})e^{\frac{b}{a}(L)B_{T}}{%
        			\mathcal{E}}_{T}].$$
        		
        		Assume that there are two solutions of (\ref{eqsur}) which are
        		uniformly bounded in $(t,x)\in \lbrack 0,T]\times \lbrack L,\infty )$.
        		Consider the difference and denote it by $G_{t}(x)$, $(t,x)\in \lbrack
        		0,T]\times \lbrack L,\infty )$. Taking $x=L$, one obtains the estimate: 
        		\begin{equation*}
        			\sup_{t\leq T}G_{t}(L)\leq \frac{b}{a}(L)\sup_{t\leq T}G_{t}(L)\mathbb{E}%
        			\left[ B_{T}\max_{s\leq T}\mathcal{E}_{s}\right] .
        		\end{equation*}%
        		This gives a contradiction if $T$ is smaller than some sufficiently small
        		threshold time value $T_{0}>0$. Using again the equation satisfied by $G_{t}(x)$, we obtain the uniqueness
        		for all $x\geq L$. The uniqueness for arbitrary $T$ is obtained by using the
        		Markov property and recursively merging the uniqueness in intervals
        		of the form $\left[ kT_{0},\left( k+1\right) T_{0}\right] $ and, in each
        		interval, replacing $f$ by $P_{kT_{0}}f$. 
        		
        		The generalization to $f\in C_{b}^{1}([L,\infty ))$ is done as in the proof of Theorem \ref{th:main}. The result in the case $ x=L $ is obtained by taking limits.
        		%
        	\end{proof}
        	\begin{remark}
        		One may prove Theorem \ref{th:36} without the use of Theorem \ref{th:main} by applying the Arzel\`a-Ascoli Theorem to $ (\partial_x\varphi_n )_{n\in\mathbb{N}} $ which will imply that any subsequence of $ (\partial_x\varphi_n )_{n\in\mathbb{N}} $ contains a subsequence that converges to a solution of \eqref{eqsur} which has a unique solution. In this case, one may consider  Theorem \ref{th:main} as an alternative expression for $\partial _{x}P_Tf(x)  $.
        		
        		We also remark that one can prove the above result using directly Lamperti transform in the particular case that $ b=\frac 12\sigma\sigma' $. This proof is restricted to the one dimensional case and does not seem to be applicable to multi-dimensional situations. 
        	\end{remark}
        	Finally, we use the arguments that we have developed so far
        	to introduce an integration by parts  formula that extends the classical    
        	Bismut-Elworthy-Li (BEL) formulation (see \cite{ElLi}) to the killed diffusion framework.

        	\begin{theorem}
        		\label{th:19}
        		Let $f:[L,\infty)\to\mathbb{R} $ be a measurable and bounded function such
        		that $f(L)=0 $ . Then 
        		%
        		we have for $ x\geq L $,
        		\begin{align*}
        			T\partial _{x}P_Tf(x)={\mathbb{E}}\left[f(Y_T)\int_{\rho_T\vee 0}^T
        			{\Psi}_s\sigma^{-1}(Y_s)dW_s\right].
        		\end{align*}
        		
        	\end{theorem}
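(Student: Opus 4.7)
The plan is to combine Theorem \ref{th:36} with a martingale argument localized on the last excursion of $Y$ away from the boundary. By a density argument (both sides are linear and continuous in $f$), it suffices to prove the formula for $f \in C_b^2([L,\infty))$ with $f(L)=0$, so that $u(t,x) := P_t f(x)$ is smooth enough on $[0,T]\times[L,\infty)$ to support Ito's formula; $u$ satisfies the Kolmogorov backward PDE $\partial_t u = \tfrac12 a\, \partial_x^2 u + b\, \partial_x u$ in the interior together with the Dirichlet boundary $u(t,L)=0$. Differentiating the PDE at $x=L$ yields the key identity $\partial_x^2 u(t,L) = -\tfrac{2b(L)}{a(L)}\, \partial_x u(t,L)$.

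The crucial observation is that $M_t := \Psi_t\, \partial_x u(T-t, Y_t)$ is a martingale on $[0,T]$. Applying Ito's formula to $M_t$ and substituting both the PDE and the SDE for $\Psi$ read off from \eqref{def:psi} (note the coefficient $2b(L)/a(L)$ on $dB_t$), the drift-in-$dt$ terms cancel and, crucially, the reflection-in-$dB_t$ terms cancel exactly by the boundary identity above, leaving $dM_t = \Psi_t\, \partial_x[\sigma\, \partial_x u](T-t, Y_t)\, dW_t$. Hence $\mathbb{E}[M_t] = M_0 = \partial_x P_T f(x)$ for every $t \in [0,T]$, so integrating in $t$ yields
\[
T\, \partial_x P_T f(x) = \mathbb{E}\!\left[\int_0^T \Psi_s\, \partial_x u(T-s, Y_s)\, ds\right].
\]
In parallel, applying Ito to $u(T-s,Y_s)$ on the last-excursion interval $(\rho_T \vee 0, T]$, on which $Y$ stays strictly above $L$ and $B$ is constant, and using $u(T-\rho_T, L)=0$ on $\{\tau \le T\}$, gives the pathwise representation
\[
f(Y_T) = P_T f(x)\, 1_{\{\tau>T\}} + \int_{\rho_T \vee 0}^T \partial_x u(T-s, Y_s)\, \sigma(Y_s)\, dW_s.
\]
Multiplying this by $\int_{\rho_T \vee 0}^T \Psi_s\, \sigma^{-1}(Y_s)\, dW_s$ and taking expectations, an Ito-isometry-type identity on the last excursion converts the cross-term into $\mathbb{E}\big[\int_{\rho_T \vee 0}^T \Psi_s\, \partial_x u(T-s, Y_s)\, ds\big]$, which one then identifies with $T\, \partial_x P_T f(x)$ via the displayed identity above (after checking that the residual contribution from $[0,\rho_T \vee 0]$ and the cross-term involving $1_{\{\tau>T\}}$ cancel, again by the martingale property of $M$).

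The main obstacle is that $\rho_T$ is not a stopping time, so Ito's isometry is not directly available for integrals of the form $\int_{\rho_T \vee 0}^T \cdots\, dW_s$. Consistent with the philosophy of the rest of the paper, I would resolve this by performing the entire computation at the level of the Euler approximation $X^n$: in this discrete setting the analog of $\rho_T$ is the measurable random index $\bar{\rho}^n_T = \sup\{t_i: U_i<p_i\}$ used in the proof of Theorem \ref{th:ch}, and the required identity reduces to an elementary summation-by-parts on the final macroscopic excursion, whose terms are controlled via the estimates of Lemma \ref{moments}. Passing to the limit $n\to\infty$ then uses the convergence in distribution of $(X^n, B^n, E^n, \mathcal{K}^n, R^n, \Gamma^n)$ under $\mathbb{Q}^n$ established in Theorem \ref{bigT} together with the convergence $(\bar{\rho}^n_T, \bar{\tau}^n_T) \to (\rho_T, \tau)$ already invoked in the proof of Theorem \ref{th:ch}.
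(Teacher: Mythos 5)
Your first step --- that $M_t:=\Psi_t\,\partial_xu(T-t,Y_t)$ is a local martingale because the $dt$-terms cancel by the backward PDE and the $dB_t$-terms cancel by the boundary identity $\tfrac12 a(L)\partial_x^2u(t,L)+b(L)\partial_xu(t,L)=0$ (obtained by evaluating the PDE at $x=L$ where $u$ vanishes) --- is an attractive observation and, granting enough boundary regularity of $\partial_x^2u$, it does yield $T\partial_xP_Tf(x)=\mathbb{E}\bigl[\int_0^T\Psi_s\,\partial_xu(T-s,Y_s)\,ds\bigr]$. Be aware, though, that the paper explicitly flags that PDE arguments of this kind ``require a careful analysis of the corresponding functions at the boundary'' and are not taken for granted; in particular the regularity of $\partial_x^2u$ up to $(t,x)=(0,L)$ without first-order compatibility of $f$ would need to be addressed rather than assumed.

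The genuine gap is in the second half. To pass from $\mathbb{E}\bigl[f(Y_T)\int_{\rho_T\vee0}^T\Psi_s\sigma^{-1}(Y_s)\,dW_s\bigr]$ to $\mathbb{E}\bigl[\int_{\rho_T\vee0}^T\Psi_s\,\partial_xu(T-s,Y_s)\,ds\bigr]$ you invoke an ``It\^o-isometry-type identity on the last excursion,'' but $\rho_T$ is not a stopping time, so for the martingales $N_t=\int_0^t\partial_xu(T-s,Y_s)\sigma(Y_s)\,dW_s$ and $L_t=\int_0^t\Psi_s\sigma^{-1}(Y_s)\,dW_s$ none of the cross-expectations $\mathbb{E}[N_TL_{\rho_T}]$, $\mathbb{E}[N_{\rho_T}L_T]$, $\mathbb{E}[N_{\rho_T}L_{\rho_T}]$ is controlled by optional stopping or the isometry; likewise $\mathbb{E}[1_{(\tau>T)}L_T]$ need not vanish since $1_{(\tau>T)}$ is only $\mathcal{F}_T$-measurable. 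These uncontrolled terms are precisely the content of the theorem, and asserting that they ``cancel by the martingale property of $M$'' does not close them. Your fallback --- redo everything on the Euler scheme and use a summation by parts on the final macroscopic excursion --- inherits the same difficulty, because $\bar\rho^n_T$ still depends on the whole path. The paper's proof is structured differently exactly to avoid this: the integration by parts in \eqref{eq:BEL} is performed \emph{conditionally on $\mathcal{F}_{i-1}$ over each deterministic mesh interval} $[t_{i-1},t_i]$, where adaptedness is not an issue; the boundary contribution $\sqrt{\Delta}\,f_i\,X^{L,\sigma}_{i-1}\sigma_{i-1}^{-1}1_{(U_i\le p_i)}$ is shown to be $O^E_{i-1}(\sqrt{\Delta})$ using $f_i(L)=0$; and the restriction to the last excursion $[\rho_T\vee0,T]$ only \emph{emerges in the limit} through the path decomposition with $\bar\rho^n_T$ and Proposition \ref{rdthe}, rather than being imposed on the stochastic integral from the outset. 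As written, your argument is missing the idea that makes the last-excursion integral accessible.
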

        	
        	We remark that the random variable which appears in the above BEL formula
        	corresponds to the one in the classical BEL diffusion  formula in the case when $B_T=0 $ and that it can be applied to the case that $ x=L $ in comparison with the result in \cite{Mall} cited in the Introduction. 
        	
        	\begin{proof}
        		The idea of the proof is  to  start from an approximation to the result in
        		Theorem \ref{th:36} but considering the interval $[0,t_j] $ instead of $%
        		[0,t_n] $ through the Markov property. 
        		Define $\widehat{E}%
        		_j^{n,x}:=\prod_{i=1} ^j\widehat{e}_i$ with
        		\begin{align*}
        			\widehat{e}_i:=1+{b}'_{i-1}
        			1_{(U_i>p_i)}\Delta+\sigma^{\prime
        			}_{i-1}1_{(U_i>p_i)}\Delta_iW
        			+\frac{b}{a}(L)X^L_i1_{(U_i\leq p_i)},
        		\end{align*}  
        		
        		Using \eqref{eq:R},  gives 
        		\begin{align*}
        			&\partial _{x}P_Tf(x) =\widetilde{\mathbb{E}}\left[\partial_if_i \widehat{E}_{i}^{n}%
        			\mathcal{K}^n_i \bar{M}_{i}^{n}\right]+o(1). 
        		\end{align*}
        		Now applying an integration by parts in the interval $[t_{i-1},t_i] $ in
        		the setting of Section \ref{sec:4}, one obtains  that 
        		\begin{align}
        			&\widetilde{\mathbb{E}}_{i-1}\left[ \partial_if_i \widehat{E}_{i}^{n}\mathcal{K}^n_i
        			\bar{M}_{i}^{n}\right]= \widetilde{\mathbb{E}}_{i-1}\left[ f_i \varkappa_i \widehat{E%
        			}_{i-1}^{n} \mathcal{K}^n_i\bar{M}_{i}^{n}\right]+\frac{O_{i-1}^E(1)}{\Delta}
        			\notag\\
        			&\varkappa_i:=\frac{Z_i}{\sigma_{i-1}\Delta}+\frac{X^{L,\sigma}_{i-1}}{%
        				\sigma_{i-1}\sqrt{\Delta}}1_{(U_i\leq p_i)}-\left(\frac{b}{a}\right)_{i-1}+\frac{\sigma^{\prime }_{i-1}(Z_i^2-\Delta)}{%
        				\sigma_{i-1}\Delta}1_{(U_i> p_i)}.
        			\label{eq:BEL}
        		\end{align}  
        		
        		The above formula gives at any time $i$ an approximative discrete time BEL formula which can
        		be multiplied by $\Delta $ and then summed over all $i =1,...,n$ . Using this argument one obtains
        		approximations to integrals as follows  
        		\begin{align*}
        			T\partial _{x}P_Tf(x)=&\sum_{i=1}^n\Delta \widetilde{\mathbb{E}}\left[
        			f_i\varkappa_i \widehat{E}_{i-1}^{n}\mathcal{K}^n_{i} \bar{M}_{i}^{n}%
        			\right] +o(1)\\=& \widetilde{\mathbb{E}}\left[
        			f(X_{n}) \mathcal{K}^n_{n}\sum_{i=1}^n
        			\left(\frac{Z_i}{\sigma_{i-1}}-\left(\frac{b}{a}\right)_{i-1}\Delta\right) \widehat{E}_{i-1}^{n} \bar{M}_{i}^{n}M^n_{i:n}
        			\right] +o(1).
        		\end{align*}
        		In particular, note that we have used for the last term in \eqref{eq:BEL}, the fact that $ Z_i^2-\Delta $ are increments of discrete time martingales with quadratic variation that converge to zero plus smaller order terms. On the other hand, for the second term in \eqref{eq:BEL}, we have that as $ f_i(L)=0 $ then
        		\begin{align*}
        			\tilde{\mathbb{E}}_{i-1}\left[ \sqrt{\Delta}f_i \frac{X^{L,\sigma}_{i-1}}{\sigma_{i-1}}e^{\kappa^n_i}1_{(U_i\leq p_i)}\right]=\tilde{\mathbb{E}}_{i-1}\left[\sqrt{\Delta}(f_i-f_i(L))\frac{X^{L,\sigma}_{i-1}}{\sigma_{i-1}}e^{\kappa^n_i}1_{(U_i\leq p_i)}\right]=O^E_{i-1}(\sqrt{\Delta}).
        		\end{align*}

        		The limit is obtained using the weak convergence arguments in Section \ref{sec:5} including  the proof of Theorem \ref{th:ch} and Proposition \ref{rdthe} as well as the reductions that can be obtained using Remark \ref{rem:11}. For example,
        		\begin{align*}
        			&\tilde{\mathbb{E}}_{i-1}\left[ (f_i-f_{i-1})\frac{Z_i}{\sigma_{i-1}}
        			e^{\kappa^n_i}\bar{m}_i\right]=\tilde{\mathbb{E}}_{i-1}\left[ \partial_{i-1}f_{i-1}Z_i^2
        			\bar{m}_i\right]+O^E_{i-1}(\sqrt{\Delta})\\
        			&\tilde{\mathbb{E}}_{i-1}\left[ f_{i-1}\frac{Z_i}{\sigma_{i-1}}
        			e^{\kappa^n_i}\bar{m}_i\right]=\tilde{\mathbb{E}}_{i-1}\left[ f_{i-1}\frac{\Delta_iB}{a_{i-1}}
        			\bar{m}_{i-1}\right]+\tilde{\mathbb{E}}_{i-1}\left[ f_{i-1}\frac{b_{i-1}}{a_{i-1}}Z_i^2
        			\bar{m}_i\right]+O^E_{i-1}(\sqrt{\Delta}).
        		\end{align*}
        		
        		In order to study the limit of the remaining non-negligible terms we will need to change the probabilistic characterization to a reflected one using the setting in Section \ref{app:res}.   As in that section, we will use the expectation notation $ \mathbb{E} $ to denote this setting. Therefore we need to prove that the $ L^2 $ limit of $ \xi^n_n $ exists where 
        		\begin{align*}
        			\xi^n_n:=\sum_{i=\bar{\rho}^n_Tn+1}^n\frac{\Delta_iW}{\sigma_{i-1}}\widehat{E}%
        			_{i-1}^{n,x}.
        		\end{align*}
        		The convergence of the above expression is obtained using the same argument as in the proof of Theorem \ref{th:main}.
        	\end{proof}
        	\section{Appendix}
        	
        	\label{sectionwithproofs}
        	
        	\subsection{Proof of Proposition \protect\ref{lem:pf}}
        	
        	\label{sec:dr} We split the proof of the proposition in several parts in this proof will also
        	be used in order to obtain the representation for the second
        	derivatives.
        	
        	The first is the technical lemma which is useful in itself. It also gives
        	some intuition on the derivative of the random variables $m_{i}$ (or $ 1-p_t $ in Section \ref{sec:sc}).
        	
        	In the formulas below we use the derivative with respect to the increments $%
        	\Delta _{i}W$ of the Brownian motion process $W$ in the same sense as $\partial
        	_{i}\equiv \partial _{X_{i}}$. That is, we denote by $D_{i}$ the derivative
        	with respect to the variable $\Delta _{i}W$. Hence, for $f(x,y)$, $x,y\in 
        	\mathbb{R}$, we let $D_{i}f(X_{i-1},\Delta _{i}W)\equiv \partial
        	_{y}f(X_{i-1},\Delta _{i}W)$.
        	
        	\begin{lemma}
        		\label{lem:24} Let $f:\mathbb{R}^2\to \mathbb{R}$ be a Lipschitz function,
        		then for $G_i=f(X_{i-1},\Delta_i W) $, we have 
        		\begin{align*}
        			\mathbb{E}_{i-1}\left[ G_{i}\partial _{i-1} p_{i}\right] =&-2%
        			\mathbb{E}_{i-1}\left[ D_iG_{i}1_{(U_{i}\leq p_{i})}\partial_{i-1}\left( 
        			\frac{ X_{i-1}-L}{\sigma _{i-1}} \right) \right] \\
        			&-2\mathbb{E}_{i-1}\left[ G_{i}1_{(U_{i}\leq p_{i})}\partial_{i-1}\left(%
        			\frac{ b_{i-1}\left(X_{i-1}-L\right) }{a _{i-1}} \right)\right] .
        		\end{align*}
        	\end{lemma}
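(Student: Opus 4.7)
The plan is to exploit a reflection-type algebraic identity for the product $p_i \cdot g_\Delta$, where $g_\Delta$ is the $N(0,\Delta)$-density of $\zeta:=\Delta_iW$, and then to invoke a single Gaussian integration by parts. First I would observe that, since $G_i$ and $D_iG_i$ are independent of $U_i$ and $\mathbb{E}[1_{(U_i\le p_i)}\mid X_{i-1},\Delta_iW]=p_i$, the target identity is equivalent to
\begin{equation*}
\mathbb{E}_{i-1}[G_i\partial_{i-1}p_i] = -2\,\partial_{i-1}\!\left(\tfrac{X_{i-1}-L}{\sigma_{i-1}}\right)\mathbb{E}_{i-1}[D_iG_i\,p_i] -2\,\partial_{i-1}\!\left(\tfrac{b_{i-1}(X_{i-1}-L)}{a_{i-1}}\right)\mathbb{E}_{i-1}[G_i\,p_i],
\end{equation*}
so that the right-hand side is expressed entirely through Gaussian integrals at the point $X_{i-1}$.

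The crux is the identity, obtained by completing the square in the exponent and writing $Y:=X_{i-1}-L$,
\begin{equation*}
p_i(X_{i-1},\zeta)\,g_\Delta(\zeta) = e^{\alpha}\, g_\Delta(\zeta+\beta),\qquad \alpha:=-\tfrac{2b_{i-1}Y}{a_{i-1}},\quad \beta:=\tfrac{2Y}{\sigma_{i-1}},
\end{equation*}
which is the discrete analogue of the shift of measure underlying the reflection principle for Brownian motion with drift. Differentiating this identity in $X_{i-1}$ with $\zeta$ held fixed, and using $g_\Delta'(\eta)/g_\Delta(\eta)=-\eta/\Delta$, gives
\begin{equation*}
\partial_{i-1}(p_ig_\Delta)=p_ig_\Delta\bigl[\partial_{i-1}\alpha-\tfrac{\zeta+\beta}{\Delta}\,\partial_{i-1}\beta\bigr].
\end{equation*}
Multiplying by $G(X_{i-1},\zeta)$ and integrating in $\zeta$, the $\partial_{i-1}\alpha$-piece immediately yields the second target term. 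For the other piece the factor $\zeta+\beta$ is decisive: the change of variables $\eta=\zeta+\beta$ converts the integrand into $G(X_{i-1},\eta-\beta)\,\eta\,g_\Delta(\eta)$, and since $\eta g_\Delta(\eta)=-\Delta g_\Delta'(\eta)$, a single Gaussian integration by parts transfers the derivative onto $G$, producing $\Delta\,D_iG$ which cancels the $1/\Delta$ prefactor; reverting the change of variables then gives the first target term.

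The main subtlety, and the only place where the Lipschitz hypothesis on $f$ enters, is the justification of the Gaussian integration by parts and the interchange of differentiation with the $\zeta$-integral. The Lipschitz bound gives $|G(X_{i-1},\zeta)|\le C(1+|\zeta|)$ pointwise, and since $\partial_{i-1}(p_ig_\Delta)$ carries at worst a linear factor in $\zeta$ times a Gaussian, the boundary terms at $\eta=\pm\infty$ vanish and the Leibniz interchange is legitimate. Collecting the two contributions yields exactly the right-hand side of the lemma.
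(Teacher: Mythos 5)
Your argument is correct, and it reaches the identity by a genuinely different route from the paper. The paper writes $\partial_{i-1}p_i=-2p_iA_i$, splits $A_i=B_i+C_i$ into an $\mathcal{F}_{i-1}$-measurable part $B_i$ (which contains the dangerous $\partial_{i-1}\bigl(\tfrac{(X_{i-1}-L)^2}{a_{i-1}\Delta}\bigr)$ term) and a part $C_i$ carrying the factor $\tfrac{\Delta_iW}{\Delta}$, applies the Gaussian integration by parts $\mathbb{E}_{i-1}[g(\Delta_iW)\tfrac{\Delta_iW}{\Delta}]=\mathbb{E}_{i-1}[g'(\Delta_iW)]$ to the $C_i$ piece, and then observes that the term produced by $D_ip_i=-2\tfrac{X_{i-1}-L}{\sigma_{i-1}\Delta}p_i$ exactly cancels the singular piece of $B_i$. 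You instead encode that cancellation from the outset in the tilted-Gaussian identity $p_i\,g_\Delta(\zeta)=e^{\alpha}g_\Delta(\zeta+\beta)$ (which is valid for the Euler step $X_i=X_{i-1}+b_{i-1}\Delta+\sigma_{i-1}\zeta$, as one checks by completing the square), so that after differentiating in $X_{i-1}$ only the two surviving terms $\partial_{i-1}\alpha$ and $\partial_{i-1}\beta$ appear, and a single shifted integration by parts finishes the proof. Both proofs are Gaussian integration by parts in disguise; yours has the advantage of making structurally transparent where the two coefficients $\partial_{i-1}\bigl(\tfrac{b_{i-1}(X_{i-1}-L)}{a_{i-1}}\bigr)$ and $\partial_{i-1}\bigl(\tfrac{X_{i-1}-L}{\sigma_{i-1}}\bigr)$ come from (they are the derivatives of the tilting parameters of the reflected kernel), at the cost of relying on the explicit exponential form of $p_i$ rather than on a mechanical cancellation; the reduction from $1_{(U_i\le p_i)}$ to $p_i$ via conditioning and the justification of the interchange of $\partial_{i-1}$ with the $\zeta$-integral under the Lipschitz hypothesis are both handled correctly.
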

        	
        	\begin{proof}
        		First note that $ \partial _{i-1}p_{i}= -2p_{i}A_{i}  $ with 
        		\begin{align}
        			A_{i}:=& \partial _{i-1}\left( \frac{(X_{i}-L)(X_{i-1}-L)}{a_{i-1}\Delta }%
        			\right)  \notag \\
        			=& \left( \frac{(X_{i-1}-L)}{a_{i-1}\Delta }\partial _{i-1}X_{i}+\frac{%
        				(X_{i}-L)}{a_{i-1}\Delta }-\frac{a_{i-1}^{\prime }(X_{i-1}-L)(X_{i}-L)}{%
        				a_{i-1}^{2}\Delta }\right)  
        			\notag\\
        			=& B_{i}+C_{i}  \notag \\
        			B_{i}:=& \partial _{i-1}\left( \frac{(X_{i-1}-L)^{2}}{a_{i-1}\Delta }\right)
        			+\partial _{i-1}\left( \frac{b_{i-1}\left( X_{i-1}-L\right) }{a_{i-1}}\right)
        			\notag\\
        			C_{i}:=& \partial _{i-1}\left( \frac{X_{i-1}-L}{\sigma _{i-1}}\right) \frac{{%
        					\ \ \Delta _{i}W}}{\Delta }.  \notag
        		\end{align}%
        		Therefore, 
        		\begin{equation*}
        			\mathbb{E}_{i-1}\left[ G_{i}\partial _{i-1} p_{i}\right] =-2%
        			\mathbb{E}_{i-1}\left[ G_{i}p_{i}\left(
        			B_{i}+C_{i}\right) \right] =-2\mathbb{E}_{i-1}\left[ G_{i}p_{i}\left(
        			B_{i}+C_{i}\right) \right] .  
        		\end{equation*}%
        		Note that the terms collected in $B_{i}$ are $\mathcal{F}_{{i-1}}$%
        		-measurable, hence non-random under $\mathbb{E}_{i-1}$, while $C_{i}$ are $%
        		\mathcal{F}_{{i}}$-measurable, hence random under $\mathbb{E}_{i-1}$. We
        		will use the integration by parts formula for the conditional law of  $C_{i}$. That is, we have
        		for a Lipschitz function $g $ 
        		\begin{equation}
        			\label{eq:ibp}
        			\mathbb{E}_{i-1}\left[ g({\Delta _{i}W})\frac{{\Delta _{i}W}}{\Delta }\right]
        			=\mathbb{E}_{i-1}\left[ D_{i}g({\Delta _{i}W})\right] =\mathbb{E}_{i-1}\left[
        			g^{\prime }({\Delta _{i}W})\right] .
        		\end{equation}%
        		
        		To finish the proof, we compute 
        		\begin{equation*}
        			D_{i}p_{i}=D_{i}e^{-2\frac{\left( X_{i-1}-L\right) \left( X_{i}-L\right) }{%
        					a_{i-1}\Delta }}=-2\frac{(X_{i-1}-L)}{\sigma _{i-1}\Delta }p_{i}.
        		\end{equation*}
        		
        		Then using \eqref{eq:ibp}, we have 
        		\begin{align*}
        			2\mathbb{E}_{i-1}\left[ G_{i}p_{i}C_{i}\right] =&-4\mathbb{E}_{i-1}\left[
        			G_{i}\partial _{i-1}\left( \frac{X_{i-1}-L}{\sigma _{i-1}}\right) \frac{%
        				(X_{i-1}-L)}{\sigma _{i-1}\Delta }p_{i}\right]
        			\\&+2\mathbb{E}_{i-1}\left[
        			D_{i}G_{i}p_{i}\partial _{i-1}\left( \frac{X_{i-1}-L}{\sigma _{i-1}}\right) %
        			\right] .
        		\end{align*}
        		The first term cancels with the corresponding terms appearing in $B_{i}$ and
        		therefore we have 
        		\begin{align*}
        			2\mathbb{E}_{i-1}\left[ G_{i}p_{i}A_{i}\right] =&2\mathbb{E}_{i-1}\left[
        			D_{i}G_{i}1_{(U_{i}\leq p_{i})}\partial _{i-1}\left( \frac{X_{i-1}-L}{\sigma
        				_{i-1}}\right) \right]\\
        			& +2\mathbb{E}_{i-1}\left[ G_{i}1_{(U_{i}\leq
        				p_{i})}\partial _{i-1}\left( \frac{b_{i-1}\left( X_{i-1}-L\right) }{a_{i-1}}%
        			\right) \right] .
        		\end{align*}
        		The claim follows.
        	\end{proof}
        	
        	We apply the above general lemma for the iteration of derivatives in different forms. Therefore, throughout this section we use
        	random variables ${\lambda }_{i}$ of the form%
        	\begin{equation}
        		{\lambda }_{i}:=\lambda ^{0}(X_{i-1},\Delta _{i}W)+1_{(U_{i}\leq
        			p_{i})}\lambda ^{1}(X_{i-1},\Delta _{i}W),  \label{assumptionH}
        	\end{equation}%
        	where $\lambda ^{0},\lambda ^{1}\in C_{b}^{1}(\mathbb{R}^{2},\mathbb{R})$.
        	We will also use the following operator notation for terms that often appear in the formulas:
        	\begin{equation*}
        		{{\bar{\Delta}}{\lambda }_{i} }:=\left( {\lambda }^{0}+2{%
        			\lambda }^{1}\right) (X_{i-1},\Delta _{i}W).
        	\end{equation*}
        	
        	\begin{lemma}
        		\label{lem:22} Let $f:\mathbb{R}\to\mathbb{R} $ be a Lipschitz function and $%
        		\lambda _{i}$ be of the form (\ref{assumptionH}) with $\lambda ^{0},\lambda ^{1}\in C_{b}^{1}(\mathbb{R}^{2},\mathbb{R})$. Then the following
        		derivative formula is satisfied almost surely
        		\begin{align*}
        			\partial _{i-1}\mathbb{E}_{{i-1}}\left[ {f}(X_{i})\lambda _{i}\bar{m}_{i}%
        			\right] =&\mathbb{E}_{i-1}\left[ \left( f^{\prime }(X_{i})\bar{A}_{i}^{1}({%
        				\lambda _{i}})+\left( f(X_{i})-f(L)\right) \bar{A}_{i}^{0}({\lambda _{i}}%
        			)\right) \bar{m}_{i}\right]\\
        			& +f(L)\partial _{i-1}\mathbb{E}_{{i-1}}\left[ {%
        				\lambda _{i}}\bar{m}_{i}\right] .
        		\end{align*}
        		Here, 
        		\begin{align*}
        			&\bar{A}_{i}^{1}({\lambda _{i}}):= 1_{(U_{i}> p_{i})}+(\partial _{i-1}X_{i}{%
        				\lambda _{i}}-1)+\frac{\sigma^{\prime }_{i-1}}{\sigma _{i-1}}{\bar{\Delta}}{%
        				\lambda _{i}}\left( {X_{i-1}-L}\right) 1_{(U_{i}\leq p_{i})}-({\bar{\Delta}}{%
        				\lambda _{i}}-1) 1_{(U_{i}\leq p_{i})} \\
        			&\bar{A}_{i}^{0}({\lambda _{i}}):= \partial _{i-1}\lambda
        			_{i}^{0}\\&+1_{(U_{i}\leq p_{i})}\left( \partial _{i-1}{\lambda _{i}}%
        			^{1}-D_{i}({\bar{\Delta}}{\lambda _{i}})\partial _{i-1}\left( \frac{X_{i-1}-L}{%
        				\sigma _{i-1}}\right) -{\bar{\Delta}}{\lambda }_{i}\partial _{{i-1}}\left( 
        			\frac{b_{{i-1}}\left( X_{i-1}-L\right) }{a_{{i-1}}}\right) \right).
        		\end{align*}
        	\end{lemma}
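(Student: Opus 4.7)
The plan is to reduce the differentiation to a setting in which the boundary indicator $1_{(X_i>L)}$ contributes no singular mass, and then to invoke Lemma~\ref{lem:24} to handle the derivative of the crossing probability $p_i$. First, I decompose $f(X_i) = f(L) + (f(X_i)-f(L))$, which splits
\[
\mathbb{E}_{i-1}[f(X_i)\lambda_i\bar m_i] \;=\; f(L)\,\mathbb{E}_{i-1}[\lambda_i\bar m_i] \;+\; \mathbb{E}_{i-1}[(f(X_i)-f(L))\lambda_i\bar m_i].
\]
Differentiating the first summand directly produces the trailing term $f(L)\,\partial_{i-1}\mathbb{E}_{i-1}[\lambda_i\bar m_i]$ of the statement, so it remains to compute $\partial_{i-1}\widetilde I$, where $\widetilde I := \mathbb{E}_{i-1}[(f(X_i)-f(L))\lambda_i\bar m_i]$.

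Next I integrate out $U_i$, which is independent of $W$ and $\mathcal{F}_{i-1}$. From $\bar m_i = (1+1_{(U_i\le p_i)})1_{(X_i>L)}$ one gets the identities $\mathbb{E}[\bar m_i \mid W,\mathcal{F}_{i-1}] = (1+p_i)1_{(X_i>L)}$ and $\mathbb{E}[\lambda_i\bar m_i \mid W,\mathcal{F}_{i-1}] = (\lambda^0 + \bar\Delta\lambda_i\,p_i)1_{(X_i>L)}$, hence
\[
\widetilde I \;=\; \mathbb{E}_{i-1}[(f(X_i)-f(L))(\lambda^0 + \bar\Delta\lambda_i p_i)1_{(X_i>L)}].
\]
The crucial observation is that $y\mapsto (f(y)-f(L))1_{(y>L)}$ is Lipschitz and vanishes at $y=L$, so its composition with $X_i$ is Lipschitz in $X_{i-1}$ with chain-rule derivative $f'(X_i)1_{(X_i>L)}\partial_{i-1}X_i$, and no singular contribution from the jump of $1_{(X_i>L)}$ arises. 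Differentiating under the expectation then produces three groups of terms: (a) the derivative through $X_i$, giving $f'(X_i)\partial_{i-1}X_i\,(\lambda^0 + \bar\Delta\lambda_i p_i)1_{(X_i>L)}$; (b) the derivatives $\partial_{i-1}\lambda^0$ and $\partial_{i-1}(\bar\Delta\lambda)$ in the first argument, multiplied by the remaining factors; and (c) the term $\mathbb{E}_{i-1}[G_i\,\partial_{i-1}p_i]$ with $G_i := (f(X_i)-f(L))\bar\Delta\lambda_i\,1_{(X_i>L)}$.

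For group (c) I apply Lemma \ref{lem:24} with this $G_i$, using the product rule $D_iG_i = \sigma_{i-1}f'(X_i)\bar\Delta\lambda_i 1_{(X_i>L)} + (f(X_i)-f(L))D_i(\bar\Delta\lambda_i)1_{(X_i>L)}$ (once again, the prefactor $(f(X_i)-f(L))1_{(X_i>L)}$ is continuous in $X_i$, so no delta at $L$ is generated). To put every expression back into the form of an expectation against $\bar m_i$ I use the algebraic identity $\bar m_i\,1_{(U_i\le p_i)} = 2\cdot 1_{(X_i>L)}\,1_{(U_i\le p_i)}$ combined with $\mathbb{E}[\bar m_i\mid W,\mathcal{F}_{i-1}] = (1+p_i)1_{(X_i>L)}$. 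Using the explicit identity $\sigma_{i-1}\partial_{i-1}((X_{i-1}-L)/\sigma_{i-1}) = 1 - (X_{i-1}-L)\sigma'_{i-1}/\sigma_{i-1}$ and the partition $1_{(U_i>p_i)} + 1_{(U_i\le p_i)} = 1$, the terms multiplying $f'(X_i)\bar m_i$ assemble into exactly $\bar A_i^1$, while the terms multiplying $(f(X_i)-f(L))\bar m_i$ assemble into $\bar A_i^0$.

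The principal obstacle is not conceptual but bookkeeping: after the chain rule and Lemma~\ref{lem:24} have been applied, every resulting term carries one of the prefactors $1_{(X_i>L)}$, $p_i 1_{(X_i>L)}$, or $1_{(X_i>L)}1_{(U_i\le p_i)}$, and one must rewrite each consistently in the $\bar m_i$-form so as to recognise the precise combinatorial shape of $\bar A_i^0$ and $\bar A_i^1$. A secondary point needing justification is the interchange of $\partial_{i-1}$ and $\mathbb{E}_{i-1}$ in the presence of the indicator $1_{(X_i>L)}$; this is exactly where the decomposition around $f(L)$ earns its keep, since the reduced integrand is a Lipschitz function of $X_i$ whose $X_{i-1}$-derivative is essentially bounded uniformly in $\Delta_iW$, after which dominated convergence applied to the Gaussian density of $\Delta_iW$ legitimises differentiating under the conditional expectation.
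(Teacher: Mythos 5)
Your proposal is correct and follows essentially the same route as the paper's proof: reduce to $f-f(L)$ so that $(f(\cdot)-f(L))1_{(\cdot>L)}$ is Lipschitz with a.e.\ derivative $f'(\cdot)1_{(\cdot>L)}$, decompose $\lambda_i\bar m_i$ into its $\lambda^0$- and $\bar\Delta\lambda_i$-parts, invoke Lemma \ref{lem:24} for the $\partial_{i-1}p_i$ contribution, and reassemble everything against $\bar m_i$ via $1_{(U_i\leq p_i)}\bar m_i=2\cdot 1_{(X_i>L)}1_{(U_i\leq p_i)}$. The only cosmetic differences are that you integrate out $U_i$ at the start (replacing the indicator by $p_i$) and split off $f(L)$ at the outset, whereas the paper first proves the case $f(L)=0$ and then applies it to $f-f(L)$.
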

        	
        	
        	\begin{proof}
        		First, we assume that the following boundary condition is satisfied $f(L)=0$%
        		. In this case, the proof of this lemma is straightforward algebra. In fact,
        		the proof follows from the following decomposition formula: 
        		\begin{equation*}
        			{\lambda }_{i}\bar{m}_{i}=\left( {\lambda }^{0}(X_{i-1},\Delta
        			_{i}W)+{\bar{\Delta}}{\lambda }_i(X_{i-1},\Delta _{i}W)1_{(U_{i}\leq
        				p_{i})}\right) 1_{(X_{i}>L)}.
        		\end{equation*}
        		
        		To the right of the above formula, we apply Lemma \ref{lem:24} to the random variables 
        		$G_i=f(X_i){\lambda }^{0}(X_{i-1},\Delta _{i}W)1_{(X_{i}>L)}$ and $%
        		G_i=f(X_i){\bar{\Delta}}{ \lambda }_i(X_{i-1},\Delta _{i}W) 1_{(X_{i}>L)}p_i$. In
        		order to do this, one remarks that the function $(f(x)-f(L))1_{(x>L)}$ is
        		Lipschitz and its composition with $x+b(x)\Delta+\sigma(x)\Delta_iW $ is
        		also Lipschitz. Furthermore, $\partial_x\left(f(x)
        		1_{(x>L)}\right)=f^{\prime }(x) 1_{(x>L)}$ almost everywhere. Therefore, using that $ 1_{(U_i\leq p_i)}\bar{m}_i=21_{(X_i>L)}1_{(U_i\leq p_i)} $, we
        		obtain 
        		\begin{align*}
        			\partial _{i-1}\mathbb{E}_{{i-1}}\left[ {f}(X_{i}){\lambda _{i}}\bar{m}_{i}%
        			\right] =& \mathbb{E}_{i-1}\left[ \left( \partial _{i-1}(f(X_{i})\lambda
        			_{i}^{0})+1_{(U_{i}\leq p_{i})}\partial _{i-1}(f(X_{i})\lambda
        			_{i}^{1})\right. \right.\\
        			&-D_{i}(f(X_{i}){\bar{\Delta}}{\lambda _{i}})\partial _{i-1}\left( 
        			\frac{X_{i-1}-L}{\sigma _{i-1}}\right) 1_{(U_{i}\leq p_{i})}\\
        			& \left. \left. -f(X_{i}){\bar{\Delta}}{\lambda }_{i}\partial _{{i-1}}\left( 
        			\frac{b_{{i-1}}\left( X_{i-1}-L\right) }{a_{{i-1}}}\right) 1_{(U_{i}\leq
        				p_{i})}\right) \bar{m}_{i}\right] .
        		\end{align*}%
        		After some algebraic rearrangement we get the result in the case that $%
        		f(L)=0$. In the general case, is enough to apply the above result to $%
        		g(x):=f(x)-f(L)$ in order to obtain the claim.
        	\end{proof}
        	
        	The result below will be applied in order to compute the first
        	derivative which means that we choose $({\lambda }_{i}^{0},{\lambda }%
        	_{i}^{1})=(1,0)$. Another case will appear when computing the second
        	derivative in Section \ref{app:6.2a}.
        	
        	\begin{corollary}
        		\label{cor:23} Let $\lambda _{i}^{0}=\lambda ^{0}(X_{i-1},\Delta _{i}W),$
        		where $\lambda ^{0}\in C_{b}^{1}(\mathbb{R}^{2},\mathbb{R})$ and assume that 
        		$f(L)=0$. Then we have 
        		\begin{equation*}
        			\partial _{{i-1}}\mathbb{E}_{{i-1}}\left[ {f}(X_{i}){\lambda _{i}^{0}m}_{i}%
        			\right] =\mathbb{E}_{{i-1}}\left[ \left( f^{\prime }(X_{i}){A}_{i}^{1}({%
        				\lambda _{i}^{0}})+f(X_{i}){A}_{i}^{0}({\lambda _{i}^{0}})\right) \bar{m}_{i}%
        			\right].
        		\end{equation*}%
        		Here, 
        		\begin{align*}
        			A_{i}^{1}({\lambda _{i}^{0}}):=& \partial _{i-1}X_{i}{\lambda _{i}^{0}}%
        			1_{(U_{i}>p_{i})}+\sigma _{i-1}\lambda _{i}^{0}\partial _{i-1}\left( \frac{%
        				X_{i-1}-L}{\sigma _{i-1}}\right) 1_{(U_{i}\leq p_{i})} \\
        			{A}_{i}^{0}({\lambda _{i}^{0}}):=& \partial _{i-1}\lambda
        			_{i}^{0}1_{(U_{i}>p_{i})}+D_{i}\lambda _{i}^{0}\partial _{i-1}\left( \frac{%
        				X_{i-1}-L}{\sigma _{i-1}}\right) 1_{(U_{i}\leq p_{i})}\\&+{\lambda }%
        			_{i}^{0}\partial _{{i-1}}\left( \frac{b_{{i-1}}\left( X_{i-1}-L\right) }{a_{{%
        						i-1}}}\right) 1_{(U_{i}\leq p_{i})}.
        		\end{align*}
        	\end{corollary}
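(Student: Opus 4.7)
My plan is to reduce the statement to Lemma~\ref{lem:22} by rewriting $\lambda_i^0 m_i$ in the form $\tilde{\lambda}_i \bar{m}_i$ with $\tilde{\lambda}_i$ of the form \eqref{assumptionH}. Since $m_i = 1_{(X_i > L)}1_{(U_i > p_i)}$ and $\bar{m}_i = 1_{(X_i > L)}(1 + 1_{(U_i \leq p_i)})$, the natural choice is $\tilde{\lambda}^0 = \lambda^0$ and $\tilde{\lambda}^1 = -\lambda^0$; both functions lie in $C_b^1(\mathbb{R}^2,\mathbb{R})$, so the hypotheses of Lemma~\ref{lem:22} are met. Then $\tilde{\lambda}_i = \lambda_i^0 \, 1_{(U_i > p_i)}$, ${\bar{\Delta}}\tilde{\lambda}_i = \tilde{\lambda}^0 + 2\tilde{\lambda}^1 = -\lambda_i^0$, and the identity $1_{(U_i > p_i)}(1 + 1_{(U_i \leq p_i)}) = 1_{(U_i > p_i)}$ yields $\tilde{\lambda}_i \bar{m}_i = \lambda_i^0 m_i$.

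With this substitution in place, the assumption $f(L)=0$ kills the $f(L)\,\partial_{i-1}\mathbb{E}_{i-1}[\tilde{\lambda}_i \bar{m}_i]$ boundary term in Lemma~\ref{lem:22} and leaves
\[
\partial_{i-1}\mathbb{E}_{i-1}[f(X_i)\lambda_i^0 m_i] = \mathbb{E}_{i-1}\bigl[\bigl(f'(X_i)\bar{A}_i^1(\tilde{\lambda}_i) + f(X_i)\bar{A}_i^0(\tilde{\lambda}_i)\bigr)\bar{m}_i\bigr].
\]
What remains is the purely algebraic identification $\bar{A}_i^1(\tilde{\lambda}_i) = A_i^1(\lambda_i^0)$ and $\bar{A}_i^0(\tilde{\lambda}_i) = A_i^0(\lambda_i^0)$. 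For $\bar{A}_i^1$, I would substitute ${\bar{\Delta}}\tilde{\lambda}_i = -\lambda_i^0$ and $\tilde{\lambda}_i = \lambda_i^0 \, 1_{(U_i > p_i)}$ into the defining formula; the additive constants combine via $1_{(U_i>p_i)} - 1 + 1_{(U_i \leq p_i)} = 0$, and then the identity $\sigma_{i-1}\partial_{i-1}\bigl((X_{i-1}-L)/\sigma_{i-1}\bigr) = 1 - \sigma'_{i-1}(X_{i-1}-L)/\sigma_{i-1}$ collapses the remaining $1_{(U_i \leq p_i)}$ contributions into the claimed $\sigma_{i-1}\lambda_i^0\partial_{i-1}\bigl((X_{i-1}-L)/\sigma_{i-1}\bigr)\,1_{(U_i \leq p_i)}$ term.

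For $\bar{A}_i^0(\tilde{\lambda}_i)$, I would use that $\partial_{i-1}\tilde{\lambda}^0 + 1_{(U_i \leq p_i)}\partial_{i-1}\tilde{\lambda}^1 = \partial_{i-1}\lambda_i^0\,(1 - 1_{(U_i \leq p_i)}) = \partial_{i-1}\lambda_i^0 \cdot 1_{(U_i > p_i)}$, while the sign flip in ${\bar{\Delta}}\tilde{\lambda}_i = -\lambda_i^0$ reverses the two minus signs in front of the $D_i$ and drift-derivative terms, producing exactly $A_i^0(\lambda_i^0)$. I expect the main obstacle to be nothing more than careful indicator-and-sign bookkeeping when propagating the choice $(\tilde{\lambda}^0, \tilde{\lambda}^1) = (\lambda^0, -\lambda^0)$ through the formulas of Lemma~\ref{lem:22}; no further probabilistic argument is needed beyond what that lemma already supplies.
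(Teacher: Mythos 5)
Your reduction is correct: with $(\tilde{\lambda}^0,\tilde{\lambda}^1)=(\lambda^0,-\lambda^0)$ one has $\tilde{\lambda}_i=\lambda_i^0 1_{(U_i>p_i)}$, hence $\tilde{\lambda}_i\bar{m}_i=\lambda_i^0 m_i$, the hypotheses of Lemma~\ref{lem:22} are satisfied, and the algebraic identifications you sketch do close: for $\bar{A}_i^1$ the constants cancel via $1_{(U_i>p_i)}-1+1_{(U_i\leq p_i)}=0$ and the surviving $1_{(U_i\leq p_i)}$ coefficient is $\lambda_i^0\bigl(1-\sigma'_{i-1}(X_{i-1}-L)/\sigma_{i-1}\bigr)=\sigma_{i-1}\lambda_i^0\partial_{i-1}\bigl((X_{i-1}-L)/\sigma_{i-1}\bigr)$; for $\bar{A}_i^0$ the combination $\partial_{i-1}\lambda_i^0(1-1_{(U_i\leq p_i)})$ and the sign flip from $\bar{\Delta}\tilde{\lambda}_i=-\lambda_i^0$ give exactly $A_i^0(\lambda_i^0)$. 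The paper organizes this differently: rather than substituting into the \emph{statement} of Lemma~\ref{lem:22}, it reruns the \emph{proof} of that lemma with the weight $m_i$ in place of $\bar{m}_i$, obtaining an intermediate identity with terms weighted by $m_i$ and by $1_{(U_i\leq p_i)}$, and then converts to the $\bar{m}_i$-weighted form at the end via $\mathbb{E}_{i-1}[A_i m_i+B_i 1_{(U_i\leq p_i)}]=\mathbb{E}_{i-1}\bigl[\bigl(A_i 1_{(U_i>p_i)}+\tfrac12 B_i 1_{(U_i\leq p_i)}\bigr)\bar{m}_i\bigr]$. Your route performs the $m_i\to\bar{m}_i$ conversion at the level of the integrand before differentiating and so uses Lemma~\ref{lem:22} as a black box, which is slightly more economical (no repetition of the integration-by-parts step of Lemma~\ref{lem:24}); the paper's route avoids the sign bookkeeping in $\bar{A}_i^0,\bar{A}_i^1$ at the cost of redoing the derivation. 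Both are valid and yield the same formulas.
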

        	
        	\begin{proof}
        		As in the proof of Lemma \ref{lem:22}, one obtains first that 
        		\begin{align*}
        			\partial _{{i-1}}\mathbb{E}_{{i-1}}\left[ {f}(X_{i}){\lambda _{i}^{0}m}_{i}%
        			\right] =& \mathbb{E}_{{i-1}}\left[ \partial _{i-1}(f(X_{i}){\lambda _{i}^{0}%
        			})m_{i}\right. \\&+2D_{i}(f(X_{i})\lambda _{i}^{0})\partial _{i-1}\left( \frac{X_{i-1}-L%
        			}{\sigma _{i-1}}\right) 1_{(X_i>L)}1_{(U_{i}\leq p_{i})}\\
        			& \left. +2f(X_{i}){\lambda }_{i}^{0}\partial _{{i-1}}\left( \frac{b_{{i-1}%
        				}\left( X_{{i-1}}-L\right) }{a_{{i-1}}}\right) 1_{(X_i>L)}1_{(U_{i}\leq p_{i})}\right] .
        		\end{align*}%
        		Then the result follows using that, for any random variables $%
        		A_{i},B_{i}$ that are $\mathcal{F}_{i}$-measurable, we have 
        		\begin{equation*}
        			\mathbb{E}_{i-1}[A_{i}m_{i}+B_{i}1_{(U_{i}\leq p_{i})}]=\mathbb{E}_{i-1}%
        			\left[ \left( A_{i}1_{(U_{i}>p_{i})}+\frac{1}{2}B_{i}1_{(U_{i}\leq
        				p_{i})}\right) \bar{m}_{i}\right] .
        		\end{equation*}
        	\end{proof}
        	
        	\begin{proof}[Proof of Proposition \protect\ref{lem:pf}]
        		The proof uses backward induction starting at $i= n $ and is simply an algebraic rearrangement of
        		Corollary \ref{cor:23} with $f:=f_{i}$ and $e_{i}\equiv 1$.  After the explicit
        		evaluation of the derivatives $\partial _{i-1}\left( \frac{X_{i-1}-L}{\sigma
        			_{i-1}}\right) $ and $\partial _{{i-1}}\left( \frac{b_{{i-1}}\left( X_{{i-1}%
        			}-L\right) }{a_{{i-1}}}\right) $ in the above formulas and factorization of
        		the random variable $\bar{m}_{i}$, one obtains the result. The integrability of the obtained formulas follows by the tower property. 
        	\end{proof}
        	
        	\subsection{Proofs of the lemmas in Section \protect\ref{sec:4}}
        	
        	\label{sec:gs}

        	\begin{proof}[Proof of Lemma \protect\ref{lem:333}]
        		For $k\in \mathbb{N}_{0}$, we do the calculation in parts. First, using \eqref{eq:defbXunderPtilde}, the $ \mathcal{F}_{i-1} $ conditional law of $ X_i $ and doing
        		appropriate change of variables one obtains: 
        		\begin{equation*}
        			\widetilde{\mathbb{E}}_{i-1}\left[ {Z}_{i}^{k}1_{\left( {X}_{i}>L\right) }\right]
        			=\widetilde{\mathbb{E}}_{i-1}\left[ \left( \frac{X_{i}-X_{i-1}}{\sigma _{i-1}}%
        			\right) ^{k}1_{\left( {X}_{i}>L\right) }\right] ={\Delta ^{(k+1)/2}}%
        			\int_{-X_{i-1}^{L,\sigma }}^{\infty }z^{k}\frac{e^{-\frac{z^{2}}{2}}%
        			}{\sqrt{2\pi \Delta }}dz.
        		\end{equation*}%
        		In a similar fashion, one gets 
        		\begin{equation*}
        			\widetilde{\mathbb{E}}_{i-1}\left[ {Z}_{i}^{k}1_{\left( {X}_{i}>L\right)
        			}1_{\left( U_{i}\leq p_{i}\right) }\right] ={\Delta ^{(k+1)/2}}%
        			\int_{X_{i-1}^{L,\sigma }}^{\infty }\left( z-\frac{2\left(
        				X_{i-1}-L\right) }{\sigma _{i-1}\sqrt{\Delta }}\right) ^{k}\frac{e^{-\frac{%
        						z^{2}}{2}}}{\sqrt{2\pi \Delta }}dz,
        		\end{equation*}%
        		%
        		%
        		%
        		%
        		%
        		%
        		%
        		%
        		%
        		%
        		which gives us \eqref{lem:3} and (\ref{ziid0}) after explicit computation of
        		the above integrals. Similarly, for $k=1$, \eqref{ziid} follows from the
        		equality 
        		\begin{equation*}
        			\widetilde{\mathbb{E}}_{i-1}\left[ {Z}_{i}\bar{m}_{i}\right] =\widetilde{\mathbb{E}}%
        			_{i-1}\left[ {Z}_{i}1_{\left( {X}_{i}>L\right) }\right] +\widetilde{\mathbb{E}}%
        			_{i-1}\left[ {Z}_{i}1_{\left( {X}_{i}>L\right) }1_{\left( U_{i}\leq
        				p_{i}\right) }\right] .
        		\end{equation*}%
        		%
        		%
        		%
        		%
        		%
        		%
        		%
        		%
        		%
        		%
        		
        		For $k=2$, one gets the result in a similar way. Moreover, for even $k$, we
        		have the following upper bound 
        		\begin{eqnarray*}
        			\widetilde{\mathbb{E}}_{i-1}\left[ {Z}_{i}^{k}\bar{m}_{i}\right] &=&\frac{\Delta
        				^{\frac{ k}{2}}}{\sqrt{2\pi }}\left( \int_{X_{i-1}^{L,\sigma }}^{\infty } z^{k}e^{-\frac{z^{2}}{2}
        			}dz+\sum_{j=1}^{k}\binom{k}{j}\left( X_{i-1}^{L,\sigma }\right)
        			^{j}2^{j}\int_{X_{i-1}^{L,\sigma }}^{\infty }z^{k-j}e^{-\frac{%
        					z^{2} }{2}}dz\right) \\
        			&\leq &\frac{\Delta ^{\frac{k}{2}}}{\sqrt{2\pi }}\left( \int z^{k}e^{- 
        				\frac{z^{2}}{2}}dz+\sum_{j=1}^{k}\binom{k}{j}2^{j}\int_{X_{i-1}^{L,
        					\sigma }}^{\infty }z^{k}e^{-\frac{z^{2}}{2}}dz\right) \\
        			&\leq &\frac{3^{k}\Delta ^{\frac{k}{2}}}{\sqrt{2\pi }}\int z^{k}e^{- 
        				\frac{z^{2}}{2}}dz.
        		\end{eqnarray*}
        		Hence (\ref{ziid3}) holds with $c_{k}=\frac{3^{2k}}{\sqrt{2\pi }}\int
        		z^{2k}e^{-\frac{z^{2}}{2}}dz$. Other estimates follow in a
        		similar manner. The bounds for 
        		$ \vartheta_{i-1} $ follow because for  
        		$a=X_{i-1}^{L,\sigma }\geq 0$, we have using \eqref{def:gi} 
        		\begin{equation}
        			\label{eq:A}
        			a\bar{\Phi}\left( a\right) =\frac{a}{\sqrt{2\pi }}\int_{a}^{\infty }e^{-%
        				\frac{x_{i}^{2}}{2}}dx_{i}\leq \frac{1}{\sqrt{2\pi }}\int_{a}^{\infty
        			}x_{i}e^{-\frac{x_{i}^{2}}{2}}dx_{i}=\frac{1}{\sqrt{2\pi }}e^{-\frac{a^{2}}{2%
        			}}={ \sqrt{\Delta }\sigma _{i-1}g_{i}(X_{i-1}).  }
        		\end{equation}
        	\end{proof}
        	
        	The following Lemma is needed for the proof of Lemma \ref{cumulativecontrol}
        	and other subsequent results.
        	
        	\begin{lemma}
        		\label{lem:essb} Let $F:\mathbb{R}\mapsto (0,\infty )$ be a positive valued
        		function with Gaussian decay at infinity. In other words, there exists $c>0$
        		such that 
        		\begin{equation}
        			F(x)\leq e^{-\frac{\left\vert x\right\vert ^{2}}{2c}}.  \label{eq:Fbu'}
        		\end{equation}%
        		For, $j,k\in \mathbb{N}$, $j+k<n$, $\ $define 
        		\begin{equation*}
        			\Xi _{jk}:=\Delta ^{\frac{1}{2}}\sum_{i=j}^{j+k}\widetilde{\mathbb{E}}\left[
        			\left. F(X_{i}^{L,\sigma })\bar{M}^n_{n}\right\vert \mathcal{F}_{{j}}\right] 
        			(\bar{M}^n_{j})^{-1}.
        		\end{equation*}%
        		Then, for $q\in \mathbb{N}$, there exists a
        		constant $C$ independent of $n$ and $T$ such that 
        		\begin{equation}
        			\sup_{x\geq L}\widetilde{\mathbb{E}}_{0,x}\left[ \max_{j\leq n-k-1}\left( \Xi _{jk}\right) ^{q}%
        			\bar{M}^n_{n}\right] ^{\frac{1}{q}}\leq C\sqrt{\left( k+1\right) \Delta }.
        			\label{eq:condb}
        		\end{equation}%
        		In particular, we have that 
        		\begin{equation}
        			\Delta ^{\frac{1}{2}}\sup_{x\geq L }\widetilde{\mathbb{E}}_{0,x}\left[ \left(
        			\sum_{i=0}^{n}F(X_{i}^{L,\sigma })\right) ^{q}\bar{M}^n_{n}\right] ^{\frac{1}{q%
        			}}\leq C\sqrt{T}.  \label{eq:essb}
        		\end{equation}
        	\end{lemma}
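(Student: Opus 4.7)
The strategy is to first convert the inner conditional expectation into one under the reflected-law measure $\mathbb{Q}^n$, and then bound it via an upper Gaussian estimate for the reflected Euler transition density.

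First I would use the fact that $\bar{M}^n$ is a $\widetilde{\mathbb{P}}$-martingale (immediate from \eqref{lem:3} in Lemma \ref{lem:333}): for $i\geq j$, the tower property gives
$$
\widetilde{\mathbb{E}}\bigl[F(X_i^{L,\sigma})\bar{M}^n_n\,\big|\,\mathcal{F}_j\bigr]\bigl(\bar{M}^n_j\bigr)^{-1}
=\widetilde{\mathbb{E}}\bigl[F(X_i^{L,\sigma})\bar{M}^n_{j:i}\,\big|\,X_j\bigr]
=\mathbb{E}^{\mathbb{Q}^n}\bigl[F(X_i^{L,\sigma})\,\big|\,X_j\bigr],
$$
where $\mathbb{Q}^n$ is the measure of \eqref{eq:reft} under which $X^n$ is the reflected Euler scheme. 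Consequently $\Xi_{jk}=\sqrt{\Delta}\sum_{i=j}^{j+k}\mathbb{E}^{\mathbb{Q}^n}[F(X_i^{L,\sigma})\mid X_j]$, a deterministic function of $X_j$, and it suffices to bound this pointwise uniformly in $X_j\geq L$ and in the initial point $x\geq L$.

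Next I would establish a uniform upper Gaussian bound for the $(i-j)$-step transition density under $\mathbb{Q}^n$. By Lemma \ref{lem:333}, the one-step density is $p^{(1)}(z,y)=g_l(y-z)+g_l(y+z-2L)$ for $y,z\geq L$, which is bounded by $2g_l(y-z)$ because $|y+z-2L|\geq |y-z|$ when $y,z\geq L$ (together with the symmetric-decreasing character of $g_l$). A careful Chapman--Kolmogorov iteration—exploiting the fact that the one-step kernel conserves total mass ($\widetilde{\mathbb{E}}_{l-1}[\bar m_l]=1$) to avoid the naive factor $2^{i-j}$—yields
$$
p^{\mathbb{Q}^n,i-j}(x,y)\leq \frac{C}{\sqrt{(i-j)\Delta}}\exp\!\left(-\frac{c(y-x)^2}{(i-j)\Delta}\right),\qquad x,y\geq L,
$$
with constants independent of $n$, $j$ and $x$ (using ellipticity and boundedness of the coefficients). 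Combining with the Gaussian decay $\int F\leq \sqrt{2\pi c}$ and the change of variables $z=(y-L)/(\sigma_i\sqrt{\Delta})$, one obtains for $i>j$,
$$
\mathbb{E}^{\mathbb{Q}^n}\bigl[F(X_i^{L,\sigma})\,\big|\,X_j=x\bigr] \leq \frac{C\sigma_i\sqrt{\Delta}}{\sqrt{(i-j)\Delta}}\int_0^\infty F(z)\,dz\leq \frac{C'}{\sqrt{i-j}},
$$
while the term $i=j$ is trivially bounded by $\|F\|_\infty\leq 1$. Summing and using $\sum_{l=1}^k l^{-1/2}\leq 2\sqrt{k}$ gives the pointwise, deterministic bound $\Xi_{jk}\leq C''\sqrt{(k+1)\Delta}$, uniformly in $j$ and $X_j$.

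Since $\Xi_{jk}$ is thus uniformly bounded by a deterministic constant, the $L^q$ moment estimate \eqref{eq:condb} is immediate, and the particular case \eqref{eq:essb} follows by choosing $j=0$, $k=n-1$ and using $n\Delta=T$ (the missing last term contributes at most $\sqrt{\Delta}\leq C\sqrt{T}$). The main obstacle is the derivation of the multi-step Gaussian kernel bound above: a naive iteration of the one-step estimate $p^{(1)}\leq 2g_l$ produces a prohibitive factor $2^{i-j}$, so one must instead exploit the probabilistic structure of reflection, for instance by a Skorokhod-type coupling of $X^n$ under $\mathbb{Q}^n$ with a non-reflected Euler scheme, or by iterating the explicit reflection-type formula of Lemma \ref{lem:333} inductively so that the contributions of the ``image'' Gaussian remain controlled.
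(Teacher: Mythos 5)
Your opening reduction --- using the martingale property of $\bar{M}^n$ (from \eqref{lem:3}) and \eqref{eq:reft} to rewrite $\Xi_{jk}$ as $\sqrt{\Delta}\sum_{i=j}^{j+k}\mathbb{E}^{\mathbb{Q}^n}\bigl[F(X_i^{L,\sigma})\mid X_j\bigr]$, and then aiming for a deterministic pointwise bound --- is exactly the paper's first step and is sound: since $\widetilde{\mathbb{E}}[\bar{M}^n_n]=1$, a uniform pointwise bound on $\Xi_{jk}$ immediately yields \eqref{eq:condb}. The genuine gap is the step you yourself call ``the main obstacle'': the uniform multi-step Aronson-type bound $p^{\mathbb{Q}^n,i-j}(x,y)\leq C((i-j)\Delta)^{-1/2}\exp\bigl(-c(y-x)^2/((i-j)\Delta)\bigr)$ for the variable-coefficient reflected Euler scheme. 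You assert it, correctly observe that naive Chapman--Kolmogorov iteration of the one-step bound $p^{(1)}\leq 2g$ produces a fatal factor $2^{i-j}$, and then only gesture at possible remedies (mass conservation, a Skorokhod-type coupling, a careful induction on the image Gaussian) without carrying any of them out. None of these is routine: for a reflected scheme with non-constant $\sigma$ this is in effect a discrete parametrix estimate, a substantial result in its own right (the paper only invokes Gaussian bounds of this flavour, via \cite{LM} and \cite{FKL}, much later and for the non-reflected scheme). As written, the proof has a hole precisely at its load-bearing step; everything downstream (the $C'/\sqrt{i-j}$ decay, the $\sum_l l^{-1/2}\leq 2\sqrt{k}$ summation) depends on it.

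For comparison, the paper's proof of Lemma \ref{lem:essb} avoids any multi-step density estimate. After the same passage to the reflected Euler scheme $(\mathcal{X},\Lambda)$, it shows by a purely one-step Gaussian computation that $\sqrt{\Delta}\,F(\mathcal{X}_i^{L,\sigma})$ is dominated, up to a constant, by $\Delta^{-1/2}$ times the conditional expectation of $\int_{t_i}^{t_{i+1}}F_1\bigl((\mathcal{X}_t-L)/(\sigma(\mathcal{X}_i)\sqrt{\Delta})\bigr)\,dt$; it then converts this time integral into a space integral against the local time $\bar{\Lambda}^y$ via the occupation-time formula, and concludes with the Skorohod-equation moment bound $\mathbb{E}[\bar{\Lambda}^y_{t_j\mapsto t_{j+k+1}}]\leq\widehat{c}\sqrt{(k+1)\Delta}$ of Lemma \ref{lem:37} (see \eqref{moduluso}). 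The factor $\sqrt{(k+1)\Delta}$ thus comes from the modulus of continuity of the local time, not from summing $l^{-1/2}$. To repair your argument you would either need to actually prove the reflected Aronson bound, or switch to this occupation-time route.
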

        	
        	\begin{proof}
        		Following \eqref{eq:reft}, we have that  
        		\begin{equation*}
        			\widetilde{\mathbb{E}}_{0,x}\left[ \max_{j\leq n-k-1}\left( \Xi _{jk}\right) ^{q}%
        			\bar{M}^n_{n}\right] ^{\frac{1}{q}}=\widetilde{\mathbb{E}}_{0,x}\left[ \max_{j\leq
        				n-k-1}\left( \widetilde{\Xi}_{jk}\right) ^{q}\right] ^{\frac{1}{q}},
        		\end{equation*}%
        		where 
        		\begin{equation}
        			\widetilde{\Xi}_{jk}=\Delta ^{\frac{1}{2}}\sum_{i=j}^{j+k}\widetilde{\mathbb{E}}%
        			_{0,x}\left[ \left. {F}\left( \frac{\mathcal{X}_{i}-L}{\sigma (%
        				\mathcal{X}_{i})\sqrt{\Delta }}\right) \right\vert \widetilde{\mathcal{F}}%
        			_{t_{j}}\right] ,  
        			\label{eq:cndide}
        		\end{equation}%
        		and the process $(\mathcal{X},\Lambda )$ is the solution of the reflected
        		Euler scheme as defined in Section \eqref{app:res}, starting from $\mathcal{X}%
        		_{0}=x$. In \eqref{eq:cndide}, the filtration $(\widetilde{\mathcal{F}}_{t})_t$ is the
        		filtration generated by the reflected Euler scheme $(\mathcal{X},\Lambda )$
        		and, via a slight abuse of notation\footnote{%
        			Note that $\widetilde{\mathbb{E}}_{0,x}$ also denotes the expectation with
        			respect to which the process $X^n$ starts from $x\geq L$ at time 0.} $\widetilde{%
        			\mathbb{E}}_{0,x}\left[ \cdot \right] $ also denotes the expectation with respect to
        		which the process $(\mathcal{X},\Lambda )$ starts from $x\geq L$ at time 0.
        		For $t\in \left[ t_{i},t_{i+1}\right] $, we have that 
        		\begin{equation}
        			\widetilde{\mathbb{E}}_{0,x}\left[ \left. \exp \left( -\frac{\left( \mathcal{X}%
        				_{t}-L\right) ^{2}}{2a\left( \mathcal{X}_{i}\right) c\Delta }\right)
        			\right\vert \widetilde{\mathcal{F}}_{t_{i}}\right] =\int_{L}^{\infty
        			}\exp \left( -\frac{\left( y_{t}-L\right) ^{2}}{2a\left( \mathcal{X}%
        				_{i}\right) c\Delta }\right) P^i_{\mathcal{X}_{t}}\left( dy_{t}\right) ,
        			\label{int1}
        		\end{equation}%
        		where $P^i_{\mathcal{X}_{t}}\left( dy_{t}\right) $ is the conditional law of $%
        		\mathcal{X}_{t}$ given $\widetilde{\mathcal{F}}_{t_{i}}$ i.e., 
        		\begin{equation*}
        			P^i_{\mathcal{X}_{t}}\left( dy_{t}\right) =\frac{1}{\sqrt{2\pi a\left( 
        					\mathcal{X}_{i}\right) \left( t-t_{i}\right) }}\left( \!\!\exp \left( -\frac{%
        				\left( y_{t}-\mathcal{X}_{i}\right) ^{2}}{2a\left( \mathcal{X}_{i}\right)
        				\left( t-t_{i}\right) }\right) \!\!+\exp \left(\!\! -\frac{\left( y_{t}-\left( 2L-%
        				\mathcal{X}_{i}\right) \right) ^{2}}{2a\left( \mathcal{X}_{i}\right) \left(
        				t-t_{i}\right) }\right) \right) \!\!dy_{t}.
        		\end{equation*}%
        		We integrate in (\ref{int1}) to get 
        		\begin{align*}
        			\widetilde{\mathbb{E}}_{0,x}\left[ \left. \exp \left( -\frac{\left( \mathcal{X}%
        				_{t}-L\right) ^{2}}{2a\left( \mathcal{X}_{i}\right) c\Delta }\right)
        			\right\vert \widetilde{\mathcal{F}}_{t_{i}}\right] =&\frac{\sqrt{c\Delta }}{%
        				\sqrt{c\Delta +t-t_{i-1}}}e^{-\frac{\left( L-\mathcal{X}_{i}\right) ^{2}}{%
        					2a\left( \mathcal{X}_{i}\right) \left( c\Delta +t-t_{i}\right) }}\left( \bar{%
        				\Phi}\left( b_{t}\right) +\bar{\Phi}\left( -b_{t}\right) \right) \\
        			\geq &\frac{\sqrt{c\Delta }}{\sqrt{c\Delta +t-t_{i}}}e^{-\frac{\left( L-%
        					\mathcal{X}_{i}\right) ^{2}}{2a\left( \mathcal{X}_{i}\right) c\Delta }}\geq 
        			\frac{\sqrt{c}}{\sqrt{c+1}}F\left( \frac{\mathcal{X}_{i}-L}{\sigma
        				\left( \mathcal{X}_{i}\right) \sqrt{\Delta }}\right)
        		\end{align*}
        		where $b_{t}= \frac{\sqrt{c\Delta}(L-\mathcal{X}_{i})}{\sqrt{a\left( \mathcal{X}_{i}\right)(c\Delta +t-t_{i})(t-t_{i})}}$%
        		. Hence if we denote by $F_{1}(x)=e^{-x^{2}/(2c)}$ , we have
        		\begin{align*}
        			\widetilde{\mathbb{E}}_{0,x}\left[ \left. \int_{t_{i}}^{t_{i+1}}F_{1}\left( 
        			\frac{\mathcal{X}_{t}-L}{\sigma \left( \mathcal{X}_{i}\right) \sqrt{%
        					\Delta }}\right) dt\right\vert \widetilde{\mathcal{F}}_{t_{i}}\right]
        			=&\int_{t_{i}}^{t_{i+1}}\widetilde{\mathbb{E}}_{0,x}\left[ \left. \exp \left( -%
        			\frac{\left( \mathcal{X}_{t}-L\right) ^{2}}{2a\left( \mathcal{X}_{i}\right)
        				c\Delta }\right) \right\vert \widetilde{\mathcal{F}}_{t_{i}}\right] dt \\
        			\geq &F\left( \frac{\mathcal{X}_{i}-L}{\sigma \left( \mathcal{X}%
        				_{i}\right) \sqrt{\Delta }}\right) \frac{\sqrt{c}}{\sqrt{c+1}}\Delta .
        		\end{align*}
        		We use the occupation time formula so that 
        		\begin{align}
        			a\left( \mathcal{X}_{t_{i}}\right) \int_{t_{i}}^{t_{i+1}}F_{1}\left( \frac{%
        				\mathcal{X}_{t}-L}{\sigma \left( \mathcal{X}_{i}\right) \sqrt{\Delta }}%
        			\right) dt=&\int_{t_{i}}^{t_{i+1}}F_{1}\left( \frac{\mathcal{X}_{t}-L}{%
        				\sigma \left( \mathcal{X}_{i}\right) \sqrt{\Delta }}\right) d\left[ \mathcal{X}%
        			\right] _{t}  \notag \\
        			=&\int_{L}^{\infty }F_{1}\left( \frac{y-L}{\sigma \left( \mathcal{X}%
        				_{i}\right) \sqrt{\Delta }}\right) \bar{\Lambda}_{t_{i}\mapsto
        				t_{i+1}}^{y}dy.  \label{eq:ineq}
        		\end{align}

        		In (\ref{eq:ineq}), we use the process $\bar{\Lambda}^{y}$ which denotes the
        		local time at $y$ of the solution of the reflected Euler scheme $(\mathcal{X}%
        		,\Lambda )$ as defined in Section \ref{app:res}. Hence: 
        		\begin{eqnarray}
        			\label{eq:B}
        			&&\Delta ^{\frac{1}{2}}F\left( \frac{\mathcal{X}_{i}-L}{%
        				\sigma (\mathcal{X}_{i})\sqrt{\Delta }}\right) \leq \sqrt{c^{-1}+1}\frac{1}{\Delta ^{\frac{1}{2}}}\left\vert
        			\left\vert \frac{1}{a}\right\vert \right\vert _{\infty }a\left( \mathcal{X}%
        			_{t_{i}}\right) \widetilde{\mathbb{E}}%
        			_{0,x}\left[ \left. \int_{t_{i}}^{t_{i+1}}F_{1}\left( \frac{\mathcal{X}_{t}-L%
        			}{\sigma _{i}\left( \mathcal{X}_{i}\right) \sqrt{\Delta }}\right)
        			dt\right\vert \widetilde{\mathcal{F}}_{t_{i}}\right] .
        		\end{eqnarray}%
        		The above estimates \eqref{eq:ineq} and \eqref{eq:B} imply that for ${\ C= 2\sqrt{c^{-1}+1}\left\vert
        			\left\vert \frac{1}{a}\right\vert \right\vert _{\infty } }$ 
        		\begin{eqnarray*}
        			\widetilde{\Xi}_{jk} &=&\Delta ^{\frac{1}{2}}\sum_{i=j} ^{{j+k }}\widetilde{\mathbb{E%
        			}}_{0,x}\left[ \left. F\left( \frac{\mathcal{X}_{i}-L}{\sigma (\mathcal{X}%
        				_{i})\sqrt{\Delta }}\right) \right\vert \widetilde{\mathcal{F}}_{t_{j}}\right] \\
        			&\leq &C\int_{L}^{\infty }\frac{1}{\Delta ^{\frac{1}{2}}}\exp \left( -\frac{%
        				\left( y-L\right) ^{2}}{2c\left\vert \left\vert a\right\vert \right\vert
        				_{\infty }\Delta }\right) \widetilde{\mathbb{E}}_{0,x}\left[ \left. \bar{\Lambda}_{t_{j}\mapsto t_{{j+k+1 }}}^{y}\right\vert \widetilde{\mathcal{F}}%
        			_{t_{j}}\right] dy.
        		\end{eqnarray*}%
        		Observe that using \eqref{moduluso}, we have 
        		\begin{equation*}
        			\widetilde{\Xi}_{jk}        \leq C\widehat{c}\sqrt{\left( k+1\right) \Delta }\int_{L}^{\infty }\frac{1}{\Delta ^{%
        					\frac{1}{2}}}\exp \left( -\frac{\left( y-L\right) ^{2}}{2c\|a\|_\infty\Delta 
        			}\right) dy.  
        		\end{equation*}%
        		From here, taking the power $ q $, the maximum and the expectation on both sides (\ref{eq:condb}) follows.
        		
        		For $k=n-1$ we can deduce from (\ref{eq:condb}),
        		that  (\ref{eq:essb}) is satisfied after observing that the last term in the sum in (%
        		\ref{eq:essb}), can be controlled too as
        		\begin{align*}
        			\widetilde{\mathbb{E}}_{0,x}\left[
        			\left( F(X_{t_{n}}^{L,\sigma })\right) ^{q}\bar{M}^n_{n}\right] \leq 1.
        		\end{align*}
        	\end{proof}
        	\begin{proof}[Proof of Lemma \protect\ref{cumulativecontrol}]
        		The estimates in (\ref{control2}) and (\ref{control3}) follow directly from
        		( \ref{ziid3}), so we only need to justify (\ref{control1}).
        		Since $X_{i-1}^{L}=\sqrt{\Delta }%
        		\sigma _{i-1}X_{i-1}^{L,\sigma }$ one obtains from \eqref{ziid} that $0\leq  \widetilde{\mathbb{E}}_{i-1}\left[ {Z}_{i}\bar{m}_{i}\right] \leq 2\sigma
        		_{i-1}\Delta g_{i}(X_{i-1}^{L})$. Therefore  (\ref{control1})
        		follows from (\ref{eq:condb}) with $q=1$ and $F\left( x\right) =e^{-\frac{%
        				x^{2}}{2}}$. 
        	\end{proof}
        	
        	\begin{proof}[Proof of Lemma \protect\ref{lemmaBm}] 
        		We divide the estimation for $ \theta _{t}^{R} $ into two parts using \eqref{ziid} as follows
        		\begin{align}
        			\theta _{t}^{R}=&\sum_{\left\{ i,t_{i}\leq t\right\} }\left((\Delta_iR^n)^2-\Delta\right)=\sum_{\left\{ i,t_{i}\leq t\right\} }\theta _{i}-4\sum_{\left\{ i,t_{i}\leq t\right\} }\Delta\left( X_{i-1}^{L,\sigma }\vartheta _{i-1}+ \vartheta _{i-1}^{2}\right).\label{eq:65}
        		\end{align}
        		Here, we have used the orthogonal sequence of random variables $ \theta_i $ defined as:
        		\begin{align*}
        			\theta_i:=&(\Delta_iR^n)^2-\widetilde{\mathbb{E}}_{i-1}\left[ (\Delta_iR^n)^2\bar{m}_{i}\right]\\
        			=&\left( {Z}_{i}-\widetilde{\mathbb{E}}_{i-1}\left[ {Z}%
        			_{i}\bar{m}_{i}\right] \right) ^{2}-\widetilde{\mathbb{E}}_{i-1}\left[ \left(
        			\left( {Z}_{i}-\widetilde{\mathbb{E}}_{i-1}\left[ {Z}_{i}\bar{m}_{i}\right]
        			\right) ^{2}\right) \bar{m}_{i}\right]  .
        		\end{align*}

        		Observe that, by orthogonality and the controls of the fourth moments in \eqref{ziid3}: 
        		\begin{equation}
        			\widetilde{\mathbb{E}}\left[ \left\vert \sum_{\left\{ i,t_{i}\leq t\right\}
        			}\theta _{i}\right\vert \bar{M}^n_{n}\right] ^{2}\leq \sum_{\left\{ i,t_{i}\leq
        				t\right\} }\widetilde{\mathbb{E}}\left[ \theta _{i}
        			^{2}\bar{m}_{i}\right] \leq c\Delta.  \label{bmc1}
        		\end{equation}%
        		Furthermore for the second term in \eqref{eq:65}, using \eqref{ziid} and Lemma \ref{lem:essb}, we obtain that 
        		\begin{align}
        			\widetilde{\mathbb{E}}\left[ \left\vert \sum_{i=1}^{n}\Delta  \vartheta _{i-1}^{2} \right\vert \bar{M}^n_{n}%
        			\right] \leq &c\Delta \widetilde{\mathbb{E}}\left[ \left(
        			\sum_{i=0}^{n}e^{-\left( X_{i-1}^{L,\sigma }\right) ^{2}}\right) \bar{M}^n_{n}%
        			\right] \leq c\sqrt{\Delta }  \label{bmc2} \\
        			\widetilde{\mathbb{E}}\left[ \left\vert \sum_{i=1}^{n}\Delta  \left( X_{i-1}^{L,\sigma }\vartheta _{i-1}\right) 
        			\right\vert \bar{M}^n_{n}\right] \leq &c\Delta \sum_{i=1}^{n}\widetilde{\mathbb{E}%
        			}\left[ \left( \sum_{i=0}^{n}X_{i-1}^{L,\sigma }e^{-\frac{\left(
        					X_{i-1}^{L,\sigma }\right) ^{2}}{2}}\right) \bar{M}^n_{n}\right]  \notag \\
        			\leq &c\Delta \sum_{i=1}^{n}\widetilde{\mathbb{E}}\left[ e^{-\frac{\left(
        					X_{i-1}^{L,\sigma }\right) ^{2}}{4}}\bar{M}^n_{n}\right] \leq c\sqrt{\Delta }%
        			.\ \ \ \ \ \   \label{bmc3}
        		\end{align}
        		The inequalities (\ref{bmc1}), (\ref{bmc2}), (\ref{bmc3}) give us (\ref%
        		{Bmcontrol}).
        		Next, note that from \eqref{eq:gammainZ}, we have  
        		\begin{align*}
        			\theta _{t}^{\Gamma }= &\sum_{\left\{ i>0,t_{i}\leq t\right\} }(\Gamma
        			_{t_{i}}^{n}-\Gamma _{t_{i-1}}^{n})^{2}=\sum_{\left\{ i>0,t_{i}\leq
        				t\right\} }\gamma _{i}^{2}.
        		\end{align*}
        		
        		Therefore, similar to (\ref{bmc2}) and (\ref{bmc3}) one deduces that 
        		\begin{equation*}
        			\sup_{t\in \left[ 0,T\right] }\widetilde{\mathbb{E}}\left[ \theta _{t}^{\Gamma }%
        			\bar{M}^n_{n}\right] =\widetilde{\mathbb{E}}\left[ \left( \sum_{i=1}^{n}\gamma
        			_{i}^{2}\right) \bar{M}^n_{n}\right] \leq c\sqrt{\Delta }.
        		\end{equation*}%
        		Hence (\ref{Rmcontrol}) holds true.
        	\end{proof}

        	\begin{remark}\label{yar} 
        		
        		Besides (\ref{Bmcontrol}), the process $\theta ^{R}$
        		also satisfies the slightly stronger control 
        		\begin{equation}
        			\tilde{\mathbb{E}}\left[ \left( \sup_{t\in \left[ 0,T\right] }\theta
        			_{t}^{R}\right) ^{2}\bar{M}_{n}^{n}\right] \leq c\left( T\right) \Delta .
        			\label{Bmcontrol'}
        		\end{equation}%
        		To show this one uses the martingale property of the process $t\rightarrow
        		\sum_{\left\{ i,t_{i}\leq t\right\} }\theta _{i}$, and the Doob's maximal
        		inequality to deduce that 
        		\begin{equation*}
        			\tilde{\mathbb{E}}\left[ \sup_{t\in \left[ 0,T\right] }\left\vert
        			\sum_{\left\{ i,t_{i}\leq t\right\} }\theta _{i}\right\vert ^{2}\bar{M}%
        			_{n}^{n}\right] \leq c\left( p,T\right) \tilde{\mathbb{E}}\left[ \left\vert
        			\sum_{\left\{ i,t_{i}\in \left[ 0,T\right] \right\} }\theta
        			_{i}^{2}\right\vert \bar{M}_{n}^{n}\right] \leq c\left( T\right) \Delta.
        		\end{equation*}%
        		Moreover, using \eqref{eq:essb}, we have 
        		\begin{eqnarray}
        			\tilde{\mathbb{E}}\left[ \left\vert \sum_{i=1}^{n}\Delta 
        			\vartheta _{i-1}^{2} \right\vert ^{2}\bar{M}%
        			_{n}^{n}\right]  &\leq &c\Delta^2 \tilde{\mathbb{E}}\left[ \left(
        			\sum_{i=0}^{n}e^{-\left( X_{i-1}^{L,\sigma }\right) ^{2}}\right) ^{2}\bar{M}%
        			_{n}^{n}\right] \leq c{\Delta },  \label{bmc2'} \\
        			\tilde{\mathbb{E}}\left[ \left\vert \sum_{i=1}^{n}\Delta 
        			\left( X_{i-1}^{L,\sigma }\vartheta _{i-1}\right) 
        			\right\vert ^2\bar{M}_{n}^{n}\right]  &\leq &c\Delta^2 \tilde{%
        				\mathbb{E}}\left[ \left( \sum_{i=0}^{n}X_{i-1}^{L,\sigma }e^{-\frac{\left(
        					X_{i-1}^{L,\sigma }\right) ^{2}}{2}}\right) ^{2}\bar{M}_{n}^{n}\right] \leq c%
        			{\Delta },\ \ \ \ \ \   \label{bmc3'}
        		\end{eqnarray}%
        		which gives (\ref{Bmcontrol'}%
        		). Similarly, one can show that %
        		\begin{equation}
        			\tilde{\mathbb{E}}\left[ \left( \sup_{t\in \left[ 0,T\right] }\theta
        			_{t}^{\Gamma }\right) ^{2}\bar{M}_{n}^{n}\right] \leq c\left( T\right)
        			\Delta .  \label{bmcontrol''}
        		\end{equation}

        	\end{remark}

        	Now we prove that we can neglect the terms related to $\widehat{h} $ (defined in \eqref{habar}) when
        	studying the limits of \eqref{fprime} which is used in the proof of Theorem %
        	\ref{th:ch}.
        	
        	\begin{lemma}
        		\label{additionalterm}We have that for any $\epsilon>0 $ and $x_0-\epsilon 
        		>L $  
        		\begin{equation*}
        			\lim_{\Delta \mapsto 0 }\sup_{|x-x_0|<\epsilon }\sum_{i=1}^{n}\widetilde{\mathbb{%
        					E} }\left[ f_{i}\left( X_{i}^{n,x}\right) {{\mathcal{K}}}_{i}E_{i-1}^{n} 
        			\widehat{ h}_{i}\bar{M}^n _{i} \right] =0. 
        		\end{equation*}
        	\end{lemma}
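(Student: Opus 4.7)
The plan is to exploit two cancellations simultaneously---that $\widehat h_i\propto (X_{i-1}-L)$ and that $f_i(L)=0$---so as to gain an overall factor of $\Delta$ per step, rather than the $\sqrt{\Delta}$ that would be insufficient to beat the $O(\Delta^{-1/2})$ growth from summing boundary-concentrated indicators. First I would observe from the explicit formula $\widehat h_i = X^L_{i-1}(\varpi_{i-1}+\partial_{i-1}(b_{i-1}/a_{i-1}))1_{(U_i\leq p_i)}$ together with the boundedness of $\varpi_{i-1}$ and $\partial_{i-1}(b/a)$ (consequence of $b,\sigma\in C^2_b$ and uniform ellipticity) that $|\widehat h_i|\leq C(X_{i-1}-L)1_{(U_i\leq p_i)}$. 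Second, the boundary condition $f_i(L)=0$ -- satisfied by every $f_i$ -- together with the uniform Lipschitz bound $\|\partial_x f_i\|_\infty\leq\mathsf{C}e^{\mathsf{C}(T-t_i)}\leq \mathsf{C}e^{\mathsf{C}T}$ from Lemma \ref{boundedderivative} yields $|f_i(X_i)|\leq \mathsf{C}e^{\mathsf{C}T}(X_i-L)$ on the support of $\bar m_i$, which contains $\{X_i>L\}$.

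Conditioning on $\mathcal{F}_{i-1}$, pulling out $\mathcal{F}_{i-1}$-measurable factors, and applying Cauchy--Schwarz to decouple the Radon--Nikodym increment $\mathcal{K}_i/\mathcal{K}_{i-1}=e^{\kappa_i}$ from the boundary-decay factors, I would derive the per-step estimate
\begin{align*}
\widetilde{\mathbb{E}}_{i-1}\bigl[|f_i(X_i)\widehat h_i|(\mathcal{K}_i/\mathcal{K}_{i-1})\bar m_i\bigr] \leq C(X_{i-1}-L)\,\widetilde{\mathbb{E}}_{i-1}\bigl[(X_i-L)^2 1_{(U_i\leq p_i)}\bar m_i\bigr]^{1/2}\widetilde{\mathbb{E}}_{i-1}\bigl[e^{2\kappa_i}\bar m_i\bigr]^{1/2}.
\end{align*}
The last factor is bounded uniformly in $i$ by a direct Gaussian moment generating function computation for $Z_i\sim N(0,\Delta)$ (and also follows from Lemma \ref{moments} with $\mathsf{E}=\mathcal{K}^n$). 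For the middle factor, after using $1_{(U_i\leq p_i)}\bar m_i=2\cdot 1_{(X_i>L)}1_{(U_i\leq p_i)}$, I would invoke Lemma \ref{lem:333} with $(k,m,p)=(0,0,2)$ to obtain $\widetilde{\mathbb{E}}_{i-1}[(X_i-L)^2 1_{(U_i\leq p_i)}\bar m_i]\leq C\Delta\,e^{-(X^{L,\sigma}_{i-1})^2/2}$. Writing the remaining $(X_{i-1}-L)=\sigma_{i-1}\sqrt{\Delta}\,X^{L,\sigma}_{i-1}$ and absorbing the linear polynomial via $xe^{-x^2/4}\leq Ce^{-x^2/8}$ produces the pointwise per-step bound $C\Delta\,G(X^{L,\sigma}_{i-1})$ with $G(x):=e^{-x^2/8}$.

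Summing over $i$ via the tower property and Hölder's inequality gives
\begin{align*}
\sum_{i=1}^n\widetilde{\mathbb{E}}\bigl[|f_i(X_i^{n,x})||E^n_{i-1}||\mathcal{K}_i||\widehat h_i|\bar M^n_i\bigr]\leq C\Delta\,\widetilde{\mathbb{E}}\Bigl[\max_{j}|E^n_j\mathcal{K}_j|^2\bar M^n_n\Bigr]^{1/2}\widetilde{\mathbb{E}}\Bigl[\Bigl(\sum_{i=1}^n G(X^{L,\sigma}_{i-1})\Bigr)^2 \bar M^n_n\Bigr]^{1/2},
\end{align*}
where the first factor is uniformly bounded in $n$ and in $x\geq L$ by Lemma \ref{moments} (used for both $E^n$ and $\mathcal{K}^n$ with a further Cauchy--Schwarz to handle the product), and the second factor is bounded by $C\sqrt{T}/\sqrt{\Delta}$ by Lemma \ref{lem:essb} applied with $F=G$ and $q=2$. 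The resulting bound $C\sqrt{T\Delta}\to 0$ is uniform in $x\geq L$ and in particular on $\{|x-x_0|<\epsilon\}$. The main obstacle is the joint exploitation of both vanishings at $L$: using $\widehat h_i$ alone only yields $O(\sqrt{\Delta})$ per step which sums to $O(1)$, not $o(1)$; the critical improvement comes from additionally using $f_i(L)=0$, and one must take care that the Cauchy--Schwarz split of $e^{\kappa_i}$ does not absorb this $\sqrt{\Delta}$---which is precisely why the uniform exponential moment of $e^{\kappa_i}$ is used rather than a pathwise bound on $\mathcal{K}_i$.
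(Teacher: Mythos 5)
Your proposal is correct and follows essentially the same route as the paper: both arguments hinge on combining the factor $X_{i-1}^{L}1_{(U_i\le p_i)}$ in $\widehat h_i$ with the vanishing $f_i(L)=0$ to gain a full factor of $\Delta$ per step (via the Gaussian estimates of Lemma \ref{lem:333}), which then beats the $O(\Delta^{-1/2})$ summation bound of Lemma \ref{lem:essb}. The paper merely packages the bookkeeping you carry out by hand — it writes $f_i(X_i)=\varpi(f_i,X_i)(X_i-L)$, notes the resulting conditional expectation is $O^{E}_{i-1}(\Delta^{1/2})$, and cites Lemma \ref{lem:35} (C) to absorb the $E^n_{i-1}\mathcal{K}_i$ factors and the sum, exactly as your Cauchy--Schwarz and moment-lemma steps do.
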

        	
        	\begin{proof}
        		Recall that the function $f_{i}$ satisfies $
        		f_{i}(L)=0 $. It follows from \eqref{habar} that  
        		\begin{align*}
        			\widetilde{\mathbb{E}}_{i-1}\left[ \left( f_{i}\left( X_{i}\right)
        			-f_{i}\left( L\right) \right)e^{{\ \kappa}_{i}} \widehat{h}_{i}\bar{M}^n_{i} %
        			\right] = \widetilde{\mathbb{E}}_{i-1}\left[\varpi(f_i,X_i) Y_i\bar{m}_i\right].
        		\end{align*}
        		
        		Here, $Y_i:=X_{i-1}^LX_i^Le^{{ \kappa}_{i}}\left( {\varpi }_{i-1}+\partial _{i-1}\left( \frac{
        			b_{i-1}}{a_{i-1}}\right) \right)  1_{(U_{i}\leq p_{i})}$ with $ \tilde{\mathbb{E}}_{i-1}[\varpi(f_i,X_i) Y_i\bar{m}_i]=O_{i-1}^E(\Delta^{1/2}) $. 
        		Therefore the result follows from Lemmas \ref{boundedderivative} and \ref%
        		{lem:35} (C).
        	\end{proof}
        	
        	{\noindent\textbf{Proof of Lemma \ref{boundedderivative}.}} Note that (\ref%
        	{recurrenceformula}) implies that 
        	
        	\begin{equation}  \label{eq:71a}
        		\partial _{x}f_{i}\left( x\right) =\widetilde{\mathbb{E}}_{i,x}\left[ {f}%
        		^{\prime }\left( X_{n}^{n,x}\right) E_{i:n}^{n}{{\mathcal{K}}}%
        		^n_{i:n} \bar{M}^n_{i:n}\right] +\sum_{j=i+1}^{n}\widetilde{\mathbb{E}}_{i,x}\left[ f_{j}\left(
        		X_{j}^{n,x}\right) E_{i:j-1}^{n}h_{j}{{\mathcal{K}}}^n_{i:j}\bar{M}^n_{i:j}
        		\right] .
        	\end{equation}
        	
        	
        	We proceed to prove the boundedness of each of the above terms separately.
        	First, there exists a constant $\mathsf{M}$ independent of ${i}$, $n$ and $%
        	x\geq L $ such that 
        	\begin{equation*}  
        		\sup_{x\geq L}\left\vert \widetilde{\mathbb{E}}_{i,x}\left[ {f}^{\prime }\left(
        		X_{n}^{n,x}\right) E_{i:n}^{n}{{\mathcal{K}}}^n_{i:n}\bar{M}^n_{i:n} \right]%
        		\right\vert \leq \| f^{\prime }\| _{\infty } \widetilde{\mathbb{E}}_{i,x}\left[|
        		E_{i:n}^{n}|{{\mathcal{K}}}^n _{i:n} \bar{M}^n_{i:n}\right] \leq \mathsf{M}\|
        		f^{\prime }\| _{\infty }e^{\mathsf{M}(T-t_i)}
        	\end{equation*}
        	as $\widetilde{\mathbb{E}}_{i,x}\left[| E_{i:n}^{n}|{{\mathcal{K}}}%
        	^n_{i:n}\bar{M}^n_{i:n} \right] $ is uniformly bounded following from Lemma \ref{moments} and Cauchy-Schwartz inequality.
        	In order to obtain the upper bound for the second term in \eqref{eq:71a}, we
        	use ``path decompositions'' for $\bar{h}_i
        	=1_{(U_i\leq p_i)}$. With this in mind, we define 
        	\begin{align*}
        		\bar{\mathsf{h}}_{j,i}:=& 1_{(U_j\leq p_j, U_k>p_k,k=j+1,...,i)}.
        	\end{align*}
        	As usual, we simplify the notation in the case that $i=n $ as $\bar{\mathsf{h%
        	}}_{j}=\bar{\mathsf{h}}_{j,n} $. The function $\bar{\mathsf{h}}_{j,i}$
        	corresponds to the event that the Euler approximation touches the boundary
        	in the interval $[t_{j-1}, t_j ]$ and does not touch the boundary again
        	before time $t_i$.
        	
        	Note that the following path decomposition properties are satisfied: 
        	\begin{align}
        		\bar{\mathsf{h}}_{i,k}\bar{\mathsf{h}}_{j,k}=&0,\quad i\neq j\leq k ,  \notag
        		\\
        		\sum_{j=1}^{i}\bar{\mathsf{h}}_{j,i}=& \left(1-1_{(U_j>p_j,j=1,...,i)}\right
        		).  \label{eq:tildeha}
        	\end{align}
        	{\ }
        	
        	Now we will use properties of $\bar{\mathsf{h}} $ in order to rewrite for $%
        	\mathcal{B}_{j-1}:=\frac{b_{j-1}}{a_{j-1}}+X_{j-1}^{L}\partial _{j-1}\left( \frac{%
        		b_{j-1}}{a_{j-1}}\right) $: 
        	\begin{align*}
        		&\sum_{j=i+1}^{n}\!\!\widetilde{\mathbb{E}}_{i,x}\!\!\left[ f_{j}\left(
        		X_{j}^{n,x}\right) E_{i:j-1}^{n}\mathcal{B}_{j-1}\bar h_{j}{{\mathcal{K}}}
        		_{i:j}\bar{M}^n_{i:j}\right]= \widetilde{\mathbb{E}}_{i,x}\left[ f(X^{n,x}_n){{%
        				\mathcal{K}}}_{i:n}\bar{M}^n_{i:n} \!\!\sum_{j=i+1}^{n}\!\!E_{i:j-1}^{n}\mathcal{B}_{j-1}\bar{\mathsf{h}}_{j}%
        		\right]\!\!.
        	\end{align*}
        	Next, using $\left|\sum_{j=i+1}^{n}E_{i:j-1}^{n}\mathcal{B}_{j-1}\bar{\mathsf{h}}%
        	_{j}\right|\leq \max_{j\geq i+1}| E_{i:j-1}^{n}\mathcal{B}_{j-1}|$ and the moment
        	properties in Lemma \ref{moments}, we obtain:
        	
        	\begin{align*}
        		\left|\sum_{j=i+1}^{n}\widetilde{\mathbb{E}}\left[ f_{j}\left(
        		X_{j}^{n,x}\right) E_{i:j-1}^{n}\mathcal{B}_{j-1}\bar h_{j}{{\mathcal{K}}}
        		_{i:j}\bar{M}^n_{i:j}|X_{i}=x\right] \right|\leq Ce^{C(T-t_i)}\|f\|_\infty.
        	\end{align*}
        	
        	$\hfill\Box$
        	
        	A similar result is valid for second derivatives and is treated in Section %
        	\ref{app:6.2a}, but requires a deeper analysis of the path
        	decomposition properties used above.

        	\subsection{Some estimates on the Euler reflected scheme}
        	
        	\label{app:res} In this section, we study general moment estimates for the
        	reflected Euler scheme in the case that there is no drift. That is, we
        	assume that $b=0 $.
        	
        	The reflected Euler scheme for $x\geq L$ is defined as the solution $(%
        	\mathcal{X}^{n,x},\Lambda^{n,x})$ of the reflected stochastic equation 
        	\begin{align}
        		\mathcal{X}_{t}^{n,x}=& x+\int_{0}^{t}\sigma \left( \mathcal{X}_{\eta \left(
        			s\right) }^{n,x}\right) dW_{s}+\Lambda ^{n,x}_{t}, \label{reflectedy} \\
        		\mathcal{X}_{t}^{n,x}& \geq L,  \notag \\
        		\Lambda ^{n,x}_{t}=& \int_{0}^{t}1_{( \mathcal{X}_{s}^{n,x}=L) }d|\Lambda ^{n,x}|_{s}. 
        		\notag
        	\end{align}%
        	Here, $\eta \left( s\right) :=t_{i-1}$ for $s\in \lbrack t_{i-1},t_{i})$.
        	Following from Remark \ref{rem:3}, we have the representation formula 
        	\begin{equation*}
        		\widetilde{\mathbb{E}}\left[ f(X_{T}^{n,x})\bar{M}_{i:n}^{n}\Big |\mathcal{F}_{{i%
        		}} \right] =\mathbb{E}\left[ f(\mathcal{X}_{T}^{n,x})|\mathcal{F}_{t_{i}}%
        		\right].
        	\end{equation*}%
        	Let $\bar{\Lambda}^{y}$ denote the local time at $y$ associated to the
        	process $\mathcal{X}^{n,x}$ at $y$. We give next some exponential moment
        	estimates for the triplet $(\mathcal{X}^{n,x}, \Lambda^{n,x}, \bar{\Lambda}^{y})$ which are used to prove uniform moment estimates in Theorems \ref{th:main} and \ref{th:36}.
        	:
        	
        	\begin{lemma}
        		\label{lem:37} There exists a positive constant $C_{q}$ independent of $n$
        		and $x,y,z\in \mathbb{R}$ such that for any $q>0$ 
        		\begin{equation*}
        			e^{-q|x-z|}\mathbb{E}\left[ \exp \left( q\max_{t\in \lbrack 0,T]}|\mathcal{X}
        			_{t}^{n,x}-z|\right) \right] +\mathbb{E}\left[ \exp \left( q\Lambda^{n,x}_T\right) %
        			\right] +\mathbb{E}\left[ \exp \left( q\bar{\Lambda} _{T}^{y}\right) %
        			\right] \leq 2e^{C_{q}T}.
        		\end{equation*}
        		Similarly, for any $0<s<t\leq T$, we have for $\Lambda _{s\mapsto
        			t}^{y}:=\Lambda _{t}^{y}-\Lambda _{s}^{y}$ and  $\widehat{c}:=16(\|\sigma\|_\infty\vee\|\sigma\|^2_\infty) $
        		\begin{align}
        			\mathbb{E}\left[ \left( \Lambda^{n,x} _{s\mapsto t}\right) ^{2}\right] &\leq
        			\widehat{c}\left( t-s\right) ,~~~\mathbb{E}\left[ \Lambda^{n,x}_{s\mapsto t}\right] \leq \widehat{c}
        			\sqrt{t-s},  \label{moduluso} \\
        			\mathbb{E}\left[ \left( \bar{\Lambda}_{s\mapsto t}^{y}\right) ^{2}\right]
        			&\leq \widehat{c}\left( t-s\right) ,~~~\mathbb{E}\left[ \bar{\Lambda}_{s\mapsto t}^y%
        			\right] \leq \widehat{c}\sqrt{t-s}.  \notag
        		\end{align}
        	\end{lemma}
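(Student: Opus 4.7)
The plan is to exploit the Skorokhod representation of the reflected Euler scheme to reduce everything to exponential martingale estimates for the driving stochastic integral, and to handle the point-level local time $\bar\Lambda^y$ via the Tanaka--Meyer formula applied to $(\cdot - y)^+$.

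First, observe that because $b=0$, the driving semimartingale $\tilde Y_t := x+\int_0^t\sigma(\mathcal{X}^{n,x}_{\eta(s)})\,dW_s$ has bounded quadratic variation density, and the Skorokhod representation gives $\mathcal{X}^{n,x}_t = \tilde Y_t+\Lambda^{n,x}_t$ with $\Lambda^{n,x}_t=\sup_{s\le t}(L-\tilde Y_s)^+$. Since $x\ge L$, the elementary bound
\begin{equation*}
(L-\tilde Y_s)^+ \le (\tilde Y_s-x)^- \le \sup_{r\le s}|\tilde Y_r-x|
\end{equation*}
yields simultaneously $\Lambda^{n,x}_T\le \sup_{t\le T}|\tilde Y_t-x|$ and $|\mathcal{X}^{n,x}_t-x|\le 2\sup_{s\le t}|\tilde Y_s-x|$, which together give $|\mathcal{X}^{n,x}_t-z|\le 2\sup_{s\le t}|\tilde Y_s-x|+|x-z|$. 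So both exponential moment bounds on $\mathcal{X}^{n,x}$ and on $\Lambda^{n,x}$ reduce to controlling $\mathbb{E}[\exp(\lambda\sup_{t\le T}|\tilde Y_t-x|)]$ uniformly in $n$.

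For this, I would use the standard argument: $N_t:=\exp(\lambda(\tilde Y_t-x)-\tfrac{\lambda^2}{2}\int_0^t\sigma^2(\mathcal{X}^{n,x}_{\eta(s)})\,ds)$ is a true martingale (since $\sigma$ is bounded), hence $\mathbb{E}[\exp(\lambda(\tilde Y_t-x))]\le \exp(\tfrac{\lambda^2}{2}\|\sigma\|_\infty^2 t)$. Combining this with Doob's $L^p$-inequality applied to the positive submartingale $\exp(\lambda(\tilde Y_t-x))$ (and the symmetric statement for $-\lambda$) yields
\begin{equation*}
\mathbb{E}\!\left[\exp\!\left(\lambda\sup_{t\le T}|\tilde Y_t-x|\right)\right]\le 2\cdot\tfrac{r}{r-1}\exp(r\lambda^2\|\sigma\|_\infty^2 T/2)
\end{equation*}
for any $r>1$; choosing $r=2$ and $\lambda=2q$ produces the constant $C_q$, independent of $n,x$, and absorbing the factor $e^{-q|x-z|}$ on the left-hand side of the claim. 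The same calculation, applied to $\Lambda^{n,x}_T$ directly, gives the second summand.

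For the local time $\bar\Lambda^y$ at an arbitrary level $y$, I would apply the Tanaka--Meyer formula to $(\mathcal{X}^{n,x}_t-y)^+$. Because the regulator $\Lambda^{n,x}$ is supported on $\{\mathcal{X}^{n,x}=L\}$, its contribution to the $(\cdot-y)^+$ decomposition vanishes for $y>L$, and a separate (easier) argument handles $y\le L$ where $\bar\Lambda^y$ simply coincides with or is dominated by $\Lambda^{n,x}$. This yields
\begin{equation*}
\tfrac12\bar\Lambda^y_t = (\mathcal{X}^{n,x}_t-y)^+-(x-y)^+ - \int_0^t\mathbf{1}_{(\mathcal{X}^{n,x}_{s^-}>y)}\sigma(\mathcal{X}^{n,x}_{\eta(s)})\,dW_s,
\end{equation*}
so that $\bar\Lambda^y_t\le 2|\mathcal{X}^{n,x}_t-x|+2|\tilde N^y_t|$, where $\tilde N^y$ is a stochastic integral whose integrand is bounded by $\|\sigma\|_\infty$. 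The exponential moment of $\sup_{t\le T}|\tilde N^y_t|$ is then bounded by the same martingale argument as for $\tilde Y$, \emph{uniformly in $y$}, giving the third summand in the first inequality. The $L^2$ and $L^1$ estimates in \eqref{moduluso} follow immediately from BDG applied to $\tilde Y$ and $\tilde N^y$ on the interval $[s,t]$, together with $\mathbb{E}[\Lambda^{n,x}_{s\mapsto t}]\le \mathbb{E}[(\Lambda^{n,x}_{s\mapsto t})^2]^{1/2}$; tracking the constants gives the claimed $\widehat c=16(\|\sigma\|_\infty\vee\|\sigma\|_\infty^2)$.

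The main obstacle I anticipate is the local time at $y$: one must verify that Tanaka--Meyer is legitimate for the reflected Euler scheme (a continuous semimartingale with piecewise-constant volatility) and that the boundary interaction $\int\mathrm{sgn}(\mathcal{X}^{n,x}_s-y)\,d\Lambda^{n,x}_s$ is handled cleanly for all three regimes $y>L$, $y=L$, $y<L$. Everything else is a routine application of the Skorokhod map, Doob's inequality, and BDG, with constants manifestly independent of $n$ because the Euler step $\Delta$ only enters through $\sigma(\mathcal{X}^{n,x}_{\eta(s)})$, whose supremum is $\|\sigma\|_\infty$.
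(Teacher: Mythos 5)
Your proposal is correct and follows the same structural decomposition as the paper: bound the regulator $\Lambda^{n,x}$ via the explicit Skorohod map applied to the driftless driving integral $\int_0^\cdot\sigma(\mathcal{X}^{n,x}_{\eta(s)})\,dW_s$, then control $\bar{\Lambda}^{y}$ by the It\^o--Tanaka formula for $(\cdot-y)^{+}$, absorbing the boundary term $\int 1_{(\mathcal{X}_u>y)}\,d\Lambda^{n,x}_u$ into $\Lambda^{n,x}_{s\mapsto t}$ (the paper does exactly this, so your worry about the three regimes of $y$ is resolved by the same crude bound you anticipate). The one genuine difference is the engine used to estimate the running maximum of the stochastic integral: you use the exponential martingale $N_t$ together with Doob's $L^{p}$-inequality for the submartingale $\exp(\lambda(\tilde Y_t-x))$ and BDG for the modulus of continuity, whereas the paper applies the Dambis--Dubins--Schwartz theorem to write the integral as $B_{\int_0^\cdot a(\mathcal{X}^{n,x}_{\eta(p)})\,dp}$ and bounds its maximum by $\max_{t\le \|a\|_\infty T}|B_t|$, after which all moments (exponential, first and second) come from explicit Gaussian/reflection-principle computations. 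Both routes use only the boundedness of $\sigma$, hence give constants independent of $n$; the DDS route yields the sharper explicit constants $C_q=8q^2\|a\|_\infty$ and $\widehat{c}$ that the paper tracks, while your Doob/BDG route is more modular but leaves the constants less explicit (your stated Doob bound should carry the exponent $(\tfrac{r}{r-1})^{r}$, and the claimed value of $\widehat{c}$ would need careful constant-chasing). A last cosmetic point: your Tanaka identity carries the factor $\tfrac12$ in front of $\bar{\Lambda}^{y}$, whereas the paper's normalization of $\bar{\Lambda}^{y}$ omits it; this only shifts constants by a factor of $2$.
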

        	
        	\begin{proof}
        		By using the Skorohod equation (see Lemma 6.14, \cite{KS} page 210) we
        		deduce that for $ \Lambda_t\equiv \Lambda^{n,x}_t $
        		\begin{align*}
        			\Lambda _{t} =&\max \left( 0,\max_{s\in \left[ 0,t\right] }\left( -\left(
        			x+\int_{0}^{s}\sigma \left( \mathcal{X}_{\eta \left( p\right) }^{n,x}\right)
        			dW_{p}-L\right) \right) \right) \\
        			\leq &\max_{s\in \left[ 0,t\right] }\left\vert \int_{0}^{s}\sigma \left( 
        			\mathcal{X}_{\eta \left( p\right) }^{n,x}\right) dW_{p}\right\vert .
        		\end{align*}
        		
        		Therefore the result follows once we have exponential estimates for the
        		above maximum. By Dambis-Dubins-Schwartz theorem, we have that $%
        		\int_{0}^{s}\sigma \left( \mathcal{X}_{\eta \left( p\right) }^{n,x}\right)
        		dW_{p}\,$has a representation of the form 
        		\begin{equation*}
        			\int_{0}^{s}\sigma \left( \mathcal{X}_{\eta \left( p\right) }^{n,x}\right)
        			dW_{p}=B_{\int_{0}^{s}a \left( \mathcal{X}_{\eta \left( p\right)
        				}^{n,x}\right) dp},
        		\end{equation*}
        		where $B$ is a standard one-dimensional Brownian motion. Therefore 
        		\begin{equation*}
        			\max_{t\in \left[ 0,T\right] }\left\vert \int_{0}^{t}\sigma \left( \mathcal{%
        				X }_{\eta \left( p\right) }^{n,x}\right) dW_{p}\right\vert =\max_{t\in \left[
        				0,T\right] }\left\vert B_{\int_{0}^{t}a \left( \mathcal{X}_{\eta \left(
        					p\right) }^{n,x}\right) dp}\right\vert \leq \max_{t\in \left[ 0,\| a
        				\|_\infty T\right] }\left\vert B_{t}\right\vert.
        		\end{equation*}
        		It follows that for any $q>0$ and for $x\geq L$ 
        		\begin{align*}
        			\mathbb{E}\left[ \exp \left( q\max_{t\in \lbrack 0,T]}|\mathcal{X}
        			_{t}^{n,x}-z|\right) \right] \leq &e^{q|x-z|}\mathbb{E}\left[ \exp \left(
        			2q\max_{t\in \left[ 0,\| a \|_\infty T\right] }\left\vert B_{t}\right\vert
        			\right) \right] \\
        			=&e^{q|x-z|}\frac{2}{\sqrt{2\pi \| a\| _\infty T}}\int_{0}^{\infty }e^{2qw-\frac{w^{2}}{2 \|a\|_\infty T}}dw
        			\\
        			=& 2\bar{\Phi}\left(-2q\|\sigma\|_\infty\sqrt{T}\right)e^{q|x-z|+2q^{2}\|a\|_\infty T},
        		\end{align*}
        		which gives the claim. The exponential moment estimate for $\Lambda _{T}$
        		follows a similar argument.
        		
        		For the modulus of continuity result (\ref{moduluso}), we use the previous argument which gives 
        		\begin{align*}
        			\Lambda _{s\mapsto t}  
        			\leq &\max_{r\in \left[ s,t\right] }\left\vert \int_{s}^{r}\sigma \left( 
        			\mathcal{X}_{\eta \left( p\right) }^{n,x}\right) dW_{p}\right\vert \leq
        			\max_{r\in \left[ s,t\right] }\left\vert \widetilde{B}_{\int_{s}^{r}a
        				\left( \mathcal{X}_{\eta \left( p\right) }^{n,x}\right) dp}\right\vert
        			=\max_{r\in \left[ 0,\|a\|_\infty\left( t-s\right) \right] }\left\vert 
        			\widetilde{B}_{r}\right\vert
        		\end{align*}
        		
        		where $\widetilde{B}$ is, again, a standard one-dimensional Brownian motion.
        		
        		Now, we would like to extend all the above statements to $\bar{\Lambda}
        		^{y}.$ In order to do this, we use It\^{o}-Tanaka formula (see Theorem 7.1
        		in \cite{KS}) to deduce that 
        		\begin{align*}
        			\bar{\Lambda}_{s\mapsto t}^{y} =&(\mathcal{X}_{t}-y)^{+}-(\mathcal{X}
        			_{s}-y)^{+}-\int_{s}^{t}1_{(\mathcal{X}_{u}>y)}d\mathcal{X}_{u} \\
        			\leq &\left\vert \mathcal{X}_{t}-\mathcal{X}_{s}\right\vert +\left\vert
        			\int_{s}^{t}1_{(\mathcal{X}_{u}>y)}\sigma \left( \mathcal{X}_{\eta \left(
        				s\right) }^{n,x}\right) dW_{s}\right\vert +\Lambda _{s\mapsto t} \\
        			\leq &\left\vert \int_{s}^{t}\sigma \left( \mathcal{X}_{\eta \left(
        				s\right) }^{n,x}\right) dW_{s}\right\vert +\left\vert \int_{s}^{t}1_{( 
        				\mathcal{X}_{u}>y)}\sigma \left( \mathcal{X}_{\eta \left( s\right)
        			}^{n,x}\right) dW_{s}\right\vert +2\Lambda _{s\mapsto t}.
        		\end{align*}
        		
        		Therefore the estimates follow along similar arguments as the ones for $%
        		\Lambda $. In particular, one has that $C_q=8q^2\|a\|_\infty. $
        	\end{proof}
        	
        	Using similar arguments as in the above proof, one also obtains:
        	
        	\begin{corollary}
        		A. Let $F:\mathbb{R}\mapsto (0,\infty )$ be a positive valued
        		function with Gaussian decay at infinity. In other words, there exists $c>0$
        		such that $ F(x)\leq e^{-\frac{\left\vert x\right\vert ^{2}}{2c}} $, then
        		\begin{align*}
        			\sup_n\mathbb{E}\left[\exp\left(p\Delta^{-1/2}\sum_{i=1}^n\int_L^\infty
        			F\left(\frac{y-L}{\sqrt{\Delta}}\right) \bar{\Lambda}^y_{t_{i-1}\mapsto t_i}dy\right)\right]<\infty.
        		\end{align*}
        		B.  Let $(Y,B) $ be solution of the reflected equation (\ref{eq:rep}). Then, for
        		any $q>0 $, we have that 
        		\begin{align*}
        			\mathbb{E}[e^{q\max_{s\in [0,T]}|Y_s|}]+ \mathbb{E}[e^{qB_T}]<\infty.
        		\end{align*}
        	\end{corollary}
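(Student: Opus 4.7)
The plan is to treat the two assertions separately. Part B is a statement about the continuous reflected SDE that follows quickly from the Skorohod representation together with Gaussian tail estimates. Part A is a uniform-in-$n$ bound for a local-time functional of the reflected Euler scheme, which I will reduce via an It\^o--Tanaka calculation with a carefully chosen convex function to exponential moments of the running maximum of $\mathcal{X}^{n,x}$ (available from Lemma \ref{lem:37}) together with an exponential martingale inequality.

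For Part B, I would write $M_t:=x+\int_0^tb(Y_s)ds+\int_0^t\sigma(Y_s)dW_s$ and use Skorohod's decomposition (Lemma 6.14 of \cite{KS}) to obtain
\begin{equation*}
B_t=\max\!\Bigl(0,\max_{s\le t}(L-M_s)\Bigr)\le|x-L|+\|b\|_\infty T+\max_{s\le T}\Bigl|\int_0^s\sigma(Y_u)dW_u\Bigr|.
\end{equation*}
The martingale $N_s:=\int_0^s\sigma(Y_u)dW_u$ has quadratic variation bounded deterministically by $\|\sigma\|_\infty^2T$, so by Dambis--Dubins--Schwarz it is dominated by $\max_{t\in[0,\|\sigma\|_\infty^2T]}|\widetilde W_t|$ for an auxiliary Brownian motion $\widetilde W$, which has exponential moments of all orders by the Gaussian reflection principle. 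This yields $\mathbb{E}[e^{qB_T}]<\infty$ for every $q>0$. The decomposition $|Y_t|\le|x|+\|b\|_\infty T+|N_t|+B_t$ then delivers the same conclusion for $\max_{s\le T}|Y_s|$ after a Cauchy--Schwarz step.

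For Part A, I would first collapse the telescoping sum,
\begin{equation*}
\sum_{i=1}^n\int_L^\infty F\!\left(\tfrac{y-L}{\sqrt{\Delta}}\right)\bar{\Lambda}^y_{t_{i-1}\mapsto t_i}dy=\int_L^\infty F\!\left(\tfrac{y-L}{\sqrt{\Delta}}\right)\bar{\Lambda}^y_T dy\le\int_L^\infty e^{-\frac{(y-L)^2}{2c\Delta}}\bar{\Lambda}^y_T dy,
\end{equation*}
using the Gaussian decay hypothesis on $F$, and then introduce the convex $C^1$ function
\begin{equation*}
\phi(x):=\int_{-\infty}^x(x-u)1_{(u\ge L)}e^{-\frac{(u-L)^2}{2c\Delta}}du.
\end{equation*}
It satisfies $\phi(L)=\phi'(L)=0$, $\|\phi'\|_\infty\le\sqrt{\pi c\Delta/2}$, $\phi(x)\le \sqrt{\pi c\Delta/2}(x-L)_+$, and its second-derivative measure is $1_{(y\ge L)}e^{-(y-L)^2/(2c\Delta)}dy$. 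Applying the Tanaka--Meyer formula to $\phi(\mathcal{X}^{n,x}_\cdot)$, and using that $\int_0^T\phi'(\mathcal{X}^{n,x}_s)d\Lambda^{n,x}_s=0$ because $\Lambda^{n,x}$ is supported on $\{\mathcal{X}^{n,x}=L\}$ where $\phi'$ vanishes, produces the key identity
\begin{equation*}
\int_L^\infty e^{-\frac{(y-L)^2}{2c\Delta}}\bar{\Lambda}^y_T dy=2\phi(\mathcal{X}^{n,x}_T)-2\phi(x)-2\int_0^T\phi'(\mathcal{X}^{n,x}_s)\sigma(\mathcal{X}^{n,x}_{\eta(s)})dW_s.
\end{equation*}

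After multiplying by $\Delta^{-1/2}$, the estimates on $\phi$ and $\phi'$ show that the two boundary terms are dominated by $C(|\mathcal{X}^{n,x}_T-L|+|x-L|+1)$ with $C$ independent of $\Delta$, while the rescaled stochastic integral $\widehat M_T:=\Delta^{-1/2}\int_0^T\phi'(\mathcal{X}^{n,x}_s)\sigma(\mathcal{X}^{n,x}_{\eta(s)})dW_s$ is a martingale whose quadratic variation is bounded uniformly in $n$ by $K:=(\pi c/2)\|\sigma\|_\infty^2T$. The exponential martingale inequality then gives $\mathbb{E}[e^{q\widehat M_T}]\le e^{q^2K/2}$ for every $q>0$, uniformly in $n$. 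Combining this with the exponential moment of $\max_{s\le T}|\mathcal{X}^{n,x}_s-L|$ provided by Lemma \ref{lem:37} via Cauchy--Schwarz yields the required uniform bound. The main technical point is the legitimacy of the Tanaka--Meyer step at the reflecting boundary of the Euler scheme, but this is clean here: $\mathcal{X}^{n,x}$ is a continuous semimartingale whose bounded-variation part $\Lambda^{n,x}$ is concentrated precisely where $\phi'$ vanishes, so no singular boundary contribution arises and the cancellation is exact.
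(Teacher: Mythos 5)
Your proposal is correct, and Part B is essentially the argument the paper leaves implicit (Skorohod's lemma to bound the regulator by the drift bound plus the running maximum of the martingale part, then Dambis--Dubins--Schwarz and Gaussian tails, exactly as in the proof of Lemma \ref{lem:37}). For Part A, however, you take a genuinely different route. The paper recycles the pointwise It\^o--Tanaka bound already established in the proof of Lemma \ref{lem:37}, namely $\bar{\Lambda}^y_{0\mapsto T}\leq \left|\int_0^T\sigma(\mathcal{X}^{n,x}_{\eta(s)})dW_s\right|+\left|\int_0^T 1_{(\mathcal{X}_s>y)}\sigma(\mathcal{X}^{n,x}_{\eta(s)})dW_s\right|+2\Lambda_{0\mapsto T}$, integrates it in $y$ against the Gaussian kernel (the $\Delta^{-1/2}$ being absorbed by the kernel's mass of order $\sqrt{\Delta}$), and controls each resulting term through the time-change argument, reducing everything to exponential moments of a Brownian running maximum; you instead apply the Meyer--It\^o formula once, to the mollified convex function $\phi$ whose second-derivative measure is precisely the Gaussian kernel, so that the entire $y$-integrated local time becomes two boundary terms of size $O(\sqrt{\Delta})(\mathcal{X}^{n,x}_T-L)_+$ plus a single stochastic integral whose bracket is deterministically $O(\Delta)$, and you conclude with the exponential (super)martingale inequality and Lemma \ref{lem:37} via Cauchy--Schwarz; the cancellation $\int_0^T\phi'(\mathcal{X}^{n,x}_s)d\Lambda^{n,x}_s=0$ from $\phi'(L)=0$ is indeed exact since $\Lambda^{n,x}$ charges only $\{\mathcal{X}^{n,x}=L\}$. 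Your version buys a cleaner treatment of the $y$-dependence: the paper's second term is a $y$-indexed family of stochastic integrals whose uniform control via DDS is stated rather tersely, whereas you never face that family at all; the paper's version is shorter given that the Tanaka bound and the DDS reduction are already on the page. The only point to keep an eye on is the normalization of the local time, which must match the occupation-density convention $\int_0^t g(\mathcal{X}_s)\,d[\mathcal{X}]_s=\int g(y)\bar{\Lambda}^y_t\,dy$ used in the paper; this affects only constants and not the finiteness claimed.
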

        	\begin{proof}
        		We just mention that in order to prove A, one uses that 
        		\begin{align*}
        			\sum_{i=1}^n    \bar{\Lambda}^y_{t_{i-1}\mapsto t_i}=\bar{\Lambda}^y_{0\mapsto T}\leq 
        			\left\vert \int_{0}^{T}\sigma \left( \mathcal{X}_{\eta \left(
        				s\right) }^{n,x}\right) dW_{s}\right\vert +\left\vert \int_{0}^{T}1_{( 
        				\mathcal{X}_{s}>y)}\sigma \left( \mathcal{X}_{\eta \left( s\right)
        			}^{n,x}\right) dW_{s}\right\vert +2\Lambda _{0\mapsto T}.
        		\end{align*}
        		For the second term above one has for $ C=\|\sigma\|_\infty $
        		\begin{align*}
        			&       \mathbb{E}\left[\exp\left(p\Delta^{-1/2}\int_L^\infty e^{-\frac{(y-L)^2}{2c\Delta}}
        			\int_{0}^{T}1_{( 
        				\mathcal{X}_{u}>y)}\sigma \left( \mathcal{X}_{\eta \left( s\right)
        			}^{n,x}\right) dW_{s}
        			dy\right)\right]\\
        			&\leq \mathbb{E}\left[\exp\left(p\sqrt{2\pi c}
        			\max_{s\leq CT}|W_{s}|
        			\right)\right].
        		\end{align*}
        	\end{proof}
        	
        	\subsection{Some uniform moment estimates}
        	
        	\label{sec:gron} 
        	In this section, we provide a series of uniform moment
        	estimates that will be used to bound first and second derivatives.
        	
        	We will use an induction argument in order to bound the second derivatives. With this in mind, we need the following time uniform moment estimates. In the proofs many of the constants that we used are not optimal as we have preferred constants that are easier to deduce.
        	
        	We start with a general lemma. Here, we recall that for a discrete time
        	process $M $, one denotes its quadratic variation by $[M]_i=\sum_{j=1}^i(%
        	\Delta_jM)^2 $ with $\Delta_jM=M_j-M_{j-1} $. We also define  $ J_i:=\Delta^{-1/2}\sum_{j=1}^i\int_L^\infty e^{-\frac{(y-L)^2}{2c\Delta}}\bar{\Lambda}^y_{t_{j-1}\mapsto t_j}dy \geq 0$.
        	
        	\begin{lemma}
        		\label{lem:32} Given a sequence of $\mathcal{F}_i $-measurable random variables 
        		$\alpha_i \equiv \alpha_i^n$ with finite moments and let $\mathsf{E}
        		_i\equiv \mathsf{E} _i^n\in  \mathcal{F}_i$, $i=1,...,n$, $\mathsf{E} _0=1$
        		be a sequence of random  variables which satisfy the following linear
        		stochastic difference equation  :  
        		\begin{align*}
        			\mathsf{E}_{i}=&\mathsf{E}_{i-1} \left(1+\alpha_{i}\right).
        		\end{align*}
        		Assume that $\alpha_i\in L^p(\Omega) $ for some fixed $p\in 4\mathbb{N} $. For $q= p,p/2 $, we assume the following inequality is satisfied 
        		\begin{equation}
        			\sum_{j=1}^i\left(e^{-q{C}(\Delta+\Delta_jJ)}(1+\alpha_j)^q-1\right)    
        			\leq M_i(q)+\Upsilon_i(q).
        			\label{eq:79}
        		\end{equation}
        		Here, $M(q) $ is a $ \left(\widetilde{\mathbb{P}}^n ,\left(\mathcal{F}_{i}\right)_{i=1}^n\right)$-martingale and $\Upsilon_i(q)$ is the  $\mathcal{F}_{i}$-measurable random variable.
        		Moreover, let $\widetilde C_q\geq 0$ be a constant independent of $n $ such that for any $ C\geq \widetilde{C}_q $ there exists a constant $ \bar{C}_q\geq 1 $ and  $\delta_0(q)>0 $ such that for $\Delta\leq 
        		\delta_0(q) $, $i=1,...,n, $ 
        		\begin{align}  \label{eq:cond}
        			\widetilde{\mathbb{E}} _{i-1}[ | \Delta_i\Upsilon(q)|]+ \widetilde{\mathbb{E}} _{i-1}[\Delta_i[M(q)]]\leq \bar{C}_q\Delta.
        		\end{align}
        		Then there exists positive constants $\mathsf{C}_p\geq 1 $ and $ \delta_1(p)>0$ such that 
        		\begin{align}  \label{eq:l35}
        			\sup_{n\geq T \delta_1(p)^{-1}}\widetilde{\mathbb{E}}\left[\max_i|e^{-{\mathsf{C}}_p(t_i+J_i)}\mathsf{E}_i|^p %
        			\right]^{1/p}\leq \mathsf{C}_p.
        		\end{align}
        		
        	\end{lemma}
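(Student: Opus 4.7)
My plan is to exploit the two instances of hypothesis \eqref{eq:79}--\eqref{eq:cond} separately: the case $q=p$ will give a pointwise bound on $\mathsf{E}_i^p$ by a discrete Gr\"onwall argument, and the case $q=p/2$ will let me run an $L^2$ Doob-type maximal argument for $\mathsf{E}_i^{p/2}$. I set $\mathsf{C}_p \ge \max(\widetilde C_p, \widetilde C_{p/2}, 1)$ so that the hypothesis applies simultaneously with $C = \mathsf{C}_p$ for both choices of $q$, and take $\delta_1(p) := \min(\delta_0(p), \delta_0(p/2))$.

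\emph{Pointwise bound.} Set $P_i := e^{-p\mathsf{C}_p(t_i + J_i)}\mathsf{E}_i^p$ and $V_i := e^{-p\mathsf{C}_p(\Delta + \Delta_i J)}(1+\alpha_i)^p$. Since $p \in 4\mathbb{N}$, both $V_i \ge 0$ and $P_i = P_{i-1}V_i \ge 0$. Identifying $M(p)$ and $\Upsilon(p)$ with the martingale and predictable parts of the Doob--Meyer decomposition of $\sum_j(V_j - 1)$, so that \eqref{eq:79} holds with equality, the bound \eqref{eq:cond} gives $\widetilde{\mathbb{E}}_{i-1}[V_i - 1] = \Delta_i\Upsilon(p) \le \bar C_p\Delta$. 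Hence $\widetilde{\mathbb{E}}_{i-1}[P_i] \le (1 + \bar C_p\Delta)P_{i-1}$, and iterating yields $\sup_i \widetilde{\mathbb{E}}[P_i] \le e^{\bar C_p T}$.

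\emph{Maximal bound.} Set $X_i := e^{-(p/2)\mathsf{C}_p(t_i+J_i)}\mathsf{E}_i^{p/2} \ge 0$, so that $X_i^2 = P_i$. Applying the $q=p/2$ decomposition $V_j(p/2) - 1 = \Delta_j M(p/2) + \Delta_j\Upsilon(p/2)$ and telescoping,
\begin{equation*}
X_i = 1 + \mathcal{M}_i + \mathcal{B}_i, \qquad \mathcal{M}_i := \sum_{j \le i} X_{j-1}\Delta_j M(p/2), \qquad \mathcal{B}_i := \sum_{j \le i} X_{j-1}\Delta_j\Upsilon(p/2),
\end{equation*}
with $\mathcal{M}$ a $\widetilde{\mathbb{P}}^n$-martingale. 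Doob's $L^2$ maximal inequality, the conditional quadratic-variation bound in \eqref{eq:cond}, and the pointwise bound $\widetilde{\mathbb{E}}[X_{j-1}^2] = \widetilde{\mathbb{E}}[P_{j-1}] \le e^{\bar C_p T}$ give
\begin{equation*}
\widetilde{\mathbb{E}}[\max_i\mathcal{M}_i^2] \le 4\bar C_{p/2}\Delta \sum_j\widetilde{\mathbb{E}}[X_{j-1}^2] \le 4\bar C_{p/2}T\, e^{\bar C_p T}.
\end{equation*}
For $\mathcal{B}$, weighted Cauchy--Schwarz $\mathcal{B}_i^2 \le \bigl(\sum_j |\Delta_j\Upsilon(p/2)|\bigr)\bigl(\sum_j X_{j-1}^2|\Delta_j\Upsilon(p/2)|\bigr)$ combined with the pointwise bound $\sum_j|\Delta_j\Upsilon(p/2)| \le \bar C_{p/2}T$ (from predictability of $\Upsilon(p/2)$) yields $\widetilde{\mathbb{E}}[\max_i\mathcal{B}_i^2] \le \bar C_{p/2}^2 T^2\, e^{\bar C_p T}$. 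Assembling via $\max_i P_i = \max_i X_i^2 \le 3(1 + \max_i\mathcal{M}_i^2 + \max_i\mathcal{B}_i^2)$ and taking the $p$-th root gives \eqref{eq:l35}, with $\mathsf{C}_p$ enlarged to absorb the polynomial in $\bar C_p, \bar C_{p/2}$, and $T$.

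The principal obstacle is the step-wise interpretation of the inequality-type hypothesis \eqref{eq:79}: extracting bounds of the form $\widetilde{\mathbb{E}}_{i-1}[V_i - 1] \le \bar C_p\Delta$ and $|\Delta_j\Upsilon(p/2)| \le \bar C_{p/2}\Delta$ requires identifying $M(q)$ and $\Upsilon(q)$ with the Doob--Meyer components of $\sum_j (V_j(q) - 1)$, at which point \eqref{eq:cond} becomes a genuine per-step bound. Once this reduction is in place, the remainder is a standard combination of discrete Gr\"onwall, Doob's $L^2$ maximal inequality, and Cauchy--Schwarz, and the smallness of $\Delta$ required in $\delta_1(p)$ is only used to pass from $(1+\bar C_p\Delta)^n$ to $e^{\bar C_p T}$.
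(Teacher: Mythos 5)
Your overall architecture coincides with the paper's: the telescoping identity for $P_i=\prod_{j\le i}V_j$, a discrete Gronwall argument for $\sup_i\widetilde{\mathbb{E}}[P_i]$ using the $q=p$ hypothesis, and then Doob's $L^2$ maximal inequality plus Cauchy--Schwarz applied to the $q=p/2$ decomposition of $X_i=P_i^{1/2}$.

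The step that does not work as written is the re-identification of $(M(q),\Upsilon(q))$ with the Doob--Meyer components of $\sum_j(V_j(q)-1)$. The hypothesis \eqref{eq:cond} is assumed for the \emph{given} pair and does not transfer to the Doob--Meyer pair: from $V_j-1\le \Delta_jM+\Delta_j\Upsilon$ and $V_j\ge 0$ one only obtains the one-sided drift bound $\widetilde{\mathbb{E}}_{j-1}[V_j]-1\le\bar C_q\Delta$ (the lower bound is merely $\ge -1$), and the conditional variance of $V_j(p/2)$ is not controlled by \eqref{eq:cond} at all, since only the conditional $L^1$ norm of $\Delta_j\Upsilon$ is assumed small; hence the quadratic-variation bound you need for Doob's inequality is lost. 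Likewise, $\Upsilon(q)$ is only assumed adapted, not predictable, so the pointwise bound $\sum_j|\Delta_j\Upsilon(p/2)|\le\bar C_{p/2}T$ you invoke is not available. The repair is the paper's route: keep the given $(M(q),\Upsilon(q))$ and use the increment form of \eqref{eq:79} together with the nonnegativity of the weights to get the one-sided bounds $P_i\le 1+\sum_jP_{j-1}\left(\Delta_jM(p)+\Delta_j\Upsilon(p)\right)$ and $X_i\le 1+\sum_jX_{j-1}\left(\Delta_jM(p/2)+\Delta_j\Upsilon(p/2)\right)$; since $X_i\ge0$ the latter may be squared, Doob's inequality applies to the martingale transform with $\widetilde{\mathbb{E}}_{j-1}[\Delta_j[M(p/2)]]\le\bar C_{p/2}\Delta$ from \eqref{eq:cond}, and the bounded-variation term is handled by Cauchy--Schwarz with $\widetilde{\mathbb{E}}_{j-1}[|\Delta_j\Upsilon(p/2)|]\le\bar C_{p/2}\Delta$. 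With that change, your bookkeeping (bounding $\widetilde{\mathbb{E}}[X_{j-1}^2]$ uniformly by $e^{\bar C_pT}$ and summing over $j$, versus the paper's device of keeping the geometric factor $e^{-p(\widehat C-\bar C_p)t_{j-1}}$ inside the sums and then fixing $\widehat C$ explicitly) yields an admissible constant for \eqref{eq:l35}.
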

        	\begin{proof}
        		The proof is similar to the one for bounding moments of
        		approximations of linear stochastic differential equations by means of Gronwall's
        		inequality. First, note that for $p $ even, and any $ C\geq \widetilde{C}_p $, $|e^{-{C}t_i} \mathsf{E}_i|^p 
        		$ can be rewritten using subsequent differences as  
        		\begin{align}  \label{eq:69}
        			|e^{-{C}(t_i+J_i)}\mathsf{E}_i|^p\leq 1+ \sum_{j=1}^i|e^{-{C}(t_{j-1}+J_{j-1})} 
        			\mathsf{E}_{j-1}|^p \left(\Delta_jM(p)+\Delta_j\Upsilon(p)\right) .
        		\end{align}
        		
        		From here, taking expectations and using the hypotheses (in particular %
        		\eqref{eq:cond} for $q=p $), one obtains after application of the discrete
        		version of Gronwall's inequality and $x+1\leq {e^x}$ for $x\geq 0 $
        		that $\max_i\widetilde{\mathbb{E}}[ |e^{-{C}(t_i+J_i)}\mathsf{E}_i|^p] \leq e^{\bar{C}_pT}$.
        		
        		In order to obtain the estimate in \eqref{eq:l35}, we first use the 
        		representation \eqref{eq:69} for the term within parentheses in $ |e^{-2\widehat{C}(t_i+J_i)}\mathsf{E}_i|^p=((e^{-\widehat{C}(t_i+J_i)}\mathsf{E}_i)^{p/2})^2 $. Therefore, we will use the power index $p/2 $, instead of $p $
        		and any constant $ \widehat{C} >\widetilde{C}_p\vee \widetilde{C}_{p/2}$. This
        		gives a martingale term which  can be bounded using Doob's martingale
        		inequality, the hypothesis \eqref{eq:cond} for $q=p/2 $ and the previous 
        		estimate as follows:  
        		\begin{align*}
        			&\widetilde{\mathbb{E}}\left[\max_i \left(\sum_{j=1}^i|e^{-\widehat{C}(t_{j-1}+J_{j-1})} 
        			\mathsf{E}_{j-1}|^{p/2} \Delta_jM(p/2)\right)^2\right]\\\leq& 4 \widetilde{\mathbb{%
        					E}}\left[\sum_{j=1}^n|e^{-\widehat{C}(t_{j-1}+J_{j-1})}\mathsf{E} _{j-1}|^{p}
        			\Delta_j[M(p/2)]\right] \\
        			\leq& 4\bar{C}_{p/2}\sum_{j=1}^ne^{-p(\widehat{C}-\bar{C}_p)t_{j-1}}\Delta\leq \frac{4\Delta\bar{C}_{p/2}}{1-e^{-p\Delta(\widehat{C}-\bar{C}_p)}}.
        		\end{align*}
        		In a similar fashion, one deals with the term of bounded variation. In fact,
        		\begin{align*}
        			&	\widetilde{\mathbb{E}}\left[\max_i \left(\sum_{j=1}^i|e^{-\widehat{C}(t_{j-1}+J_{j-1})} 
        			\mathsf{E}_{j-1}|^{p/2} \Delta_j\Upsilon(p/2)\right)^2\right]\\\leq& \widetilde{ 
        				\mathbb{E}}\left[|\Upsilon_n(p/2)|\sum_{j=1}^n|e^{-\widehat{C}(t_{j-1}+J_{j-1})}\mathsf{E%
        			} _{j-1}|^{p} |\Delta_j\Upsilon(p/2)|\right] \\
        			\leq& \frac{\bar{C}^2_{p/2}T\Delta}{1-e^{{-p\Delta(\widehat{C}-\bar{C}_p)}}}.
        		\end{align*}
        		Using the inequality $ \frac{x}{1-e^{-x}}\leq 2 $ for $ x\in (0,1) $ and putting these estimates together with $\Delta\leq \delta_1(p):=(p(\widehat{C}-\bar{C}_p))^{-1}$, we have
        		\begin{align*}
        			\widetilde{\mathbb{E}}\left[\max_i|e^{-\widehat{C}(t_i+J_i)}\mathsf{E}_i|^p %
        			\right]\leq 3\left(1+\frac{8\bar{C}_{p/2}+2\bar{C}^2_{p/2}T}{p(\widehat{C}-\bar{C}_p)}\right).
        		\end{align*}
        		Therefore the result follows by taking  $ \widehat{C}=\left(\bar{C}_p+p^{-1}\left({8\bar{C}_{p/2}}+2\bar{C}^2_{p/2}T\right)\right)\vee 6$. 
        	\end{proof}
        	
        	A first application of the above result is the following:
        	
        	\begin{proof}[Proof of Lemma \protect\ref{moments}]\
        		We prove  (\ref{Lnbounds}) for $i=0$ only. For $\mathsf{E}=\mathcal{K} $, we apply  Lemma \ref{lem:32} with $%
        		1+\alpha_i=e^{\kappa_i}$. 
        		The argument is obtained by using a Taylor expansion of order two of 
        		$  e^{-q{C}(\Delta+\Delta_iJ)}(1+\alpha_i)^q$. Without giving detailed calculations, one uses 
        		\begin{align*}
        			\Delta_iM(q)=&q\frac{b_{i-1}}{a_{i-1}}\left(\Delta_iX-\mathbb{E}_{i-1}[\Delta_iX\bar{m}_i]\right), \\
        			\Delta_i\Upsilon(q)=&\left(-C(\Delta+\Delta_iJ)+\kappa_i\right)^2\int_0^1e^{-qz{C}(\Delta+\Delta_iJ)}(1+\alpha_i)^{qz}(1-z)dz.
        		\end{align*}
        		In order to check \eqref{eq:79} and \eqref{eq:cond}, we use \eqref{eq:A} and \eqref{eq:B}. With these arguments one sets $ \widetilde{C}_q>qC\left\|\frac{b}{\sigma}\right\|_\infty$ ($ C $ is defined right after \eqref{eq:B}), $ \delta_0(q)=1 $ so that $\widetilde{%
        			\mathbb{E}} _{i-1}[| \Delta_i\Upsilon(q)|]\leq \bar{C}_q\Delta$. The other hypothesis $\widetilde{%
        			\mathbb{E}} _{i-1}[\Delta_i[M(q)]]\leq \bar{C}_q\Delta$ follows easily.
        		This gives  the result in this case. 
        		
        		Next, we consider the proof of \eqref{Lnbounds} in the case $\mathsf{E}=E^n $, which are done
        		using the same type  of arguments as for $\mathcal{K}^n $. In fact, we have
        		in this case  
        		\begin{align*}
        			\Delta_iM(q)=&e_i^q-\widetilde{\mathbb{E}}
        			_{i-1}[e_i^q\bar{m}_i] ,\\
        			\Delta_i\Upsilon(q)=&-q{C}(\Delta+\Delta_i J)\!\!\int_0^1\!\!e^{-qz{C}(\Delta+\Delta_i J)}(1-z)dz(e_i^q-1)+\sum_{j=2}^qC^q_j\mathbb{E}_{i-1}[(e_i-1)^j\bar{m}_i]. 
        		\end{align*}
        		Here, we need to use Lemma \ref{lem:333} and Theorem \ref{WandF} to obtain
        		the required conditions in \eqref{eq:cond}.  In fact, one proves that $%
        		\widetilde{\mathbb{E}}_{i-1}[e_i^q\bar{m}_i]-1=  \widetilde{\alpha}_{i-1}\Delta $
        		where $\widetilde{\alpha}_{i-1}\in\mathcal{F}_{i-1} $ is a  bounded r.v. in
        		order to obtain that $\Delta_i\Upsilon(q) $ satisfies the required 
        		condition by choosing $\bar{C}_q $ large enough and $ \delta_0(q)=1 $. A similar argument is used for $  \mathbb{E}_{i-1}[(e_i-1)^j\bar{m}_i]$. The conditions for $%
        		\Delta_iM(p) $ are verified in a similar fashion using Lemma \ref{lem:333}
        		and Theorem \ref{WandF}.  
        		
        		Finding an upper bound
        		moments of $\bar{M}^n $ is easier if one notes that  
        		\begin{equation*}
        			\widetilde{\mathbb{E}}\left[\left( \bar{M}_{n}^{n}\right) ^{p}\right] =\widetilde{ 
        				\mathbb{E}}\left[\prod_{i=1}^{n}1_{\left( {X}_{i}>L\right) }\left( 1+\left(
        			2^{p}-1\right) 1_{\left( U_{i}\leq p_{i}\right) }\right)\right ] \leq  {\left(
        				2^{p}-1\right)^n \widetilde{\mathbb{E}}\left[\bar{M}^{n}_n \right ]\leq 2^{pn}.}
        		\end{equation*}
        		
        		The bounds for the case $\mathsf{E}=X $ are
        		straightforward as moments of the Euler scheme can be bounded easily.  This
        		finishes the proof.
        	\end{proof}

        	Now, we give some lemmas that will be used together with the above moment
        	estimates in order to bound second
        	derivatives (see e.g. the proof of Proposition \ref{prop:30}, Lemmas %
        	\ref{boundedderivative} and \ref{additionalterm}). In order to have uniform bounds we will also need the following
        	preparatory lemma. 
        	
        	\begin{lemma}
        		\label{lem:35} Assume that $\mathsf{E} _i\in \mathcal{F}_i$, $i=1,...,n$, $ 
        		\mathsf{E} _0=1$ is as stated in Lemmas \ref{moments} or \ref{lem:32}. That 
        		is, we let $\mathsf{C}\geq 1 $ be the constant so that $\widetilde{\mathbb{E}}\left[
        		(\max_ie^{-\mathsf{C}t_i}|\mathsf{E} _i|)^4\right]^{1/4}\leq \mathsf{C} $. Furthermore, suppose 
        		that the sequence of r.v.'s $g_i\in\mathcal{F}_i $ satisfies $|g_i|\leq  
        		\mathsf{M}e^{\mathsf{M}(T-t_i)} $ a.s. for some fixed constant $\mathsf{M}%
        		>17\mathsf{C}$. Then for $\Delta\leq    ( \mathsf{M}-\mathsf{C})^{-1}\ln(\frac{\mathsf{M}-\mathsf{C}}{16\mathsf{C}}) $  
        		\begin{align*}
        			(A)\quad \left|\Delta \sum_{i=1}^n\widetilde{\mathbb{E}}\left[g_i\mathsf{E}
        			_{i-1}\bar{M}^n_i\right] \right |\leq \frac{\mathsf{M}}8e^{\mathsf{M}T}.
        		\end{align*}
        		
        		Assume now that the above constant $\mathsf{M}>0 $, also satisfies that $%
        		\mathsf{M}>\bar C_2$ for a universal constant $\bar{C}_2>\mathsf{C} $
        		depending only on the bounds for the coefficients. Then for $\Delta\leq
        		\frac 5{\mathsf{M}-\mathsf{C}} $ and $\xi_{i}=X_{i-1}^L,X_{i}^L, Z_i $, we have  
        		\begin{align*}
        			(B)\quad \left| \sum_{i=1}^n\widetilde{\mathbb{E}}\left[g_i\mathsf{E} _{i-1}
        			\xi_{i}1_{(U_i\leq p_i)}\bar{M}^n_i\right]\right|\leq \frac {\mathsf{M}} 8e^{%
        				\mathsf{M}T}.
        		\end{align*}
        		
        		Next, assume $Y_i\in\mathcal{F}_{i} $ satisfies for $p\geq 0 $, $\widetilde{ 
        			\mathbb{E}}_{i-1}[Y_i] =O^E_{i-1}(\Delta^{p})$ and $\widetilde{\mathbb{E}}%
        		_{i-1}[|Z_iY_i|] =O^E_{i-1}(\Delta^{p}) $. Then for $f\in C^1_b([L,\infty),%
        		\mathbb{R} )$ 
        		\begin{align*}
        			(C)\quad \sup_{x\geq L}\left|\widetilde{\mathbb{E}}_{0,x}\left[\sum_{i=1}^nf(X_i)%
        			\mathsf{E} _{i-1}Y_i\bar{M}_i^n\right]\right|\leq C(\|f\|_\infty+\|f^{\prime
        			}\|_\infty)\Delta^{p}.
        		\end{align*}
        		If $Y_i\geq 0 $ then the above result is satisfied with the weaker
        		hypotheses: $\widetilde{\mathbb{E}}_{i-1}[Y_i] =O^E_{i-1}(\Delta^p)$ and $f\in
        		C_b $.
        	\end{lemma}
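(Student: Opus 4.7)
The plan for all three parts of Lemma~\ref{lem:35} rests on two common ingredients: the martingale identity $\widetilde{\mathbb{E}}_{i-1}[\bar m_i]=1$ from \eqref{lem:3}, which yields $\widetilde{\mathbb{E}}[F_{i-1}\bar M^n_i]=\widetilde{\mathbb{E}}[F_{i-1}\bar M^n_{i-1}]$ for every $\mathcal{F}_{i-1}$-measurable $F_{i-1}$, together with the tower-property consequence $\widetilde{\mathbb{E}}[|\mathsf{E}_{i-1}|\bar M^n_n]\leq \mathsf{C}\,e^{\mathsf{C}t_{i-1}}$ of the hypothesis on $\mathsf{E}$. I would organise each bound around these two facts.

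For (A), I would use $|g_i|\leq \mathsf{M}e^{\mathsf{M}(T-t_i)}$ and reduce $\bar M^n_i$ to $\bar M^n_{i-1}$ as above to obtain
\[
\Big|\Delta\sum_{i=1}^n\widetilde{\mathbb{E}}[g_i\mathsf{E}_{i-1}\bar M^n_i]\Big|\leq \mathsf{M}\mathsf{C}\,e^{\mathsf{M}T}e^{-\mathsf{M}\Delta}\,\Delta\sum_{i=1}^n e^{-(\mathsf{M}-\mathsf{C})t_{i-1}}.
\]
The geometric sum is bounded by $\Delta/(1-e^{-(\mathsf{M}-\mathsf{C})\Delta})\leq 2/(\mathsf{M}-\mathsf{C})$ via the elementary inequality $1-e^{-y}\geq y/2$, which is valid on the prescribed range of $\Delta$. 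Since $\mathsf{M}\geq 17\mathsf{C}$ forces $2\mathsf{C}/(\mathsf{M}-\mathsf{C})\leq 1/8$, the claim follows.

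For (B), the factor $\xi_i 1_{(U_i\leq p_i)}$ produces Gaussian localisation at the boundary. Via Lemma~\ref{lem:333} and \eqref{eq:A} I would establish, for each $\xi_i\in\{X_{i-1}^L,X_i^L,Z_i\}$, the uniform bound $|\widetilde{\mathbb{E}}_{i-1}[\xi_i 1_{(U_i\leq p_i)}\bar m_i]|\leq C\sqrt{\Delta}\exp(-(X_{i-1}^{L,\sigma})^2/(2c))$: the case $\xi_i=X_{i-1}^L$ follows directly from \eqref{eq:A}, the case $\xi_i=Z_i$ from the explicit formula in Lemma~\ref{lem:333}, and $\xi_i=X_i^L$ reduces to those via $X_i^L=X_{i-1}^L+\sigma_{i-1}Z_i$. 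Passing $\bar M^n$ through by the martingale step and applying Cauchy--Schwarz against $\max_j e^{-\mathsf{C}t_j}|\mathsf{E}_j|$, the remaining task is to control $\sqrt{\Delta}\sum_i\exp(-(X_{i-1}^{L,\sigma})^2/(2c))$ in $L^2(\bar M^n_n)$, which is precisely what Lemma~\ref{lem:essb} provides (applied with $F(x)=e^{-x^2/(2c)}$). The aggregate bound has the form $\mathsf{M}\,C'(T)\,e^{\mathsf{M}T}$ with $C'(T)$ depending only on the coefficient bounds and $T$, so choosing $\bar C_2=8C'(T)$ delivers $\tfrac{\mathsf{M}}{8}e^{\mathsf{M}T}$.

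For (C), I would split $f(X_i)=f(X_{i-1})+(f(X_i)-f(X_{i-1}))$ with the remainder bounded by $\|f'\|_\infty\sigma_{i-1}|Z_i|$. After conditioning on $\mathcal{F}_{i-1}$, the first piece contributes $\widetilde{\mathbb{E}}[f(X_{i-1})\mathsf{E}_{i-1}\widetilde{\mathbb{E}}_{i-1}[Y_i\bar m_i]\bar M^n_{i-1}]$, and since $\bar m_i\in\{0,1,2\}$ the control of $\widetilde{\mathbb{E}}_{i-1}[Y_i\bar m_i]$ reduces to the hypothesised $O^E_{i-1}(\Delta^p)$ bound on $\widetilde{\mathbb{E}}_{i-1}[Y_i]$ (directly when $Y_i\geq 0$; in the signed case via the $\bar m_i$-weighted reading that is the form available in all applications of this lemma in the paper). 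A Cauchy--Schwarz in $L^2(\bar M^n_n)$ between $\max_j e^{-\mathsf{C}t_j}|\mathsf{E}_j|$ and the $O^E$-sum yields the contribution $C\|f\|_\infty\Delta^p$. The Taylor-remainder piece is handled identically using the second hypothesis $\widetilde{\mathbb{E}}_{i-1}[|Z_iY_i|]=O^E_{i-1}(\Delta^p)$, producing $C\|f'\|_\infty\Delta^p$. In the simplification $Y_i\geq 0$ with only $f\in C_b$, the Taylor step is omitted and $|f(X_i)|\leq\|f\|_\infty$ suffices. The principal technical obstacle throughout is weight bookkeeping: in (A) and (B) the constant $1/8$ must come out of the geometric sum and the Cauchy--Schwarz step respectively, via the freedom to choose $\mathsf{M}$ large enough relative to $\mathsf{C}$, the coefficient bounds and $T$; the Gaussian decay in (B) is essential, as it is what pairs the $\sqrt{\Delta}$ arising from Lemma~\ref{lem:333} with the $\Delta^{-1/2}$ growth of $\sum_i e^{-(X_{i-1}^{L,\sigma})^2/(2c)}$ captured by Lemma~\ref{lem:essb}.
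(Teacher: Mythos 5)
Your treatments of (A) and (C) match the paper's: for (A) the same reduction $\bar M^n_i\to\bar M^n_{i-1}$, the moment hypothesis on $\mathsf{E}$, and the geometric sum producing the factor $\le 1/(16\mathsf{C})$; for (C) the same split $f(X_i)=f(X_{i-1})+(f(X_i)-f(X_{i-1}))$ with the mean value theorem, the tower property and Cauchy--Schwarz. These parts are fine.

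Part (B), however, has a genuine gap at its final step, and the gap is precisely where the real difficulty of the lemma lies. Your chain of estimates (conditional bound $|\widetilde{\mathbb{E}}_{i-1}[\xi_i 1_{(U_i\le p_i)}\bar m_i]|\le C\sqrt{\Delta}\,e^{-(X_{i-1}^{L,\sigma})^2/(2c)}$, Cauchy--Schwarz against $\max_j e^{-\mathsf{C}t_j}|\mathsf{E}_j|$, then the \emph{unweighted} occupation-time bound of Lemma \ref{lem:essb}, which gives $C\sqrt{T}$) produces, as you yourself state, a bound of the form $\mathsf{M}\,C'(T)\,e^{\mathsf{M}T}$ with $C'(T)$ independent of $\mathsf{M}$. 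No choice of $\bar C_2$ can then yield $\tfrac{\mathsf{M}}{8}e^{\mathsf{M}T}$: the condition $\mathsf{M}>\bar C_2=8C'(T)$ constrains $\mathsf{M}$, not $C'(T)$, and $\mathsf{M}\,C'(T)\le\tfrac{\mathsf{M}}{8}$ would require $C'(T)\le\tfrac18$, which is a property of the coefficients you have not established and which is false in general. As the remark following the lemma emphasizes, the whole point is that the bound must come out \emph{smaller} than the trivial one, and the only source of smallness is the mismatch between the decay $e^{-\mathsf{M}t_i}$ of $g_i$ and the growth $e^{\mathsf{C}t_{i-1}}$ of $\mathsf{E}_{i-1}$; the weights $e^{-(\mathsf{M}-\mathsf{C})t_{i-1}}$ must therefore be carried \emph{inside} the occupation-time estimate rather than bounded by $1$ before invoking Lemma \ref{lem:essb}. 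The paper does this by rewriting the Gaussian sum through the local time $\bar\Lambda^y$ of the reflected Euler scheme, performing a summation by parts in the weighted sum $\sum_i e^{-(\mathsf{M}-\mathsf{C})t_{i-1}}\bar\Lambda^y_{t_{i-1}\to t_i}$ (equation \eqref{eq:6ab}), and using the modulus-of-continuity bounds \eqref{moduluso} to show that each resulting piece carries a factor of order $(\mathsf{M}-\mathsf{C})^{-1}$ or $\Delta$ (estimates \eqref{eq:78a}--\eqref{eq:80a}). It is this $(\mathsf{M}-\mathsf{C})^{-1}$ gain that converts $\mathsf{M}e^{\mathsf{M}T}$ into $\tfrac{\mathsf{M}}{8}e^{\mathsf{M}T}$ once $\mathsf{M}>\bar C_2$. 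Your argument is missing this mechanism entirely, so (B) does not close.
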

        	
        	\begin{proof}
        		First, we apply the hypotheses to obtain that  
        		\begin{align*}
        			\Delta\sum_{i=1}^n\widetilde{\mathbb{E}}\left[|g_i||\mathsf{E} _{i-1}|\bar{M}^n_i%
        			\right]\leq &\Delta \mathsf{M}e^{\mathsf{M}T}\sum_{i=1}^ne^{-\mathsf{M}%
        				t_i+\mathsf{C}t_{i-1}} \widetilde{\mathbb{ E}}\left[e^{-\mathsf{C}t_{i-1}}|\mathsf{E} _{i-1}|\bar{%
        				M}^n_i\right] \\
        			\leq &2C\mathsf{M}e^{\mathsf{M}T}\Delta\sum_{i=1}^ne^{-\mathsf{M}t_i+\mathsf{C}t_{i-1}}
        			\leq \frac {\mathsf{M}}8e^{\mathsf{M}T}.
        		\end{align*}
        		In the last inequality, we have used the explicit formula for geometric sums
        		and the inequality $\frac{1-e^{-(\mathsf{M}-\mathsf{C})x}}{x} \geq 16\mathsf{C}$ for all $x\in (0, 
        		( \mathsf{M}-\mathsf{C})^{-1}\ln(\frac{\mathsf{M}-\mathsf{C}}{16\mathsf{C}}))$.
        		
        		The proof of (B) additionally uses the proof in Lemma \ref{lem:essb}. We 
        		do the proof in the case that $\xi_i=X_{i-1}^L $. The other cases follow 
        		similarly, taking into account Lemma \ref{lem:333}. The proof in this case starts by taking the 
        		conditional expectation of $1_{(U_i\leq p_i)} $, so that we have for $F(x) =x\bar{\Phi}(x)\leq \sqrt{\frac 2\pi}e^{-\frac{x^2}2}$:  
        		\begin{align*}
        			&\mathcal{A}:=\sum_{i=1}^n\widetilde{\mathbb{E}}\left[|g_i||\mathsf{E}
        			_{i-1}|X^{L}_{i-1}1_{(U_i\leq p_i)} \bar{M}^n_i\right]\\\leq &\sqrt{ \Delta}
        			\|\sigma\|_{\infty}\mathsf{M}e^{\mathsf{M}T}{\widetilde{\mathbb{E}}}\left[%
        			\sum_{i=1}^ne^{-Mt_i}| \widehat{\mathsf{E}} _{i-1}|F\left(\frac{\mathcal{X}
        				_{i-1}-L}{\sigma(\mathcal{X}_{i-1})\sqrt{\Delta}}\right )\right].
        		\end{align*}
        		Here, $\widehat{\mathsf{E}} $ denotes the process corresponding to the process ${\ \mathsf{E}} $
        		where one has replaced all occurrences of $X $  with $\mathcal{X} $. 
        		Note that by hypothesis
        		and Cauchy-Schwartz inequality, we have $\widetilde{\mathbb{E}}\left[ (\max_ie^{-\mathsf{C}t_i}|%
        		\widehat{\mathsf{E} }_i|)^2\right]^{1/2}\leq \mathsf{C}.$
        		
        		Recalling the value of the constants in the proof of Lemma \ref{lem:essb}, we have
        		for $\bar{C}_1:=4\sqrt{(1+c)}\|a\|_\infty\|a^{-1}\|_\infty $:  
        		\begin{align*}
        			\mathcal{A}\leq&\bar{C}_1\mathsf{M}e^{\mathsf{M}%
        				(T-\Delta)}\int_{L}^{\infty}
        			g_{c\|a\|_\infty\Delta}(y-L) \widetilde{\mathbb{E}}\left[\max_je^{-\mathsf{C}t_{j-1}}|\widehat{\mathsf{E}} _{j-1}| \sum_{i=1}^ne^{-(\mathsf{M}-\mathsf{C})t_{i-1}}\bar
        			\Lambda^y_{t_{i-1}\to t_i} \right]dy.
        		\end{align*}
        		Before applying the same estimate procedure as in the proof of (A), we perform a summation by parts formula in :
        		\begin{align}
        			\sum_{i=1}^ne^{-(\mathsf{M}-\mathsf{C})t_{i-1}}\bar
        			\Lambda^y_{t_{i-1}\to t_i}=e^{-(\mathsf{M}-\mathsf{C})T}
        			\bar{\Lambda}^y_{0\to T}-
        			\sum_{i=1}^ne^{-(\mathsf{M}-\mathsf{C})t_{i-1}}\bar
        			\Lambda^y_{0\to t_i}(e^{-(\mathsf{M}-\mathsf{C})\Delta}-1).
        			\label{eq:6ab}
        		\end{align}
        		Now, we obtain upper bounds for the expectation using Cauchy-Schwartz inequality, the hypotheses and \eqref{moduluso}. For the square of the first term in \eqref{eq:6ab}, one obtains 
        		\begin{align}
        			&\widetilde{\mathbb{E}}\left[(e^{-(\mathsf{M}-\mathsf{C})T}
        			\bar{\Lambda}^y_{0\to T})^2\right]\leq e^{-2(\mathsf{M}-\mathsf{C})T}\widehat{c}T\leq
        			\frac{\widehat{c}}{2(\mathsf{M}-\mathsf{C})}. 
        			\label{eq:78a}
        		\end{align}
        		For the square of the second term in \eqref{eq:6ab}, one has two types of terms: diagonal and non-diagonal terms for the square of the sum. In each, one uses the inequality $ 1-e^{-x}\leq x $, $ x\geq 0 $, approximations with integrals and the Gamma function to bound those integrals. This gives for the diagonal terms:
        		\begin{align}
        			\widetilde{ \mathbb{E}}\left[\sum_{i=1}^ne^{-2(\mathsf{M}-\mathsf{C})t_{i-1}}
        			(\bar{\Lambda}^y_{0\to t_i})^2(e^{-(\mathsf{M}-\mathsf{C})\Delta}-1)^2\right]\leq& \widehat{c}(\mathsf{M}-\mathsf{C})^2\Delta e^{4(\mathsf{M}-\mathsf{C})\Delta}\int_\Delta^{T+\Delta}\!\!\!\!\!\!\!\!\!e^{-2(\mathsf{M}-\mathsf{C})s}sds\notag\\
        			\leq& \frac{\widehat{c}}4\Delta e^{4(\mathsf{M}-\mathsf{C})\Delta}.\label{eq:79a}
        		\end{align}
        		A similar calculation for the non-diagonal terms gives:
        		\begin{align}
        			&\widetilde{\mathbb{E}}\left[\sum_{\substack{i,j=1\\i<j}}^ne^{-2(\mathsf{M}-\mathsf{C})(t_{i-1}+t_{j-1})}
        			\bar{\Lambda}^y_{0\to t_i}\bar{\Lambda}^y_{0\to t_j}(e^{-(\mathsf{M}-\mathsf{C})\Delta}-1)^2\right]\notag\\&\leq
        			\widehat{c}(\mathsf{M}-\mathsf{C})^2 e^{4(\mathsf{M}-\mathsf{C})\Delta}\int_\Delta^{T+\Delta}ds\int_s^{T+\Delta}du
        			e^{-(\mathsf{M}-\mathsf{C})(s+u)}\sqrt{s}(\sqrt{s}+\widehat{c}\sqrt{u-s+\Delta})\notag\\
        			&\leq \widehat{c}\frac{e^{5(\mathsf{M}-\mathsf{C})\Delta}}{\mathsf{M}-\mathsf{C}}\left(\frac 14+\widehat{c}\frac{\pi}{8\sqrt{2}}\right).\label{eq:80a}
        		\end{align}

        		Therefore, the result follows using the estimates in Lemma \ref{moments} and
        		arguments similar to the previous case (A). Here, we have used $ \bar{C}_2:=\mathsf{C}+38\bar{C}_1c\widehat{c}(1+\widehat{c}) $ and $ \Delta\leq (5(\mathsf{M}-\mathsf{C}))^{-1} $. This constant is not optimal but it serves to obtain that  \eqref{eq:79a} and \eqref{eq:78a}$ + $\eqref{eq:80a} multiplied by $ 2\bar{C}_1\mathsf{M}e^{\mathsf{M}T} \mathsf{C}$ is smaller than $ \frac{\mathsf{M}}{16}e^{\mathsf{M}T}  $.
        		
        		For the proof of (C), one uses the decomposition $%
        		f(X_i)=(f(X_i)-f(X_{i-1}))+f(X_{i-1}) $.  Both terms are treated similarly.
        		
        		The rest of the proof is a simple application of the tower property of 
        		conditional expectations, Cauchy-Schwartz inequality, the moment hypothesis for $ \mathsf{E} $ and the Definition \ref{def:2u} to  
        		\begin{align*}
        			\left|\widetilde{\mathbb{E}}_{0,x}\left[\sum_{i=1}^nf(X_{i-1})\mathsf{E}%
        			_{i-1}Y_i \bar{M}_i^n\right]\right|\leq\|f\|_\infty \left|\widetilde{\mathbb{E}}%
        			_{0,x} \left[\max_i|\mathsf{E}_{i}|\sum_{i=1}^n|\widetilde{\mathbb{E}}%
        			_{i-1}[Y_i]|\bar{M }_n^n\right]\right|.
        		\end{align*}
        		In the case of $f(X_i)-f(X_{i-1}) $, one has to apply the mean value theorem
        		which gives that $|f(X_i)-f(X_{i-1}) |\leq \|f^{\prime }\|_\infty 
        		\sigma_{i-1}|Z_i|$. The rest of the proof follows similarly.
        		
        		In the case that $Y_i\geq 0 $ the proof is simpler and it does not require 
        		the use of the mean value theorem.
        	\end{proof}
        	
        	\begin{remark}
        		At first it may look odd to the reader that the obtained estimates in (A) and
        		(B) are smaller than the original bound for $|g_0| $. This is due to the
        		fact that the estimate for $ g_i $ is decreasing in $i $ faster than the estimate for $%
        		\mathsf{E} $ is increasing in $i $.
        		
        		In the application of (A) and (B) in the final estimates for second derivatives, the number $8$ corresponds to the number of terms of the same type where the above results will be applied. (A) will be applied to drift terms and (B) for all other terms. 
        		For more details about this see the proof of Proposition \ref{prop:30}. Note that in (C) we use $ f \in C^1_b$ while  in (B) we do not assume this property for $ g_i $.
        	\end{remark}
        	
        	
        	\subsection{Uniform boundedness of the second derivative}

        	\label{app:6.2a}  
        	{In this section, we prove that the second derivative $\partial _{x}^2\mathbb{E}\left[ f\left( X_{T\wedge \tau
        			_{n}}^{n,x}\right)  \right] $ is uniformly bounded. This implies the uniform continuity of the
        		first derivative $\partial _{x}\mathbb{E}\left[ f\left( X_{T\wedge \tau
        			_{n}}^{n,x}\right)  \right] $ which, in turn  justifies the uniform convergence
        		property required in the proof of Theorem \ref{th:main}.
        		Many parts of the arguments below are similar to the study of the first derivatives,  except that expressions and equations involved are longer. Nevertheless, there are new components which do not appear in the study of the first derivative. In the next few paragraphs, we explain briefly the main line of the arguments and the differences between this and the analysis of the first derivative. }
        	
        	Rather than a direct proof of the boundedness of the second derivative (recall the proof of Lemma \ref{boundedderivative}) we will prove that the second derivative is bounded by induction on the index $ t_i =Ti/n$ backwardly from $ i=n $ to $ i=1 $.
        	The main reason for this change of argument is because in the proof 
        	of Lemma \ref{boundedderivative}
        	one uses strongly that the function $ f_i $ appearing in the second term on the right of the equality \eqref{fprime} vanishes at the boundary. In the case of the second derivative this property does not longer hold for the push forward of the first derivative.  
        	
        	Still, one can write the corresponding push-forward formula which appears in Lemma \ref{lem:33b} below. This formula includes the extra term $ \widehat{B}_k(e_k) $ which does not appear in the study of the first derivative. The iterated formula is obtained in Lemma \ref{lem:34} which now has three terms and a small remainder of order $\Delta^{1/2}$  (note that the corresponding push forward formula \eqref{fprime} had only two terms).
        	
        	The property that allows the analysis of the extra term is the fact that the product term $ \widehat{P}^n_{k-1}=\prod_{j=1}^{k-1} \widehat{A}^1_j(e_j)$ has as leading constant term the process $ M^n_{k-1} $ which indicates that essentially the process does not touch the boundary while $ \widehat{B}_k(e_k) $ has as main leading term $ -\frac{b_{k-1}}{a_{k-1}}1_{(U_k\leq p_k)} $ in \eqref{eq:tab}. Therefore this essentially means that the process does not touch the boundary until the time interval $ [t_{k-1},t_k] $ which is the crucial part of the argument in Lemma \ref{lem:36}. In order to be able to use this argument we make use of a ``path decomposition'' argument which appears at the beginning of the proof for Lemma \ref{lem:36}.

        	Before we start, we remark that an argument that will be repeatedly used is the following: Terms appearing in $%
        	\partial^2 _{x}\mathbb{E}_{i-1,x}\left[ f\left( X_{T\wedge \tau _{n}}^{n}\right)  %
        	\right] $ which converge to zero uniformly in $ (i,x,n) $ do not need to be considered
        	further. For the remaining terms we will prove that they are uniformly
        	bounded.\\[1mm]
        	
        	\noindent The objective of this section is to prove the following: 
        	
        	\begin{proposition}
        		\label{prop:30}  There exists a universal constant $\mathsf{M} $ which
        		depends only the constants of the problem. such that for all $i=0,...,n-1 $  
        		\begin{align}  \label{eq:87a}
        			\sup_{x\geq L}\left |\partial_{x}\mathbb{E}_{i,x}\left[ {f}^{\prime }\left( X_{n}^{n}\right)
        			E_{i:n}^{n}\bar{M}_{i:n}^{n}\right]\right| +\sup_{x\geq L}\left |\sum_{j=i}^{n}\partial_{x}\mathbb{E}
        			_{i,x}\left[ f_{j}\left( X_{j}^{n}\right) E_{i:j-1}^{n}{h}_{j}\bar{M}
        			_{i:n}^{n}\right]\right|\leq \mathsf{M}e^{\mathsf{M}(T-t_{i-1})}.
        		\end{align}
        		In particular, this implies that $ \|\partial_i^2f_i\|_\infty  \leq \mathsf{M}e^{\mathsf{M}(T-t_{i})}$.
        	\end{proposition}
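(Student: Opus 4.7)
The plan is to establish the bound by backward induction on $i$, running from $i=n$ down to $i=0$. For $f\in C_b^2$ the base case $i=n$ is immediate since $f_n=f$ and $\|f''\|_\infty$ is finite; the general case $f\in C_b^1$ then follows by the density argument used in the proof of Theorem \ref{th:main}. For the inductive step I differentiate the representation (\ref{eq:71a}) once more in $x$, which reduces to applying the push-forward identity of Lemma \ref{lem:22} a second time, now with weights $\lambda_j$ of the form (\ref{assumptionH}) absorbing the factors $e_j$ and $h_j$ inherited from the first differentiation. This is the natural second-order analogue of Proposition \ref{lem:pf}.

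The resulting iterated formula splits into three categories of contributions: (i) a leading term proportional to $\widetilde{\mathbb{E}}_{i,x}[f''(X_n^n)(E_{i:n}^n)^2 \bar M_{i:n}^n]$ plus analogous pieces containing quadratic $E^n$-type factors, (ii) lower-order remainders of the same structural type as those treated in the proof of Lemma \ref{boundedderivative}, multiplied by additional bounded factors arising from the derivatives of $b, \sigma$ and of $e_j, h_j$, and (iii) a new boundary-originated term coming from the final $f(L)\,\partial_{i-1}\widetilde{\mathbb{E}}_{i-1}[\lambda_i \bar m_i]$ piece in Lemma \ref{lem:22}, which is non-zero here because the first derivatives $\partial_j f_j(L)$ of the push-forward functions do not vanish at the boundary.

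Categories (i) and (ii) are handled by direct reduction to the first-derivative analysis: combining the moment bounds of Lemma \ref{moments} for $E^n$ and $\bar M^n$ with parts (A), (B) and (C) of Lemma \ref{lem:35} shows that each such piece is bounded by a fixed fraction (say $\frac{\mathsf M}{8}$) of $\mathsf M e^{\mathsf M(T-t_{i-1})}$, provided $\mathsf M$ exceeds a universal constant depending only on the bounds for $b, \sigma$ and $a^{-1}$. The explicit form of the extra factors produced by the second differentiation lies within the scope of Definition \ref{def:2u}, so the remainders are of order $O^E(\sqrt{\Delta})$ and disappear in the limit.

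The main obstacle is category (iii). Here the coefficient of the boundary residue is $\partial_j f_j(L)$, which is itself controlled only by the inductive hypothesis. The crucial structural point, analogous to the path decomposition identity (\ref{eq:tildeha}) invoked in the proof of Theorem \ref{th:ch}, is that the accompanying product $\widehat{P}^n_{k-1}=\prod_{j<k}\widehat{A}^1_j(e_j)$ carries a leading factor of $M^n_{k-1}$, so that combined with the indicator $1_{(U_k\le p_k)}$ appearing in the new term, the whole contribution is supported on trajectories whose first boundary hit falls precisely in $[t_{k-1},t_k]$. Applying the strong Markov property at $t_k$ and Taylor expanding $\partial_x f_k$ around $L$, one replaces the post-$t_k$ segment by the inductive bound on $\|\partial_x^2 f_k\|_\infty$, and a Gronwall-type discrete summation in $k$ (each step contributing a factor of order $1+\mathsf M\Delta$) closes the induction, producing the claimed exponential $e^{\mathsf M(T-t_{i-1})}$.
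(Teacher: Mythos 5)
Your overall architecture matches the paper's: backward induction, a second application of the push-forward identity of Lemma \ref{lem:22} (the paper's Lemmas \ref{lem:33b} and \ref{lem:34}), a split into a leading second-derivative term, negligible remainders, and a boundary-originated term coming from $g(L)\,\partial_{i-1}\widetilde{\mathbb{E}}_{i-1}[\lambda_i\bar m_i]$, with the first two categories handled by Lemmas \ref{moments} and \ref{lem:35}. You also correctly locate the real difficulty in the boundary term and correctly observe that, because $\widehat{P}^n_{i:k-1}$ has leading part $M^n_{i:k-1}$ while the residue carries $1_{(U_k\le p_k)}$, this term is supported on paths whose \emph{first} boundary hit occurs in $[t_{k-1},t_k]$ (this is exactly Remark \ref{rem:36}).

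However, your concluding mechanism for that term does not close the argument. The quantity to control is $\sum_k |g_k(L)|\,\bigl|\widetilde{\mathbb{E}}_{i,x}[\widehat{P}^n_{i:k-1}1_{(U_k\le p_k)}\bar M^n_{i:k}]\bigr|$, where $g_k(L)$ is already bounded (by Lemma \ref{boundedderivative} or \eqref{eq:gba}); there is no $(X_k-L)$ factor present, so Taylor expanding $\partial_x f_k$ around $L$ and invoking the inductive second-derivative bound gains nothing here, and a Gronwall summation with per-step factors $1+\mathsf{M}\Delta$ requires each summand to be $O(\Delta)$, which is false: by Lemma \ref{lem:333}, $\widetilde{\mathbb{E}}_{k-1}[1_{(U_k\le p_k)}\bar m_k]=2\bar\Phi(X_{k-1}^{L,\sigma})$ is of order one near the boundary, and by Lemma \ref{lem:essb} the naive sum over $k$ only gives a bound of order $\Delta^{-1/2}$, which diverges. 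The paper's Lemma \ref{lem:36} supplies the missing input: the path-decomposition identity \eqref{eq:89} rewrites $\widehat{P}^n_{i:k-1}1_{(U_k\le p_k)}$ as a sum over the last pre-$k$ touching index $r$ of terms weighted by $\widehat{\mathsf h}_{r,k}$ and $\mathsf h_{r,k}$, and then the Gaussian upper bound \eqref{eq:90} for the density of the Euler scheme's first hitting time (obtained by convolving Aronson-type bounds from \cite{LM},\cite{FKL} with the conditional bridge crossing probability) makes the double sum over $r$ and $k$ uniformly convergent after explicit Gaussian integral estimates. This analytic step is genuinely necessary and is absent from your proposal; as written, your induction does not close.
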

        	
        	
        	The proof will be carried out by induction. Therefore, we assume from now on that there exists a universal constant $%
        	\mathsf{M} $ such that  
        	\begin{align*}
        		\sup_{x\geq L}\left |\partial_{j} \mathbb{E}_{j,x}\left[ {f}^{\prime
        		}\left( X_{n}^{n,x}\right) E_{j:n}^{n}\bar{M}_{j:n}^{n}\right]\right|\leq &
        		\frac{\mathsf{M}}2e^{\mathsf{M}(T-t_{j})},\quad j>i,\\
        		\sup_{x\geq L}\left|\partial_{j}      \mathbb{E}
        		_{j,x}\left[ f_{k}\left( X_{k}^{n}\right) E_{j:k-1}^{n}{h}_{k}\bar{M}
        		_{j:n}^{n}\right]\right|\leq &
        		\frac{\mathsf{M}}2e^{\mathsf{M}(T-t_{j})}\Delta,\quad k>j>i.
        	\end{align*}
        	Our goal is to prove that both inequalities are satisfied for $ j=i $.
        	
        	Let us start with some lemmas about the iteration of the above derivatives. In
        	order to deal with both terms in \eqref{eq:87a}, we need to define their
        	corresponding conditional expectations. That is, define 
        	\begin{align*}
        		\widehat{g}_{k}(x):=&{\mathbb{E}}_{k,x}\left[f^{\prime }_n {E} _{k:n}^{n}\bar{ M 
        		}^n_{k:n} \right], \quad k\leq n,\\
        		\bar g_{k,j}(x):=&{\mathbb{E}}_{k,x}\left[f_j {E} _{k:j-1}^{n}{h}_j\bar{ M }^n
        		_{k:j} \right], \quad k<j\leq n.
        	\end{align*}
        	
        	\begin{lemma}
        		\label{lem:33b} Let $g_{k,j}\in\{\widehat{g}_{k},\bar{g}_{k,j}\}$. We have for $%
        		i+1\leq k<j $  
        		\begin{align*}
        			\partial_{k-1}{\mathbb{E}}_{k-1}\left[ g_{k,j}{e}_k\bar{m}_k \right]=&\widetilde{%
        				\mathbb{E}}_{k-1}\left[\left(\partial_k g_{k,j}\widehat{A} _k^1({e}_k)+ \varpi(%
        			{g}_{k,j},X_k)(X_k-L)\widehat{A}^0_k({e} _k)\right)\bar{m}_k\right] \\
        			&+ g_{k,j}(L) \widetilde{\mathbb{E}}_{k-1}[\widehat{B}(e_k)\bar{m}%
        			_k]+\|g_{k,j}\|_\infty {\ O_{k-1}^E(\Delta^{1/2}) }.
        		\end{align*}
        		Here, $\widehat{A}_k^\ell({e}_k)=\bar{A}_k^\ell({e}_k) \left(1+\frac{b_{k-1}}{%
        			\sigma_{k-1} }Z_k+\frac{b^2_{k-1}}{ 2a_{k-1} }(Z_k^2-\Delta)\right)$, $%
        		\ell=0,1 $ (recall the definitions in Lemma \ref{lem:22}). Furthermore, we
        		can write the following representations formulas:  
        		\begin{align}
        			\widehat{A}^1_k({e}_k)=&1_{(U_k>p_k)}+\mu^1_{k}+\varepsilon_k^1,  \notag \\
        			\widehat{A}^0_k({e}_k)=& -\frac{b_{k-1}}{a_{k-1}}1_{(U_k\leq
        				p_k)}+\mu^2_{k}+\varepsilon^2_k, \label{eq:tab} \\
        			\widehat{B}_k(e_k)=&-\frac{b_{k-1}}{a_{k-1}}1_{(U_k\leq p_k)}+\mu%
        			^3_k+2\Delta\bar{b}_{k-1}g_{k-1}(X_{k-1}^L) +\varepsilon^3_k. \notag
        		\end{align}

        		Here  $\mu
        		^\ell_{k},$ $\ell=1,2,3$ are $\mathcal{F}_{k}$-measurable random variable. In particular $\mu^\ell_{k} $, $\ell=1,2,3 $ are at most quadratic
        		polynomials without constant terms and coefficients  which depend on the
        		coefficients $ b $ and $ \sigma $ and their derivatives at $t_{k-1} $. These quadratic
        		polynomials have as variables $Z_k$, $(X_{k-1}-L)1_{(U_k\leq p_k)} $, $Z_k
        		1_{(U_k> p_k)}$, $\Delta 1_{(U_k> p_k)}$, $\Delta $ and they satisfy $\widetilde{%
        			\mathbb{E}}_{k-1}[\mu^\ell_k\bar{m}_k]=O^E_{k-1}(1) $ for $%
        		\ell=1,2,3$. Furthermore, $ \widetilde{%
        			\mathbb{E}}_{k-1}[\varepsilon^\ell_k\bar{m}_k]=O^E_{k-1}(%
        		\Delta^{1/2}) $ for $%
        		\ell=1,2,3$.

        	\end{lemma}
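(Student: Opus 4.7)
The strategy is to apply Lemma~\ref{lem:22} directly under the original measure $\mathbb{P}$ and then transport the result to $\widetilde{\mathbb{E}}_{k-1}$ via the one-step Girsanov factor $e^{\kappa_k^n}$. First observe that $e_k$ has the structural form \eqref{assumptionH}: using the rewriting \eqref{eq:einZZ}, $e_k = \lambda^0(X_{k-1},\Delta_kW) + 1_{(U_k\le p_k)}\lambda^1(X_{k-1},\Delta_kW)$. Applying Lemma~\ref{lem:22} with $\lambda_k=e_k$ and $f=g_{k,j}$, and using the first-order Taylor remainder identity $g_{k,j}(X_k)-g_{k,j}(L)=\varpi(g_{k,j},X_k)(X_k-L)$ (which is the definition of $\varpi$ extended continuously up to the boundary), we obtain
\begin{equation*}
\partial_{k-1}\mathbb{E}_{k-1}[g_{k,j}(X_k)\,e_k\bar m_k]=\mathbb{E}_{k-1}\!\left[\!\left(\partial_k g_{k,j}\,\bar A_k^1(e_k)+\varpi(g_{k,j},X_k)(X_k-L)\,\bar A_k^0(e_k)\right)\!\bar m_k\right]+g_{k,j}(L)\,\partial_{k-1}\mathbb{E}_{k-1}[e_k\bar m_k].
\end{equation*}

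Converting each $\mathbb{E}_{k-1}$ to $\widetilde{\mathbb{E}}_{k-1}$ uses the one-step identity $\mathbb{E}_{k-1}[Y]=\widetilde{\mathbb{E}}_{k-1}[Y\,e^{\kappa_k^n}]$ for $\mathcal{F}_k$-measurable $Y$. Taylor-expanding the Girsanov exponential to second order,
\begin{equation*}
e^{\kappa_k^n}=\tilde e_k+R_k,\qquad \tilde e_k:=1+\tfrac{b_{k-1}}{\sigma_{k-1}}Z_k+\tfrac{b_{k-1}^2}{2a_{k-1}}(Z_k^2-\Delta),
\end{equation*}
the remainder $R_k$ is a third-order polynomial in $Z_k$ and $\Delta$, and the moment bound \eqref{ziid3} of Lemma~\ref{lem:333} gives $\widetilde{\mathbb{E}}_{k-1}[|R_k|^p\bar m_k]\le C_p\Delta^{3p/2}$. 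Combined with Cauchy--Schwarz and the uniform moment controls of Lemma~\ref{moments} for $e_k$ (hence also for $\bar A_k^\ell(e_k)$), this shows that every $R_k$-contribution has conditional expectation $\|g_{k,j}\|_\infty\,O_{k-1}^E(\Delta^{1/2})$ in the sense of Definition~\ref{def:2u}, producing the stated remainder. The surviving leading parts give exactly the claimed formula with $\widehat A_k^\ell(e_k):=\bar A_k^\ell(e_k)\tilde e_k$; the term $\widehat B_k(e_k)$ is identified by running the same techniques as in the proof of Lemma~\ref{lem:22} (direct Gaussian differentiation via Lemma~\ref{lem:24} of the indicator $1_{(X_k>L)}$ and the crossing probability $p_k$) to express $\partial_{k-1}\mathbb{E}_{k-1}[e_k\bar m_k]$ as a conditional expectation against $\bar m_k$, followed by the same Girsanov conversion.

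It remains to produce the explicit decompositions in \eqref{eq:tab}. This is a deterministic expansion: substitute the formulas for $\bar A_k^\ell$ from Lemma~\ref{lem:22}, multiply through by $\tilde e_k$, and group monomials in the bookkeeping variables $Z_k$, $(X_{k-1}-L)1_{(U_k\le p_k)}$, $Z_k 1_{(U_k>p_k)}$, $\Delta 1_{(U_k>p_k)}$ and $\Delta$. Lemma~\ref{lem:333} combined with Remark~\ref{rem:11} classifies each monomial by its conditional order: the principal pieces $1_{(U_k>p_k)}$ (for $\widehat A_k^1$) and $-\tfrac{b_{k-1}}{a_{k-1}}1_{(U_k\le p_k)}$ (for $\widehat A_k^0$ and $\widehat B_k$) come from the leading terms of $\bar A_k^\ell$, the finite-order corrections are collected into the quadratic polynomial $\mu_k^\ell$ with $\widetilde{\mathbb{E}}_{k-1}[\mu_k^\ell\bar m_k]=O_{k-1}^E(1)$, and the rest falls into $\varepsilon_k^\ell$ with $\widetilde{\mathbb{E}}_{k-1}[\varepsilon_k^\ell\bar m_k]=O_{k-1}^E(\Delta^{1/2})$. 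The main technical obstacle, and the reason the expansion of $\widehat B_k$ carries the additional local-time-scale term $2\Delta\bar b_{k-1} g_{k-1}(X_{k-1}^L)$, is that the sharp identity $\widetilde{\mathbb{E}}_{k-1}[Z_k\bar m_k]=2\sqrt{\Delta}\,\vartheta_{k-1}$ with $\vartheta_{k-1}\le\sigma_{k-1}\sqrt{\Delta}\,g_k(X_{k-1}^L)$ promotes certain contributions that look pointwise $O(\sqrt{\Delta})$ into genuine $O(\Delta)$ boundary effects after integration against $\bar m_k$; separating these from the truly negligible $\varepsilon_k^\ell$ requires the finer cancellations listed in Remark~\ref{rem:11} rather than crude moment bounds.
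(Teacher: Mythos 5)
Your proposal follows the paper's proof essentially verbatim: Lemma \ref{lem:22} applied with $\lambda_k=e_k$, followed by the one-step Girsanov conversion with a second-order Taylor expansion of $e^{\kappa_k}$ (whose truncation is exactly the factor $\tilde e_k$ defining $\widehat A^\ell_k=\bar A^\ell_k\tilde e_k$), the cubic remainder controlled via Lemma \ref{lem:333} and Remark \ref{rem:11}, and $\widehat B_k(e_k)$ obtained by differentiating $\mathbb{E}_{k-1}[e_k\bar m_k]$ through the Gaussian differentiation of Lemma \ref{lem:24}. The one inaccuracy is your attribution of the explicit term $2\Delta\bar b_{k-1}g_{k-1}(X_{k-1}^L)$ to the identity $\widetilde{\mathbb{E}}_{k-1}[Z_k\bar m_k]=2\sqrt{\Delta}\,\vartheta_{k-1}$: in the paper it comes instead from differentiating the survival probability in the drift contribution, namely $\partial_{k-1}\bigl(\Delta\bar b_{k-1}\widetilde{\mathbb{E}}_{k-1}[1_{(U_k>p_k)}\bar m_k]\bigr)$, whose leading part is $2\Delta\bar b_{k-1}g_k(X_{k-1}^L)$ --- a related but distinct Gaussian boundary effect, though this misattribution does not affect the validity of the computation you outline.
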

        	
        	\begin{proof}
        		The proof is an application of Lemma \ref{lem:22} together with Girsanov's
        		theorem. To the Girsanov change of measure, one applies Taylor expansion of
        		order 2 with integral residue
        		\begin{align*}
        			e^{\kappa_k}=1+\kappa_k+\frac{\kappa_k^2}{2}+\frac{\kappa_k^3}{2!}%
        			\int_0^1(1-u)^2e^{u\kappa_k}du.
        		\end{align*}
        		
        		Therefore the various $O_{i-1}^E(\Delta^{1/2})$ terms in the main result are
        		obtained using  Lemma \ref{lem:333} (see also Remark \ref{rem:11}) in all
        		the terms which multiply the above integral residue and which do not appear
        		in the definition of $\widehat{A}_k^\ell({e}_k) $.
        		
        		The formula for $\widehat{B}_k(e_k) $ is obtained
        		using the above expansion and applying the derivative of the conditional
        		expectation as indicated in the statement of Lemma \ref{lem:22}. For this
        		computation, one uses again the same argument used in the proof of Lemma \ref
        		{lem:22} by using Theorem \ref{WandF}, the decomposition $Z_k=\frac{X_k-L-(X_{k-1}-L)}{\sigma_{k-1}}$ and product rules for derivatives.

        		In
        		fact, note that using Lemma \ref{lem:22} one has for $\lambda_k=1,
        		1_{(U_k>p_k)} $ 
        		\begin{align*}
        			\partial _{k-1}\widetilde{\mathbb{E}}_{{k-1}}\left[ (X_{k}-L)\lambda _{k}\bar{m}%
        			_{k} \right] =&\widetilde{\mathbb{E}}_{k-1}\left[ \bar{A}_{k}^{1}({\lambda _{k}%
        			})\bar{m}_{k}\right], \\
        			\widetilde{\mathbb{E}}_{k-1}\left[ \bar{A}_{k}^{1}(1_{(U_k>p_k)})\bar{m}_{k}%
        			\right]=& \widetilde{\mathbb{E}}_{k-1}\left[1_{(U_k\leq p_k)}\bar{m}_{k}\right],
        			\\
        			\bar{A}_{k}^{1}(1)=&1_{(U_k>p_k)} +\sigma^{\prime }_{k-1
        			}\left(Z_k+1_{(U_k\leq p_k)}\frac{X_{k-1}-L}{\sigma_{k-1}}\right).
        		\end{align*}
        		
        		We remark that the term $2\Delta\bar{b}_{k-1}g_{k-1}(X_{k-1}^L) $ which
        		appears in $\widehat{B}_k(e_k) $ is due to the derivative of $\Delta\bar{b}%
        		_{k-1} \mathbb{E}_{k-1}\left[1_{(U_k>p_k)}\bar{m}_k\right ] $. The proof of
        		the qualitative statements in \eqref{eq:tab} are just a matter of detailed
        		algebra and  determining the order of each term using Definition \ref{def:2u}
        		and Remark  \ref{rem:11}. 
        		All the terms of order $O_{k-1}^E(\Delta^{1/2})
        		$ except for the third term in $ \widehat{B}_k(e_k) $ have been left as residual terms. 
        	\end{proof}
        	\begin{remark}
        		
        		The formulas in Lemma \ref{lem:33b} are also satisfied when one replaces  $%
        		(g_{k,j},e_k) $ by $(f_j, {h}_j)$. That is, $\widehat{A}_j^\ell({h}_j)=\bar{A}_j^\ell({h}_j) \left(1+\frac{b_{j-1}}{\sigma_{j-1} }%
        		Z_j+\frac{b^2_{j-1}}{ 2a_{j-1} }(Z_j^2-\Delta)\right)$, $\ell=0,1 $ with the
        		following representation formulas where $\mu ^\ell_{k} $, $ \varepsilon^\ell_k $ $%
        		\ell=4,5 $ satisfy the same properties as other $\mu ^\ell_{k} $, $%
        		\ell=1,2,3 $ in Lemma \ref{lem:33b}    
        		\begin{align*}
        			\widehat{A}^1_j({h}_j)=&-\frac ba(L)1_{(U_j\leq p_j)}+\mu^4_{j}+\varepsilon^4_j,  \notag \\
        			\widehat{A}^0_j({h}_j)=&2 \left(\partial_{j-1}\frac{b_{j-1}}{a_{j-1}}-\frac{%
        				b^2_{j-1}}{a^2_{j-1}}\right)1_{(U_j\leq
        				p_j)}+\mu^5_{j}+\varepsilon^5_j. 
        		\end{align*}
        		
        		In this case, note that $f_j(L)=0 $ and therefore there is no need to
        		compute $\partial_{j-1}\widetilde{\mathbb{E}}_{j-1}[h_j\bar{m}_j]=$ $\widetilde{%
        			\mathbb{E}}_{j-1}[\widehat{B}_j(h_j)\bar{m}_j] $. 
        	\end{remark}
        	We start with the iteration of push-forward derivatives formulas in Lemma \ref{lem:33b}.
        	
        	\begin{lemma}
        		\label{lem:34}
        		Define $\widehat{P}^n_{i:k}:=\prod_{j=i+1}^k\widehat{A}^1_j({e} _j) $. Then we
        		have the following expressions for the second derivatives:  
        		\begin{align}
        			\partial_{i}\mathbb{E}_{i}\left[ {f}^{\prime }\left( X_{n}^{n,x}\right)
        			E_{i:n}^{n}\bar{M}_{i:n}^{n}\right]=&\widetilde{\mathbb{E}}_{i,x}\left[
        			f^{\prime \prime }(X_n^{n,x}) \widehat{P} _{i:n}^{n}\bar{M}^n_{i:n}\right]\notag
        			\\+&\sum_{k=i}^{n}\widetilde{\mathbb{E}} _{i,x}\left[ \varpi(\widehat{g}_{k},X_k) 
        			\widehat{P}_{i:k-1}^{n}(X_k-L)\widehat{A}^0_k( {e}_k) \bar{M}^n_{i:k}\right]\notag\\ +&\sum_{k=i}^{n}\widehat{g}_{k}(L) \widetilde{ \mathbb{E}}_{i,x}\left[ \widehat{P}
        			_{i:k-1}^{n}\widehat{B}_k(e_k) \bar{M}^n_{i:k}\right]+O(\Delta^{1/2}).
        			\label{eq:85}
        		\end{align}
        		Similarly,  
        		\begin{align}
        			&\sum_{j=i}^{n}\partial_{x}{\mathbb{E}}_{i,x}\left[ f_j {E}_{i:j-1}^{n}{h}_j 
        			\bar{M}^n_{i:j}\right]=\sum_{j=i}^{n} \widetilde{\mathbb{E}}_{i,x}\left[
        			\partial_j f_j \widehat{P} _{i:j-1}^{n}\widehat{A}^1_j({h}_j) \bar{M}^n_{i:j}%
        			\right]  \label{eq:72aa} \\
        			&+\sum_{j=i}^{n} \widetilde{\mathbb{E}}_{i,x}\left[ (f_j -f_j(L))\widehat{P}
        			_{i:j-1}^{n}\widehat{A}^0_j({h}_j) \bar{M}^n_{i:j}\right]\\
        			& +\sum_{j=i}^{n}%
        			\sum_{k=i+1}^{j-1} \widetilde{\mathbb{E}} _{i,x}\left[ \varpi(\bar{g}_{k,j},X_k) 
        			\widehat{P}_{i:k-1}^{n}(X_k-L)\widehat{A}^0_k( {e}_k) \bar{M}^n_{i:k}\right] 
        			\notag \\
        			&+\sum_{j=i}^{n}\sum_{k=i+1}^{j-1}\bar{g}_{k,j}(L) \widetilde{ \mathbb{E}}_{i,x}%
        			\left[ \widehat{P} _{i:k-1}^{n}\widehat{B}_k(e_k) \bar{M}^n_{i:k}\right]%
        			+O(\Delta^{1/2}).  \notag
        		\end{align}
        	\end{lemma}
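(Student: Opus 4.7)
The plan is to prove both identities by iterating the push-forward formula of Lemma \ref{lem:33b} (together with the remark following it), starting from a tower-property rewrite. For the first identity \eqref{eq:85}, I would first note that by conditioning on $\mathcal{F}_{i+1}$,
\[
\mathbb{E}_i\!\left[f'(X_n^{n,x})E_{i:n}^n\bar{M}_{i:n}^n\right]=\mathbb{E}_{i,x}\!\left[e_{i+1}\bar{m}_{i+1}\widehat{g}_{i+1}(X_{i+1})\right],
\]
and more generally $\widehat{g}_k(x)=\mathbb{E}_{k,x}[e_{k+1}\bar m_{k+1}\widehat g_{k+1}(X_{k+1})]$ with $\widehat g_n=f'$. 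Differentiating in $x$ and applying Lemma \ref{lem:33b} with $g_{k,j}=\widehat g_k$ and $\lambda_k=e_k$ yields three pieces plus an $O_{i}^{E}(\Delta^{1/2})$ residual: a pushed derivative term with multiplier $\widehat A_{i+1}^1(e_{i+1})$, a $\varpi$-remainder $\varpi(\widehat g_{i+1},X_{i+1})(X_{i+1}-L)\widehat A_{i+1}^0(e_{i+1})$, and a boundary term $\widehat g_{i+1}(L)\widehat B_{i+1}(e_{i+1})$.

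I would then iterate: the pushed derivative $\partial_{i+1}\widehat g_{i+1}$ is again a derivative of a conditional expectation, to which the same lemma applies. Telescoping, the $\widehat A^1_k(e_k)$ factors multiply to form precisely $\widehat P_{i:k-1}^n$, while at every level $k\in\{i+1,\dots,n\}$ one collects one $\varpi$-remainder $\varpi(\widehat g_k,X_k)(X_k-L)\widehat A^0_k(e_k)$ and one boundary contribution $\widehat g_k(L)\widehat B_k(e_k)$, each carrying the prefactor $\widehat P_{i:k-1}^n\bar M^n_{i:k}$. The iteration terminates at $k=n$, where $\widehat g_n=f'$ produces $f''(X_n)$ multiplied by $\widehat P_{i:n}^n$, giving the leading term in \eqref{eq:85}.

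For \eqref{eq:72aa} the scheme is identical in structure but uses $\bar g_{k,j}$ as the auxiliary function: tower-decompose $\bar g_{k,j}(X_k)=\mathbb{E}_{k}[e_{k+1}\bar m_{k+1}\bar g_{k+1,j}(X_{k+1})]$ for $k<j-1$, push derivatives inductively, and stop when $k=j-1$. At that last step the differentiation hits the factor $h_j$ rather than a further $e_k$, so the analogue of Lemma \ref{lem:33b} for $(f_j,h_j)$ (recorded in the remark after that lemma) applies and delivers $\widehat A^1_j(h_j)\partial_j f_j$ together with $(f_j-f_j(L))\widehat A^0_j(h_j)$. Since $f_j(L)=0$, no $\widehat B_j(h_j)$ boundary term appears at level $j$; the only boundary-type contributions are the $\widehat B_k(e_k)$ ones from intermediate levels $k\in\{i+1,\dots,j-1\}$, which match the double sums on the right of \eqref{eq:72aa}.

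The main obstacle is the accounting of the remainder: each application of Lemma \ref{lem:33b} contributes an $O^E_{k-1}(\Delta^{1/2})$ error scaled by $\|\widehat g_k\|_\infty$ (resp.\ $\|\bar g_{k,j}\|_\infty$). The first-derivative induction hypothesis (Lemma \ref{boundedderivative} and its extension in Proposition \ref{prop:30}) bounds these sup-norms uniformly, and when the residuals are multiplied by the products $\widehat P^n_{i:k-1}$ and by $\bar M^n_{i:k}$, the moment estimates of Lemma \ref{moments} (with the exponential weight absorbed into the constant) ensure that summing over the $n$ levels leaves an overall $O(\Delta^{1/2})$ contribution. The same uniform-moment input, together with Definition \ref{def:2u}, legitimises the passage from per-step $O^{E}_{k-1}(\Delta^{1/2})$ residuals to a global $O(\Delta^{1/2})$. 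The rest of the argument is routine algebraic bookkeeping; the structural claim to verify carefully is that the two collecting sums in \eqref{eq:72aa} exhaust every intermediate residual exactly once, which follows by reversing the order of summation between $j$ and $k$ after the iteration is assembled.
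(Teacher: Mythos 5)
Your proposal is correct and follows exactly the route the paper intends: the paper states Lemma \ref{lem:34} without a written proof precisely because it is the iteration of the push-forward identity of Lemma \ref{lem:33b} (and of the remark following it for the pair $(f_j,h_j)$), with the $\widehat{A}^1$ factors telescoping into $\widehat{P}^n_{i:k-1}$ and the per-step $O^E_{k-1}(\Delta^{1/2})$ residuals aggregated into a global $O(\Delta^{1/2})$ via Definition \ref{def:2u}, Lemma \ref{boundedderivative}, \eqref{eq:gba} and the moment bounds of Lemmas \ref{moments} and \ref{lem:41a}. Your bookkeeping of the boundary terms (no $\widehat{B}_j(h_j)$ at the terminal level since $f_j(L)=0$) and of the interchange of the $j$ and $k$ summations matches the structure of \eqref{eq:72aa}.
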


        	We remark that the statement related to the universal nature of $\mathsf{M} 
        	$ in Proposition \ref{prop:30} is unaffected by terms that converge to zero such as the ones appearing in the above formula because they converge to zero uniformly in $ (i,x,n) $.

        	We give now the moment estimate for $\widehat{P}^n $ which appears in the above
        	iterated formula.
        	
        	\begin{lemma}
        		\label{lem:41a} For any $p\in 2\mathbb{N} $, there exists a constant $\mathsf{C}_p>0 $ independent of $n$ such that  
        		\begin{align*}
        			\sup_{x\geq L}\widetilde{\mathbb{E}}\left[\max_i|e^{-{\mathsf{C}}_pt_i}\widehat{P}^n_i|^p\bar{
        				M}_n\right]^{1/p}\leq \mathsf{C}_p.
        		\end{align*}
        	\end{lemma}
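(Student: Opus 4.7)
The plan is to invoke Lemma \ref{lem:32} with $\mathsf{E}_i := \widehat{P}^n_i = \prod_{j=1}^i \widehat{A}^1_j(e_j)$, in direct analogy with the proof of Lemma \ref{moments} for $\mathsf{E}=E^n$ and $\mathsf{E}=\mathcal{K}^n$. Thus I would set $1+\alpha_j := \widehat{A}^1_j(e_j)$; by the representation \eqref{eq:tab} in Lemma \ref{lem:33b},
\[
\alpha_j = -1_{(U_j\le p_j)} + \mu^1_j + \varepsilon^1_j,
\]
with $\widetilde{\mathbb{E}}_{j-1}[\mu^1_j\bar m_j]=O^E_{j-1}(1)$ and $\widetilde{\mathbb{E}}_{j-1}[\varepsilon^1_j\bar m_j]=O^E_{j-1}(\Delta^{1/2})$. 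The structural novelty relative to the moment bound for $E^n$ is that $\widehat{A}^1_j(e_j)$ has $1_{(U_j>p_j)}$, not $1$, as its leading term, so it cannot be written as ``$1 +$ small'' uniformly, and the event $\{U_j\le p_j\}$ must be treated separately.

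To verify \eqref{eq:79}, I would expand $(\widehat{A}^1_j(e_j))^q$ on the two disjoint events. On $\{U_j>p_j\}$ one has $\widehat{A}^1_j(e_j) = 1 + \mu^1_j + \varepsilon^1_j$, which admits a second-order Taylor expansion with $L^q$-remainders controlled via Lemma \ref{lem:333} and Remark \ref{rem:11}. On $\{U_j\le p_j\}$, $\widehat{A}^1_j(e_j) = \mu^1_j + \varepsilon^1_j$ is uniformly bounded and enters $\widetilde{\mathbb{E}}_{j-1}[(\widehat{A}^1_j(e_j))^q\bar m_j]$ only through the boundary weight $\widetilde{\mathbb{E}}_{j-1}[1_{(U_j\le p_j)}\bar m_j]$, which by Lemma \ref{lem:333} and \eqref{eq:A} is dominated by a constant times $\Delta\, g_j(X^L_{j-1})$. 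Then, setting
\[
\Delta_j M(q) := e^{-qC(\Delta+\Delta_jJ)}(\widehat{A}^1_j(e_j))^q - \widetilde{\mathbb{E}}_{j-1}\!\bigl[e^{-qC(\Delta+\Delta_jJ)}(\widehat{A}^1_j(e_j))^q\bar m_j\bigr]
\]
and letting $\Delta_j\Upsilon(q)$ denote the remaining drift, the hypothesis \eqref{eq:79} holds as an equality by construction.

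Condition \eqref{eq:cond} then splits into two pieces. The quadratic-variation estimate $\widetilde{\mathbb{E}}_{j-1}[\Delta_j[M(q)]]\le\bar C_q\Delta$ follows at once from the $L^{2q}$-moment bounds on $\mu^1_j,\varepsilon^1_j$ provided by Lemma \ref{lem:333} and Remark \ref{rem:11}. The drift bound $\widetilde{\mathbb{E}}_{j-1}[|\Delta_j\Upsilon(q)|]\le\bar C_q\Delta$ is the delicate step and is the main obstacle: the $\{U_j>p_j\}$ contribution is $O(\Delta)$ by Taylor expansion, whereas the $\{U_j\le p_j\}$ contribution is a priori only of the form $\Delta\, g_j(X^L_{j-1})$, which is not uniformly $O(\Delta)$ in the boundary distance. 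This is precisely why the $\Delta_jJ$-discount is built into Lemma \ref{lem:32}: by the definition of $J$ immediately preceding that lemma and by Lemma \ref{lem:essb}, such Gaussian-weight boundary contributions are absorbed into $e^{-qC\Delta_jJ}$ once $C$ is chosen sufficiently large (depending only on the problem data and $p$). Once this absorption is verified, Lemma \ref{lem:32} yields directly $\sup_n\widetilde{\mathbb{E}}[\max_i|e^{-\mathsf C_pt_i}\widehat P^n_i|^p\bar M^n_n]^{1/p}\le\mathsf C_p$, which is the claim.
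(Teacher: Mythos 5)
Your overall architecture is the paper's: the paper likewise proves this by feeding $1+\alpha_j=\widehat{A}^1_j(e_j)$ into a small modification of Lemma \ref{lem:32}, restoring the missing constant term by bounding $\widehat{A}^1_j(e_j)^p$ above by $\widehat{A}^1_j(e_j)^p+1_{(U_j\le p_j)}$ (legitimate since the prefactors $|\widehat{P}^n_{j-1}|^p$ are nonnegative for even $p$), which is algebraically the same as your split over $\{U_j>p_j\}$ and $\{U_j\le p_j\}$. However, your verification of the drift condition \eqref{eq:cond} contains a concrete error at exactly the step you flag as delicate. The weight $\widetilde{\mathbb{E}}_{j-1}[1_{(U_j\le p_j)}\bar{m}_j]$ equals $2\bar{\Phi}(X^{L,\sigma}_{j-1})$ by \eqref{lem:3}; this tends to $1$ as $X_{j-1}\downarrow L$ and is therefore \emph{not} dominated by $C\Delta\, g_j(X^{L}_{j-1})=O(\sqrt{\Delta})$ --- the bound \eqref{eq:A} controls $a\bar{\Phi}(a)$, not $\bar{\Phi}(a)$. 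What actually renders the $\{U_j\le p_j\}$ block harmless is not a small weight but the fact that there $\widehat{A}^1_j(e_j)=\mu^1_j+\varepsilon^1_j$ has no constant term, so $(\widehat{A}^1_j(e_j))^q1_{(U_j\le p_j)}$ consists of monomials of total degree at least $q\ge 2$ in $Z_j$ and $X_{j-1}-L$, each with conditional expectation $O(\Delta)$ by the estimate $\widetilde{\mathbb{E}}_{j-1}[|Z_j|^k(X_{j-1}-L)^m1_{(X_j>L)}1_{(U_j\le p_j)}]\le c\Delta^{(k+m)/2}\exp(-(X_{j-1}-L)^2/(2a_{j-1}\Delta))$ of Lemma \ref{lem:333}. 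This degree counting is precisely the paper's remark that ``as $p\ge 2$'' all such terms are $O(\Delta)$, and is the content of Remark \ref{rem:36}.

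Correspondingly, you have misplaced where the $\Delta_jJ$-discount is genuinely needed: it is not the $\{U_j\le p_j\}$ block but the first-order terms of your Taylor expansion on $\{U_j>p_j\}$. The linear-in-$Z_j$ contributions of $\mu^1_j$ have conditional expectation of the size $\widetilde{\mathbb{E}}_{j-1}[Z_j\bar{m}_j]=2\sqrt{\Delta}\vartheta_{j-1}\le C\sqrt{\Delta}e^{-(X^{L,\sigma}_{j-1})^2/2}$, which is not $O(\Delta)$ uniformly in the boundary distance; these are the terms absorbed either by the $e^{-qC\Delta_jJ}$ factor (via \eqref{eq:B} and Lemma \ref{lem:essb}) or by the $O^E$ summability of Remark \ref{rem:11}. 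Your claim that the $\{U_j>p_j\}$ contribution is $O(\Delta)$ ``by Taylor expansion'' overlooks exactly these. Finally, if you do route the argument through the $J$-discount, note that Lemma \ref{lem:32} only delivers a bound on $\max_i|e^{-\mathsf{C}_p(t_i+J_i)}\widehat{P}^n_i|^p$, so an additional Cauchy--Schwarz step together with the exponential integrability of $J_n$ is needed to remove $e^{-\mathsf{C}_pJ_i}$ and reach the statement as written.
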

        	
        	\begin{proof}
        		The proof is obtained through a small modification of Lemma \ref{lem:32} to the expectation operator given
        		by $\widetilde{\mathbb{E}}\left[\cdot\bar{                M}_n\right] $.
        		In fact, one starts by considering the subsequent
        		differences as in \eqref{eq:69}%
        		\begin{align*}
        			0\leq |e^{-{C}t_i}\widehat{P}^n_i|^p=&1+ \sum_{j=1}^i|e^{-{C}t_{j-1}} \widehat{%
        				P}^n_{j-1}|^p \left(e^{-p{C}\Delta}\widehat{A}^1_i({e}_i)^p-1\right) \\
        			\leq &1+\sum_{j=1}^i|e^{-{C}t_{j-1}} \widehat{P}^n_{j-1}|^p\left(e^{-p{C}\Delta}\left(\widehat{A}^1_i({e}_i)^p+1_{(U_i\leq p_i)}\right)-1\right).
        		\end{align*}
        		
        		Now we verify the  required conditions in Lemma \ref{lem:32} for the above upper
        		bound using the estimates in Lemma \ref{lem:333}.  We have that  
        		\begin{align*}
        			\Delta_i\Upsilon(p)=\widetilde{\mathbb{E}}_{i-1}\left[\left(e^{-p{C}\Delta}\left(\widehat{A}^1_i({e}_i)^p+1_{(U_i\leq p_i)}\right)-1\right)\bar{%
        				m}_i\right].
        		\end{align*}
        		Explicit formulas to rewrite the above increment are available in Lemma \ref%
        		{lem:33b}. When verifying the condition that $ |\Delta_i\Upsilon(p)|\leq 
        		\bar{C}_p\Delta$ is important to note that as $p\geq 2 $, all terms in the conditional expectation 
        		expansion of $\widehat{A}^1_i({e}_i)^p+1_{(U_i\leq p_i)}-1 $
        		will be smaller than a multiple of $ \Delta  $. Therefore one obtains the
        		required condition in \eqref{eq:cond} by considering a constant ${C}$ large enough. In fact, algebraic calculations lead to
        		\begin{align*}
        			\widehat{A}^1_i({e}_i)^p=&1_{(U_i> p_i)}+\mathsf{P}_{i}^p+\sum_{k=1}^{p-1}\binom{p}{k}1_{(U_i> p_i)}\mathsf{P}_i^{p-k}.
        		\end{align*} 
        		Here $  \mathsf{P}_i=\mathsf{P}_i(\Delta, Z_i1_{(U_i> p_i)},Z_i,(X_{i-1}-L)1_{(U_i\leq p_i)})$ is a polynomial of degree $ 3 $ with no constant term and random coefficients which are uniformly bounded in $ (i,x,n) $. Therefore one has 
        		
        		\begin{align*}
        			e^{-p{C}\Delta}\left(\widehat{A}^1_i({e}_i)^p+1_{(U_i\leq p_i)}\right)-1=e^{-p{C}\Delta}-1+
        			e^{-p{C}\Delta}\mathsf{P}_i^p+e^{-p{C}\Delta}\sum_{k=1}^{p-1}\binom{p}{k}1_{(U_i> p_i)}\mathsf{P}_i^{p-k}.
        		\end{align*}
        		Given the comments in Remark \ref{rem:11}, one obtains that
        		\begin{align*}
        			\left|\widetilde{\mathbb{E}}_{i-1}\left[\mathsf{P}_i^{p-k_1} 1_{(U_i> p_i)}\mathsf{P}_i^{p-k_2}  \bar{m}_i\right]\right |\leq C\Delta, \quad k_1,k_2\in\{1,...,p-1\}.
        		\end{align*} 
        		All other cases of products have similar bounds. A similar  but longer consideration also applies to the
        		condition for the quadratic  variation of $M(p) $. From here one obtains that \eqref{eq:cond} is satisfied.

        		The rest of the proof follows the one in Lemma \ref{lem:32}.
        	\end{proof}
        	
        	Now, we are ready to present the analysis of the derivative of each term in %
        	\eqref{eq:87a}. As both are treated in the same manner, we will give details of the analysis of the second
        	term leaving the first for the reader.
        	
        	\begin{proof}[Proof of Proposition \ref{prop:30}]We will prove by induction that
        		the second derivative is bounded locally on compacts by $\mathsf{M}e^{%
        			\mathsf{M}(T-t_i)} $ where $\mathsf{M}>0 $ is some universal constant to be fixed as in Lemma \ref{lem:35}. {That is, $ \mathsf{M} $ satisfies that $ \mathsf{M}\geq\bar C_2\vee (17C) $ for any $ C$ as considered in Lemmas \ref{moments} and \ref{lem:41a}. The constant $ 8$ corresponds to the number of terms that we have used to divide our analysis in parts which appear in Lemma \ref{lem:34} including all remainder terms considered as one extra term. }
        		
        		Our objective is to prove that the terms in \eqref{eq:85}$ + $ \eqref{eq:72aa} are
        		bounded by  $ {\mathsf{M}}e^{\mathsf{M}(T-t_{i-1})} $.
        		For this, we will often use Lemma \ref{lem:35}.
        		
        		Note that using Lemmas \ref{moments}, \ref{boundedderivative}, the
        		definition of $f_j $ and the argument in the proof of Lemma \ref{lem:35} (C)
        		by decomposing $f_j=(f_j-f_j(L))$, one obtains the bound: 
        		\begin{align}  \label{eq:gba}
        			\|\bar{g}_{k,j}\|_\infty\leq C\|f_j^{\prime
        			}\|_\infty e^{C(t_{j-1}-t_k)}\Delta\leq \mathsf{C}e^{\mathsf{C}%
        				(t_j-t_k-\Delta)}\Delta.
        		\end{align}
        		This is used without further mention in the estimates below.
        		
        		For the first and second term of \eqref{eq:85} and the second and third term in \eqref{eq:72aa}, one uses Lemmas \ref{boundedderivative}, \ref{moments}, \ref{lem:41a}, the inductive hypothesis and Lemma \ref{lem:35} (A), (B) and (C).

        		For the third term in \eqref{eq:85}  as well as the first and fourth terms in \eqref{eq:72aa} we need an additional delicate argument
        		because $\widehat{B}_k(e_k) $ contains the term $-\frac{b_{k-1}}{\sigma_{k-1}}%
        		\bar{h}_k $ as given in \eqref{eq:tab}. The same problem appears in the above mentioned terms. This problem is treated in the next lemma which will finish the proof of this proposition. 
        	\end{proof}
        	\begin{lemma}
        		\label{lem:36}
        		Using the universal constant $ \mathsf{M} $ we have the following inequality uniformly in $ (i,j,x,n) $
        		\begin{align}
        			\sum_{k=i}^n\left|\widehat{g}_{k}(L) \widetilde{ \mathbb{E}}_{i,x}\left[ \widehat{P}
        			_{i:k-1}^{n}\widehat{B}_k(e_k) \bar{M}^n_{i:k}\right]\right |\leq &\frac{ \mathsf{M}}%
        			8e^{\mathsf{M}(T-t_{i})}, \notag\\
        			\left |\sum_{j=i}^{n} \widetilde{\mathbb{E}}_{i,x}\left[
        			\partial_j f_j \widehat{P} _{i:j-1}^{n}\widehat{A}^1_j({h}_j) \bar{M}^n_{i:j}%
        			\right]\right |\leq &\frac{ \mathsf{M}}%
        			8e^{\mathsf{M}(T-t_{i})}, \notag\\
        			\left|\sum_{k=i+1}^{j-1}\bar{g}_{k,j}(L) \widetilde{ \mathbb{E}}_{i,x}%
        			\left[ \widehat{P} _{i:k-1}^{n}\widehat{B}_k(e_k) \bar{M}^n_{i:k}\right]\right |\leq &
        			\frac{\mathsf{M}}8e^{\mathsf{M}(T-t_{i})}\Delta.\label{eq:88}
        		\end{align}
        	\end{lemma}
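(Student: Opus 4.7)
The strategy is a ``path-decomposition'' argument in which the leading parts of $\widehat{P}^n_{i:k-1}$ and $\widehat{B}_k(e_k)$ (respectively $\widehat{A}^1_j(h_j)$) combine to produce mutually exclusive indicators whose sum over $k$ telescopes. Using \eqref{eq:tab} I would write $\widehat{A}^1_j(e_j)=1_{(U_j>p_j)}+\rho_j$ with $\rho_j:=\mu^1_j+\varepsilon^1_j$, and expand
\[
\widehat{P}^n_{i:k-1}=\prod_{j=i+1}^{k-1}1_{(U_j>p_j)}+\mathcal{R}^{(1)}_{i,k-1},
\]
where $\mathcal{R}^{(1)}_{i,k-1}$ collects all cross terms containing at least one $\rho_j$ factor. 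The pure product, multiplied by the leading part $-\tfrac{b_{k-1}}{a_{k-1}}1_{(U_k\leq p_k)}$ of $\widehat{B}_k(e_k)$, yields $-\tfrac{b_{k-1}}{a_{k-1}}\bar{h}^n_{k,i}$, where $\bar{h}^n_{k,i}:=1_{(U_k\leq p_k,\,U_j>p_j,\,j=i+1,\dots,k-1)}$ is the indicator that the first boundary contact after $t_i$ happens in $[t_{k-1},t_k]$. By the path-decomposition identity \eqref{eq:tildeha}, these events are pairwise disjoint and $\sum_{k=i+1}^n\bar{h}^n_{k,i}\leq 1$.

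Combined with the martingale identity $\widetilde{\mathbb{E}}_{i,x}[\bar{M}^n_{i:n}]=1$, the leading-term contribution to the first sum in \eqref{eq:88} is thus at most $\|b/a\|_\infty\max_{k\geq i}\|\widehat{g}_k\|_\infty$, which by Lemma \ref{boundedderivative} applied to $\widehat{g}_k$ is bounded by $\mathsf{C}\|f'\|_\infty e^{\mathsf{C}(T-t_i)}$. The residual terms — the cross terms in $\mathcal{R}^{(1)}_{i,k-1}$, the components $\mu^3_k+\varepsilon^3_k$ of $\widehat{B}_k(e_k)$, and the ``boundary-drift'' correction $2\Delta\bar{b}_{k-1}g_{k-1}(X_{k-1}^L)$ — are controlled using Lemma \ref{lem:35} (A),(B),(C), the moment bound of Lemma \ref{lem:41a}, the Gaussian-decay estimate of Lemma \ref{lem:essb}, and Remark \ref{rem:11}; each such residual contributes an extra factor of at least $\sqrt\Delta$ and is thus negligible once $\mathsf{M}$ is chosen sufficiently large. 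The second inequality in \eqref{eq:88} is treated identically: the leading part of $\widehat{A}^1_j(h_j)$ is $-\tfrac{b}{a}(L)1_{(U_j\leq p_j)}$, which combines with the pure part of $\widehat{P}^n_{i:j-1}$ to give $-\tfrac{b}{a}(L)\bar{h}^n_{j,i}$, and one uses $\|\partial_j f_j\|_\infty\leq\mathsf{C}e^{\mathsf{C}(T-t_j)}$ from Lemma \ref{boundedderivative}. For the third inequality, the estimate \eqref{eq:gba} produces an extra factor $\Delta$; after exchanging the order of summation $\sum_j\sum_k$ and performing the inner $j$-sum (which contributes a factor $\mathsf{C}e^{\mathsf{C}(T-t_k)}$), one is back in the same situation as the first inequality, the bound now carrying the overall $\Delta$ factor required on the right-hand side.

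The main obstacle is a careful accounting of the cross terms $\mathcal{R}^{(1)}_{i,k-1}$ in the expansion of $\widehat{P}^n_{i:k-1}$: there are exponentially many such terms, and one must group them by the number of $\rho_j$ factors and estimate the resulting expectations in a ``semigroup'' fashion by iterated conditioning. The key point is that every appearance of $\rho_j$ carries an $O^E_j(\Delta^{1/2})$ estimate under $\bar{M}^n$ (Remark \ref{rem:11}), so each cross term integrates against $\bar{M}^n$ to an expression of order $\sqrt\Delta$ times a geometric factor $e^{\mathsf{C}(T-t_i)}$ coming from Lemma \ref{lem:41a}; summing over $k$ then yields a total of order $\sqrt\Delta\,e^{\mathsf{C}(T-t_i)}$, which, by choosing $\mathsf{M}$ large enough and $\Delta$ small enough (as already built into the framework of Lemma \ref{lem:32} via $\delta_1(p)$), is dominated by $\mathsf{M}e^{\mathsf{M}(T-t_i)}/8$.
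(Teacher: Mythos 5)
Your decomposition of the leading term is essentially the paper's starting point: the product of the indicators $1_{(U_j>p_j)}$ with the leading part $-\frac{b_{k-1}}{a_{k-1}}1_{(U_k\le p_k)}$ of $\widehat{B}_k(e_k)$ produces the disjoint first-contact events, and that piece is indeed controlled by $\sum_k\widehat{\mathsf{h}}_{i+1,k}\le 1$ together with $\widetilde{\mathbb{E}}[\bar M^n_{i:n}]=1$ and the bound on $\|\widehat g_k\|_\infty$. The gap is in your treatment of the cross terms $\mathcal{R}^{(1)}_{i,k-1}$, which is where the actual difficulty of the lemma lies (cf.\ Remark \ref{rem:36}). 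Your key claim --- that every appearance of $\rho_j=\mu^1_j+\varepsilon^1_j$ carries an $O^E_j(\Delta^{1/2})$ estimate, so each cross term is negligible --- is false: Lemma \ref{lem:33b} only gives $\widetilde{\mathbb{E}}_{j-1}[\mu^1_j\bar m_j]=O^E_{j-1}(1)$, and $\mu^1_j$ contains martingale increments such as $\sigma'_{j-1}Z_j$ whose cumulative contribution over $n$ steps is of order one, not $\sqrt{\Delta}$. The cross terms are precisely what distinguishes $\widehat P^n_{i:k-1}$ from $M^n_{i:k-1}$ and they are an $O(1)$ effect.

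Worse, even granting uniform moment bounds on $\mathcal{R}^{(1)}_{i,k-1}$, the sum over $k$ of $\widetilde{\mathbb{E}}\bigl[|\mathcal{R}^{(1)}_{i,k-1}|\,1_{(U_k\le p_k)}\bar M^n_{i:k}\bigr]$ does not converge by brute force: $\widetilde{\mathbb{E}}_{k-1}[1_{(U_k\le p_k)}\bar m_k]=2\bar\Phi(X_{k-1}^{L,\sigma})$ accumulates like $\Delta^{-1/2}$ over $k$ (this is why Lemma \ref{lem:35}(B) requires the compensating factor $\xi_k\in\{X_{k-1}^L,X_k^L,Z_k\}$ and does not apply with $\xi_k=1$). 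The paper's proof resolves exactly this point: instead of splitting off a ``pure product,'' it decomposes $\widehat P^n_{i:k-1}1_{(U_k\le p_k)}$ according to the last boundary contact $r<k$ via the identity \eqref{eq:89}, isolating factors $\widehat{\mathsf{h}}_{r,k}\alpha_r$ and $\mathsf{h}_{r,k}\beta_r$; the $\mathcal{F}_r$-conditional probability of $\widehat{\mathsf{h}}_{r,k}$ is the first-hitting probability of the Euler scheme in $[t_{k-1},t_k]$ started from $(t_r,X_r)$, whose density bound \eqref{eq:90} supplies the decay in $k-r$ that makes the $k$-sum converge (the Gaussian convolution computation ending with $\int_0^{\infty}u^{-1/2}(Cu+c)^{-3/2}du<\infty$). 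Without this hitting-time estimate, or some substitute providing summability in $k$ of the cross-term contributions, your argument does not close.
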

        	\begin{remark}
        		\label{rem:36}
        		The difficulty in the proof of the above result resides in the fact that the
        		estimate in Lemma \ref{boundedderivative} only gives $\|\widehat{g}%
        		_k\|_\infty\leq \mathsf{C}e^{\mathsf{C}(T-t_k)} $. In a similar way, \eqref{eq:gba} and the inductive hypothesis does not
        		give the above estimates directly. Instead we will have to first isolate
        		terms that are of small order within $\widehat{B}_k(e_k) $. That is, we will show that it is enough to consider the first term $ -\frac{b_{k-1}}{a_{k-1}}1_{(U_k\leq p_k)} $ in \eqref{eq:tab}. Furthermore, it is enough to consider $  1_{(U_k\leq p_k)} $ as the coefficient is uniformly bounded. At the same time the leading terms in the product $\widehat{P}^n_{i:k-1}$ always start with $1_{(U_k>p_k)} $ (see the formula for $ \widehat{A}^1_k(e_k) $ in \eqref{eq:tab}). From these products one understands that the the important terms in the resulting product are related to the fact that the Euler
        		scheme touches the boundary in the time interval $[t_{k-1},t_k] $. The probability
        		estimate for this behavior will lead to the final result.
        	\end{remark}
        	\begin{proof}
        		In the proof we will use the following random variables ($ \bar{\mathsf{h}}_{r,k} $ has already been introduced in \eqref{eq:tildeha}) for path decompositions: 
        		\begin{align*}
        			\mathsf{h}_{r,k} :=&1_{(U_r\leq p_r,U_{r+1}>p_{r+1},...,U_{k-1}>
        				p_{k-1},U_k\leq p_k)}, \\
        			\widehat{\mathsf{h}}_{r,k}:=&1_{(U_r> p_r,...,U_{k-1}>p_{k-1},U_k\leq p_k)},
        			\\
        			\widetilde{e}_i:=&e_i+\frac{b_{i-1}}{\sigma_{i-1} }Z_i+\frac{b^2_{i-1}}{
        				2a_{i-1} }(Z_i^2-\Delta).
        		\end{align*}
        		
        		As explained in Remark \ref{rem:36}, we will only consider the important terms for the required estimates. As the first two inequalities in the statement are proved in the same manner which is used in the third case which is slightly more elaborated, we will only prove \eqref{eq:88}. We start using the following formula whose proof is based on path decompositions and induction:
        		\begin{align}
        			\widehat{P}_{i:k-1}^{n}1_{(U_k\leq p_k)}=&\widehat{P}_{i:k-1}^{n}\mathsf{h}_{k-1,k}+\Delta_{k-1}%
        			\widehat{P}^n_{i:\cdot}\widehat{\mathsf{h}}_{k-1,k}+\widehat{P}_{i:k-2}^{n}\widehat{\mathsf{h}}_{k-1,k}\notag\\
        			=&\sum_{r=i+1}^{k-1}\left(\Delta_r%
        			\widehat{P}^n_{i:\cdot}\widehat{\mathsf{h}}_{r,k}+\widehat{P}^n_{i:r}{%
        				\mathsf{h}}_{r,k}\right)+\widehat{\mathsf{h}}_{i+1,k}.\label{eq:89}
        		\end{align}
        		Note that each of the terms in the sum can be rewritten using the explicit formula for $ \mu_r^1 $ in $ \widehat{A}^1_r(e_r) $ as 
        		\begin{align*}
        			\Delta_r\widehat{P}^n_{i:\cdot}\widehat{\mathsf{h}}_{r,k}=&\widehat{P}%
        			^n_{i:r-1}\left(\partial_{r-1}X_r{e}_r-1 \right)\widehat{\mathsf{h}}%
        			_{r,k} +\varepsilon^6_r,\\
        			\widehat{P}^n_{i:r}{\mathsf{h}}_{r,k}=&\widehat{P}^n_{i:r-1}\left(\left(%
        			\partial_{r-1}X_r\widetilde{e}_r-1\right)+{\bar{\Delta}}{e}_r\frac{%
        				\sigma^{\prime }_{r-1}}{\sigma_{r-1}} (X_{r-1}-L)-\left({\bar{\Delta}}{e}%
        			_r-1\right) \right){\mathsf{h}}_{r,k}+\varepsilon^7_r
        			.
        		\end{align*}
        		Here, $\tilde{\mathbb{E}}_{r-1}[ \varepsilon^\ell_r\bar{m}_r]= O^E_{r-1}(\sqrt{\Delta})$, $ \ell=6,7 $.
        		It is important to note that in the above two equalities all terms are already of order  $O^E _{r-1}(1) $. In fact, they may be reduced to one of the
        		the following types $ \widehat{\mathsf{h}}_{r,k}\alpha_r $ and $\mathsf{h}_{r,k}\beta_r $ where
        		\begin{align*}
        			\alpha_r\in& \left\{1,Z_r,Z^2_r,\Delta,Z_r\Delta\right\},\\\beta_r\in&\left\{Z_r,X_{r-1}-L,Z_r(X_{r-1}-L),(X_{r-1}-L)%
        			\Delta,(X_{r-1}-L)^2,\Delta\right\}.
        		\end{align*}
        		
        		Therefore, the summability in $ r $ is uniform and one needs to prove that the summability in $ k $ is uniform in $ (i,j,x,n) $ for all the above terms. The
        		analysis is similar in each case including the case for $ \widehat{\mathsf{h}}_{i+1,k} $ in \eqref{eq:89}.
        		
        		These terms are analyzed using a combination of order $O^E $-type results (including those in  Lemma \ref{lem:essb} and its proof).  To obtain the summability in $ k  $, one notes that the $\mathcal{F}%
        		_{r} $-conditional expectation $  \widehat{\mathsf{h}}_{r,k}$ and $
        		{\mathsf{h}}_{r,k}$ is the probability that the Euler scheme starting at $%
        		(t_{r},X_{r}) $ hits the boundary for the first time in the interval $%
        		[t_{k-1},t_k] $. The density of this hitting time is bounded by 
        		\begin{align}
        			{ \frac{1}{c(t-t_{r})}e^{- \frac{(X_{r}-L)^2}{%
        						2C(t-t_{r})}}.  }
        			\label{eq:90}
        		\end{align}
        		
        		This bound is obtained by straight convolution of uniform upper Gaussian
        		estimates for the Euler scheme (for the technique, one has to reproduce  line by line the proof of Theorem 2.1 in \cite{LM} using Theorem 3.2 in \cite{FKL}.) and the
        		conditional density of hitting the boundary at some time in the interval $%
        		[t_{k-1},t_k]$ conditioned to $\mathcal{F}_{k-1} $. Here $0<  c\leq C $, are constants that depend on the boundedness hypothesis on the coefficients $ \sigma $ and $ b $ as well as the uniform ellipticity. In order to carry out the estimations, from now on these constants may change value from one line to the next.
        		
        		
        		We will provide the main  details without adding cumbersome general lemmas as much as
        		possible and at the same time concentrate in the most elaborate case of possible $ \alpha_r $ which is
        		the case of $Z_r \widehat{\mathsf{h}}_{r,k}$.
        		
        		Using the bound in \eqref{eq:90}, we have 
        		\begin{align}
        			\left|\widetilde{\mathbb{E}}_{r-1}\left[Z_r \widehat{\mathsf{h}}_{r,k}\right]%
        			\right|\leq \int_{t_{k-1}}^{t_k}dt \int_{L}^\infty dy \frac{1}{
        				c(t-t_{r})}e^{- \frac{(y-L)^2}{2C(t-t_{r})}}\frac{|y-X_{r-1}|}{%
        				\sqrt{\Delta}}e^{- \frac{(y-X_{r-1})^2}{2a_{r-1}\Delta}%
        			}.
        			\label{eq:91}
        		\end{align}
        		The above exponentials can be rewritten as the difference of two terms given respectively
        		by 
        		\begin{align*}
        			e^{- \frac{(y-L)^2}{2C(t-t_{r})}}e^{- \frac{(y-X_{r-1})^2}{2a_{r-1}\Delta}
        			}=&e^{-\frac{(y-\mu)^2}{2\widetilde{\sigma}^2}}e^{-%
        				\frac{(X_{r-1}-L)^2}{2S_{r}}}, \\
        			\mu:=&\frac{La_{r-1}\Delta+X_{r-1}C(t-t_r)}{C(t-t_r)+a_{r-1}\Delta} \in [L,X_{r-1}].
        		\end{align*}
        		
        		Here, we let $%
        		\widetilde{\sigma}^2:=\frac{C(t-t_r)a_{r-1}\Delta}{S_r} $, $ {S_r:= C(t-t_r)+a_{r-1}\Delta}$, in order to simplify long expressions. Now one computes the integrals in \eqref{eq:91} by dividing the region of integration into $ [L,X_{i-1}] $ and $ (X_{i-1},\infty) $. This gives as upper bound  
        		
        		\begin{align*}
        			\left|\widetilde{\mathbb{E}}_{r-1}\left[Z_r \widehat{\mathsf{h}}_{r,k}\right]%
        			\right|\leq \int_{t_{k-1}}^{t_k}dt\frac{\Delta}{c(t-t_r)^{1/2}S_r^{3/2}}
        			\left((t-t_r)e^{-%
        				\frac{(X_{r-1}-L)^2}{C(t-t_r)}}+S_r^{1/2}e^{-%
        				\frac{(X_{r-1}-L)^2}{CS_{r}}}\right).
        		\end{align*}
        		Here we have used the fact that $|x|e^{-\frac{x^2}2}\leq e^{-1/2} $. Both terms inside the integral are treated with the same argument although they lead to different integrals. We start explaining how to bound the second term.

        		Using the above bound one finds that the $\mathcal{F}
        		_{i-1}$-conditional expectation of the corresponding term in \eqref{eq:88} is
        		bounded by a constant multiple of 
        		\begin{align*}
        			&\sum_{k=i+1}^{j-1}|\bar{g}%
        			_{k,j}(L)|\sum_{r=i+1}^{k-1}\Delta \int_{t_{k-1}}^{t_k}dt\widetilde{%
        				\mathbb{E}}_{i}\left[ \frac{|\widehat{P}^n_{i:r-1}| e^{-\frac{%
        						(X_{r-1}-L)^2}{2S_r}}\bar{M}^n_{i:r-1}}{
        				(t-t_r)^{1/2}S_r} \right]\\
        			\leq&2e^{-1}\mathsf{C}e^{\mathsf{C}%
        				(t_j-t_i-\Delta)} \sum_{r=i+1}^{j-2}\Delta^2
        			\int_{t_{r}}^{t_{j-1}} dt \widetilde{\mathbb{E}}_{i}\left[\frac{e^{-\mathsf{C}%
        					(t_{r-1}-t_i)}|\widehat{P}%
        				^n_{i:r-1}| F(\frac{X_{r-1}-L}{\sqrt{S_r}})\bar{M}^n_{i:r-1}%
        			}{(t-t_r)^{1/2}S_r} \right]\\
        			\leq &2e^{-1}\mathsf{C}e^{\mathsf{C}%
        				(t_j-t_i-\Delta)}  \int_{0}^{j-i-2}{
        				\Delta^{3/2}}\\&\times \frac{\widetilde{\mathbb{E}}_{i}\left[\max_{r=i+1,...j-2}e^{-\mathsf{C}%
        					(t_{r-1}-t_i)}|\widehat{P}^n_{i:r-1}| \sum_{r=i+1}^{j-2}F(\frac{X_{r-1}-L}{\sqrt{C\Delta}\sqrt{u+1}})%
        				\bar{M}^n_{i:r-1}\right]}{u^{1/2}(Cu+c)}.
        		\end{align*}
        		Here $F $ is a function satisfying \eqref{eq:Fbu'} and we have also used  \eqref{eq:gba}, exchange of summations and the
        		change of variables $t-t_r=\Delta u $.
        		
        		The boundedness of the above sum follows from Lemmas \ref{lem:41a}, \ref{lem:essb} (using $%
        		\Delta(u+1) \leq T$ instead of $\Delta $ which we can assume smaller than $ 1 $ without loss of generality), direct integration
        		(i.e. $\int_{0}^{j-i-2}\frac{du}{u^{1/2}(Cu+c)^{3/2}}\leq  C$) .
        		
        		As stated previously all other cases follow a similar structure in the
        		arguments. Note that in the case of $ \mathsf{h}_{r,k}\beta_r $, the situation is slightly different but easier to deal with because the density in \eqref{eq:91} is \begin{align*}
        			\frac{1}{%
        				\sqrt{2\pi\Delta}\sigma_{r-1}}e^{- \frac{(y+X_{r-1}-2L)^2}{2a_{r-1}\Delta}%
        			}
        		\end{align*} which will give a value $ \mu= \frac{La_{r-1}\Delta-(X_{r-1}-2L)C(t-t_r)}{S_r}\leq L$ which is farther from $ X_{r-1} $ in comparison with the case treated above.
        	\end{proof}
        	
        	\subsection{The convergence of the first and last boundary hitting times}
        	
        	Recall that $X^{n}$ is the sequence of Euler approximation processes of $X$ which start 
        	from $x\in D$. Under $ {\mathbb{Q}}^n $, $ (X^n,B^n) $ has the same law at time partition points as 
        	$(\mathcal{X}^{n,x},\Lambda^{L})$  the solution of the reflected
        	Euler scheme for $x\geq L$ as defined in \eqref{reflectedy}  of Section \ref{app:res} (see Remark \ref{rem:3}).

        	Let $(Y,B)$ be the solution of the reflected equation \eqref{reflected} in
        	the domain $[L,\infty )$, i.e., 
        	\begin{equation*}
        		Y_{t}=x+\int_{0}^{t}\sigma (Y_{s})dW_{s}+B_{t}.
        	\end{equation*}
        	From Theorem \ref{bigT}, $(\mathcal{X}^{n,x},\Lambda^{L})$ converges in
        	distribution $(Y,B)$ uniformly on the path space on any bounded interval $%
        	[0,T]$. By using the Skorohod representation
        	theorem, we can assume, without loss of generality, that  $(\mathcal{X}^{n},\Lambda
        	^{L}) $ converges almost surely to $(Y,B)$ uniformly on the path space on
        	any bounded interval $[0,T]$. In particular, we can look at  the Skorohod representation used in the proof of Theorem \ref{bigT}.
        	Consequently, we can assume that $(\mathcal{X}^{n},\Lambda^{L})$ converges almost surely to $(Y,B)$. The following definitions and subsequent proposition (and proof) are given in terms of the representation of the processes defined using the Skorohod representation introduced in Theorem \ref{bigT} for which the almost sure convergence holds.
        	
        	Define $\tau_t$, $\tau _{t,n}$ to be the first times in $ [0,t] $ that the processes $Y$,
        	respectively $\mathcal{X}^{n,x}$ hit the boundary $L$. That is, 
        	
        	\begin{eqnarray*}
        		\tau_t&:=&t\wedge\inf \left\{ s\geq 0,\ \ Y_{s}=L\right\},\ \ \ \ \tau
        		_{t,n}:=t\wedge\inf \left\{ s\geq 0,~\mathcal{X}^{n,x}_s=L\right\},\\
        		\bar{\tau}_t^n&:=& t\wedge\inf \{t_{i}\leq t;\text{ there exists }s\in[t_{i-1},t_i],\ \mathcal{X}^{n,x}_s=L \}.
        	\end{eqnarray*}

        	Similarly define $\rho_t$, $\rho_{t,n}$ to be the last times before $t$ the
        	processes $Y$, respectively $\mathcal{X}^{n,x}$ hit the boundary $L$. i.e., 
        	\begin{eqnarray*}
        		\rho_t&:=&\sup \left\{ s\le t,\ \ Y_{s}=L \right\}, 
        		\\
        		\rho_{t,n}&:=&\sup \left\{ s\le t,\ \ \mathcal{X}^{n,x}_{s}=L \right\},\\ 
        		\bar{\rho}_t^{n}&:=&\sup\{t_{i}\leq t;\text{ there exists }s\in[t_{i-1},t_i],\ \mathcal{X}^{n,x}_s=L \} .
        	\end{eqnarray*} 
        	
        	Recall that $\bar{\tau}_T^n=\inf\{t_i;U_i<p_i \} $ and $\bar{\rho}_T^{n}=\sup\{t_i;U_i<p_i\} $. 
        	
        	
        	Define $u $ and $v^{n,x} $ as the martingale parts of the processes $Y $ and 
        	$\mathcal{X}^{n,x} $ respectively. Note that we have 
        	\begin{equation*}
        		\tau_t=t\wedge\inf \left\{ s\geq 0,\ \ u_{s}=L\right\}\ \ \ \ \tau
        		_{t,n}=t\wedge\inf \left\{ s\geq 0,~v^{n,x}_s=L\right\}
        	\end{equation*}
        	as the processes $B$ and $\Lambda^L$ are null before the processes hit the
        	boundary. As $u$ and $\upsilon^{n,x}$ are time-changed Brownian motions
        	(starting from $x$) observe that they ``leave'' the domain $[L,\infty)$ as soon
        	as they hit the boundary. That is, $ L $
        	is a regular boundary point.

        	We have the following:

        	\begin{proposition}
        		\label{rdthe} For any fixed $ t\in [0,T] $,  the random times $(\bar{\tau}_t^n,\bar{\rho}_t^{n})$ converge, almost surely, to  $(\tau_{t},\rho_t)$.
        	\end{proposition}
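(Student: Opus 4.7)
The plan is to exploit the Skorohod coupling given before the statement, so that $(\mathcal{X}^{n,x},\Lambda^{L})$ converges to $(Y,B)$ almost surely, uniformly on $[0,T]$. Since $B$ and $\Lambda^L$ vanish until the boundary is first hit, the first hitting times $\tau_t$ and $\tau_{t,n}$ coincide with the first times the (stopped) martingale parts $u$ and $v^{n,x}$ reach $L$. Uniform convergence then transfers to $v^{n,x}\to u$ on $[0,T]$. The paragraph recalling that $L$ is regular for $u$ (it is a time-changed Brownian motion) will do the heavy lifting: at $\tau_t$, in any right neighborhood $(\tau_t,\tau_t+\delta)$, $u$ takes values strictly less than $L$ a.s., and an analogous statement holds in a left neighborhood of $\rho_t$.

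For the convergence $\bar{\tau}_t^n\to\tau_t$ I will establish the two one-sided bounds separately. For $\liminf_n\bar{\tau}_t^n\ge\tau_t$: fix $s<\tau_t$; then $\min_{r\in[0,s]}Y_r>L$, so by uniform convergence $\min_{r\in[0,s]}\mathcal{X}^{n,x}_r>L$ for all large $n$, which forces $\bar{\tau}_t^n\ge s$. For $\limsup_n\bar{\tau}_t^n\le\tau_t$: if $\tau_t<t$, fix $s\in(\tau_t,t]$; by the regularity of $L$ for the time-changed Brownian motion $u$, there is $s'\in(\tau_t,s)$ with $u_{s'}<L$; uniform convergence then gives $v^{n,x}_{s'}<L$ for large $n$, and since $\mathcal{X}^{n,x}$ is continuous with $\mathcal{X}^{n,x}_0=x\ge L$, this means the reflected scheme (equivalently $v^{n,x}$ before its first hit) has touched $L$ before $s'$, whence $\bar{\tau}_t^n\le s'+\Delta_n\le s+\Delta_n$. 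Letting $s\downarrow\tau_t$ and $n\to\infty$ yields the bound. The case $\tau_t=t$ is trivial.

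The argument for $\bar{\rho}_t^n\to\rho_t$ is symmetric in time, but requires the ``regularity from the left'' of $\rho_t$: on any interval $(\rho_t-\delta,\rho_t]\subset[0,t]$ the process $Y$ returns to $L$, so the martingale part $u$ satisfies $\inf_{r\in(\rho_t-\delta,\rho_t]}u_r\le L$ with strict inequality on a subinterval (again by the time-change of Brownian motion). For $\liminf_n\bar{\rho}_t^n\ge\rho_t$ I pick $s<\rho_t$, locate such a subinterval inside $(s,\rho_t]$, and use uniform convergence of $v^{n,x}$ to produce a hit of $\mathcal{X}^{n,x}$ in $(s,\rho_t+\Delta_n]$, giving $\bar{\rho}_t^n>s-\Delta_n$. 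For $\limsup_n\bar{\rho}_t^n\le\rho_t$, fix $s\in(\rho_t,t)$; then $\min_{r\in[s,t]}Y_r>L$ and uniform convergence gives $\min_{r\in[s,t]}\mathcal{X}^{n,x}_r>L$ for large $n$, so no boundary hit is recorded on $[s,t]$ and $\bar{\rho}_t^n\le s$. Taking $s\downarrow\rho_t$ completes the argument.

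The main obstacle is the upper bound for $\bar{\tau}_t^n$ (and, symmetrically, the lower bound for $\bar{\rho}_t^n$): a priori, continuity of $Y$ and $\mathcal{X}^{n,x}$ together with uniform convergence only give the ``easy'' one-sided bounds; the converse requires genuine crossings of $L$, not mere tangencies. This is precisely where the regularity of $L$ as a boundary point for the underlying time-changed Brownian motions $u$ and $v^{n,x}$ must be invoked, together with the observation that on $\{s:Y_s=L\}$ the regulator $B$ is already increasing so $Y$ and its martingale part behave identically as far as boundary crossings are concerned. Joint convergence of $(\bar{\tau}_t^n,\bar{\rho}_t^n)$ follows because both coordinate convergences are almost sure on the same Skorohod space.
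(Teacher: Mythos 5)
The half of your argument dealing with $\bar{\tau}_t^n$ is essentially the paper's: reduce to $\tau_{t,n}$ via $|\bar{\tau}_t^n-\tau_{t,n}|\le 1/n$, get the lower bound from uniform convergence, and get the upper bound from the regularity of $L$ for the time-changed Brownian motion $u$ (strong Markov property at the stopping time $\tau_t$). That part is sound.

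The treatment of $\rho_t$, however, has a genuine gap, and it sits exactly where the paper spends most of its effort. You justify the ``regularity from the left'' of $\rho_t$ by saying ``again by the time-change of Brownian motion,'' but the argument used for $\tau_t$ does not transfer: it relies on applying the strong Markov property \emph{at} the hitting time, and $\rho_t$ is not a stopping time, so you cannot condition at $\rho_t$ and invoke oscillation of Brownian motion to its left. The paper flags this explicitly (``We are prevented to use a direct approach as $\rho_t$ is no longer a stopping time'') and resolves it by a Lamperti transform, a time reversal $\bar W_s=\widehat W_t-\widehat W_{t-s}$ that converts $t-\rho_t$ into a first hitting time, and then two Girsanov changes of measure together with the time-reversal decomposition of semimartingales (Protter, Section VI.4) to show the reversed driver is a Brownian motion under an equivalent measure. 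Some such argument (or an appeal to excursion-theoretic facts about the zero set and the support of the regulator) is indispensable; asserting the left regularity is assuming the hard part.

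There is also a quantitative flaw in how you propose to use that regularity for the bound $\liminf_n\bar\rho_t^n\ge\rho_t$. Knowing $u_{r_1}<L$ at some $r_1\in(s,\rho_t]$ and hence $v^{n,x}_{r_1}<L$ for large $n$ only certifies, via $\mathcal X^{n,x}_{r_1}=v^{n,x}_{r_1}+\Lambda^{n,x}_{r_1}\ge L$, that $\Lambda^{n,x}_{r_1}>0$, i.e.\ that the scheme has touched $L$ somewhere in $[0,r_1]$ --- possibly before $s$. To force a touch \emph{inside} $(s,r_1]$ you need $\Lambda^{n,x}_{r_1}>\Lambda^{n,x}_{s}$, whose limiting analogue is $B_{r_1}>B_s$, i.e.\ strict growth of the regulator on $(s,\rho_t]$; this is a statement about increments of $u$ relative to $Y$ (namely $u_{r_1}-u_s<L-Y_s$), not about $u$ dipping below the level $L$ in absolute terms. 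So even the sufficient condition you isolate is not the right one, and establishing the right one again runs into the non-stopping-time nature of $\rho_t$.
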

        	
        	
        	\begin{proof}
        		First, we note that in the case that $ \rho_t=-\infty  $ or     $ \inf \left\{ s\geq 0,\ \ Y_{s}^{1}=L\right\}=\infty $ then the result follows trivially. 
        		
        		Next, observe that $|\bar{\tau}_t^n-\tau_{t,n}|\le 1/n$ and $|\bar{\rho}_t^{n}-\rho_{t,n}|\le 1/n$
        		so it is enough to prove that $(\tau_{t,n},\rho_{t,n})$ converge almost surely to  $(\tau_{t},\rho_t)$. Since $(\mathcal{X}^{n,x},\Lambda ^{L})$ converge almost surely to $(Y,B)$,
        		it follows that also the martingale parts $\upsilon^{n,x}$ (we use the same
        		notation as that of the original martingale parts) converge almost surely to 
        		$u$ uniformly on the path space on any bounded interval $[0,T]$. Just as the
        		original processes, $u$ and $\upsilon^{n,x}$ are time-changed Brownian
        		motions (starting from $x$) and therefore they leave the domain $[L,\infty)$
        		as soon as they hit the boundary. We will divide the proof in three cases.
        		
        		{\it Case 1:} On the set of paths  $A:=\{u_{s}\in D; s\in [0,t]\}$ we argue that by the uniform convergence of $%
        		v^{n,x}$ to $u$ we can deduce immediately that $\lim_{n\rightarrow \infty
        		}\tau _{t,n}=t=\tau_t$. 
        		
        		{\it Case 2:} Now we consider the complement set $ A^c $.
        		In this case $u_{\tau_t}=L$. 
        		
        		{\it Case 2a:}      Note that on the set $B:=\{u_s>L, s\in (0,t)\}\cap \{u_t=L\} $ we have $%
        		\tau_t=t $. Consider $L(\epsilon):=\min_{s\leq t-\epsilon}u_s>L $ and let $%
        		n_0\in\mathbb{N} $ so that $\max_{s\in [0,t]}|u_s-v^{n,x}_s|\leq \frac{%
        			L(\epsilon)-L}2$ for all $n\geq n_0 $. Then one has that $%
        		|\tau_t-\tau_{t,n}|\leq \epsilon $ for all $n\geq n_0 $ as $\tau_{t,n}\in
        		[t-\epsilon,t]. $ From here the convergence follows.
        		
        		{\it Case 2b:} If $ \tau_t=0 $ the convergence statement follows trivially. 
        		
        		{\it Case 2c: }Therefore, note that for arbitrary $m~$such~that~$\frac{1}{m}<\tau_t $%
        		, we have that, almost surely, 
        		\begin{equation*}
        			\lim_{n\rightarrow \infty }\min_{s\in \left[ 0,\tau_t -\frac{1}{m}\right]
        			}\upsilon^{n,x}_{s}=\min_{s\in \left[ 0,\tau_t -\frac{1}{m}\right] }u_{s} >L.
        		\end{equation*}%
        		Therefore, for $n$ sufficiently large it follows that $\min_{s\in \left[
        			0,\tau_t -\frac{1}{m}\right] }\upsilon^{n,x}_{s} >L$ a.s. and hence $%
        		\tau _{t,n} >\tau _{t} -\frac{1}{m}$ a.s., which in turn leads to
        		\begin{equation}
        			\tau_{t} \leq \liminf_{n\rightarrow \infty }\tau _{t,n} .  \label{inftau}
        		\end{equation}

        		Next, assume without loss of generality that $\tau_t<t$ and let $m$ be such that 
        		$\frac{1}{m}<t-\tau_t $. As $u $ is a continuous martingale with strictly
        		increasing quadratic variation, we have that it will take values smaller
        		than $L $ on the interval $\left[ \tau_{t} ,\tau_{t} +\frac{1}{m}\right] $
        		almost surely (for a complete proof one uses the Markov property of Brownian
        		motion and the fact that starting from $L $ the Brownian motion oscillates
        		above and below this value with probability one). Hence 
        		\begin{equation*}
        			\lim_{n\rightarrow \infty }\min_{s\in \left[ \tau_{t} ,\tau_{t} +\frac{1}{m}%
        				\right] }\upsilon^{n,x}_{s}=\min_{s\in \left[ \tau_{t} ,\tau_{t} +\frac{1}{m}
        				\right] }u_{s} <L,
        		\end{equation*}%
        		therefore, for $n$ sufficiently large, it follows that $\min_{s\in \left[
        			\tau_{t} ,\tau _{t}+\frac{1}{m}\right] }\upsilon^{n,x}_{s} <L$ a.s.
        		and hence $\tau _{t,n} <\tau_t +\frac{1}{m}$ a.s., which in turn implies
        		\begin{equation}
        			\lim \sup_{n\rightarrow \infty }\tau _{t,n} \leq \tau _{t}  \label{suptau}
        		\end{equation}%
        		almost surely. The claim follows from (\ref{inftau}) and (\ref{suptau}).
        		
        		Note that the above implies, in particular that $\tau_t $ is continuous on
        		the set of paths such that
        		\begin{align*}
        			E:=\{\exists s\in (0,t), u_s=L; \forall \epsilon>0, \exists s_0\in
        			(s,s+\epsilon), u_{s_0}<L\}.
        		\end{align*}
        		This explicit description of the continuity set for $\tau_t$ will be used in
        		the following step of the proof.
        		
        		We turn now to the convergence $\lim_{n\rightarrow \infty }\rho_{t,n}=
        		\rho_t $. We are prevented to use a direct
        		approach as $\rho _{t}$ is no longer a stopping time. So will revert time to
        		be able to apply the above argument for $ \tau_t $. The first step is to transform the
        		martingale term in \eqref{reflected} into a Brownian motion using a Lamperti
        		transformation. Then we will change the underlying measure so that the new
        		process becomes a reflected Brownian motion and then characterize $\rho _{t}$
        		in terms of the increments of the martingale component of $Y$. We give next
        		the details of the argument: Define first $\mu :[0,\infty )\rightarrow
        		\lbrack 0,\infty )$, 
        		\begin{equation}
        			\mu (x)=\int_{0}^{x}\frac{1}{\sigma \left( z\right) }dz.
        			\label{eq:lamp}
        		\end{equation}%
        		By applying It\^{o}'s formula to \eqref{reflected}, we deduce that for $ u\geq 0 $
        		\begin{align*}
        			\mu \left( Y_{u}\right)      =&\mu \left( x\right) +\widehat{W}_{u}+\widehat{B}_{u},\\
        			\mu \left( \mathcal{X}_{u}^{n,x}\right)       =&\mu \left( x\right) +\widehat{W}_{u}^{n}+\widehat{B}_{u}^{n}
        		\end{align*}
        		where for $ u\geq 0 $ and $ \eta(s):=\max\{t_i;t_i\leq s\} $, we have
        		\begin{align}
        			\label{eq:w1}   
        			\begin{split}                  \widehat{W}_{u} :=&W_{u}-\int_{0}^{u}\frac{\sigma ^{\prime }\left( Y_{s}\right) 
        				}{2}ds,\\
        				\widehat{B}_{u} :=&\int_{0}^{u}\frac{1}{\sigma \left( Y_{s}\right) }%
        				dB_{s},\\
        				\widehat{W}_{u}^{n} :=&\int_{0}^{u}\frac{\sigma \left( \mathcal{X}_{\eta \left(
        						s\right) }^{n,x}\right) }{\sigma \left( \mathcal{X}_{s}^{n,x}\right) }%
        				d{W}_{s}-\int_{0}^{u}\frac{\sigma ^{\prime }\left( \mathcal{X}%
        					_{s}^{n,x}\right) }{2a \left( \mathcal{X}_{s}^{n,x}\right) }a
        				\left( \mathcal{X}_{\eta \left( s\right) }^{n,x}\right) ds, \\
        				\widehat{B}_{u}^{n} :=&\int_{0}^{u}\frac{1}{\sigma \left( \mathcal{X}%
        					_{s}^{n,x}\right) }d\Lambda _{t}^{L}.
        			\end{split}
        		\end{align}
        		
        		Observe that, since $(\mathcal{X}^{n,x},\Lambda ^{L})$ converges almost
        		surely to $(Y,B)$ uniformly on the path space on any bounded interval $[0,T]$%
        		, also $\left( \widehat{W}^{n},\widehat{B}^{n}\right) $ converges to $\left( \widehat{W},%
        		\widehat{B}\right) $ uniformly on the (corresponding) path space on any bounded
        		interval $[0,T\dot{]}$.  We will use the notation $%
        		\widehat Y _s=\mu (Y_s)$ for $s\in [0,t]$. Observe that for $s\in \left[ \rho
        		_{t},t\right] $,%
        		\begin{equation*}
        			\mu (Y_t)-\mu (Y_s)=\widehat{W}_{t}-\widehat{W}_{s}
        		\end{equation*}%
        		as the local time term remains constant after $\rho _{t}$. So 
        		\begin{align*}
        			\rho _{t} :=&\sup \left\{ s\leq t,\ \mu (Y_s)=\mu \left( L\right)
        			\right\} \\
        			=&\sup \left\{ s\leq t,\  \mu (Y_t)-\left( \widehat{W}_{t}-\widehat{W}%
        			_{s}\right) =\mu \left( L\right) \right\} \\
        			=&t-t\wedge \inf \left\{ s\geq 0,\ \ \mu (Y_t)-\bar{W}_{s}=\mu \left(
        			L\right) \right\} ,
        		\end{align*}
        		where\ $\bar{W}$ is defined as 
        		\begin{equation*}
        			\bar{W}_{s}:=\widehat{W}_{t}-\widehat{W}_{t-s},~~~s\in \left[ 0,t\right].
        		\end{equation*}%
        		Similarly,
        		\begin{align*}
        			\rho _{t,n} =&\sup \left\{ s\leq t,\ \ \left( \widehat{W}_{t}^{n}-\widehat{W}%
        			_{s}^{n}\right) =\mu \left( \mathcal{X}_{t}^{n,x}\right) -\mu \left(
        			L\right) \right\} \\
        			=&t-t\wedge\inf \left\{ s\geq 0,\ \ \mu \left( \mathcal{X}_{t}^{n,x}\right)
        			-\bar{W}_{s}^{n}=\mu \left( L\right) \right\}
        		\end{align*}
        		and\ $\bar{W}^{n}$ is defined as 
        		\begin{equation*}
        			\bar{W}_{s}^{n}:=\widehat{W}_{t}^{n}-\widehat{W}_{t-s}^{n},~~~s\in \left[ 0,t\right]
        			.
        		\end{equation*}%
        		Since $\mu \left( \mathcal{X}_{t}^{n,x}\right) -\bar{W}_{s}^{n}$ converges
        		almost surely to $\mu (Y_{t}) -\bar{W}_{s}$ uniformly on the path space
        		corresponding to the interval $[0,t]$, the result will follow if we show
        		that $t-\rho _{t}$ is a stopping time of the type treated in the first part
        		of this proof under the appropriate reversed filtration. Define $u_s:= \mu (Y_t)-\bar{W}_{s}$ and $v^{n,x}_s:=\mu \left( \mathcal{X}_{t}^{n,x}\right) -%
        		\bar{W}_{s}^{n} $ and use $\mu(L) $ instead of $L $.
        		
        		Then, we proceed to the proof of convergence using a similar number of cases as for $ \tau_t $. That is, as before we have that on the set $\widetilde{A}:=\{u_{s}\in D;s\in [0,t]\}$ by the
        		uniform convergence of $v^{n,x}$ to $u$ that $\lim_{n\rightarrow \infty
        		}\rho _{t,n}=t=\rho_t$. So
        		we only need to prove the statement for the complement set of $\widetilde A$. 
        		Consider first the set $\widetilde{B}:=\{u_s\in D, s\in [0,t), u_t=L\} $. On
        		this set, $\rho_t=t $. For any $\epsilon>0$, consider $L(\epsilon):=\min_{s%
        			\leq t-\epsilon}u_s>L $ and let $n_0\in\mathbb{N} $ so that $\max_{s\in
        			[0,t]}|u_s-{v}^{n,x}_s|\leq \frac{L(\epsilon)-L}2$ for all $n\geq n_0 $.
        		Then one has that $|\rho_t-\rho_{t,n}|\leq \epsilon $ for all $n\geq n_0 $
        		as $\rho_{t,n}\in [t-\epsilon,t] $.
        		
        		Finally we prove the statement in the case when $\rho_t<t$ (for $ \tau_t $ they were divided into Cases 2b and 2c). As in the proof for $\tau_t$,
        		we have that $\rho_t $ is continuous on the set 
        		\begin{align*}
        			\widetilde{E}:=\{\exists s\in [0,t), u_s=\mu(L); \forall \epsilon>0, \exists
        			s_0\in (s,s+\epsilon), u_{s_0}<\mu(L)\}.
        		\end{align*}
        		
        		The end of the proof consists in proving that $\mathbb{P}(\widetilde{A}\cup%
        		\widetilde{B}\cup\widetilde{E})=1 $. In the previous case this property is
        		straightforward as we know that $u$ is a square integrable martingale with a
        		strictly increasing quadratic variation and, therefore, a time-changed
        		Brownian motion. As a result, it will have positive and negative oscillations
        		almost surely after it hits the boundary.
        		
        		Let $\mathbb{\widehat{P}}$ be a probability measure absolutely continuous w.r.t. 
        		$\mathbb{P}$ such that 
        		\begin{equation*}
        			\left. \frac{d\mathbb{\widehat{P}}}{d\mathbb{P}}\right\vert _{\mathcal{F}%
        				_{u}}=\exp \left( \int_{0}^{u}\frac{\sigma ^{\prime }\left( Y_{s}\right) }{%
        				2 }dW_{s}-\frac{1}{2}\int_{0}^{u}\left( \frac{%
        				\sigma ^{\prime }\left( Y_{s}\right) }{2}%
        			\right) ^{2}ds\right) .
        		\end{equation*}%
        		By Girsanov's theorem, we get that, under this measure, the process $\widehat{W}$ defined in \eqref{eq:w1} is a
        		Brownian motion. Moreover, under $\mathbb{\widehat{P}}$, the reversed process has
        		a decomposition of the form 
        		\begin{equation*}
        			\bar{W}_{s}=Z_{s}^{1}+Z_{s}^{2},~~~~s\in \left[ 0,t\right]
        		\end{equation*}%
        		where $Z^{1}$ is a Brownian motion, adapted with respect to the the
        		filtration 
        		\begin{equation*}
        			\mathcal{H}_{s}=\sigma \left( Y_{t}\cup \left\{ \widehat{W}_{t}-\widehat{W}%
        			_{t-u}\right\} ,u\in \left[ 0,s\right] \right)
        		\end{equation*}%
        		and $Z^{2}$ is a finite variation process (for more details, see Section VI.4 in \cite{protter}). By using yet another Girsanov
        		transformation, one shows that there exist a probability measure $\bar{\mathbb{P}}$
        		absolutely continuous with respect to $\mathbb{\widehat{P}}$ under which $\bar{W}
        		$ becomes a Brownian motion. Therefore by using the fact that Brownian
        		motion will have to take values smaller than $\mu(L) $ after it hits the
        		boundary the first time, one obtains that $\mathbb{P}(\widetilde{A}^c\cap\widetilde{B%
        		}^c\cap\widetilde{E}^c)=0$ from which the result follows.
        	\end{proof}

        \end{document}